\renewcommand\p@subfigure{\thefigure.}
\setlist{itemsep=1ex,topsep=1ex} 
\setlist[1]{leftmargin=*}
\setlist[itemize,1]{labelindent=1em}
\setlist[itemize,2]{leftmargin=2pc,labelsep=*}
\renewcommand{\PrintDOI}[1]{%
  \href{http://dx.doi.org/#1}{{\tt DOI:#1}}%
}
\renewcommand{\eprint}[1]{#1}
\tikzset{
  x=0.7cm, 
  y=0.7cm,
  knot diagram/every knot diagram/.append style={
    consider self intersections,
    clip width=5,
    end tolerance=0.05pt,
    clip radius=7pt,
  },
  knot diagram/every strand/.append style={
    thick,
  }
}
\newcommand{\stwbr}[2][]{
\strand[#1] (1, #2+2) .. controls +(0,-0.3) and +(0,0.3) .. (0,#2+1)
.. controls +(0,-0.3) and +(0,0.3) .. (1,#2); }
\newcommand{\ctwbr}[3][]{
\strand[evaluate={\str=(sqrt(#2));},#1]
(#2,#3+#2+#2) .. controls +(0,-0.3*\str) and +(0,0.3*\str) ..
(0,#3+#2) .. controls +(0,-0.3*\str) and +(0,0.3*\str) .. (#2,#3); }
\newcommand{\sribtwbr}[2][]{
\strand[#1] (1,#2+2) .. controls +(0,-0.2) and +(-0.17,0) ..
(1.13,#2+2-0.57) ..  controls +(0.11,0) and +(0,-0.06) .. (1.28,
#2+2-0.4) .. controls +(0,0.1) and +(0.15,0) .. (1.05,#2+2-0.2) ..
controls +(-0.15,0) and +(0,0.1) .. (0.75, #2+2-0.4) .. controls
+(0,-0.25) and +(0,0.25) .. (0,#2+2-1.1) .. controls +(0,-0.3) and
+(0,0.3) .. (1,#2); }
\newcommand{\cribtwbr}[3][]{
\strand[evaluate={\str=(sqrt(#2));\dummy=#2-0.25;\wstr=(sqrt(\dummy));},#1]
(#2,#3+#2+#2) .. controls +(0,-0.2) and +(-0.17,0) ..
(#2+0.13,#3++#2+#2-0.57) ..  controls +(0.11,0) and +(0,-0.06) ..
(#2+0.28, #3+#2+#2-0.4) .. controls +(0,0.1) and +(0.15,0) ..
(#2+0.05,#3+#2+#2-0.2) .. controls +(-0.15,0) and +(0,0.1) ..
(#2-0.25, #3+#2+#2-0.4) .. controls +(0,-0.3*\wstr) and
+(0,0.3*\wstr) .. (0,#3+#2-0.1) .. controls +(0,-0.3*\str) and
+(0,0.3*\str) .. (#2,#3); }
\newcommand{\varsribtwbr}[2][]{
\strand[#1] (1,#2) .. controls +(0,0.2) and +(-0.17,0) ..
(1.13,#2+0.57) ..  controls +(0.11,0) and +(0,0.06) .. (1.28,
#2+0.4) .. controls +(0,-0.1) and +(0.15,0) .. (1.05,#2+0.2) ..
controls +(-0.15,0) and +(0,-0.1) .. (0.75, #2+0.4) .. controls
+(0,0.25) and +(0,-0.25) .. (0,#2+1.1) .. controls +(0,0.3) and
+(0,-0.3) .. (1,#2+2); }
\newcommand{\braidgen}[3][]{
\strand[#1] (#2,#3) .. controls +(0,0.3) and +(0,-0.3) ..
(#2+1,#3+1); \strand[#1] (#2+1,#3) .. controls +(0,0.3) and
+(0,-0.3) .. (#2,#3+1); }
\newcommand{\braidtwocldbl}[1][]{
  \def\StyleOpt{{#1}}%
  \braidtwocldblintnl%
}
\newcommand{\braidtwocldblintnl}[3][5pt]{
\strand[evaluate={\optstyle=\StyleOpt;\ddist=#1;},only when
rendering/.style={
    double distance=\ddist,
  },\optstyle] (#2,#3) .. controls +(0,0.3) and +(0,-0.3) .. (#2+1,#3+1);
\strand[evaluate={\optstyle=\StyleOpt;},\optstyle] (#2+1,#3) ..
controls +(0,0.3) and +(0,-0.3) .. (#2,#3+1); }
\newcommand{\braidtencldbl}[1][]{
  \def\StyleOpt{{#1}}%
  \braidtencldblintnl%
}
\newcommand{\braidtencldblintnl}[3][5pt]{
\strand[evaluate={\optstyle=\StyleOpt;},\optstyle] (#2,#3) ..
controls +(0,0.3) and +(0,-0.3) .. (#2+1,#3+1);
\strand[evaluate={\optstyle=\StyleOpt;\ddist=#1;},only when
rendering/.style={
    double distance=\ddist,
  },\optstyle] (#2+1,#3) .. controls +(0,0.3) and +(0,-0.3) .. (#2,#3+1);
}
\numberwithin{equation}{section}
\theoremstyle{plain}
\newtheorem{Theorem}{Theorem}[section]
\newtheorem{Lem}[Theorem]{Lemma}
\newtheorem{Prop}[Theorem]{Proposition}
\newtheorem{Cor}[Theorem]{Corollary}
\newtheorem{Conj}[Theorem]{Conjecture}
\theoremstyle{definition}
\newtheorem{Def}[Theorem]{Definition}
\theoremstyle{remark} 
\newtheorem{Rem}[Theorem]{Remark}
\newtheorem{Rems}[Theorem]{Remarks}
\newcommand\bp{\begin{proof}}
\newcommand\ep{\end{proof}}
\mathchardef\mhyph="2D
\DeclareMathOperator{\ad}{ad}
\DeclareMathOperator{\Ad}{Ad}
\DeclareMathOperator{\End}{End}
\DeclareMathOperator{\id}{id}
\DeclareMathOperator{\Id}{Id}
\DeclareMathOperator{\Mor}{Mor}
\DeclareMathOperator{\Rep}{Rep}
\DeclareMathOperator{\Sym}{Sym}
\DeclareMathOperator{\Tr}{Tr}
\DeclareMathOperator{\ev}{ev}
\newcommand{\hotimes}{\mathbin{\hat{\otimes}}}
\newcommand{\op}{\mathrm{op}}
\newcommand{\ns}{\mathrm{ns}}
\newcommand{\un}{\mathbbm{1}}
\newcommand{\msA}{\mathscr{A}}
\newcommand{\msE}{\mathscr{E}}
\newcommand{\wmsE}{\widetilde{\msE}}
\newcommand{\msK}{\mathscr{K}}
\newcommand{\wmsK}{\widetilde{\mathscr{K}}}
\newcommand{\msR}{\mathscr{R}}
\newcommand{\wmsR}{\widetilde{\msR}}
\newcommand{\msU}{\mathscr{U}}
\newcommand{\mfa}{\mathfrak{a}}
\newcommand{\mfb}{\mathfrak{b}}
\newcommand{\mfe}{\mathfrak{e}}
\newcommand{\mfg}{\mathfrak{g}}
\newcommand{\mfh}{\mathfrak{h}}
\newcommand{\mfk}{\mathfrak{k}}
\newcommand{\mfm}{\mathfrak{m}}
\newcommand{\mfn}{\mathfrak{n}}
\newcommand{\mfp}{\mathfrak{p}}
\newcommand{\mfs}{\mathfrak{s}}
\newcommand{\mfsl}{\mathfrak{sl}}
\newcommand{\mfso}{\mathfrak{so}}
\newcommand{\mfsp}{\mathfrak{sp}}
\newcommand{\mfsu}{\mathfrak{su}}
\newcommand{\mft}{\mathfrak{t}}
\newcommand{\mfu}{\mathfrak{u}}
\newcommand{\mfureg}{\mfu_{\mathrm{r}}}
\newcommand{\mcC}{\mathcal{C}}
\newcommand{\mcD}{\mathcal{D}}
\newcommand{\mcI}{\mathcal{I}}
\newcommand{\mcS}{\mathcal{S}}
\newcommand{\mcT}{\mathcal{T}}
\newcommand{\mbc}{\mathbf{c}}
\newcommand{\mbq}{\mathbf{q}}
\newcommand{\mbs}{\mathbf{s}}
\newcommand{\mbt}{\mathbf{t}}
\newcommand{\C}{\mathbb{C}}
\newcommand{\N}{\mathbb{N}}
\newcommand{\R}{\mathbb{R}}
\newcommand{\T}{\mathbb{T}}
\newcommand{\Z}{\mathbb{Z}}
\newcommand{\walpha}{\widetilde{\alpha}}
\newcommand{\Mod}[1]{#1\mhyph\mathrm{Mod}}
\newcommand{\absv}[1]{\left|#1\right|}
\newcommand{\qbin}[3]{\genfrac{[}{]}{0pt}{}{#1}{#2}_{#3}}
\newcommand{\Nu}{N}
\begin{document}

\title{Ribbon braided module categories, quantum symmetric pairs and Knizhnik--Zamolodchikov equations}

\date{February 13, 2018}

\author{Kenny De Commer}
\address{Vrije Universiteit Brussel}
\email{kenny.de.commer@vub.be}

\author{Sergey Neshveyev}
\address{Universitetet i Oslo}
\email{sergeyn@math.uio.no}

\author{Lars Tuset}
\address{OsloMet - storbyuniversitetet}
\email{lars.tuset@hioa.no}

\author{Makoto Yamashita}
\address{Ochanomizu University}
\email{yamashita.makoto@ocha.ac.jp}

\thanks{The work of K.~De Commer was partially supported by the FWO grant G.0251.15N and the grant H2020-MSCA-RISE-2015-691246-QUANTUM DYNAMICS. The work of S.~Neshveyev was partially supported by the ERC grant 307663-NCGQG}


\begin{abstract}
Let $\mfu$ be a compact semisimple Lie algebra, and $\sigma$ be a Lie algebra involution of $\mfu$. Let $\Rep_q(\mfu)$ be the ribbon braided tensor C$^*$-category of admissible $U_q(\mfu)$-representations for $0<q<1$. We introduce three module C$^*$-categories over $\Rep_q(\mfu)$ starting from the input data $(\mfu,\sigma)$. The first construction is based on the theory of $2$- cyclotomic KZ-equations. The second construction uses the notion of quantum symmetric pair as developed by G.~Letzter. The third construction uses a variation of Drinfeld twisting. In all three cases the module C$^*$-category is ribbon twist-braided in the sense of A.~Brochier---this is essentially due to B.~Enriquez in the first case, is proved by S.~Kolb in the second case, and is closely related to work of J.~Donin, P.~Kulish, and A.~Mudrov in the third case. We formulate a conjecture concerning equivalence of these ribbon twist-braided module C$^*$-categories, and confirm it in the rank one case.
\end{abstract}

\maketitle

\section*{Introduction}

The \emph{Tannaka--Kre{\u\i}n principle} for quantum groups asserts that there is a duality between Hopf algebras on the one hand, and \emph{concretely represented} tensor categories with duals on the other. Here, we say that a tensor category $\mcC$ is concretely represented if there is a fiber functor realizing $\mcC$ within the category of vector spaces. From this viewpoint, a tensor category is more fundamental than a Hopf algebra, as the latter can be seen as a concrete manifestation of the former through a fiber functor, while an abstract tensor category can occur in more general contexts, for instance, as a hidden symmetry in quantum field theories.

This viewpoint is beautifully illustrated by the result of V.~Drinfeld~\cite{MR1047964} which in the framework of braided tensor categories provides two apparently unrelated constructions starting from a semisimple Lie algebra $\mfg$. The first construction is algebraic, where the category is the representation category $\Rep_q(\mfg)$ of $U_q(\mfg)$, the \emph{$q$-deformed universal enveloping algebra} of $\mfg$, viewed as a quasi-triangular Hopf algebra. The second construction is analytic, and builds a new associator on the category of $\mfg$-representations by means of the monodromy of the \emph{Knizhnik--Zamolodchikov equations} arising from conformal field theory. In both approaches, the crucial point is that one should look for a system which contains a \emph{braiding} obtained by deforming the symmetric braiding appearing in the classical Tannaka--Kre{\u\i}n duality theorem.

If quantum groups correspond to tensor categories, their actions correspond to \emph{module categories}. Such structures are more elaborate, partly reflected through new phenomena such as the distinction between `homogeneous spaces' and `subgroups' already present in the case $\mfg = \mfsl_2(\C)$, see~\cite{MR919322}. While a full classification seems difficult, there is a very useful ansatz producing module categories from the theory of \emph{reflection equations}, which stems from the study of quantum scattering on the half-line by I.~Cherednik~\cite{MR774205}.

While there are many interesting developments in the relation between the reflection equation and quantum group actions (\citelist{\cite{MR1198638}\cite{arXiv:math/991141}\cite{MR1958831}\cite{MR1992884}\cite{MR2304470}\cite{MR2565052}} to list a few), the main motivation for the present work is the interplay between the theory of KZ-equation of type B on the analytic side, and quantum symmetric pairs for semisimple Lie algebras on the algebraic side.

Let $\mfg$ be a complex semisimple Lie algebra, $\theta$ an involution of $\mfg$ with spectral decomposition $\mfg = \mfg^\theta \oplus \mfp$, where $\mfg^{\theta}$ is the fixed point Lie algebra. Choose a Cartan subalgebra $\mfh$ of $\mfg$ which is stable under $\theta$ and such that $\mfh \cap \mfp$ is a maximal $\ad_\mfg$-diagonalizable abelian subspace of $\mfp$. G.~Letzter~\citelist{\cite{MR1717368}\cite{MR1742961}} realizes a quantization of $\mfg^{\theta}$ as a coideal subalgebra of $U_q(\mfg)$ built using a root system of $(\mfg, \mfh)$, depending possibly on some extra parameters.  The categorical counterpart is given by the representation category of this coideal subalgebra as a module category over $\Rep_q(\mfg)$. Recent works of M.~Balagovi\'c and S.~Kolb \citelist{\cite{arXiv:1507.06276}\cite{arXiv:1705.04238}}, making use of the formalism of the \emph{universal $K$-matrix}~\cite{MR1992884}, show that this module category is a \emph{ribbon twist-braided module category} in the sense of A.~Brochier \cite{MR3248737}. Note that the theory of (universal) $K$-matrices forms the concrete connection with the theory of reflection equations. 

On the other hand, the theory of \emph{2-cyclotomic} KZ-equations (or, KZ-equations of \emph{type B})~\citelist{\cite{MR1052280}\cite{MR1289327}\cite{MR1940926}\cite{MR2383601}} makes it possible to construct,  for an arbitrary involution $\sigma$ of $\mfg$, an action of $\Rep(\mfg)$ with the KZ-associator on $\Rep(\mfg^{\sigma})$ and a non-trivial \emph{mixed} associator. Based on the results of \cite{MR2383601} and \cite{MR2892463}, one observes that this is actually again a ribbon twist-braided module category.

\smallskip
The goal of this paper is to perform the above constructions in the presence of a C$^*$-structure, corresponding to the compact form $\mfu$ of $\mfg$, and to state a precise conjecture concerning the equivalence of the resulting ribbon twist-braided module C$^*$-categories in case $\sigma = \theta$. We also present a third picture, conjecturally equivalent to the above two, where we consider involutions $\nu$ of $\mfu$ for which the prescribed Cartan subalgebra $\mft\subseteq \mfu$ is stable and $\mft^{\nu}$ is a Cartan subalgebra of $\mfu^{\nu}$. In this situation, the involution quantizes straightforwardly to an automorphism of $U_q(\mfu)$. Using a $\nu$-twisted version of the Drinfeld double construction, we construct a $*$-algebra with coaction by $U_q(\mfu)$ such that its category of admissible $*$-representations becomes a ribbon twist-braided module C$^*$-category over $\Rep_q(\mfu)$. Here the construction of the braiding on the module C$^*$-category is a variation on the construction of a universal $K$-matrix in \cite{MR1992884}.

\smallskip
The precise structure of the paper is as follows.

In Section~\ref{sec:rib-tw-mod} we recall the formalism of ribbon twist-braided module categories \citelist{\cite{MR3248737}\cite{arXiv:1705.04238}} in the setting of non-trivial associators and non-trivial autoequivalence of the acting tensor category. In Section~\ref{sec:symm-pair-lie-alg-symm-sp} we present an overview of the main elements of the theory of symmetric pairs for compact semisimple Lie algebras that we will make use of. In Section~\ref{ref:three-const-ribb-br-mod} we present in detail the three constructions of twist-braided module C$^*$-categories associated to compact symmetric pairs that were mentioned above. We then state our main conjecture regarding the equivalence of these ribbon twist-braided module C$^*$-categories in Section \ref{sec:main-conj}. We verify the conjecture in Section~\ref{sec:rk-one-case} for the rank one case.

In the Appendix we treat in more detail the following topics. In the first part we discuss the two extremal positions for a stable Cartan subalgebra with respect to an involution on a semisimple compact Lie algebra. In the second part we investigate compatibility of the $*$-structure of $U_q(\mfu)$ with the coideal symmetric pairs of \cite{MR1717368}, complementing the discussion in \cite{MR1913438} concerning twisted $*$-invariance. In the third part we provide a link between the choice of parameters for a coideal symmetric pair, and the character theory of the associated non-parameter coideal symmetric pair. In the fourth part we discuss
a relation between ribbon braids and cylinder twists.

\smallskip
\emph{Acknowledgements}: We thank Weiqiang Wang for bringing our attention to~\cite{arXiv:1610.09271}.

\subsection*{Conventions}

We assume in the following, unless stated otherwise, that all categories are $\C$-linear and semisimple---in particular morphism spaces are finite dimensional. We also assume that all (bi)functors are $\C$-(bi)linear. We identify identity morphisms with their objects, $\id_X = X$. For $\sigma$ an endomorphism on a real vector space $V$, we will denote by the same symbol its extension to a complex-linear endomorphism of $V^{\C} = V \otimes_\R \C$.

We use the following notation for $q$-integers, $q$-factorials, and $q$-binomial coefficients:
\begin{align*}
[n]_q &= \frac{q^{-n} - q^n}{q^{-1} - q},&
[n]_q! &= [n]_q[n-1]_q \cdots [2]_q[1]_q,&
\qbin{m}{n}{q} &= \frac{[m]_q!}{[n]_q![m-n]_q!}.
\end{align*}

\section{Ribbon twist-braided module \texorpdfstring{C$^*$}{C*}-categories}
\label{sec:rib-tw-mod}

We mainly follow the conventions of~\cite{MR3204665} for tensor C$^*$-categories. Additional basic material on module categories can be found in, e.g., \citelist{\cite{MR2681261}\cite{MR3242743}}.

\begin{Def}
A \emph{tensor category} is given by:
\begin{itemize}
\item a category $\mcC$,
\item a bifunctor $\otimes\colon \mcC \times \mcC \to \mcC$,
\item a distinguished object $\un$ of $\mcC$,
\item a natural isomorphism $\Phi_{U, V, W} \colon (U \otimes V) \otimes W \to U \otimes (V \otimes W)$, and
\item natural isomorphisms $\lambda_U \colon \un \otimes U \to U$, $\rho_U \colon U \otimes \un \to U$
\end{itemize}
satisfying a standard set of axioms. We often use the symbol $\mcC$ to denote the above set of data. A \emph{tensor C$^*$-category} is a tensor category whose underlying category is a C$^*$-category and for which $\otimes$ is a $*$-bifunctor and $\Phi$, $\lambda$, and $\rho$  are \emph{unitary} natural transformations.
\end{Def}

\begin{Def}
Let $\mcC$ be a tensor category as above. A \emph{(right) module category} over $\mcC$  is given by:
\begin{itemize}
\item a category $\mcD$,
\item a bifunctor $\odot \colon \mcD  \times \mcC \rightarrow \mcD$, and
\item natural isomorphisms $r_X\colon X \odot \un \to X$ and
\[
\Psi_{X, U, V}\colon (X \odot U) \odot V \rightarrow X \odot (U \otimes V)
\]
\end{itemize}
for which the following diagrams commute (unit relations and the \emph{mixed pentagon relation}):

\begin{figure}[h]

\begin{tikzcd}[column sep=tiny]
(X\odot \un)\odot U \ar[rr, "r_X\odot U"] \ar[rd, "\Psi_{X,\un,U}"'] & & X\odot U  \\
& X\odot (\un \odot U) \ar[ru, "X\odot \lambda_U"']
\end{tikzcd}
\qquad
\begin{tikzcd}[column sep=tiny]
(X\odot U)\odot \un \ar[rr, "r_{X\odot U}"] \ar[rd, "\Psi_{X,U,\un}"'] & & X\odot U  \\
& X\odot (U \odot \un) \ar[ru, "X\odot \rho_U"']
\end{tikzcd}
\begin{tikzpicture}[commutative diagrams/every diagram]
\begin{scope}[yscale=0.7,xscale=1.2,evaluate={\radius=3;}]
\path (0,0) (0,\radius+1); 
  \node (P0) at (0,\radius) {$((X\odot U)\odot V) \odot W $};
  \node (P1) at ({\radius*cos(90+72)-1},{\radius*sin(90+72)}) {$(X\odot (U\otimes V))\odot W$};
  \node (P2) at ({\radius*cos(90+2*72)-1.3},{\radius*sin(90+2*72)}) {$X\odot ((U\otimes V)\otimes W)$};
  \node (P3) at ({\radius*cos(90+3*72)+1.3},{\radius*sin(90+3*72)}) {$X\odot ((U\otimes V)\otimes W)$};
  \node (P4) at ({\radius*cos(90+4*72)+1},{\radius*sin(90+4*72)}) {$(X\odot U) \odot (V\otimes W)$};
  \path[commutative diagrams/.cd, every arrow, every label]
    (P0) edge node[swap] {$\Psi_{X,U,V}\odot W$} (P1)
    (P1) edge node[swap,pos=0.3] {$\Psi_{X,U\otimes V,W}$} (P2)
    (P2) edge node[swap] {$X\odot \Phi_{U,V,W}$} (P3)
    (P4) edge node[pos=0.3] {$\Psi_{X,U,V\otimes W}$} (P3)
    (P0) edge node {$\Psi_{X\odot U,V,W}$} (P4);
\end{scope}
\end{tikzpicture}
\end{figure}
Again, we often denote the whole structure by $\mcD$.

When $\mcC$ is a tensor C$^*$-category, we call $\mcD$ a module C$^*$-category if $\mcD$ is a C$^*$-category, $\odot$ is a $*$-bifunctor, and $\Psi,r$ are unitary.
\end{Def}

In the following we will assume that the unit $\un$ of $\mcC$ is strict, and that it acts as a strict unit on $\mcD$, so $X \odot \un = X$ and $r_X= X$. This will be satisfied in all our examples.

\begin{Def}
A \emph{braided} tensor category is a tensor category $\mcC$ endowed with natural isomorphisms $\beta_{U, V}\colon U \otimes V \to V \otimes U$ satisfying the \emph{hexagon equation}
\[
\begin{tikzcd}[column sep=tiny]
& (V\otimes U)\otimes W  \ar[rrrrr,"\Phi_{V,U,W}"] &&&&& V\otimes (U\otimes W) \ar[dr,"V\otimes \beta_{U,W}"]&\\
(U\otimes V)\otimes W \ar[ur,"\beta_{U,V}\otimes W"] \ar[dr,"\Phi_{U,V,W}"']  &&&&&&&  V\otimes (W\otimes U)\\
& U\otimes (V\otimes W) \ar[rrrrr,"\beta_{U,V\otimes W}"']  &&&&& (V\otimes W)\otimes U \ar[ru,"\Phi_{V,W,U}"']&
\end{tikzcd}
\]
and the similar relation with $\beta_{U,V}$ replaced by $\beta_{V,U}^{-1}$. We also assume $\beta_{\un,U} = \beta_{U,\un} = U$.

A \emph{monoidal autoequivalence} of $\mcC$ consists of an equivalence $\sigma \colon \mcC \to \mcC$ and natural isomorphisms
\[
(\sigma_2)_{U, V} \colon \sigma(U) \otimes \sigma(V) \to \sigma(U \otimes V)
\]
satisfying natural compatibility conditions with respect to the associator $\Phi$. A \emph{braided autoequivalence} of $\mcC$ is a monoidal autoequivalence $\sigma$ for which the following diagram commutes:
\[
\begin{tikzcd}[column sep=large]
\sigma(U)\otimes \sigma(V) \ar[r,"\beta_{\sigma(U),\sigma(V)}"] \ar[d,"(\sigma_2)_{U,V}"']& \sigma(V)\otimes \sigma(U) \ar[d,"(\sigma_2)_{V,U}"]\\ \sigma(U\otimes V) \ar[r,"\sigma(\beta_{U,V})"] & \sigma(V\otimes U)
\end{tikzcd}
\]
\end{Def}

The following definition was introduced in \cite{MR3248737}*{Section 5}, see also \cite{arXiv:1705.04238}*{Remark 3.15}. However, we use a slightly different convention than the latter reference in the presence of a non-trivial braided autoequivalence (apart from introducing explicitly the necessary associators in the non-strict setting).

\begin{Def}
Let $\mcC$ be a braided tensor category, and $\sigma\colon\mcC\to\mcC$ a braided monoidal autoequivalence. A right module category $\mcD$ over $\mcC$ is said to be $\sigma$-\emph{braided} when $\mcD$ is equipped with a natural isomorphism $\eta_{X, U}\colon X \odot \sigma(U) \to X \odot U$, called the \emph{$\sigma$-braid} of $\mcD$, which satisfies the $\sigma$-\emph{octagon equation}
\begin{equation}\label{EqOct2}
\begin{tikzcd}[column sep=large,cramped] 
(X \odot U) \odot \sigma(V) \ar[d,"\eta_{X\odot U,V}"'] \ar[r,"\Psi_{X,U,\sigma(V)}"] & X\odot (U\otimes \sigma(V)) \ar[r,"X\odot \beta_{U,\sigma(V)}"] & X\odot (\sigma(V)\otimes U)  \ar[r,"\Psi_{X,\sigma(V),U}^{-1}"] & (X\odot \sigma(V))\odot U \ar[d,"\eta_{X,V}\odot U"] \\
(X\odot U)\odot V &  \ar[l,"\Psi_{X,U,V}^{-1}"] X\odot (U\otimes V) & \ar[l,"X\odot \beta_{V,U}"]X\odot (V\otimes U) & \ar[l,"\Psi_{X,V,U}"] (X\odot V)\odot U
\end{tikzcd}
\end{equation}
We say that $\eta$ is a \emph{ribbon $\sigma$-braid} if, moreover, the \emph{ribbon $\sigma$-twist} equation is satisfied:
\begin{equation}\label{EqRB2}
\begin{tikzcd}[column sep=5em]
X\odot \sigma(U\otimes V) \ar[r,"X\odot (\sigma_2)_{U,V}^{-1}"] \ar[d,"\eta_{X,U\otimes V}"']& X\odot (\sigma(U)\otimes \sigma(V)) \ar[r,"\Psi_{X,\sigma(U),\sigma(V)}^{-1}"] & (X\odot \sigma(U))\odot \sigma(V) \ar[d,"\eta_{X,U}\odot \sigma(V)"]\\
X\odot (U\otimes V) & \ar[l,"\Psi_{X,U,V}"] (X\odot U)\odot V & \ar[l,"\eta_{X\odot U,V}"] (X\odot U)\odot \sigma(V)
\end{tikzcd}
\end{equation}
We say that $\mcD$ is \emph{(ribbon) twist-braided} if~$\sigma$ is implicit, and that $\mcD$ is \emph{(ribbon) braided} if $\sigma$ is the identity.
\end{Def}

\begin{Rems}
\begin{enumerate}
\item If all our structures including $\sigma$ are strict, our definition will give a (ribbon) $\sigma^{-1}$-braid in the sense of \cite{arXiv:1705.04238}*{Remark 3.15}, taking $e_{M,V} = \eta_{M,\sigma^{-1}(V)}$.
\item In the C$^*$-setting, besides the condition of module C$^*$-category, we just assume that $\sigma$ is a $*$-autoequivalence and $\mcD$ is a $\mcC$-module C$^*$-category. We \emph{do not} assume that $\beta$ and $\eta$ are unitary, as this would be too restrictive.
\end{enumerate}
\end{Rems}

The following definition clarifies the notion of equivalence we will use between different (ribbon) twist-braided module C$^*$-categories.

\begin{Def}
Let $\mcC$ be a braided tensor category with braided monoidal autoequivalences $\sigma$ and $\sigma'$, and let $(\mcD,\odot,\Psi,\eta)$, resp.~$(\mcD',\odot',\Psi',\eta')$ be a $\sigma$-braided, resp.~$\sigma'$-braided module category over $\mcC$. We say that $\mcD$ and $\mcD'$ are \emph{equivalent} as twist-braided module categories, written $\mcD \cong \mcD'$, if there exist an isomorphism of monoidal functors $F\colon \sigma' \rightarrow \sigma$, and an equivalence $G\colon \mcD \rightarrow \mcD'$ of categories together with natural isomorphisms
$$
(G_2)_{X, U}\colon G(X) \odot' U \rightarrow G(X \odot U)
$$
such that the diagrams
$$
\begin{tikzpicture}[commutative diagrams/every diagram]
\begin{scope}[yscale=0.7,xscale=1.2,evaluate={\radius=3;}]
  \node (P0) at (0,\radius) {$(G(X) \odot' U)\odot' V$};
  \node (P1) at ({\radius*cos(90+72)},{\radius*sin(90+72)})         {$G(X\odot U)\odot'V$};
  \node (P2) at ({\radius*cos(90+2*72)-0.7},{\radius*sin(90+2*72)+0.1}) {$G((X\odot U)\odot V)$};
  \node (P3) at ({\radius*cos(90+3*72)+0.7},{\radius*sin(90+3*72)+0.1}) {$G(X\odot (U\otimes V))$};
  \node (P4) at ({\radius*cos(90+4*72)},{\radius*sin(90+4*72)})     {$G(X) \odot' (U\otimes V)$};
  \path[commutative diagrams/.cd, every arrow, every label]
    (P0) edge node[swap] {$(G_2)_{X,U}\odot' V$} (P1)
    (P1) edge node[swap,pos=0.3] {$(G_2)_{X\odot U,V}$} (P2)
    (P2) edge node[swap] {$G(\Psi_{X,U,V})$} (P3)
    (P4) edge node[pos=0.3] {$(G_2)_{X,U\otimes V}$} (P3)
    (P0) edge node {$\Psi'_{G(X),U,V}$} (P4);
\end{scope}
\begin{scope}[shift={(11.5,0)},yscale=0.7,xscale=1.2,evaluate={\radius=3;}]
  \node (P0) at (0,\radius) {$G(X) \odot' \sigma'(U)$};
  \node (P1) at ({\radius*cos(90+72)},{\radius*sin(90+72)})         {$G(X)\odot' \sigma(U)$};
  \node (P2) at ({\radius*cos(90+2*72)-0.7},{\radius*sin(90+2*72)+0.1}) {$G(X\odot \sigma(U))$};
  \node (P3) at ({\radius*cos(90+3*72)+0.7},{\radius*sin(90+3*72)+0.1}) {$G(X\odot U)$};
  \node (P4) at ({\radius*cos(90+4*72)},{\radius*sin(90+4*72)})     {$G(X) \odot' U$};
  \path[commutative diagrams/.cd, every arrow, every label]
    (P0) edge node[swap] {$G(X) \odot' F_U$} (P1)
    (P1) edge node[swap,pos=0.3] {$(G_2)_{X,\sigma(U)}$} (P2)
    (P2) edge node[swap] {$G(\eta_{X,U})$} (P3)
    (P4) edge node[pos=0.3] {$(G_2)_{X,U}$} (P3)
    (P0) edge node {$\eta'_{X,U}$} (P4);
\end{scope}
\end{tikzpicture}
$$
commute. In the C$^*$-setting we ask that $G$ is a $*$-functor, and that $F$ and $G_2$ are unitary.
\end{Def}
Note that $F$ is uniquely determined if it exists.

For completeness, let us briefly recall the graphical presentation of (ribbon) twist-braids, following~\cite{MR3248737}. We represent composition of morphisms by vertical stacking, $\xi \eta$ being represented by the part for $\xi$ on top of that for $\eta$. The monoidal product is represented by horizontal juxtaposition. As usual, we represent the braiding of $\mcC$ by the tangle \ref{fig:br}. A twist-braid can be represented by the diagram \ref{fig:tw-br}, so that the twist octagon relation can be presented as \ref{fig:tw-oct-rel} (suppressing the module category associator $\Psi$).
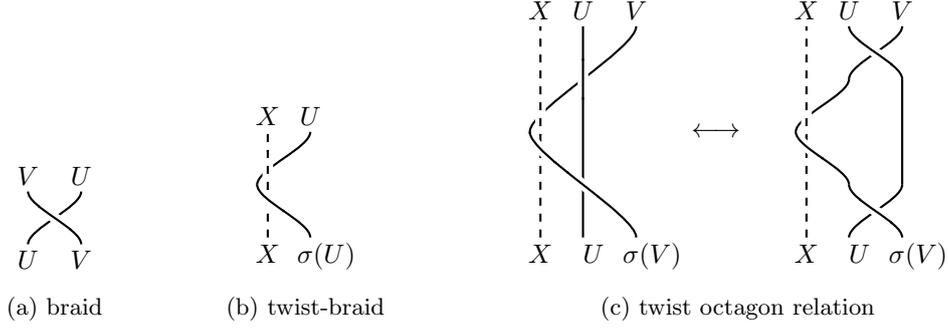
\begin{figure}[h]
\begin{subfigure}[b]{.2\linewidth}
\centering
\leavevmode
\begin{tikzpicture}
\begin{knot}[
]
\braidgen{0}{0}
\node at (0,-0.3) {$U$};
\node at (1,-0.3) {$V$};
\node at (1,1.3) {$U$};
\node at (0,1.3) {$V$};
\flipcrossings{1}
\end{knot}
\end{tikzpicture}
\caption{braid}
\label{fig:br}
\end{subfigure}
\begin{subfigure}[b]{.2\linewidth}
\centering
\leavevmode
\begin{tikzpicture}
\begin{knot}[
]
\stwbr{0}
\strand[only when rendering/.style={dashed}] (0.2,0) -- (0.2,2);
\node at (0.2,-0.3) {$X$};
\node at (1.3,-0.35) {$\sigma(U)$};
\node at (0.2,2.3) {$X$};
\node at (1,2.3) {$U$};
\flipcrossings{1}
\end{knot}
\end{tikzpicture}
\caption{twist-braid}
\label{fig:tw-br}
\end{subfigure}
\begin{subfigure}[b]{.5\linewidth}
\centering
\leavevmode
\begin{tikzpicture}
\begin{knot}[
]
\ctwbr{2}{0}
\strand[only when rendering/.style={dashed}] (0.2,0) -- (0.2,4);
\strand[] (1,0) -- (1,4);
\node at (0.2,-0.3) {$X$};
\node at (1.2, -0.3) {$U$};
\node at (2.3,-0.35) {$\sigma(V)$};
\node at (0.2,4.3) {$X$};
\node at (1,4.3) {$U$};
\node at (2,4.3) {$V$};
\flipcrossings{1,2}
\end{knot}
\draw node at (3.5,2) {$\longleftrightarrow$};
\begin{scope}[shift={(5,0)}]
\begin{knot}[
]
\braidgen{1}{0}
\stwbr{1}
\braidgen{1}{3}
\strand[] (2,1) -- (2,3);
\strand[only when rendering/.style={dashed}] (0.2,0) -- (0.2,4);
\node at (0.2,-0.3) {$X$};
\node at (1.2, -0.3) {$U$};
\node at (2.3,-0.35) {$\sigma(V)$};
\node at (0.2,4.3) {$X$};
\node at (1,4.3) {$U$};
\node at (2,4.3) {$V$};
\flipcrossings{1,2,4}
\end{knot}
\end{scope}
\end{tikzpicture}
\caption{twist octagon relation}
\label{fig:tw-oct-rel}
\end{subfigure}
\caption{twist-braids}
\end{figure}

On the other hand, ribbon twist-braids need to be represented by \ref{fig:rib-tw-br}, so that regular isotopy of strands in \ref{fig:rib-tw-eq} represents the ribbon twist equation.\\
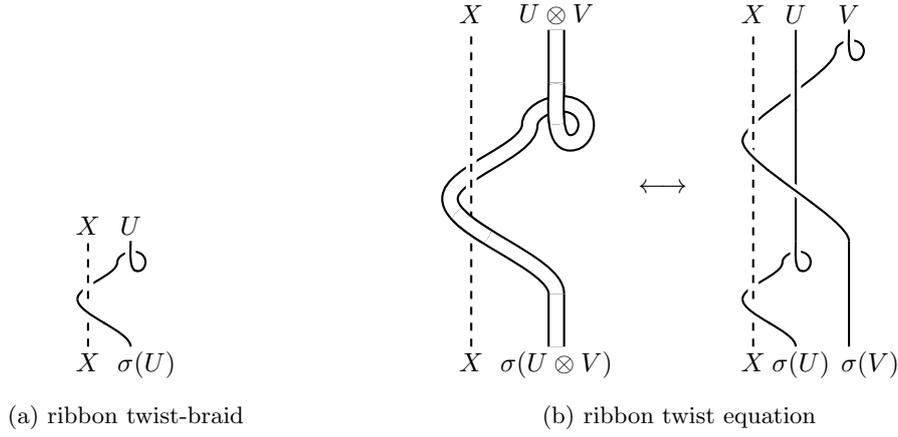
\begin{figure}[h]
\begin{subfigure}[b]{.35\linewidth}
\centering
\leavevmode
\begin{tikzpicture}
\begin{knot}[
]
\sribtwbr{0}
\strand[only when rendering/.style={dashed}] (0.2,0) -- (0.2,2);
\node at (0.2,-0.3) {$X$};
\node at (0.2,2.3) {$X$};
\node at (1.3,-0.35) {$\sigma(U)$};
\node at (1,2.3) {$U$};
\flipcrossings{2}
\end{knot}
\end{tikzpicture}
\caption{ribbon twist-braid}
\label{fig:rib-tw-br}
\end{subfigure}
\begin{subfigure}[b]{.55\linewidth}
\centering
\leavevmode
\begin{tikzpicture}
\begin{scope}[scale=2]
\begin{knot}[
]
\sribtwbr[only when rendering/.style={
    double distance=5pt,
  }]{0.5}
\strand[only when rendering/.style={
    double distance=5pt,
  }] (1,0) -- (1,0.5);
\strand[only when rendering/.style={
    double distance=5pt,
  }] (1,2.5) -- (1,3);
\strand[only when rendering/.style={dashed}] (0.2,0) -- (0.2,3);
\flipcrossings{2}
\end{knot}
\end{scope}
\node at (0.4,-0.3) {$X$};
\node at (0.4,6.3) {$X$};
\node at (2,-0.35) {$\sigma(U \otimes V)$};
\node at (2,6.3) {$U \otimes V$};
\node at (4,3) {$\longleftrightarrow$};
\begin{scope}[shift={(5.5,0)}]
\begin{knot}[
]
\sribtwbr{0} \strand[] (2,0) -- (2,2);
\cribtwbr{2}{2}  \strand[] (1,2) -- (1,6);
\strand[only when rendering/.style={dashed}] (0.2,0) -- (0.2,6);
\node at (0.2,-0.3) {$X$};
\node at (0.2,6.3) {$X$};
\node at (1.1,-0.35) {$\sigma(U)$};
\node at (1,6.3) {$U$};
\node at (2.4,-0.35) {$\sigma(V)$};
\node at (2,6.3) {$V$};
\flipcrossings{2,5,6}
\end{knot}
\end{scope}
\end{tikzpicture}
\caption{ribbon twist equation}
\label{fig:rib-tw-eq}
\end{subfigure}
\caption{ribbon twist-braids}
\end{figure}

\begin{Rem}
The extra twist can be gotten rid of by slightly changing the axiomatics of the ribbon twist-braids. This change of axiomatics does not affect anything as long as the acting tensor category has a ribbon twist, see \cite{arXiv:1606.04769}*{Remark~3.6}.
\end{Rem}

One can obtain module categories from algebraic structures in the following way. We momentarily neglect the semisimplicity condition on our categories.

\begin{Def}[\citelist{\cite{MR1047964}\cite{MR1321145}*{Chapter XV}}]
A (unital and counital) \emph{quasi-bialgebra} is given by
\begin{itemize}
\item a unital algebra $A$,
\item unital algebra homomorphisms $\Delta \colon A \to A \otimes A$ and $\varepsilon \colon A \to \C$, and
\item an invertible element $\phi \in A \otimes A \otimes A$
\end{itemize}
satisfying
\begin{gather*}
\begin{aligned}
(\id \otimes \Delta) \Delta (a) &= \phi ((\Delta \otimes \id) \Delta(a)) \phi^{-1},&
(\id \otimes \id \otimes \Delta)(\phi) (\Delta \otimes \id \otimes \id)(\phi) &= \phi_{234} (\id \otimes \Delta \otimes \id)(\phi) \phi_{123},
\end{aligned}\\
(\varepsilon \otimes \id) \Delta(a) = a = (\id \otimes \varepsilon) \Delta (a),\quad (\id \otimes \varepsilon \otimes \id)(\phi) = 1 \otimes 1.
\end{gather*}
\end{Def}

From the above conditions we also have $(\id \otimes \id \otimes \varepsilon)(\phi) = 1 = (\varepsilon \otimes \id \otimes \id)(\phi)$. The category $\Mod{A}$ of left $A$-modules becomes a tensor category as follows. When $U, V, W$ are $A$-modules, $U \otimes V$ is the usual tensor product over $\C$ with the left $A$-module structure $a\cdot (u\otimes v) = \Delta(a)(u\otimes v)$, and the associator is given by
\[
\Phi_{U,V,W}(u\otimes v\otimes w) = \phi (u\otimes v\otimes w).
\]
In this context the rigidity of $\Mod{A}$ can be encoded by a generalization of antipode, and quasi-bialgebras with such antipodes are called \emph{quasi-Hopf algebras}.

\begin{Def}
Let $A$ be a quasi-bialgebra, and $B$ a unital algebra. A \emph{(normalized right) quasi-coaction} of $A$ on $B$ is given by a unital homomorphism $\alpha\colon B \rightarrow B \otimes A$ and an invertible element $\psi \in B \otimes A \otimes A$ satisfying
\begin{gather*}
(\id\otimes \varepsilon\otimes \id)(\psi) = (\id\otimes \id\otimes \varepsilon)(\psi) = 1\otimes 1,\\
(1\otimes \phi)(\id\otimes \Delta\otimes \id)(\psi)(\psi\otimes 1) = (\id\otimes \id\otimes \Delta)(\psi)(\alpha\otimes \id\otimes \id)(\psi),\\
(\id\otimes \Delta)\alpha(b) = \psi (\alpha\otimes \id)\alpha(b) \psi^{-1}, \quad (\id\otimes \varepsilon)\alpha(b) = b.
\end{gather*}
\end{Def}

The following lemma is immediate.

\begin{Lem}
In the above setting, $\Mod{B}$ becomes an $(\Mod{A})$-module category as follows. When $X$ is a $B$-module and $U, V$ are $A$-modules, $X \otimes U$ is a $B$-module by $b\cdot (x\otimes u) = \alpha(b)(x\otimes u)$, and the associator is given by
\[
\Psi_{X,U,V}(x\otimes v\otimes w) = \psi (x\otimes u\otimes v).
\]
\end{Lem}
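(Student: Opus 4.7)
The plan is to verify each module-category axiom as a direct translation of one of the defining properties of a normalized right quasi-coaction.

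First I would check that the bifunctor $\odot$ is well defined: since $\alpha$ is a unital algebra homomorphism, the formula $b\cdot(x\otimes u) = \alpha(b)(x\otimes u)$ makes $X\otimes U$ into a $B$-module, and functoriality in $X$ and $U$ is automatic. Next I would verify that $\Psi_{X,U,V}\colon (X\odot U)\odot V \to X\odot (U\otimes V)$, defined as left multiplication by $\psi$, is a $B$-linear isomorphism. On $(X\odot U)\odot V$ the element $b\in B$ acts by $(\alpha\otimes\id)\alpha(b)$, while on $X\odot(U\otimes V)$, where $U\otimes V$ carries the $A$-action through $\Delta$, it acts by $(\id\otimes\Delta)\alpha(b)$. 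Hence $B$-linearity of $\xi\mapsto\psi\xi$ is equivalent to the twisted coassociativity $(\id\otimes\Delta)\alpha(b)\,\psi = \psi\,(\alpha\otimes\id)\alpha(b)$, which is exactly one of the defining axioms; invertibility follows from invertibility of $\psi$, and naturality in all three variables is automatic since $\psi$ is a fixed element.

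The main step is the mixed pentagon. Tracing the top-right composition $(X\odot \Phi_{U,V,W})\circ\Psi_{X,U\otimes V,W}\circ(\Psi_{X,U,V}\odot W)$ on $x\otimes u\otimes v\otimes w$ yields left multiplication by $(1\otimes\phi)(\id\otimes\Delta\otimes\id)(\psi)(\psi\otimes 1)$, where the factor $(\id\otimes\Delta\otimes\id)(\psi)$ arises because the middle $A$-slot of $\psi$ acts on the tensor $A$-module $U\otimes V$ through $\Delta$, and $(1\otimes\phi)$ comes from $\Phi_{U,V,W}$. The bottom-left composition $\Psi_{X,U,V\otimes W}\circ\Psi_{X\odot U,V,W}$ yields left multiplication by $(\id\otimes\id\otimes\Delta)(\psi)(\alpha\otimes\id\otimes\id)(\psi)$, where $(\alpha\otimes\id\otimes\id)(\psi)$ arises because the $B$-slot of $\psi$ acts on $X\odot U$ through $\alpha$. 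These two expressions agree by the 3-cocycle axiom for $\psi$.

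Finally, the two unit triangles reduce, under the strict-unit convention $X\odot\un=X$, to the identities $(\id\otimes\varepsilon\otimes\id)(\psi) = (\id\otimes\id\otimes\varepsilon)(\psi) = 1\otimes 1$, which make $\Psi_{X,\un,U}$ and $\Psi_{X,U,\un}$ act as the identity on the underlying vector spaces. I do not expect any real obstacle here: each module-category axiom matches one clause in the definition of a quasi-coaction in a one-to-one manner, which is why the author describes the lemma as immediate. The only place where some care is required is the bookkeeping of which tensor slot of $\psi$ acts through $\alpha$, $\Delta$, $\varepsilon$, or multiplication by $\phi$ when unwinding the two sides of the pentagon.
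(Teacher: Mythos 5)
Your verification is correct and is exactly the intended argument: the paper simply declares the lemma immediate, and your axiom-by-axiom matching (twisted coassociativity $\leftrightarrow$ $B$-linearity of $\Psi$, the $\psi$-cocycle identity $\leftrightarrow$ the mixed pentagon, the counit normalizations of $\psi$ and of $\alpha$ $\leftrightarrow$ the unit constraints) is precisely the bookkeeping being suppressed. No gaps.
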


It is also easy to see that this way we get a one-to-one correspondence between module category structures on $\Mod{B}$ with strict unit conditions and quasi-coactions on $B$.

Additional braided structure can be implemented analogously. Recall that a \emph{quasi-triangular quasi-bialgebra} is given by a quasi-bialgebra $A$ together with an invertible element $\msR\in A\otimes A$ satisfying
\begin{align*}
\msR\Delta(a)\msR^{-1} &= \Delta(a)_{21},&
(\Delta\otimes \id)(\msR) &= \phi_{312}\msR_{13}\phi_{132}^{-1}\msR_{23}\phi,&
(\id\otimes \Delta)(\msR) &= \phi_{231}^{-1}\msR_{13}\phi_{213}\msR_{12}\phi^{-1}.
\end{align*}
Then $\Mod{A}$ becomes a braided tensor category with
$$
\beta_{U,V}(u \otimes v) = \Sigma \msR (u \otimes v),
$$
where $\Sigma\colon U \otimes V \to V \otimes U$ is the flip map $u\otimes v \mapsto v\otimes u$. Let $\sigma$ be an automorphism of $(A,\Delta,\phi,\msR)$, so in particular
\begin{align*}
(\sigma \otimes \sigma \otimes \sigma)(\phi) &= \phi,& (\sigma \otimes \sigma)(\msR) &= \msR.
\end{align*}
It then defines a (strict) braided autoequivalence of $\Mod{A}$, which we denote by the same symbol $\sigma$: if $U$ is a left $A$-module, then $\sigma(U)=U$ as a vector space, while the module structure is given by $a \cdot_{\sigma} x=\sigma(a) x$.

\begin{Def}
Let $(B,\alpha,\psi)$ be a quasi-coaction for a quasi-triangular quasi-bialgebra $(A,\Delta,\phi,\msR)$. A \emph{$\sigma$-braid} for $B$ is an invertible element $\msE\in B\otimes A$ such that $\msE(\id\otimes\sigma)\alpha(b)=\alpha(b)\msE$ for all $b\in B$ and $\msE$ satisfies the $\sigma$-octagon relation
$$
(\alpha\otimes \id)(\msE) = \psi^{-1} \msR_{21} \psi_{021} \msE_{02}(\id\otimes\id\otimes\sigma)(\psi_{021}^{-1} \msR_{12}\psi).
$$
We say that $\msE$ is a \emph{ribbon $\sigma$-braid} if moreover the ribbon $\sigma$-braid equation holds:
$$
(\id\otimes \Delta)(\msE) = \msR_{21} \psi_{021} \msE_{02} (\id\otimes\id\otimes\sigma)(\psi_{021}^{-1} \msR_{12} \psi) \msE_{01}(\id\otimes\sigma\otimes\sigma)(\psi^{-1}).
$$
\end{Def}

Note that in the above equations $B$ is considered at place $0$.

The following lemma is now immediate.

\begin{Lem}
Let $\msE$ be a $\sigma$-braid (resp.~ribbon $\sigma$-braid) for $(B,\alpha,\psi)$. Then $\Mod{B}$ becomes a $\sigma$-braided (resp.~ribbon $\sigma$-braided) $\Mod{A}$-module category by
$$
\eta_{X,U}(x \otimes u) =\msE(x \otimes u).
$$
\end{Lem}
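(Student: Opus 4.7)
The plan is a direct verification of the defining diagrams of a (ribbon) $\sigma$-braided module category, translating each arrow into an operator on the underlying vector space.

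First, I would check that $\eta_{X,U}$ is a well-defined morphism $X \odot \sigma(U) \to X \odot U$ in $\Mod{B}$. The source carries the $B$-action $b \mapsto (\id\otimes\sigma)\alpha(b)$ and the target carries $b \mapsto \alpha(b)$, so $B$-linearity of $\eta_{X,U}$ is exactly the intertwining property $\msE (\id\otimes\sigma)\alpha(b) = \alpha(b) \msE$ that was built into the definition of a $\sigma$-braid. Naturality in $X$ and $U$ is automatic, because $\eta_{X,U}$ is given by left multiplication by a single element $\msE \in B \otimes A$ independent of $X$ and $U$.

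Next, I would unwrap the $\sigma$-octagon~\eqref{EqOct2} as an identity of operators on $X \otimes U \otimes V$. The associators $\Psi^{\pm 1}$ correspond to left multiplication by $\psi^{\pm 1}$; the braiding $\beta_{U,W}$ is the flip $\Sigma$ composed with left multiplication by $\msR$; and replacing $W$ by $\sigma(W)$ only changes the $A$-action on the relevant tensor factor by precomposition with $\sigma$. After pushing the two flips to the outside of each path and matching up the positions of the legs, the clockwise and counter-clockwise compositions around the octagon are seen to agree precisely when
\[
(\alpha\otimes\id)(\msE) = \psi^{-1}\msR_{21}\psi_{021}\msE_{02}(\id\otimes\id\otimes\sigma)(\psi_{021}^{-1}\msR_{12}\psi),
\]
which is the hypothesis.

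For the ribbon case I would similarly unwrap~\eqref{EqRB2}. The arrow $X \odot (\sigma_2)_{U,V}^{-1}$ is the identity, since $\sigma$ comes from an honest algebra automorphism of $A$ and is therefore strict as a monoidal autoequivalence of $\Mod{A}$. The two twist-braid arrows contribute factors $\msE_{01}$ and $\msE_{02}$, the associators contribute $\psi^{\pm 1}$ in various positions (with $\sigma$-twists landing on the legs corresponding to $\sigma(U)$ or $\sigma(V)$), and equality of the two paths becomes exactly the ribbon $\sigma$-braid equation imposed on $\msE$. The main obstacle is purely bookkeeping: tracking the flips $\Sigma$ coming from $\beta$ and the $\sigma$-twists which appear on each factor bearing a $\sigma(\cdot)$, while using that $(\sigma\otimes\sigma)(\msR) = \msR$ and that $(\sigma\otimes\sigma\otimes\sigma)(\phi) = \phi$ to move $\sigma$'s past the braiding and associator. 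Once this accounting is in place, the categorical axioms reduce letter-for-letter to the algebraic ones, so the lemma follows.
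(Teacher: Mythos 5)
Your proposal is correct and matches the paper's (implicit) argument: the paper gives no proof, declaring the lemma "immediate," and what it means by that is exactly the direct translation you carry out — $B$-linearity of $\eta_{X,U}$ is the intertwining condition on $\msE$, naturality is automatic for multiplication by a fixed element of $B\otimes A$, and the octagon and ribbon diagrams unwrap (with $(\sigma_2)^{-1}=\id$ and the $\sigma$-twists landing on the legs carrying $\sigma(\cdot)$) precisely to the two displayed identities for $\msE$.
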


\begin{Rem}\label{rem:sigma-change}
Assume that $(B,\alpha,\psi)$ is a quasi-coaction for a quasi-triangular quasi-bialgebra $(A,\Delta,\phi,R)$, $\msE$ is a (ribbon) $\sigma$-braid and $g\in A$ is a group-like element. Then $\msE(1\otimes g^{-1})$ is a (ribbon) $(\Ad g)\sigma$-braid, and the associated twist-braided module categories are equivalent. Therefore, if $\sigma$ is inner and implemented by a group-like element, we can pass from the $\sigma$-braided setting to the usual one. It follows that by replacing $A$ by the crossed product $A\rtimes_\sigma\Z$ we can always get (ribbon) braids from (ribbon) $\sigma$-braids. Note also that if $\sigma$ is of finite order $n$, then instead of $\Z$ we can take $\Z/n\Z$.
\end{Rem}

In the setting we will be interested in, the elements $\phi,\psi,\msR,\msE$ will live in a completed tensor product. More precisely, assume that we have a dense unital $\Delta$-preserving embedding $A \subseteq M(\msA)$, where $\msA=(\msA,\Delta)$ is a discrete multiplier quasi-bialgebra \cite{MR2832264}, so as an algebra
\[
\msA \cong \bigoplus_{\lambda \in \Lambda} \End(V_{\lambda})
\]
for some index set $\Lambda$ and finite-dimensional vector spaces $(V_{\lambda})_{\lambda \in \Lambda}$. Here $M(\msA)$ is the \emph{multiplier algebra} of the (non-unital) algebra $\msA$, which we can identify with
\[
M(\msA) \cong \prod_{\lambda \in \Lambda} \End(V_{\lambda}).
\]
The embedding of $A \subseteq M(\msA)$ is \emph{dense} in the sense that $xAy = x\msA y$ for all elements $x,y$ in $\msA$. For $B$ a unital algebra, we write
\[
B \hotimes A \hotimes \ldots \hotimes A = M(B\otimes \msA \otimes \ldots \otimes \msA) = \prod_{\lambda,\ldots,\mu} B\otimes \End(V_{\lambda}\otimes \ldots \otimes V_{\mu}),
\]
and we make use of the natural inclusions of the form
\[
A\otimes \cdots \otimes A \subseteq A \hotimes \cdots \hotimes A.
\]

We then call \emph{generalized quasi-coaction} of $A$ on $B$ any homomorphism $\alpha\colon B \rightarrow B\hotimes A$ together with an invertible element $\psi \in B \hotimes A \hotimes A$ satisfying
\[
\psi (\alpha\otimes \id)\alpha(b)\psi^{-1} = (\id\otimes \Delta)\alpha(b),
\]
interpreting the identity in $B \hotimes A \hotimes A$. Similarly the notion of (ribbon) $\sigma$-braid can be modified by only requiring $\msE \in B \hotimes A$.

\section{Symmetric pairs of Lie algebras and symmetric spaces}
\label{sec:symm-pair-lie-alg-symm-sp}

In this section we recall some elements of the theory of symmetric pairs and symmetric spaces.

\subsection{Symmetric pairs and symmetric spaces}

By a \emph{Lie $*$-algebra} we mean a complex Lie algebra $\mfg$ endowed with an involutive conjugate-linear anti-homomorphism $*\colon \mfg \to \mfg$. Let $\mfg_0$ be a real Lie algebra with complexification $\mfg = \mfg_0^{\C} = \mfg_0 \otimes_{\R}\C$. We can recover $\mfg_0$ from $\mfg$ as $\mfg_0 = \{x\in \mfg\mid x^* =-x\}$ if we endow $\mfg$ with the Lie $*$-algebra structure
\[
*\colon \mfg \rightarrow \mfg,\quad x+i y \mapsto (x+iy)^* = -x + i y,\qquad (x,y\in \mfg_0).
\]
Every Lie $*$-algebra $(\mfg,*)$ is obtained from a real Lie algebra $\mfg_0$ as above, and we call $\mfg_0\subseteq \mfg$ the associated \emph{real form} of $(\mfg,*)$. In particular, when $\mfg$ is semisimple with the Killing form $B(x,y) = \Tr(\ad_\mfg(x)\ad_\mfg(y))$, we reserve the notation $*$ for the $*$-structure defined by a fixed compact real form $\mfu \subseteq \mfg$. Note that the associated sesqui-linear form $\langle x,y \rangle = B(x,y^*)$ on $\mfg$ will be positive definite.

In the following we fix a compact semisimple Lie algebra $\mfu$.
\begin{Def}
Let $\mfk \subsetneq \mfu$ be a proper real Lie subalgebra. The pair $\mfk \subseteq \mfu$ is called a \emph{compact symmetric pair} if there exists  an involutive Lie algebra automorphism $\sigma$ of $\mfu$ such that $\mfk = \mfu^{\sigma}$, the fixed point Lie algebra of $\sigma$.
\end{Def}
Note that the nontrivial involutive automorphisms of $\mfu$ are in one-to-one correspondence with the $*$-preserving Lie algebra involutions\footnote{By involution we will always mean a \emph{proper} involution, i.e., not the identity.} of $\mfg = \mfu^{\C}$. Since the Killing form is $\sigma$-invariant, $\sigma$ is unitary with respect to the Hermitian scalar product on $\mfg$, and it follows that $\sigma$ is uniquely determined by $\mfk$.

Let $U$ be a simply connected compact Lie group (unique up to isomorphism) integrating~$\mfu$. Let $\sigma$ be an involution on $\mfu$. Then $\sigma$ integrates uniquely to an involutive Lie group automorphism of $U$, again denoted by $\sigma$. Let $K = U^{\sigma}$ be the compact Lie subgroup of $\sigma$-fixed points. We have that $K$ is connected \cite{MR1834454}*{Theorem VII.8.2}, with Lie algebra $\mfk$.

\begin{Def}
With $K\subseteq U$ as above, we call $U/K$ the \emph{standard symmetric space} associated to $\sigma$.
\end{Def}

We will in the following always consider $U/K$ as a $U$-space by the natural action $U \curvearrowright U/K$.
By isomorphisms of symmetric spaces we mean $U$-equivariant homeomorphisms.

\begin{Def}
Two involutions $\sigma_1,\sigma_2$ of $\mfu$, or the corresponding symmetric pairs, are called \emph{equivalent} if there exists an automorphism of $\mfu$ carrying $\mfk_1 = \mfu^{\sigma_1}$ to $\mfk_2 = \mfu^{\sigma_2}$. We call $\sigma_1$ and $\sigma_2$ \emph{inner equivalent} if we can choose the automorphism of the form $\Ad(u)$ for some $u\in U$.
\end{Def}

It is clear that $\sigma_1,\sigma_2$ are inner equivalent if and only if $U/K_1$ and $U/K_2$ are isomorphic as $U$-spaces.

There are two standard ways to classify involutions of $\mfu$: one is by means of \emph{Satake diagrams}, the other by means of \emph{Vogan diagrams}. In the following, we will recall the main ingredients of these classifications. Both classifications depend explicitly on the choice of a fixed Cartan subalgebra $\mft \subseteq \mfu$ together with a positive root system $\Delta^+ \subseteq \mfh^*$, where $\mfh = \mft^{\C}\subseteq \mfg$. We write $W$ for the Weyl group and $I =\{\alpha_1,\ldots,\alpha_l\} \subseteq \Delta^+$ for the set of simple positive roots. When considering the associated Dynkin diagrams, we explicitly take $I$ as the set of vertices. Moreover, we fix as well Chevalley generators $\{e_r,f_r,h_r\mid r\in I\}$ for $\mfg$ such that, under the $*$-structure on $\mfg$ defining~$\mfu$, one has $e_r^* = f_r$ and $h_r^* =h_r$. In this case we say that the generators are \emph{compatible with the compact form}~$\mfu$ of~$\mfg$, or that they are \emph{$*$-compatible}. Note that with such a choice of generators the Chevalley automorphism
\begin{align*}
\omega(h_r)&=-h_r,&
\omega(e_r)&=-f_r,&
\omega(f_r)&=-e_r.
\end{align*}
is a $*$-automorphism of $\mfg$.

Denote by $T\subset U$ the maximal torus defined by $\mft$.  For $\tau$ an automorphism of the Dynkin diagram, we denote as well by $\tau$ the corresponding $*$-preserving automorphism of $\mfg$ determined by
\begin{align*}
h_r &\mapsto h_{\tau(r)},&
e_r &\mapsto e_{\tau(r)},&
f_r &\mapsto f_{\tau(r)}.
\end{align*}
For $X\subseteq I$, let $W_X$ be the subgroup of $W$ generated by the simple reflections $s_r$ corresponding to the roots $\alpha_r$, $r\in X$. We also view the $s_r$ as automorphisms of $\mfh$ by duality, and we denote the canonical lift of the longest element~$w_X$ to the normalizer $N_U(T)$ by~$m_X$,~so
\[
m_X =m_{r_1}\ldots m_{r_k},\quad m_r = \exp(e_r)\exp(-f_r)\exp(e_r),
\]
if $w_X = s_{r_1}\ldots s_{r_k}$, as we have $s_r = \Ad(m_r)$.

\begin{Def}
We say that the involution $\theta$ is in \emph{Satake form} (with respect to the chosen Chevalley presentation) if there exist an involutive automorphism $\tau$ of the Dynkin diagram, a globally $\tau$-invariant subset $X\subseteq I$ and unimodular numbers $(z_r)_{r \in I}$ such that
\begin{align}
\label{eq:theta-formula-rel-to-zr-and-mx}
\theta(h_r) &= -(\Ad m_X)(h_{\tau(r)}),&
\theta(e_r) &= -\bar z_{\tau(r)}(\Ad m_X)(f_{\tau(r)}),&
\theta(f_r) &= -z_{\tau(r)}(\Ad m_X)(e_{\tau(r)}),
\end{align}
where we assume that the following conditions are satisfied:
\begin{gather}
\text{the action of $\tau$ on $X$ coincides with the action of $-w_X$};\label{cond:admis1}\\
z_r=1\text{ for }r\in X.\label{cond:z}
\end{gather}
\end{Def}

In other words, we can write
$$
\theta=(\Ad z)\circ(\Ad m_X)\circ\tau\circ\omega,
$$
where $z\in T$ denotes any element such that $z(\alpha_r)=z_r$. If one defines a map $\theta$ by the formula \eqref{eq:theta-formula-rel-to-zr-and-mx}, $\theta^2=\id$ is equivalent to
\begin{equation}\label{eq:z2}
z_r\bar z_{\tau(r)}=(-1)^{2(\alpha_r,\rho_X^\vee)},
\end{equation}
where $\rho_X^\vee$ is half the sum of the positive coroots of the root system generated by $X$, see~\cite{MR3269184}*{Section~2.4}. In particular, we must have
\begin{equation}\label{cond_admis2}
(\alpha_r,\rho_X^\vee)\in\Z\text{ if }\tau(r)=r.
\end{equation}

\begin{Def}[\cite{MR3269184}*{Definition 2.3}]
A pair $(X,\tau)$, consisting of an involutive automorphism $\tau$ of the Dynkin diagram and a $\tau$-invariant subset $X\subsetneq I$ satisfying conditions~\eqref{cond:admis1} and~\eqref{cond_admis2} is called \emph{admissible}.\footnote{In \cite{MR3269184} condition \eqref{cond_admis2} is only stated for $i\in I\setminus X$, but for $i\in X$ one automatically has $(\rho_X^\vee,\alpha_r) = 1$. We also exclude the case $X = I$ as it corresponds to the identity automorphism.}
\end{Def}

From the above discussion, an involution in Satake form gives an admissible pair. Conversely, starting with an admissible pair we can choose $*$-compatible Chevalley generators $(e_r, f_r, h_r)_{r \in I}$ and numbers $z_r \in \T$ satisfying conditions~\eqref{cond:z} and~\eqref{eq:z2}, and define an involution in Satake form by \eqref{eq:theta-formula-rel-to-zr-and-mx}. How exactly we choose $z_r$ is not important, since the involutions corresponding to different choices are conjugate by inner automorphisms defined by elements of $T$, or equivalently, we can obtain Satake forms of the same involution with different scalars $z_r$ by rescaling the generators $e_r$ and $f_r$. In particular, in the choice of $z_r$, besides satisfying~\eqref{cond:z} and~\eqref{eq:z2}, we can always set $z_r=1$ for all $r$ fixed by $\tau$. For every admissible pair $(X,\tau)$ we make such a choice and denote the corresponding element $z$ by $s(X,\tau)$ and the involution $(\Ad s(X,\tau))\circ(\Ad m_X)\circ\tau\circ\omega$ by $\theta(X,\tau)$.

Note that an involution in Satake form preserves in particular the Cartan subalgebra $\mfh$. Let $\Theta\colon \mfh^* \rightarrow \mfh^*$ define the involutive transformation dual to $\theta_{\mid \mfh}$. The \emph{Satake diagram} of $\theta$ is obtained from the Dynkin diagram as follows:
\begin{itemize}
\item the vertices corresponding to the simple roots in $X$ are painted black, and
\item two distinct simple roots~$\alpha_r$ and~$\alpha_s$ in $I\setminus X$ such that $\Theta(\alpha_r)+\alpha_s\in\Z X$ are joined by an arrow.
\end{itemize}
Repeatedly using $\beta - s_{r} \beta = (\alpha_r^\vee, \beta) \alpha_r$, we obtain $\beta - w_X(\beta) \in \Z X$ for any $\beta$ in the root lattice $Q$. Combining this for $\beta = \Theta(\alpha_r)$ with the identity $-(w_X\circ\Theta)(\alpha_r)=\alpha_{\tau(r)}$, we see that the vertices $r$ and $s$ are joined by an arrow if and only if $r=\tau(s)$. Therefore the admissible pairs and the Satake diagrams contain literally the same information. Hence we will use these terminologies interchangeably.

Modulo the explicit statement about $*$-compatibility, the following result is the standard classification of symmetric pairs in terms of Satake diagrams as formulated, e.g., in~\cite{MR3269184}*{Theorem~2.7}. For the benefit of the reader, we provide some further information on its proof in Appendix \ref{Ap1}.

\begin{Theorem}\label{TheoSat}
Let $\mfu$ be a compact semisimple Lie algebra with fixed $*$-compatible Chevalley generators in~$\mfg=\mfu^\C$. If $\sigma$ is an involution of $\mfu$, there exists a Satake diagram $(X,\tau)$ such that $\sigma$ is inner equivalent with $\theta(X,\tau)$ for some, and hence any, choice of $s(X,\tau)$. Furthermore, up to an automorphism of the Dynkin diagram, $(X,\tau)$ depends only on the equivalence class of $\sigma$.
\end{Theorem}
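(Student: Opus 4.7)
The argument proceeds in three stages: (i) reduce to the case that $\sigma$ preserves the prescribed Cartan $\mft$; (ii) extract the combinatorial data $(X,\tau)$ from the action of $\theta:=\sigma^{\C}$ on the root system; (iii) normalize the Chevalley generators to recover the explicit form \eqref{eq:theta-formula-rel-to-zr-and-mx}.

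For stage (i), decompose $\mfu=\mfk\oplus\mfq$ with $\mfk=\mfu^\sigma$ and $\mfq$ the $(-1)$-eigenspace of $\sigma$. Pick a maximal abelian subspace $\mfa_0\subseteq\mfq$ and extend it to a maximal torus $\mft_\sigma\subseteq\mfu$; a standard centralizer-of-$\mfa_0$ argument forces $\mft_\sigma$ to be $\sigma$-stable. All maximal tori of $U$ are conjugate under $\Ad U$, so we find $u\in U$ with $\Ad(u)(\mft_\sigma)=\mft$, and replacing $\sigma$ by $(\Ad u)\sigma(\Ad u)^{-1}$ --- a move within the inner equivalence class --- one may assume $\theta$ preserves $\mfh=\mft^{\C}$, with $\mfh\cap\mfp$ (where $\mfp$ is the $(-1)$-eigenspace of $\theta$) a maximal abelian subspace of $\mfp$.

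For stage (ii), let $\Theta$ denote the transpose of $\theta|_\mfh$ acting on $\mfh^*$. The roots vanishing on $\mfa_0$ form a $\Theta$-stable subsystem; after conjugating $\sigma$ by a further inner automorphism lifting a suitable Weyl group element, the simple roots of this subsystem can be arranged to lie inside $I$, and one defines $X\subseteq I$ as this set. The composite $-w_X\circ\Theta$ then preserves the dominant chamber, hence permutes $I$, hence equals a Dynkin diagram automorphism $\tau$. Since $w_X^2=\id$ and $\Theta^2=\id$, the permutation $\tau$ is an involution, and the identities $\tau(X)=X$ together with $\Theta=-w_X\tau$ on $\mfh^*$ are precisely condition \eqref{cond:admis1}.

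For stage (iii), set $\theta':=(\Ad m_X)\circ\tau\circ\omega$. By construction $\theta$ and $\theta'$ agree on $\mfh$, so $\theta(\theta')^{-1}$ is an automorphism of $\mfg$ that is trivial on $\mfh$ and therefore equals $\Ad z$ for some $z\in T$. Putting $z_r:=z(\alpha_r)$, this yields \eqref{eq:theta-formula-rel-to-zr-and-mx} with $z_r\in\T$, unimodularity being forced by $*$-compatibility of the Chevalley generators since both $\theta$ and $\theta'$ preserve the $*$-structure. The requirement $\theta^2=\id$ forces \eqref{eq:z2} and hence \eqref{cond_admis2}. For $r\in X$ the phase $z_r$ can be absorbed by rescaling $e_r,f_r$ inside the $\Delta_X$-subsystem in a $*$-compatible way, realizing \eqref{cond:z}. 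Uniqueness follows by the same reduction: if $\theta(X_1,\tau_1)$ and $\theta(X_2,\tau_2)$ are inner equivalent, then after arranging both to preserve $\mft$ the intertwining inner automorphism normalizes $\mft$ and hence acts on $\mfh^*$ through $W$, which identifies the data $(X,\tau)$ up to an automorphism of the Dynkin diagram.

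The main obstacle is the bookkeeping in stage (ii): one must ensure that the simple roots of the $\Theta$-stable compact subsystem can actually be placed inside the fixed simple set $I$, and that $-w_X\circ\Theta$ permutes $I$ rather than merely preserving the root lattice. Both points rest on choosing a positive system for the restricted root system on $\mfa_0$ compatible with $\Delta^+$; this compatibility argument, together with the verification that the resulting $\tau$ is a genuine Dynkin diagram symmetry, is the technical core and the piece one would develop in full in Appendix~\ref{Ap1}.
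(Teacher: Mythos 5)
Your overall route coincides with the paper's: choose a maximal abelian $\mfa_0$ in the $(-1)$-eigenspace, build a $\sigma$-stable maximally split Cartan, pick an adapted positive system, set $X=\{r\mid \alpha_r|_{\mfa_0}=0\}$ and $\tau=-w_X\circ\Theta$, and peel off the torus part $\Ad z$. Stage (ii) is only sketched, but you correctly identify the missing ingredient (a positive system compatible with restriction to $\mfa_0$, e.g.\ a lexicographic order putting $\mfa_0$ first), which is exactly how the paper's appendix proceeds; and for uniqueness the paper simply cites Kac--Wang rather than running your normalizer argument, which as stated is too quick (a Weyl group element intertwining the two involutions need not induce a diagram automorphism without further work).

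The genuine gap is in stage (iii), at condition \eqref{cond:z}. You claim that for $r\in X$ the phase $z_r$ "can be absorbed by rescaling $e_r,f_r$". It cannot: $*$-compatibility forces any rescaling to be unimodular, and for $k\in X$ the root $\alpha_k$ is $\Theta$-fixed, so $\theta$ preserves the one-dimensional root space $\mfg_{\alpha_k}$ and acts on it by an intrinsic eigenvalue $\pm1$ that no rescaling (equivalently, no conjugation by an element of $T$) can alter. Having $z_k=1$ for $k\in X$ is equivalent to $\theta$ acting as the \emph{identity} on $\mfg_X$, and this must be proved, not normalized away. The paper's argument has two parts you omit: first, if $\theta(e_k)=-e_k$ for some $k\in X$, then $e_k+f_k$ lies in $i\mfm$ and commutes with $\mfa_\theta$, contradicting maximality of $\mfa_\theta$ — so $\theta|_{\mfg_X}=\id$; second, the structural identity $(\Ad m_X)\circ\tau\circ\omega=\id$ on $\mfg_X$ (Back-Valente et al., Lemme 4.9), which converts $\theta|_{\mfg_X}=\id$ into $z_k=1$. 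Without these two steps the verification that $(X,\tau,z)$ is actually a Satake form in the sense of the definition is incomplete.
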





\bigskip
Next let us review Vogan diagrams.

\begin{Def}
We say that an involutive automorphism $\nu$ of $\mfu$ is in \emph{Vogan form} (with respect to a given set of Chevalley generators) if there exists an involutive automorphism $\mu$ of the Dynkin diagram such that
\begin{align*}
\nu(h_r) &= h_{\mu(r)},&
\nu(e_r) &= \epsilon_r e_{\mu(r)},&
\nu(f_r) &= \epsilon_r f_{\mu(r)}
\end{align*}
for some $\epsilon_r =\pm1$ with $\epsilon_r=1$ if $\mu(r)\ne r$. We denote by $Y$ the set of simple roots fixed by $\mu$ such that $\epsilon_r=-1$.
\end{Def}

A pair $(Y,\mu)$ as above, consisting of an involutive automorphism $\mu$ of the Dynkin diagram and a set $Y\subseteq I$ pointwise fixed by $\mu$, can be encoded as a \emph{Vogan diagram} by making the vertices in $Y$ black and joining vertices $r,s$ by an arrow if $r= \mu(s)$, see~\cite{MR1920389}*{Section~VI.8}. Therefore, we will refer to $(Y,\mu)$ itself as a Vogan diagram. We let $\nu(Y,\mu)$ be the involution associated to the Vogan diagram $(Y,\mu)$ (and a fixed set of Chevalley generators). We exclude the case where $\mu = \id$ and $Y = \emptyset$, which corresponds to the identity map.

Again, the proof of the following theorem is provided in Appendix \ref{Ap1}.

\begin{Theorem}\label{TheoVog}
Let $\mfu$ be a compact semisimple Lie algebra with fixed $*$-compatible Chevalley generators in $\mfg=\mfu^\C$. If $\sigma$ is an involution of $\mfu$, there exists a Vogan diagram $(Y,\mu)$ such that $\sigma$ is inner equivalent with $\nu(Y,\mu)$.
\end{Theorem}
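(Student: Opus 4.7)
The plan is to exhibit $u\in U$ such that $\Ad(u)\circ\sigma\circ\Ad(u^{-1})$ equals a Vogan form $\nu(Y,\mu)$ for the given Chevalley generators, by three successive inner conjugations that progressively sharpen the form of $\sigma$. For the first step, we appeal to Appendix~\ref{Ap1}: every involution of $\mfu$ admits a stable Cartan subalgebra $\mft'\subseteq\mfu$ in maximally compact position, meaning $\mft'\cap\mfu^\sigma$ is a Cartan subalgebra of $\mfu^\sigma$. Since all Cartan subalgebras of $\mfu$ are conjugate under $U$, we absorb the comparison into an inner automorphism and reduce to $\mft'=\mft$, so that $\sigma(\mft)=\mft$ and $\mft^\sigma$ is a Cartan subalgebra of $\mfu^\sigma$.

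For the second step, the maximally compact position ensures, again by Appendix~\ref{Ap1}, that no root of $(\mfg,\mft^\C)$ vanishes on the real subspace $i\mft^\sigma$. Picking a regular element $H\in i\mft^\sigma$ and defining $\Delta^+_H=\{\alpha\mid\alpha(H)>0\}$, the identity $(\sigma\alpha)(H)=\alpha(\sigma(H))=\alpha(H)$ shows $\sigma(\Delta^+_H)=\Delta^+_H$. Since the Weyl group of $(\mfu,\mft)$ is implemented inside $U$ by $N_U(T)$, a further inner conjugation carries $\Delta^+_H$ onto the prescribed $\Delta^+$, after which $\sigma$ stabilizes both $\mft$ and $\Delta^+$ and therefore permutes the simple roots via an involutive Dynkin diagram automorphism $\mu$. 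Duality and the $\sigma$-invariance of the Killing form then yield $\sigma(h_r)=h_{\mu(r)}$.

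For the third step, write $\sigma(e_r)=c_r e_{\mu(r)}$. Since $\sigma$ commutes with the $*$-structure and $e_r^*=f_r$, we have $\sigma(f_r)=\bar c_r f_{\mu(r)}$; evaluating $\sigma$ on $[e_r,f_r]=h_r$ gives $|c_r|=1$, and $\sigma^2=\id$ gives $c_r c_{\mu(r)}=1$. The simple roots form a linearly independent family of characters of $T$, so for every $(t_r)_{r\in I}\in\T^I$ there exists $t\in T$ with $\Ad(t)(e_r)=t_r e_r$; conjugation by such $\Ad(t)$ replaces the scalar $c_r$ by $t_{\mu(r)}t_r^{-1}c_r$. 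On each $\mu$-orbit $\{r,\mu(r)\}$ of size two, the choice $t_r=1$, $t_{\mu(r)}=c_r^{-1}$ normalizes both scalars to $1$, while on $\mu$-fixed simple roots $c_r$ is unchanged and $c_r^2=1$ forces $c_r=\epsilon_r\in\{\pm1\}$. The resulting $\sigma$ is in Vogan form $\nu(Y,\mu)$ with $Y=\{r\in I\mid\mu(r)=r,\ \epsilon_r=-1\}$.

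The principal obstacle is not Step~3, which is essentially a computation, but the geometric input of Step~2: that the maximally compact position rules out roots vanishing on $i\mft^\sigma$, so that a $\sigma$-invariant positive system exists. This is the substantive content genuinely needed here, and is exactly what is provided by Appendix~\ref{Ap1}.
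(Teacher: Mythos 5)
Your argument is correct and follows essentially the same route as the paper's: pass to a $\sigma$-stable maximally compact Cartan subalgebra, use the absence of roots vanishing on its compact part to obtain a $\sigma$-stable positive system, and then normalize the scalars on the simple root vectors by a torus element (the paper instead builds a new positive system and rescales the Chevalley generators, but this is the same argument read in the opposite direction, via the reduction that a change of $*$-compatible generators is inner up to a diagram automorphism). The one thing to repair is sourcing: you cite Appendix~\ref{Ap1} for the two substantive inputs---the existence of the maximally compact stable Cartan and the non-vanishing of every root on $i\mft^{\sigma}$---but that appendix section \emph{is} the proof of this theorem, so the citation is circular. The first input is \cite{MR1920389}*{Proposition~6.60}; for the second you should reproduce the short direct argument: if a root $\alpha$ vanished on $\mft^{\sigma}$, then the dual involution would send $\alpha$ to $-\alpha$, so $\sigma(\mfg_\alpha)=\mfg_{-\alpha}$, and after rescaling a root vector $e_\alpha$ so that $\sigma(e_\alpha)=e_\alpha^*$, the element $i(e_\alpha+e_\alpha^*)$ lies in $\mfu^{\sigma}$, commutes with $\mft^{\sigma}$, and is not in $\mft^{\sigma}$, contradicting the maximality of $\mft^{\sigma}$ as a Cartan subalgebra of $\mfu^{\sigma}$. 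With that lemma made explicit, the rest of your computation (unimodularity of the $c_r$, the relation $c_rc_{\mu(r)}=1$, and the normalization by $\Ad(t)$ using surjectivity of $t\mapsto(\alpha_r(t))_r$) is sound.
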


\begin{Rem}
As opposed to Satake diagrams, the equivalence class of an involution can be described by non-isomorphic Vogan diagrams. We say that $(Y,\mu)$ is \emph{standard}, and then that the corresponding involution $\nu=\nu(Y,\mu)$ is in \emph{standard Vogan form}, if the following conditions are satisfied:
\begin{itemize}
\item every connected component of the Dynkin diagram contains at most one element of $Y$,
\item if $r\in Y$ and $\mu=\id$ on the connected component containing $r$, then $(\varpi_r-\varpi_s,\varpi_s)\le0$ for all $s$,
\end{itemize}
where $\varpi_r$ denote the fundamental weights. It can be shown that every involution $\nu$ of $\mfu$ is inner equivalent with $\nu(Y,\mu)$ for a standard Vogan diagram~$(Y,\nu)$. For this one starts with a Vogan form of $\nu$ and then makes a new, more careful choice of simple roots, see~\cite{MR1920389}*{Section~VI.10}. Standard Vogan diagrams are much closer to being uniquely (up to conjugacy) associated to the conjugacy classes of involutions. There is, however, still some ambiguity, corresponding to the obvious isomorphisms $\mathfrak{so}(p,q)\cong\mathfrak{so}(q,p)$ and $\mathfrak{sp}(p,q)\cong\mathfrak{sp}(q,p)$ of real simple Lie algebras.
\end{Rem}

\section{Three constructions of ribbon braided module \texorpdfstring{C$^*$}{C*}-categories from symmetric pairs}
\label{ref:three-const-ribb-br-mod}

Throughout this section $\mfu$ will be a fixed compact semisimple Lie algebra with complexification $\mfg= \mfu^{\C}$ and a fixed set of Chevalley generators $\{e_r,f_r,h_r\mid r\in I\}$ compatible with $\mfu$. We let $\mfh \subseteq \mfg$ be the associated Cartan subalgebra with root and weight lattices $Q$ and $P$, respectively, with positive part $Q^+$ and $P^+$. We write the simple positive roots as $\alpha_r$, and the associated fundamental weights in $P^+$ as $\varpi_r$. We fix also a non-degenerate $\mfg$-invariant form $(-,-)_{\mfg}$ on $\mfg$ for which the induced form $(-,-)$ on $\mfh^*$ makes the short roots of each simple summand have square length $2$. We let $A = (a_{rs})_{rs}$ be the associated Cartan matrix of $\mfg$, with entries $a_{rs} = ({\alpha}^\vee_r,\alpha_s) = 2 (\alpha_r, \alpha_s) / (\alpha_r, \alpha_r)$, and write $d_A$ for the determinant of $A$ (for our purposes we may as well take the least common multiple of the determinants of Cartan matrices of irreducible components), which is also equal to the index of $Q$ in $P$. By our normalization, $(-, -)_{\mfg}$ takes integral values on $Q \times Q$, and on $P \times P$ its values belong to $d_A^{-1} \Z$. 

\subsection{\texorpdfstring{$q$}{q}-deformed tensor \texorpdfstring{C$^*$}{C*}-categories}

Let $\mbq$ be an indeterminate variable. We form the Hopf algebra $U_{\mbq}(\mfg)$  over the field $\C(\mbq^{1/d_A})$ of rational functions in $\mbq^{1/d_A}$ using the same conventions as in \cite{MR3269184}, but taking the Cartan part to be labeled by elements of $P$, as follows. Let $U_\mbq(\mfn^+)$ be the algebra over $\C(\mbq^{1/d_A})$ generated by the elements $(E_r)_{r\in I}$ subject to the relations
\[
\sum_{n=0}^{1-a_{rs}}(-1)^n \qbin{1-a_{rs}}{n}{\mbq_r} E_r^{1-a_{rs}-n} E_s E_r^n = 0 \qquad (r \neq s),
\]
with $\mbq_{r} = \mbq^{\frac{1}{2}(\alpha_r,\alpha_r)}$. Let $U_\mbq(\mfn^-)$ be an isomorphic copy of $U_\mbq(\mfn^+)$ with generators $F_r$, and let $U_\mbq(\mfh)$ be the group algebra of $P$, spanned by the elements $(K_{\omega})_{\omega \in P}$. Then $U_\mbq(\mfg)$ is defined as the algebra generated by $U_\mbq(\mfn^+),U_\mbq(\mfn^-)$ and $U_\mbq(\mfh)$ with the following interchange relations:
\begin{align*}
K_{\omega} E_r &= \mbq^{(\omega,\alpha_r)}E_rK_{\omega},&
K_{\omega} F_r &= \mbq^{-(\omega,\alpha_r)}F_rK_{\omega},&
[E_r,F_s] &= \delta_{rs}\frac{K_{r} - K_{r}^{-1}}{\mbq_r-\mbq_r^{-1}},
\end{align*}
where $K_r  =K_{\alpha_r}$.  Multiplication gives a vector space isomorphism
\[
U_\mbq(\mfn^-)\otimes U_\mbq(\mfh)\otimes U_\mbq(\mfn^+)\cong U_\mbq(\mfg),
\]
called the \emph{triangular decomposition}, and this still holds with any of the factors on the left interchanged. One can endow $U_\mbq(\mfg)$ with a unique Hopf algebra structure such that
\begin{align*}
\Delta(K_{\omega}) &= K_{\omega}\otimes K_{\omega},&
\Delta(E_r) &= E_r\otimes 1+ K_{r}\otimes  E_r,&
\Delta(F_r) &= F_r\otimes K_{r}^{-1} + 1\otimes F_r.
\end{align*}
We denote by $U_\mbq(\mfb^{\pm})$ the Hopf subalgebras generated by $U_\mbq(\mfn^{\pm})$ and $U_\mbq(\mfh)$.

By specializing $\mbq$ at $q \in \C^\times$ not a root of unity, we obtain the Hopf algebra $U_q(\mfg)$ over $\C$. In case $0<q<1$, we can moreover endow $U_q(\mfg)$ with a good Hopf $*$-algebra structure
\begin{align*}
E_r^* &= F_r K_r,&
F_r^* &= K_r^{-1}E_r,&
K_{\omega}^* &= K_{\omega}.
\end{align*}
The resulting Hopf $*$-algebra will be denoted $U_q(\mfu)$.\footnote{The theory can also be developed for $q>1$, but this case would need a slight modification in later sections, so we restrict to $0<q<1$ from the outset. Note that the $*$-structure is also well-defined for $q<0$, but the resulting $*$-algebra does not have a good $*$-representation theory.} 

We call a $U_{\mbq}(\mfg)$-module \emph{admissible} if its restriction to $U_\mbq(\mfh)$ is a direct sum of one-dimensional modules of the form
\[
K_{\omega} \mapsto \mbq^{(\omega,\chi)},\qquad \chi \in P.
\]
It is called \emph{finite} if it is finite-dimensional. 

A similar definition can be made for $U_q(\mfg)$. For $U_q(\mfu)$ we moreover assume that the module is equipped with a pre-Hilbert space structure compatible with the $*$-structure, in which case we call it a \emph{$*$-representation}. Finite admissible $*$-representations are then in one-to-one correspondence with finite $*$-representations under which each $K_{\omega}$ becomes a positive operator.

\begin{Def}
For $0<q<1$, we denote by $\Rep_q(\mfu)$ the tensor C$^*$-category of finite admissible $*$-representations of the Hopf $*$-algebra $U_q(\mfu)$.
\end{Def}

The equivalence classes of irreducible objects in $\Rep_q(\mfu)$ can be labeled by $\varpi \in P^+$, the set of dominant integral weights, and for each $\varpi\in P^+$ one can canonically construct an irreducible $*$-representation $V_{\varpi}^q$ generated by a highest weight vector $\xi_{\varpi}$ vanishing under the $E_r$ and with
$$
K_{\chi}\xi_{\varpi} = q^{(\chi,\varpi)}\xi_{\varpi}.
$$
We can then make the (non-unital) direct sum $*$-algebra
$$
\msU_q(\mfu) = \bigoplus_{\varpi\in P^+} B(V_{\varpi}^q)
$$
into a discrete multiplier $*$-bialgebra $(\msU_q(\mfu) ,\Delta_q)$ with dense, coproduct-compatible embedding \[U_q(\mfu) \subseteq M(\msU_q(\mfu)).\] There is a unique quasi-triangular structure $\msR_q \in U_q(\mfu)\hotimes U_q(\mfu)$ such that\footnote{Our $\msR_q$ corresponds to $\msR_{21}^{-1}$ in \cite{MR2832264} as to have the same conventions for the $R$-matrix as in \cite{MR3269184}.} for all $\varpi,\chi \in P^+$
\[
\msR_q(\xi_{\varpi}\otimes \eta_{w_0\chi}) = q^{-(\varpi,w_0\chi)} \xi_{\varpi} \otimes \eta_{w_0\chi},
\]
where $\eta_{w_0\chi}$ is the lowest weight vector in $V_{\chi}^q$ of weight $w_0\chi$ and $w_0$ is the longest element in the Weyl group of $\mfg$. Correspondingly, $\Rep_q(\mfu)$ becomes a braided tensor C$^*$-category.

An equivalent braided tensor C$^*$-category can be obtained from monodromy of Knizhnik--Zamolod\-chikov equations. Specifically, let $\{X_i\}_{i \in \mcI}$ be an orthonormal basis of $\mfg$ with respect to the Hermitian inner product $\langle X,Y\rangle_{\mfg} = (X,Y^*)_{\mfg}$, and put
$$
t = \sum_{i \in \mcI} X_i^* \otimes X_i \in\Sym^2(\mfg)^{\mfg}.
$$
Let $V_1,\ldots,V_n$ be finite-dimensional $\mfu$-representations, and $\hbar\in i\R$. On the configuration space of $n$ distinct complex numbers
$$
\Omega_n = \Bigl\{z \in \C^n \mid \prod_{i\neq j}(z_i-z_j) \neq 0\Bigr\}
$$
we consider the following system of differential equations on $V_1\otimes\cdots \otimes V_n$-valued functions, called \emph{KZ$_n$-equations} (of type A),
$$
\frac{\partial v}{\partial z_i} = \hbar \sum_{j\neq i} \frac{t_{ij}}{z_i-z_j}v \quad (i = 1, \ldots, n),
$$
where $t_{ij}$ means $t$ acting on the $i$-th and $j$-th tensors. It is well-known that the appropriately normalized monodromy of KZ$_3$ from the region $\absv{z_2-z_1} \ll \absv{z_3-z_1}$ to the region $\absv{z_3-z_2} \ll \absv{z_3-z_1}$ then gives a family of unitary operators
$$
\Phi_{\hbar}\colon (V \otimes W) \otimes Z \rightarrow V \otimes (W \otimes Z),
$$
providing a non-trivial unitary associator on the tensor C$^*$-category $\Rep(\mfu)$ of finite-dimensional unitary representations of $\mfu$. This becomes a braided tensor C$^*$-category with respect to the braiding\footnote{Again, we choose the opposite braiding of \cite{MR2832264}.} defined by $\Sigma e^{-\pi i \hbar t}$, where $\Sigma$ is the flip map. Correspondingly, if we write $U(\mfu)$ for the enveloping Hopf algebra of $\mfg$  with the $*$-structure induced by $\mfu$, together with its dense embedding
$$
U(\mfu) \subseteq M(\msU(\mfu)),\qquad \msU(\mfu) = \bigoplus_{\varpi\in P^+} B(V_{\varpi}),
$$
where again $V_{\varpi}$ is constructed with respect to the fixed Chevalley generators, then $U(\mfu)$ becomes a quasi-triangular quasi-Hopf algebra with respect to $\phi_{\hbar} \in U(\mfu)^{\hat{\otimes}3}$ defined by $\Phi_{\hbar}$ and $\msR_{\hbar} = e^{-\pi i \hbar t} \in U(\mfu)^{\hat{\otimes} 2}$. We denote by $\Rep_{\hbar}(\mfu)$ the corresponding braided tensor C$^*$-category.

One has the following result.

\begin{Theorem}[\cite{MR2832264}, cf.~\citelist{\cite{MR1047964}\cite{MR1239506}\cite{MR1239507}}] Let $0<q<1$, and let $\hbar\in i\R_{>0}$ be such that $q = e^{\pi i \hbar}$. Then there exists an equivalence of braided tensor C$^*$-categories
\[
F\colon \Rep_{\hbar}(\mfu) \rightarrow \Rep_q(\mfu)
\]
such that $F(V_{\varpi}) = V_{\varpi}^q$.
\end{Theorem}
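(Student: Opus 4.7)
The plan is to invoke Drinfeld's version of the Kohno--Drinfeld theorem, which identifies the braided monoidal structure arising from KZ monodromy with that of the quantum group at $q = e^{\pi i \hbar}$, and then to upgrade the statement to the C$^*$-setting by exploiting unitarity of both structures.

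First I would observe that the functor is essentially prescribed on objects. Indeed, the irreducible objects of both $\Rep_\hbar(\mfu)$ and $\Rep_q(\mfu)$ are labelled by $P^+$, and the classical Weyl dimension formula says $\dim V_\varpi = \dim V_\varpi^q$ for every $\varpi \in P^+$. Hence one may fix, as additive $*$-functor, any equivalence $F \colon \Rep(\mfu) \to \Rep_q(\mfu)$ sending $V_\varpi$ to $V_\varpi^q$, and the task reduces to constructing unitary natural isomorphisms $(F_2)_{U,V}\colon F(U) \otimes F(V) \to F(U \otimes V)$ compatible with $\phi_\hbar$, $\msR_\hbar$ on the source and the trivial associator and $\msR_q$ on the target.

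The core algebraic input is Drinfeld's theorem: after completing in $\hbar$, there exists an invertible twist $J_\hbar \in U(\mfg)^{\hat{\otimes}2}$ such that the quasi-triangular quasi-Hopf algebra $(U(\mfg), \Delta, \phi_\hbar, \msR_\hbar)$ is twist-equivalent to $(U_q(\mfg), \Delta_q, 1, \msR_q)$ via $J_\hbar$; this is the content of~\cite{MR1239506, MR1239507} building on~\cite{MR1047964}. Transferred to categories, $J_\hbar$ gives exactly the monoidal structure $F_2$. Because we only evaluate $J_\hbar$ on pairs of finite admissible representations and both the KZ associator and the $R$-matrix $e^{-\pi i \hbar t}$ act on each such pair as a concrete operator on a finite-dimensional space, convergence issues do not arise: one specialises to $q = e^{\pi i \hbar} \in (0,1)$ and obtains a genuine equivalence of (non-unitary) braided tensor categories on finite admissible modules.

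The main obstacle, and the analytic heart of the argument, will be the unitarity of $J_\hbar$. Both $\Rep_\hbar(\mfu)$ and $\Rep_q(\mfu)$ are tensor C$^*$-categories: $\phi_\hbar$ is unitary since it arises from parallel transport of a unitary connection (the $t$ at hand is skew-adjoint with respect to $\langle\cdot,\cdot\rangle_\mfg$ and the KZ connection preserves the induced Hermitian structure), and the braidings $\Sigma e^{-\pi i \hbar t}$, $\Sigma \msR_q$ are also unitary in the appropriate sense. A general rigidity statement, proved in~\cite{MR2832264}, asserts that any two braided tensor C$^*$-category structures on the same underlying C$^*$-category with the same fusion rules and Weyl dimensions which agree at the level of quasi-Hopf algebras up to an algebraic twist must in fact agree up to a \emph{unitary} twist; the argument proceeds sector by sector, using that each homogeneous component of $J_\hbar$ polarises into a unitary times a positive element which may be absorbed into the gauge. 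Applying this rigidity to $J_\hbar$ yields a unitary twist $\tilde J_\hbar$ with the same effect on associators and braiding, and the resulting $F_2 = \tilde J_\hbar$ defines the required equivalence of braided tensor C$^*$-categories with $F(V_\varpi) = V_\varpi^q$.
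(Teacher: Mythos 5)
There is a genuine gap at the central step. Drinfeld's theorem (and its Kazhdan--Lusztig refinements) produces the twist $J_\hbar$ only as a \emph{formal power series} in $\hbar$ with coefficients in $U(\mfg)^{\otimes 2}$; it is obtained by a cohomological/inductive construction, not as a monodromy or any other analytically defined object. Your claim that ``convergence issues do not arise'' because each representation is finite dimensional is not correct: a formal power series of operators on a fixed finite-dimensional space still need not converge at a numerical value of $\hbar$, and the convergence of the Drinfeld (or Etingof--Kazhdan) twist at numerical parameters is a genuinely hard problem, not a formality. This is precisely why the cited reference \cite{MR2832264} does not proceed by specializing the formal twist. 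There the equivalence is built directly at the numerical value $q=e^{\pi i\hbar}$, following the Kazhdan--Lusztig strategy: one passes through the fusion category of modules over the affine Lie algebra at the corresponding level, uses coinvariants and the (analytically defined, convergent) KZ monodromy to identify it with $\Rep_\hbar(\mfu)$, and reconstructs $U_q(\mfg)$ from the resulting fiber functor. No formal series is ever specialized.

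Your remaining steps are essentially sound: matching the simple objects via the Weyl dimension formula is fine, and the final unitarization of the tensor structure by polar decomposition (absorbing the positive part of each component of the twist into the gauge) is a standard and correct argument for $*$-functors between rigid tensor C$^*$-categories, and is indeed how unitarity is handled in this circle of ideas. But as written the proof rests on an input ($J_\hbar$ as an actual operator-valued object at numerical $\hbar$) that the quoted theorems do not provide, so the argument does not go through without replacing the first step by the analytic Kazhdan--Lusztig construction.
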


The above equivalence is, moreover, unique up to a natural unitary monoidal isomorphism~\citelist{\cite{MR2782190}\cite{MR2959039}}. In particular, the Tannaka reconstruction applied to the forgetful functor on
$\Rep_q(\mfu)$ composed with any $F$ as above will produce a discrete multiplier Hopf $*$-algebra with a distinguished isomorphism to $(\msU_q(\mfu),\Delta_q)$.

\subsection{Ribbon braided module \texorpdfstring{C$^*$}{C*}-categories from KZ equations of type B}
\label{sec:ribbon-br-mod-from-KZ}

Let $\sigma$ be an involution on $\mfu$ (recall that we assume involutions to be nontrivial, so $\sigma \neq \id$). In this subsection, we will construct from $\sigma$ a ribbon $\sigma$-braided module C$^*$-category over $\Rep_{\hbar}(\mfu)$.

Let $\mfg_{\pm}$ be the $\pm1$-eigenspaces in $\mfg$ for $\sigma$, and write $\mfu_{\pm} = \mfg_{\pm}\cap \mfu$. We will also write
$$
\mfu_+ = \mfk,\quad \mfu_- = \mfm,\quad \text{so} \quad \mfg_+ = \mfk^{\C},\quad \mfg_- = \mfm^{\C}.
$$
Let $\{X_i\}_{i\in \mcI_{\pm}}$ be an orthonormal basis of $\mfg_{\pm}$ for the restriction of Hermitian inner product on $\mfg$ as in the previous section, and put
$$
t^+  = t^{\mfk} =  \sum_{i\in \mcI_{+}} X_i^* \otimes X_i \in \mfk^{\C}\otimes \mfk^{\C},\qquad t^-  = t^{\mfm} =  \sum_{i\in \mcI_{-}} X_i^* \otimes X_i \in \mfm^{\C}\otimes \mfm^{\C}
$$
so that $t = t^{+}+t^{-}$. Finally, let us denote the Casimir element of $\mfk$ as
$$
C^{+} = C^{\mfk} =  \sum_{i\in \mcI_+} X_i^*X_i \in U(\mfk^{\C}).
$$

The following definition introduces the Knizhnik--Zamolodchikov equations associated to the hyperplane arrangement of the Coxeter group of type B, as constructed from $\sigma$ in~\cite{MR2892463}*{Section 1.3}.

\begin{Def}[\citelist{\cite{MR1017085}\cite{MR1289327}\cite{MR1940926}}]
Let $\hbar \in \C$,
$$
\Omega_n' = \{w\in \C^n \mid \prod_{\pm, i\neq j} w_i(w_i\pm w_j)\neq 0\},
$$
$V_0$ be a finite-dimensional representation of $\mfk$, and $V_1,\ldots,V_n$ finite-dimensional representations of $\mfu$. The system of \emph{2-cyclotomic KZ$_n$-equations} is the following system of differential equations on $V_0 \otimes \cdots \otimes V_n$-valued functions on $\Omega_n'$:
\begin{equation}\label{eq:KZ2}
\frac{\partial v}{\partial w_i} = \hbar\left( \frac{2 t^{\mfk}_{0,i} + C^{\mfk}_i}{w_i} + \sum_{\pm,j\neq i} \frac{t_{i,j}^{\mfk} \pm t_{i,j}^{\mfm}}{w_i\mp w_j}\right)v, \qquad (i = 1, \ldots, n).
\end{equation}
\end{Def}

Note that the coefficients of the above system indeed lie in $U(\mfk^{\C}) \otimes U(\mfg)^{\otimes n}$, so that the equations are meaningful. Let us check explicitly that this is a compatible system of equations.

\begin{Lem}
The $2$-cyclotomic KZ$_n$-system \eqref{eq:KZ2} is flat for any $n\ge2$.
\end{Lem}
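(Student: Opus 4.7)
The plan is to verify the flatness condition $\partial_{w_i} G_j - \partial_{w_j} G_i = [G_i, G_j]$ for all $1 \le i \ne j \le n$, where $G_i$ denotes the operator-valued coefficient
\[
G_i = \frac{2 t^{\mfk}_{0,i} + C^{\mfk}_i}{w_i} + \sum_{k \ne i}\left(\frac{s^+_{i,k}}{w_i - w_k} + \frac{s^-_{i,k}}{w_i + w_k}\right),\qquad s^{\pm}_{i,k} = t^{\mfk}_{i,k} \pm t^{\mfm}_{i,k}.
\]
The left-hand side is a sum of rational functions in $w_1,\dots,w_n$ whose poles lie on the hyperplanes $w_i = w_j$ and $w_i = -w_j$, while $[G_i, G_j]$ expands into products of such rational functions. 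By partial-fraction decomposition on the two-variable function field $\C(w_i,w_j)$ (treating the other $w_k$ as parameters), the whole identity splits into a finite list of residue matching conditions, one for each distinct pole.

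The residue matching reduces to a finite list of algebraic identities in $U(\mfg)^{\hotimes(n+1)}$. The required inputs are: (a) the $\sigma$-invariance of the decomposition $\mfg = \mfg_+ \oplus \mfg_-$, i.e.\ $[\mfg_{\epsilon}, \mfg_{\epsilon'}] \subseteq \mfg_{\epsilon\epsilon'}$; (b) the $\mfg$-invariance of $t = t^{\mfk} + t^{\mfm}$ and the $\mfk$-invariance of $t^{\mfk}$ and $t^{\mfm}$ separately; (c) the centrality of $C^{\mfk}$ in $U(\mfk^{\C})$; and (d) the classical infinitesimal braid relation $[t_{i,j}, t_{i,k} + t_{j,k}] = 0$ (and its disjoint-support analog $[t_{i,j}, t_{k,l}] = 0$), together with its refinements obtained by splitting $t = t^{\mfk} + t^{\mfm}$ and applying $(\sigma\otimes 1)$ or $(1\otimes\sigma)$ to individual legs. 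Concretely, one uses that $s^{\pm}_{i,j}$ are the $\pm 1$-eigenparts of the $\sigma$-twisted Casimir, and the refined relations express this in the form $[s^{\epsilon}_{i,j}, s^{\epsilon'}_{i,k} + s^{\epsilon\epsilon'}_{j,k}] = 0$ and variants.

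The poles on $w_i = w_j$ and $w_i = -w_j$ produce, after matching residues, precisely such infinitesimal braid-type relations among the $s^{\pm}$; the poles on $w_i = 0$ (coming from $\partial_{w_j}$ acting on the $w_i$-free part and vice versa) turn out to vanish trivially, while cross terms with denominator $w_i(w_j \mp w_i)$ vs.\ $w_j(w_i \mp w_j)$ are handled after clearing denominators by means of identities of the type $[2t^{\mfk}_{0,i} + C^{\mfk}_i, s^{\pm}_{i,j}] + [2t^{\mfk}_{0,j} + C^{\mfk}_j, s^{\pm}_{i,j}] = 0$, which again follow from (a)--(c) together with the observation that $t^{\mfk}_{0,i} + t^{\mfk}_{0,j}$ is the image of a $\mfk$-invariant tensor under the coproduct of $\mfk$.

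I expect the principal obstacle to be purely bookkeeping: the number of rational-function strata is modest but nonzero, and for each one must correctly assemble the refined braid identity from (a)--(d) with the right signs, especially where the $\sigma$-twist swaps $s^+$ and $s^-$ in the commutator. Apart from this, no new ingredient beyond the classical Kohno--Drinfeld argument enters, so the argument reduces to a direct verification that can be carried out uniformly in $n$ by considering only the two indices $i,j$ at a time, since all other positions appear only as spectators through $s^{\pm}_{i,k}, s^{\pm}_{j,k}$ and commute as required by disjoint support when $k \notin \{i,j\}$.
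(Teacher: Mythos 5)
Your overall strategy---reduce flatness to a finite list of commutator identities by partial fractions in $(w_i,w_j)$ and verify them from $\sigma$-equivariance, invariance of $t,t^{\mfk},t^{\mfm}$, and centrality of $C^{\mfk}$---is sound, and it is in substance what the paper does: the paper simply outsources the partial-fraction bookkeeping by citing the flatness criterion of \cite{MR1289327}*{Proposition 1.2} and then checking its conditions with $\tau_{ij}=t_{ij}$, $\mu_{ij}=t^{\mfk}_{ij}-t^{\mfm}_{ij}$, $\nu_i=2t^{\mfk}_{0i}+C^{\mfk}_i$. However, one identity you state as an input is false, and it is precisely the one carrying the new content of the type B situation. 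You claim $[\nu_i+\nu_j,\,s^{\pm}_{ij}]=0$ with $\nu_i=2t^{\mfk}_{0,i}+C^{\mfk}_i$. The part $t^{\mfk}_{0,i}+t^{\mfk}_{0,j}=(\id\otimes\Delta)(t^{\mfk})_{0ij}$ does commute with the $\mfk$-invariant $s^{\pm}_{ij}$, but $C^{\mfk}_i+C^{\mfk}_j=\Delta(C^{\mfk})_{ij}-2t^{\mfk}_{ij}$ does not: one gets $[\nu_i+\nu_j,\,s^{\pm}_{ij}]=\mp2[t^{\mfk}_{ij},t^{\mfm}_{ij}]$, and $[t^{\mfk},t^{\mfm}]\neq0$ in general (already for $\mfsl_2$ with $t^{\mfk}=\tfrac12 h\otimes h$ and $t^{\mfm}=e\otimes f+f\otimes e$).

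What saves the computation is a term your stratification omits: commuting the interaction summands of $G_i$ against those of $G_j$ produces $2[s^+_{ij},s^-_{ij}]\,(w_i-w_j)^{-1}(w_i+w_j)^{-1}$, which after partial fractions feeds into the residues at $w_i=\pm w_j$ with exactly the coefficient $[s^+_{ij},s^-_{ij}]=-2[t^{\mfk}_{ij},t^{\mfm}_{ij}]$ needed to repair your identity. The correct relations at those poles are $[\nu_i+\nu_j+s^{\mp}_{ij},\,s^{\pm}_{ij}]=0$, i.e.\ $[t_{ij}+\nu_i+\nu_j,\,t^{\mfk}_{ij}-t^{\mfm}_{ij}]=0$ and its image under $\sigma$ applied to one leg; these are Leibman's conditions 3.a)--3.b) which the paper verifies, and they do follow from your inputs because $t_{ij}+C^{\mfk}_i+C^{\mfk}_j=\Delta(C^{\mfk})_{ij}-(t^{\mfk}_{ij}-t^{\mfm}_{ij})$. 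Likewise the coefficient of $(w_iw_j)^{-1}$ is $[\nu_i+2t^{\mfk}_{ij},\,\nu_j]$ rather than $[\nu_i,\nu_j]$, and its vanishing is not ``trivial'': it requires rewriting $2t^{\mfk}_{ij}+2t^{\mfk}_{0,i}=2(\Delta\otimes\id)(t^{\mfk})_{0ji}$ and invoking $\mfk$-invariance of $t^{\mfk}$ and centrality of $C^{\mfk}$. With these corrections your argument closes and coincides with the paper's.
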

\begin{proof}
We verify that the conditions 1--4) of \cite{MR1289327}*{Proposition 1.2} are satisfied with
\begin{align*}
\tau_{ij} &= t_{ij},&
\mu_{ij} &= t_{ij}^{\mfk}-t_{ij}^{\mfm} = (\id\otimes \sigma)(t)_{ij},&
\nu_i &= 2t_{0i}^{\mfk}+C_i^{\mfk}.
\end{align*}
Equations 1) and 2.a) follow from $\lbrack t,\Delta(X)\rbrack = 0$ for all $X\in \mfg$. Equation 2.b) follows from
\begin{equation*}
\begin{split}
\lbrack \mu_{ik},\tau_{ij}+\mu_{jk}\rbrack &= \lbrack (\id\otimes \sigma)(t)_{ik},t_{ji} + (\id\otimes \sigma)(t)_{jk}\rbrack \\
&= \sigma_k(\lbrack t_{ik},t_{ji}+t_{jk}\rbrack) \\
&= 0,
\end{split}
\end{equation*}
where we used that $\sigma$ is a Lie algebra automorphism.

For 3.a), we use that $\Delta(C^{\mfk}) = (C^{\mfk}\otimes 1) + (1\otimes C^{\mfk}) + 2t^{\mfk}$ to compute that
\begin{equation*}
\begin{split}
\lbrack \tau_{ij} + \nu_i+\nu_j,\mu_{ij}\rbrack &= \lbrack t_{ij} + C_i^{\mfk}+C_j^{\mfk} +2t_{0,i}^{\mfk} + 2t_{0,j}^{\mfk},t_{ij}^{\mfk} -t_{ij}^{\mfm}\rbrack \\
&= \lbrack -(t_{ij}^{\mfk}-t_{ij}^{\mfm}) +\Delta(C^{\mfk})_{ij} +2 (\id\otimes \Delta)(t^{\mfk})_{0ij},t_{ij}^{\mfk}-t_{ij}^{\mfm}\rbrack \\
&= 0,
\end{split}
\end{equation*}
using also that $\lbrack t^{\mfk},\Delta(X)\rbrack = 0$ and hence $\lbrack t^{\mfm},\Delta(X)\rbrack = 0$  for all $X\in \mfk^{\C}$. Then 3.b) follows from applying $\sigma$ to the $j$-th leg.

For 3.c), we compute
\begin{equation*}
\begin{split}
\lbrack \tau_{ij}+\nu_i + \mu_{ij},\nu_j \rbrack &= \lbrack 2t_{ij}^{\mfk} + 2t_{0i}^{\mfk}+C_i^{\mfk},2t_{0j}^{\mfk}+C_j^{\mfk}\rbrack \\
&= \lbrack 2 (\Delta\otimes \id)(t^{\mfk})_{0,j,i} + C_i^{\mfk},2 t_{0j}^{\mfk}+C_j^{\mfk}\rbrack \\
&= 0.
\end{split}
\end{equation*}
Equations of 4) are obvious.
\end{proof}

We will be particularly interested in the $2$-cyclotomic KZ$_2$-equation. Analogously to the KZ$_3$-equation of type A, we can construct a modified $2$-cyclotomic KZ$_2$-equation \cite{MR2383601} on $\C \setminus \{0,\pm1\}$, with values in $V_0\otimes V_1\otimes V_2$:
\begin{equation}\label{EqKZB}
H'(w) = \hbar\left(\frac{B_-}{w+1} + \frac{2A}{w}+ \frac{B_+}{w-1}\right)H(w),
\end{equation}
where
\begin{align}\label{EqAB}
A&=t^{\mfk}_{01}+\frac{1}{2}C^{\mfk}_{1},&
B_- &= t^{\mfk}_{12} - t^{\mfm}_{12},&
B_+ &= t^{\mfk}_{12} + t^{\mfm}_{12}= t_{12}.
\end{align}
If $H$ is a solution of the modified $2$-cyclotomic KZ$_2$-equation, we see by using Lemma~\ref{LemComm} below that, with
$$
d=\hbar(2 t^{\mfk}_{01}+ 2t^{\mfk}_{02} + 2t^{\mfk}_{12} + C_1^{\mfk} + C_2^{\mfk}),
$$
the function
\[
v(w_1,w_2) = w_2^{d} H(w_1/w_2)
\]
is a solution of the $2$-cyclotomic KZ$_2$-equation.

\begin{Rem}
The modified KZ$_2$-equation can be written in a more concise form
by considering rather
$$
G(z) = H(\sqrt{z})
$$
on $\C \setminus \R^-$, which satisfies the equation
\begin{equation}
\label{eq:mKZ}
G'(z) = \hbar \left(\frac{A}{z}+\frac{B(z)}{z-1}\right)G(z), \quad \text{with } B(z) = t^{\mfk}_{12} + \frac{t^{\mfm}_{12}}{\sqrt{z}}.
\end{equation}
The second term is precisely $X(z)/z(z-1)$ in the notation of \cite{MR2126485}, but we have swapped the first and the last leg and started indexing at $0$ as to have right module category actions later on. We also chose a different cut for the $\theta_k$ than the set $\mathbf{D}$ in \cite{MR2126485}, as it allows for a more concise form of their term $X(z)$.
\end{Rem}

Following \cite{MR2383601} we define $\Psi$ as a normalized monodromy  of \eqref{EqKZB} from $0$ to~$1$.\footnote{It is also possible to define $\Psi$ as a normalized monodromy of \eqref{eq:mKZ} from $0$ to $1$, but then we have to be careful with normalization, since the natural normalization, as in~\cite{MR2126485}, leads to an operator which differs from $\Psi$ by the factor $2^{\hbar t_{12}}$.} Namely, consider general operators $a$, $b_+$ and $b_-$ on a finite dimensional vector space and the equation
\begin{equation}\label{EqKZB2}
H'(w) =\left(\frac{b_-}{w+1} + \frac{a}{w}+ \frac{b_+}{w-1}\right)H(w).
\end{equation}
Then from the theory of differential equations with singularities it is known that if the operators~$a$ and~$b_+$ have no eigenvalues that differ by a nonzero integer, then there exist unique operator valued solutions~$H_0$ and~$H_1$ of this equation on $(0,1)$ such that
\begin{itemize}
\item $H_0(w) w^{-a}$ is analytic in a neighborhood of $0$ and equal to $\id$ at $0$,
\item $H_1(w)(1-w)^{- b_+}$ is analytic in a neighborhood of $1$ and is equal to $\id$ at $1$.
\end{itemize}
We then put
$$
\Psi(a,b_+,b_-)=H_1(w)^{-1}H_0(w),
$$
which is independent of $w\in(0,1)$. Note that if $\hbar$ is purely imaginary and $a,b_+,b_-$ are Hermitian, the associated monodromy $\Psi(a,b_+,b_-)$ will be unitary.

Assume now that $\hbar\in i\R$. Let $V,W$ be finite dimensional unitary representations of $\mfu$, and $X$ a finite dimensional unitary representation of $\mfk$. Then the skew-adjoint operators $2\hbar A$ and $\hbar B_{\pm}$ from \eqref{EqAB}, considered as operators on $X \otimes V \otimes W$, have no eigenvalues that differ by a nonzero integer, so we get a unitary
$$
\Psi_{X,V,W}=\Psi(2\hbar A|_{X\otimes V\otimes W}, \hbar B_+|_{X\otimes V\otimes W},\hbar B_-|_{X\otimes V\otimes W}) \in B(X\otimes V\otimes W).
$$
By slightly abusing notation, we can write the family $(\Psi_{X,V,W})_{X,V,W}$ as
$$
\Psi=\Psi\left(\hbar(2t^{\mfk}_{01}+C^{\mfk}_{1}), \hbar t_{12},\hbar (t^{\mfk}_{12}-t^{\mfm}_{12})\right).
$$

Write $\Rep(\mfk)$ for the category of finite dimensional unitary representations of $\mfk$. We have the obvious functor
$$
\odot\colon \Rep(\mfk)\times \Rep(\mfu)\to\Rep(\mfk)
$$
of tensor product of representations of $\mfk$ with the restrictions of representations of $\mfu$ to $\mfk$. Then the morphisms $\Psi_{X,V,W}$ can be regarded as natural isomorphisms
$$
\Psi_{X, V, W}\colon (X \odot V) \odot W \rightarrow X \odot (V \otimes W),
$$
making $\Rep(\mfk)$ into a right $\Rep_{\hbar}(\mfu)$-module C$^*$-category which we will denote $\Rep_{\hbar}(\mfk)$, see \citelist{\cite{MR2126485}*{Theorem~4.6}\cite{MR2383601}*{Proposition~~2.1}}.\footnote{Note once again that the normalizations of monodromy operators in these papers are different. We use the one of the latter.}

In \cite{MR2383601}*{Section~4.5}, it is essentially proved  that $\Rep_\hbar(\mfk)$ is a braided module category for the braid given by the operator $e^{-\pi i\hbar(2t^{\mfk}_{01} + C_1^{\mfk})}$. Since Enriquez works with crossed products (see our Remark~\ref{rem:sigma-change}) and does not consider the ribbon braid relation, let us provide some details.

\begin{Prop}
For any operators $a$, $b_+$, $b_-$ on a finite dimensional vector space we have the following identity, whenever the left hand side of it is well-defined:
\begin{equation} \label{eq:Eg}
\Psi(a,b_+,b_-)^{-1}e^{\pi i b_+}\Psi(c,b_+,b_-)e^{\pi i c}\Psi(c,b_-,b_+)^{-1}e^{\pi i b_-}\Psi(a,b_-,b_+)e^{\pi i a}=1,
\end{equation}
where $c = -a - b_+ - b_-$.
\end{Prop}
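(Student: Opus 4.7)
The identity \eqref{eq:Eg} is a monodromy relation for the Fuchsian equation \eqref{EqKZB2} on $\mathbb{P}^1\setminus\{-1,0,1,\infty\}$, whose local exponents at the four singular points are $b_-, a, b_+, c$ respectively (the value $c=-(a+b_++b_-)$ being forced by the balance of residues). My strategy is to interpret the left-hand side as the composition of analytic continuations of a normalized fundamental solution along a closed path in the upper half plane that bounds a simply connected region containing no singular points, whence the composition must act as the identity.

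\textbf{Step 1 (Local solutions and the four $\Psi$'s).} For each $p\in\{-1,0,1\}$ let $H_p$ denote the unique local fundamental solution with $H_p(w)(w-p)^{-m_p}$ holomorphic at $p$ and equal to $\id$ there, where $m_p$ is the local exponent; let $H_\infty$ denote the analogous solution at $\infty$, obtained after the substitution $\tilde w=1/w$, which transforms the equation into one of the same form but with $a$ replaced by $c$ and $b_\pm$ unchanged. By definition $\Psi(a,b_+,b_-)=H_1^{-1}H_0$ on the interval $(0,1)$. The symmetry $w\mapsto -w$ swaps $b_\pm$ and the two $\pm1$-singularities, allowing me to identify $\Psi(a,b_-,b_+)$ as the connection matrix $H_{-1}^{-1}H_0$ on $(-1,0)$ (with branches chosen via the upper half plane). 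Similarly, using the symmetry $w\mapsto 1/w$, I identify $\Psi(c,b_+,b_-)$ as $H_1^{-1}H_\infty$ on $(1,+\infty)$ and $\Psi(c,b_-,b_+)$ as $H_{-1}^{-1}H_\infty$ on $(-\infty,-1)$.

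\textbf{Step 2 (The contractible loop).} Each factor $e^{\pi i m_p}$ is the half-monodromy (counterclockwise, as seen in the upper half plane) of $H_p$ along a small upper semicircle around $p$. Applying the operator sequence of the left-hand side of \eqref{eq:Eg} to $H_0$ at a base point in $(0,1)$, one traces out the following closed path step by step: an upper indentation over $0$; the interval $(-1,0)$ to a neighborhood of $-1$ with frame change from $H_0$ to $H_{-1}$; an upper indentation over $-1$; the segment $(-\infty,-1)$ to a neighborhood of $\infty$ with frame change to $H_\infty$; an upper semicircle over $\infty$; the segment $(+\infty,1)$ to a neighborhood of $1$ with frame change to $H_1$; an upper indentation over $1$; and finally the interval $(1,0)$ back to the base point with frame change to $H_0$. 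This path is precisely the boundary of the upper half plane with small semicircular excisions at the four singular points; it encloses a simply connected region containing no singularities, so its monodromy is trivial, which is exactly \eqref{eq:Eg}.

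\textbf{Main obstacle.} The technical core is the bookkeeping of branches: each factor $(w-p)^{m_p}$ is multi-valued, and one must verify that the normalizations of $H_0, H_1, H_{-1}, H_\infty$ and the half-turns $e^{\pi i m_p}$ all use a compatible choice of branch (namely, the branch induced from the upper half plane). In particular, the normalization of $H_\infty$ obtained via $\tilde w=1/w$ must be matched with the branch of $w^{-c}$ used on $(1,+\infty)$ and $(-\infty,-1)$, and the half-turn at $\infty$ viewed in $w$-coordinates (a large upper semicircle) must be correctly identified with a lower semicircle around $\tilde w=0$ in $\tilde w$-coordinates. A miscounted half-turn would introduce a spurious factor $e^{\pm 2\pi i m_p}$ and ruin the identity. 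Once these conventions are fixed consistently, the result follows from the contractibility of the traced loop. An alternative, more algebraic route is to extract the statement directly from Enriquez's analysis of the monodromy of the 2-cyclotomic KZ$_2$-equation in \cite{MR2383601}*{Section~4.5}, which carries out an equivalent calculation.
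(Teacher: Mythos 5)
Your argument is correct in outline, but it is presented quite differently from the paper's proof, which consists of a single citation: the identity is a specialization of Enriquez's Proposition~2.1 in \cite{MR2383601}, with the only point needing comment being that Enriquez works with formal power series in $\hbar$, whereas here the operators are honest endomorphisms, so one must observe that his formal identities specialize whenever every factor is analytically well-defined (which is the hypothesis "whenever the left hand side is well-defined"). What you have done instead is reconstruct the monodromy argument that underlies Enriquez's proposition: the four local exponents of the Fuchsian system \eqref{EqKZB2} at $-1,0,1,\infty$ are $b_-,a,b_+,c$ (your computation of the residue at $\infty$, forced by the vanishing of the sum of residues, is exactly why $c=-a-b_+-b_-$), the four connection matrices on the intervals $(0,1)$, $(-1,0)$, $(-\infty,-1)$, $(1,\infty)$ are identified with the four $\Psi$'s via the symmetries $w\mapsto -w$ and $w\mapsto 1/w$, and the eight factors compose to the analytic continuation around the boundary of the upper half-plane with semicircular indentations, which is trivial by simple connectedness. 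This is a legitimate and more self-contained route; what it costs is precisely what you flag as the main obstacle, namely the branch bookkeeping for $w^{a}$, $(1\mp w)^{b_\pm}$ and the normalization of $H_\infty$ under $\tilde w=1/w$ (including the orientation reversal of the half-turn at $\infty$), where a single miscounted half-turn would produce a spurious $e^{\pm 2\pi i m_p}$. Since you do not actually carry out that verification, your write-up is a correct strategy with an honestly acknowledged gap at exactly the delicate point; the paper avoids the issue entirely by delegating to Enriquez, at the price of having to trust that his conventions match the normalization of $\Psi$ adopted here.
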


\begin{proof}
This is a particular case of \cite{MR2383601}*{Proposition~2.1}. To be pedantic, Enriquez works with formal power series, but whenever all the terms in the above identity can be defined analytically, all his equalities can be specialized.
\end{proof}

We will apply this proposition to
\begin{align*}
a &= 2\hbar A = \hbar (2t^{\mfk}_{01}+C^{\mfk}_{1}),&
b_+ &= \hbar B_+= \hbar t_{12},&
b_- &= \hbar B_-=\hbar (t^{\mfk}_{12}-t^{\mfm}_{12})
\end{align*}
acting on $X\otimes V\otimes W$.

\begin{Lem}\label{LemComm}
Define
$$
d =\hbar(2 t^{\mfk}_{01}+ 2t^{\mfk}_{02} + 2t^{\mfk}_{12} + C_1^{\mfk} + C_2^{\mfk}).
$$
Then $d$ commutes with $a$, $b_+$ and $b_-$. Furthermore, with $c = -a - b_+ - b_-$, we have the following identities:
\begin{align} \label{eq:var}
d+c &= a_{02}=\hbar (2t^{\mfk}_{02}+C^{\mfk}_{2}),&
d-a &= (\Delta\otimes\id)(a),&
d &= (\id\otimes\Delta)(a).
\end{align}
\end{Lem}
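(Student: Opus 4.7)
The proof splits naturally into verifying the three identities in \eqref{eq:var} and then the three commutation statements.

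\textbf{Identities.} All three are direct expansions. Since $b_+ + b_- = 2\hbar t^{\mfk}_{12}$, one computes $c = -a - b_+ - b_- = -\hbar(2t^{\mfk}_{01} + 2t^{\mfk}_{12} + C^{\mfk}_1)$, from which $d + c = \hbar(2t^{\mfk}_{02} + C^{\mfk}_2) = a_{02}$ is immediate. For the remaining two, I would use the symmetry of $t^{\mfk}$ to write $(\id\otimes\Delta)(t^{\mfk}) = t^{\mfk}_{01} + t^{\mfk}_{02}$ and $(\Delta\otimes\id)(t^{\mfk}) = t^{\mfk}_{02} + t^{\mfk}_{12}$, and then expand $(\id\otimes\Delta)(a)$ and $(\Delta\otimes\id)(a)$ using $\Delta(C^{\mfk}) = C^{\mfk}\otimes 1 + 1\otimes C^{\mfk} + 2t^{\mfk}$; comparison with $d$ and $d - a$ finishes this part.

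\textbf{Commutation.} The conceptual heart is to rewrite
$$
d = \hbar\bigl(\Delta^{(2)}(C^{\mfk}) - C^{\mfk}_0\bigr),
$$
where $\Delta^{(2)} = (\id\otimes\Delta)\Delta$, viewing the right-hand side as an operator on legs $0,1,2$; this is just the identity $\Delta^{(2)}(C^{\mfk}) = \sum_{i=0}^{2} C^{\mfk}_i + 2\sum_{0 \le i < j \le 2} t^{\mfk}_{ij}$. Since $C^{\mfk}$ is central in $U(\mfk^{\C})$ and $\Delta^{(2)}$ is an algebra map, $\Delta^{(2)}(C^{\mfk})$ is central in the enveloping algebra of the diagonal $\mfk^{\C}$-action $\{X_0 + X_1 + X_2 : X \in \mfk^{\C}\}$. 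Thus $[d, \xi] = 0$ for $\xi \in \{a, b_+, b_-\}$ follows once we verify that each $\xi$ commutes with every $X_0 + X_1 + X_2$ for $X \in \mfk^{\C}$, and separately that $\xi$ commutes with $C^{\mfk}_0$.

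Commutation with $C^{\mfk}_0$ is immediate: $b_+$ and $b_-$ do not involve leg $0$, and the leg-$0$ component of $a$ lies in $U(\mfk^{\C})$, where $C^{\mfk}$ is central. For the diagonal $\mfk^{\C}$-invariance, I would use $[t^{\mfk}, \Delta(X)] = 0$ for $X \in \mfk^{\C}$ together with centrality of $C^{\mfk}$ to handle $a$; the full $\mfg$-invariance of $t$ (so $[t_{12}, \Delta(X)_{12}] = 0$ for all $X \in \mfg$) to handle $b_+ = \hbar t_{12}$; and the separate $\mfk^{\C}$-invariance of both $t^{\mfk}$ and $t^{\mfm}$ --- recorded in the proof of flatness of the $2$-cyclotomic KZ$_n$-system earlier in this section --- to handle $b_-$. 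The only obstacle worth mentioning is recognizing the reformulation of $d$ via the iterated Casimir $\Delta^{(2)}(C^{\mfk})$; once that is in place, every commutator collapses to an instance of $\mfk^{\C}$-invariance already established.
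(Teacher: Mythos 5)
Your proof is correct, and all the ingredients it uses ($\mfk^{\C}$-invariance of $t^{\mfk}$ and $t^{\mfm}$, $\mfg$-invariance of $t$, and $\Delta(C^{\mfk})=C^{\mfk}\otimes 1+1\otimes C^{\mfk}+2t^{\mfk}$) are exactly those the paper relies on; the treatment of the identities \eqref{eq:var} coincides with the paper's. Where you differ is in the packaging of the commutation step: the paper verifies directly that $d$ commutes with each of $t^{\mfk}_{01}$, $t^{\mfk}_{12}$, $C^{\mfk}_1$ and $t_{12}$ (splitting $d$ as $\hbar(2t^{\mfk}_{01}+2t^{\mfk}_{02})+\hbar\Delta(C^{\mfk})_{12}$ for the last of these), which covers $a$, $b_+$ and $b_-$ since $b_-=\hbar(2t^{\mfk}_{12}-t_{12})$; you instead observe that $d=\hbar\bigl(\Delta^{(2)}(C^{\mfk})-C^{\mfk}_0\bigr)$, so that commutation reduces to invariance of $a,b_\pm$ under the threefold-diagonal $\mfk^{\C}$-action together with the (trivial) commutation with $C^{\mfk}_0$. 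Your reformulation is slightly more conceptual --- it identifies $d$ with the total $\mfk$-Casimir of the triple tensor product and explains uniformly why every $\mfk$-equivariant coefficient of the connection must commute with it --- whereas the paper's direct check is shorter to write down; both are complete and rest on the same invariance facts, so this is a matter of organization rather than of substance.
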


\begin{proof}
As $t_{01}^{\mfk} + t_{02}^{\mfk}  = (\id\otimes \Delta)(t^{\mfk})$ and $t_{02}^{\mfk} + t_{12}^{\mfk} = (\Delta\otimes \id)(t^{\mfk})$, and $t^{\mfk}$ is $\mfk^{\C}$-invariant, it follows that $d$ commutes with $t^{\mfk}_{12}$ and $t^{\mfk}_{01}$. Clearly $d$ also commutes with $C_1^{\mfk}$. It remains to check that $d$ commutes with~$t_{12}$. The summand $t_{01}^{\mfk} + t_{02}^{\mfk}$ commutes with $t_{12}$ as $t$ is $\mfg$-invariant. For the same reason, $t_{12}$ commutes with $2t^{\mfk}_{12} + C_1^{\mfk} + C_2^{\mfk} = \Delta(C^{\mfk})$. Identities \eqref{eq:var} are straightforward from the equalities we have indicated in this proof.
\end{proof}

\begin{Theorem}\label{thm:EKZ}
For $\hbar\in i\R$, the triple  $\Rep_{\hbar}(\mfk) = (\Rep(\mfk^{\C}),\odot,\Psi,e^{-\pi i\hbar(2t^{\mfk}_{01} + C_1^{\mfk})})$ is a ribbon $\sigma$-braided right module C$^*$-category over  $\Rep_{\hbar}(\mfu) = (\Rep(\mfu),\otimes,\Phi,\Sigma e^{-\pi i\hbar t})$.
\end{Theorem}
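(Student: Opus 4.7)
There are three things to verify: (a) that $\eta_{X,V}=e^{-\pi i\hbar(2t^\mfk_{01}+C^\mfk_1)}$ defines a natural isomorphism $X\odot\sigma(V)\to X\odot V$ in $\Rep(\mfk)$; (b) the $\sigma$-octagon \eqref{EqOct2}; and (c) the ribbon $\sigma$-twist equation \eqref{EqRB2}. The module C$^*$-category structure $(\Rep(\mfk),\odot,\Psi)$ is already in place by the results cited from \cite{MR2126485} and \cite{MR2383601}, so the task is to add the twist-braid layer.

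For (a), since $\sigma$ fixes $\mfk^\C$ pointwise, $X\odot\sigma(V)$ and $X\odot V$ have the same underlying vector space and the same $\mfk$-module structure, so $\eta_{X,V}$ is a candidate morphism. The generator satisfies $2t^\mfk_{01}+C^\mfk_1=\Delta(C^\mfk)-C^\mfk_0$ on $X\otimes V$; both $\Delta(C^\mfk)$ and $C^\mfk_0$ are $\mfk$-central for the diagonal action, hence $\eta_{X,V}$ is $\mfk$-linear. Invertibility is obvious, and for $\hbar\in i\R$ unitarity follows from self-adjointness of the generator. Naturality in both arguments is automatic.

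For (b), the $\sigma$-octagon is essentially a reformulation of Enriquez's identity \eqref{eq:Eg} applied to the operators $a=\hbar(2t^\mfk_{01}+C^\mfk_1)$, $b_+=\hbar t_{12}$, $b_-=\hbar(t^\mfk_{12}-t^\mfm_{12})$ acting on $X\otimes U\otimes V$. The plan is to exhibit an edge-by-edge correspondence between the eight factors in \eqref{eq:Eg} and the eight arrows of \eqref{EqOct2}. The four monodromy factors $\Psi(a,b_\pm,b_\mp)$ and $\Psi(c,b_\pm,b_\mp)$ are matched with the four associators $\Psi_{X,U,V}$, $\Psi_{X,U,\sigma(V)}$, $\Psi_{X,V,U}$, $\Psi_{X,\sigma(V),U}$ (the latter two up to conjugation by the central factor $e^{\pi id}$, using $c=a_{02}-d$ from Lemma~\ref{LemComm}); the factors $e^{\pi ib_+}$ and $e^{\pi ib_-}$ become the non-flip parts of $\beta_{U,V}^{-1}$ and $\beta_{U,\sigma(V)}^{-1}$ (note that $t$ acting on $U\otimes\sigma(V)$ equals $(\id\otimes\sigma)(t)=t^\mfk-t^\mfm=b_-/\hbar$), while $e^{\pi ia}$ and $e^{\pi ic}$ become $\eta_{X,U}^{-1}\odot\sigma(V)$ and $\eta_{X\odot U,V}^{-1}$, the latter via $\eta_{X\odot U,V}=e^{-\pi i(d-a)}$ and centrality of $e^{-\pi id}$. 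The flips $\Sigma$ appearing in every $\beta$ cancel in pairs around the cyclic product, and \eqref{eq:Eg} then becomes the octagon.

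For (c), since $\sigma$ is an honest Lie algebra automorphism it is strict monoidal, so $(\sigma_2)_{U,V}=\id$. Moreover $t^\mfk$, $t^\mfm$, and $C^\mfk$ are all fixed under $\sigma\otimes\sigma$ (they live in subspaces where this action is trivial), so as operators on $X\otimes U\otimes V$ we have $\Psi_{X,\sigma(U),\sigma(V)}=\Psi_{X,U,V}$. Plugging in $\eta_{X\odot U,V}=e^{-\pi i(d-a)}$, $\eta_{X,U}\odot\sigma(V)=e^{-\pi ia}$, and $\eta_{X,U\otimes V}=e^{-\pi id}$, and using $[a,d]=0$ from Lemma~\ref{LemComm}, the ribbon twist equation reduces to $\Psi_{X,U,V}\,e^{-\pi id}\,\Psi_{X,U,V}^{-1}=e^{-\pi id}$, which holds immediately since $d$ commutes with $a$, $b_+$ and $b_-$, and hence with the monodromy $\Psi$ built from them.

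The main obstacle is step (b): checking the edge-by-edge correspondence between \eqref{eq:Eg} and the $\sigma$-octagon, since one has to track carefully how the flip $\Sigma$ inside each $\beta$ interacts with the permutations of legs that change which monodromy $\Psi(\cdot,\cdot,\cdot)$ represents which associator $\Psi_{X,?,?}$, and how the central shift $e^{\pi id}$ arising from $c=a_{02}-d$ telescopes around the cyclic product.
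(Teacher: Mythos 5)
Your overall route is the paper's: for (a) and (c) your arguments are correct and essentially what the paper does (the paper likewise notes that $\msE=e^{-\pi i a}$ commutes with $\Delta(x)=(\id\otimes\sigma)\Delta(x)$ for $x\in U(\mfk^\C)$, and reduces the ribbon equation, via $(\sigma_2)=\id$, $(\id\otimes\sigma\otimes\sigma)(\Psi)=\Psi$ and Lemma~\ref{LemComm}, to the fact that the central element $e^{-\pi i d}$ commutes with $\Psi$); for (b) the paper also deduces the octagon from Enriquez's identity \eqref{eq:Eg} applied to exactly your $a$, $b_\pm$. However, the explicit matching you propose in (b) --- the step you yourself single out as the crux --- is wrong at the two $\eta$-factors. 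The octagon \eqref{EqOct2} contains the morphisms $\eta_{X\odot U,V}$ and $\eta_{X,V}\odot U$; the morphism $\eta_{X,U}\odot\sigma(V)$ occurs only in the ribbon equation \eqref{EqRB2}, so $e^{\pi i a}$ cannot be matched with $(\eta_{X,U}\odot\sigma(V))^{-1}$. Likewise $e^{\pi i c}\ne\eta_{X\odot U,V}^{-1}$, even up to central factors: by Lemma~\ref{LemComm} one has $\eta_{X\odot U,V}^{-1}=e^{\pi i(\Delta\otimes\id)(a)}=e^{\pi i(d-a)}$, whereas $e^{\pi i c}e^{\pi i d}=e^{\pi i a_{02}}=\msE_{02}^{-1}=(\eta_{X,V}\odot U)^{-1}$, and $a_{02}$ is not equal to $d-a$ modulo central elements.

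The correct bookkeeping, which is what the paper carries out, is the following. Since $d$ commutes with $c$ and $b_\pm$, and since $\Psi(\cdot,b_+,b_-)$ is unchanged under adding a central operator to its first argument, one has $\Psi(c,b_+,b_-)=\Psi(a_{02},b_+,b_-)=\Psi_{021}$ and $\Psi(c,b_-,b_+)=(\id\otimes\id\otimes\sigma)(\Psi_{021})$; writing $e^{\pi ic}=e^{\pi ia_{02}}e^{-\pi id}$ and collecting the central factor on the right turns \eqref{eq:Eg} into $\Psi^{-1}e^{\pi ib_+}\Psi_{021}e^{\pi ia_{02}}(\id\otimes\id\otimes\sigma)(\Psi_{021}^{-1}e^{\pi ib_+}\Psi)e^{\pi ia}=e^{\pi id}$. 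Here $e^{\pi ia_{02}}$ accounts for $(\eta_{X,V}\odot U)^{-1}$, while it is the combination $e^{\pi id}e^{-\pi ia}=e^{\pi i(d-a)}$, obtained after moving $e^{\pi ia}$ to the right, that yields $\eta_{X\odot U,V}^{-1}=((\Delta\otimes\id)(\msE))^{-1}$; taking inverses and applying $\sigma$ to the last leg then gives precisely the $\sigma$-octagon. With this correction your argument goes through and coincides with the paper's proof.
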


\begin{proof}
Let $H(w)$ be a solution of the equation \eqref{EqKZB2}. If $D$ is an operator commuting with all $a,b_-,b_+$, one easily sees that $H_D(w) = H(w)z^{D}$ is a solution of \eqref{EqKZB2} with $a$ replaced by $a+D$. In particular, $\Psi(a,b_+,b_-) = \Psi(a+D,b_+,b_-)$.  Hence, as $c=a_{02}-d$, with $d$ commuting with $c,b_+,b_-$, it follows that we can write \eqref{eq:Eg} as
$$
\Psi(a,b_+,b_-)^{-1}e^{\pi i b_+}\Psi(a_{02},b_+,b_-)e^{\pi i a_{02}}\Psi(a_{02},b_-,b_+)^{-1}e^{\pi i b_-}\Psi(a,b_-,b_+)e^{\pi i a}=e^{\pi i d}.
$$
We have $\Psi(a,b_+,b_-)=\Psi$ and $\Psi(a_{02},b_+,b_-)=\Psi_{021}$. As $(\id\otimes\id\otimes\sigma)(b_+)=b_-$, we also have
$$
\Psi(a_{02},b_-,b_+)=(\id\otimes\id\otimes\sigma)(\Psi_{021})\ \ \text{and}\ \ \Psi(a,b_-,b_+)=(\id\otimes\id\otimes\sigma)(\Psi).
$$
Hence we get
\begin{equation} \label{eq:RTKZ}
\Psi^{-1}e^{\pi i b_+}\Psi_{021}e^{\pi i a_{02}}(\id\otimes\id\otimes\sigma)(\Psi_{021}^{-1}e^{\pi i b_+}\Psi) e^{\pi i a}=e^{\pi i d}.
\end{equation}

Let us now verify that this entails that $\msE = e^{-\pi i a} = e^{-\pi i\hbar (2t^{\mfk}_{01}+C^{\mfk}_1)}$ is a ribbon $\sigma$-braid with respect to $\msR = e^{-\pi i b_+} = e^{-\pi i \hbar t}$. Clearly $\msE$ commutes with $\Delta(X) = (\id\otimes \sigma)\Delta(X)$ for all $X\in U(\mfk)$.
If in \eqref{eq:RTKZ} we move $e^{\pi i a}$ to the right, take inverses and apply $\sigma$ to the last leg, we obtain (using the identity $d-a=(\Delta\otimes\id)(a)$) the $\sigma$-octagon relation
$$
\Psi^{-1} \msR_{21} \Psi_{021} \msE_{02}(\id \otimes \id \otimes \sigma)(\Psi_{021}^{-1} \msR_{12} \Psi) = (\Delta \otimes \id)(\msE).
$$
Using the relations of Lemma \ref{LemComm} and that $d$ commutes with $a$, we then obtain
$$
\Psi^{-1} \msR_{21} \Psi_{021} \msE_{02}(\id \otimes \id \otimes \sigma)(\Psi_{021}^{-1} \msR_{12} \Psi) \msE_{01} = (\id \otimes \Delta)(\msE),
$$
which is equivalent to the ribbon $\sigma$-braid equation as $\Psi = (\id\otimes \sigma\otimes \sigma)(\Psi)$ commutes with $(\id\otimes \Delta)(\msE)$.
\end{proof}

Up till now, the choice of a fixed set of Chevalley generators has not played any r\^{o}le. However, recall that this choice determined the particular form of the braided tensor equivalence
\[
\Rep_\hbar(\mfu) \rightarrow \Rep_q(\mfu),\qquad q = e^{\pi i \hbar}.
\]
By means of this equivalence, any braided autoequivalence of $\Rep_\hbar(\mfu)$ can be transported to a braided autoequivalence of $\Rep_q(\mfu)$, and any ribbon twist-braided module C$^*$-category for $\Rep_\hbar(\mfu)$ correspondingly defines one for $\Rep_q(\mfu)$. In particular, we can interpret in this way $\Rep_\hbar(\mfk)$ as a ribbon $\sigma$-braided module C$^*$-category for $\Rep_q(\mfu)$.

\begin{Rem} \label{RemEqInn}
Note that if $\sigma$ and $\sigma'$ are inner equivalent, there is an obvious isomorphism between the associated twist-braided module C$^*$-categories.
\end{Rem}

\subsection{Ribbon braided module \texorpdfstring{C$^*$}{C*}-categories from coideal quantum symmetric pairs}\label{SecCoid}

Fix again a compact semisimple Lie algebra $\mfu$ together with compatible Chevalley generators $\{e_r,f_r,h_r\mid r\in I\}$ of $\mfg = \mfu^{\C}$. Fix also an admissible pair $(X,\tau)$ with associated involution $\theta=\theta(X,\tau)$ and the fixed point Lie algebra $\mfk = \mfu^{\theta}$. Then $\theta$ has a quantum analogue~\cite{MR3269184}*{Definition 4.3}
$$
\theta_\mbq = \theta_\mbq(X,\tau) =  (\Ad s(X,\tau)) \circ T_{w_X} \circ \psi \circ \tau \circ \omega \colon U_\mbq(\mfg)\rightarrow U_\mbq(\mfg),
$$
where the different maps in the composition are all $\C(\mbq^{1/d})$-algebra automorphisms given by the following formulas:
\begin{itemize}
\item $\omega$ is the \emph{quantum Chevalley} automorphism
\begin{align*}
\omega(E_r) &= -F_r,&
\omega(F_r) &= -E_r,&
\omega(K_{\chi}) &= K_{-\chi},
\end{align*}
\item $\tau$ is the automorphism induced by the Dynkin diagram automorphism,
\begin{align*}
\tau(E_r) &= E_{\tau(r)},&
\tau(F_r) &= F_{\tau(r)},&
\tau(K_{\varpi_r}) &= K_{\varpi_{\tau(r)}},
\end{align*}
\item $\psi$ is the automorphism
\begin{align*}
\psi(E_r) &= E_rK_r,&
\psi(F_r) &= K_r^{-1}F_r,&
\psi(K_{\chi}) &= K_{\chi},
\end{align*}
\item $T_{w_X}$ is the \emph{Lusztig braid operator} (following the conventions of \cite{MR1359532}*{Section 8}, see also Remark in loc.~cit.~Section 8.6) corresponding to the longest element in the Weyl group associated to $X$: with a reduced expression
\[
w_X = s_{r_1}\ldots s_{r_M},
\]
we have
\[
T_{w_X} = T_{r_1}\ldots T_{r_M}
\]
where each $T_r$ is the algebra automorphisms determined by
\begin{align*}
T_r(E_r) &= -F_rK_r,&
T_r(F_r) &= -K_r^{-1}E_r,&
T_r(K_{\chi}) &= K_{s_r(\chi)}
\end{align*}
and for $r\neq s$
\[
T_r(E_s) = \sum_{m+n= -a_{rs}}
\frac{(-\mbq_r)^{-m}}{[m]_{\mbq_r}![n]_{\mbq_r}!}
E_r^{n}E_sE_r^{m},\quad
T_r(F_s) = \sum_{m+n = -a_{rs}}
\frac{(-\mbq_r)^{m}}{[m]_{\mbq_r}![n]_{\mbq_r}!}
F_r^{m}F_sF_r^{n},
\]
\item $\Ad(s(X,\tau))$ is given by
\[
E_r \mapsto z_rE_r,\qquad F_r \mapsto z_r^{-1}F_r,\qquad
K_{\chi}\mapsto K_{\chi}
\]
where $z_r = s(X,\tau)_r$.
\end{itemize}
The same construction applies verbatim to construct an automorphism $\theta_q$ of $U_q(\mfg)$ with $q \in \C \setminus \{ 0 \}$ not a root of unity. Note however that $\theta_{\mbq}$ is not involutive and does not preserve the comultiplication. Furthermore, $\theta_q$ does not preserve the $*$-structure when $0<q<1$.

Associated to $(X,\tau)$ one has a (family of) \emph{right coideal subalgebras}
\[
B \subseteq U_\mbq(\mfg),\qquad \Delta(B) \subseteq B\otimes U_\mbq(\mfg).
\]
To introduce them, we further borrow notation from \cite{MR3269184}. Let us first recall the parameter sets $\mcC_{\mbq}$ and $\mcS_\mbq$. Writing
\[
I_{\mcC} = \{r \in I \setminus X \mid \tau(r)\neq  r\textrm{ and }(\alpha_r,\Theta(\alpha_r)) = 0\},
\]
we put
\[
\mcC_{\mbq}  = \{(c_r(\mbq))_{r \in I \setminus X}\mid c_r(\mbq) \in \C(\mbq^{1/d})^\times \textrm{ and } c_r(\mbq) = c_{\tau(r)}(\mbq) \textrm{ for }r \in I_{\mcC}\}.
\]
(Recall that $\Theta$ denotes the involutive transformation dual to $\theta_{\mid \mfh}$.)
Note that by \cite{MR1834454}*{Chapter X, Exercises and further results, F.3-5} when $\tau(r)\neq r$ we have
\[
(\alpha_r,\Theta(\alpha_r)) = 0 \Leftrightarrow \mfg_{2\overline{\alpha_r}} = 0,
\]
where $\overline{\alpha}$ is the restriction of $\alpha$ to $\mfa_{\theta} = \{x \in i\mft \mid \theta(x) = -x \}$ and $\mfg_{\psi}$ for $\psi \in \mfa^*$ is the corresponding $\mfa_{\theta}$-eigenspace. In terms of the Satake diagram, this means $r\in I_{\mcC}$ if and only if $\tau(r) \neq r$ and there is at least one white vertex on the path between $r$ and $\tau(r)$.

Let us further write
$$
I_\ns = \{r \in I\setminus X\mid \tau(r) = r \textrm{ and
}(\alpha_r,\alpha_s)=0 \textrm{ for all } s \in X\},
$$
which correspond to the white vertices not connected to arrows or any black vertices in the Satake diagram. Next, we define
$$
I_{\mcS}=\{r\in I_\ns\mid a_{sr}\in 2\Z\textrm{ for all }s\in I_\ns\}. \footnote{This corrects a misprint in the definition of the set $\mcS$ in \cite{MR3269184}*{(5.11)}, where the $a_{ij}$ should read $a_{ji}$.}
$$
In other words, $I_{\mcS}$ consists of vertices $r \in I_\ns$ such that, when $s \in I_\ns$ is different from $r$, either there is no edge between $r$ and $s$ or there is an arrow of multiplicity two from $r$ to $s$ (when $\mfg$ is simple, the latter possibility happens only for the type CI case corresponding to $\mfk^\C \cong \mathfrak{gl}_l(\C) \subset \mathfrak{sp}_{2l}(\C) \cong \mfg$). Then we define
\[
\mcS_{\mbq} = \{(s_r(\mbq))_{r\in I\setminus X} \mid s_r(\mbq) \in \C(\mbq^{1/d}) \textrm{ and } s_r(\mbq) = 0 \textrm{ for }r\notin I_{\mcS}\}.
\]

We will also use the shorthand notation
\[
\mcT_{\mbq} = \mcC_{\mbq} \times \mcS_{\mbq} \subseteq \C(\mbq^{1/d})^{I\setminus X}\times \C(\mbq^{1/d})^{I\setminus X},
\]
and write $\mbt = (\mbc,\mbs)$ for a typical element in $\mcT_{\mbq}$.

Let $\mfg_X$ be the Lie algebra generated by the elements $e_r,f_r,h_r$ with $r\in X$, and let $U_\mbq(\mfg_X)$ be the $\C(\mbq^{1/d})$-algebra generated by the elements $E_r,F_r,K_r$ with $r\in X$. Moreover, let $U_\mbq(\mfh^{\theta})$ denote the algebra generated by the elements $K_{\omega}$ with $\Theta(\omega) = \omega$.

\begin{Def}
For $\mbt = (\mbc,\mbs) \in \mcT_{\mbq}$, we define $U_\mbq^{\mbt}(\mfk^{\C}) = B_{\mbc,\mbs}$ to be the algebra generated by $U_\mbq(\mfg_X)$, $U_\mbq(\mfh^{\theta})$ and the elements
\[
B_r = F_r + c_r \theta_\mbq(F_r K_r)K_r^{-1} + s_r K_r^{-1},\qquad r\in I\setminus X.
\]
\end{Def}

From \cite{MR3269184}*{Proposition 5.2}, it follows that $U_{\mbq}^{\mbt}(\mfk^{\C})$ is a right coideal subalgebra of $U_\mbq(\mfg)$. Again, the same constructions apply verbatim to the Hopf algebra $U_q(\mfg)$ for $q\ne0$ not a root of unity, in which case $\mcC_q$, $\mcS_q$ are interpreted as subsets of $\C^{I\setminus X}$.

For $0<q<1$ and $U_q(\mfu)$ as before, it has been observed in \cite{MR1913438}*{discussion before Theorem 7.6} that $U_q^{\mbt}(\mfk^{\C})$ should always be `twisted' $*$-invariant, see also the discussion in \cite{MR1742961}*{Section 3}. For us, it will be important to know when $U_q^{\mbt}(\mfk^{\C})$ is genuinely $*$-invariant. We therefore refine the corresponding statement of \cite{MR1913438} in the theorem below. We thank  Weiqiang Wang for pointing out to us that \cite{arXiv:1610.09271}*{Proposition 4.6} considers a similar treatment of $*$-invariance. As however we are using different conventions and normalisations, we spell out some of the details. 

Choose a set $I^*$ of representatives for $\tau$-orbits in $I\setminus X$. Note that by the proof of \cite{MR3269184}*{Proposition 9.2}, we can conjugate $U_q^{\mbt}(\mfk^{\C})$ with a unitary element in the (completed) Cartan part as to have $c_r >0$ when $\tau(r)=r$ or $r\in I_{\mcC}$ or $r\notin I^*$, without altering the invariance under $*$. We will hence assume this as an extra condition in the following.\footnote{Although the claim for $r\notin I^*$ is not substantiated in \cite{MR3269184}*{Proposition 9.2} a proof is easily provided, cf.~\cite{MR1913438}*{Section 7, Variation 1}.}

\begin{Theorem}\label{TheoStarInv}
The algebra $U_q^{\mbt}(\mfk^{\C})$ is $*$-invariant if $s_r \in i\R$ and
\begin{equation}\label{EqC}
c_{\tau(r)}c_r = q^{(\Theta(\alpha_r)-\alpha_{r},\alpha_{\tau(r)})}
\end{equation}
for all $r\in I\setminus X$.
\end{Theorem}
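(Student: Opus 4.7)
The plan is to verify $*$-invariance on generators. The subalgebra $U_q(\mfg_X)$ is $*$-invariant because $E_r^* = F_r K_r$ with $K_r \in U_q(\mfg_X)$ for $r \in X$, and $U_q(\mfh^\theta)$ is $*$-invariant because $K_\omega^* = K_\omega$. Hence it suffices to show $B_r^* \in U_q^{\mbt}(\mfk^\C)$ for every $r \in I \setminus X$.

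Unwinding $\theta_q = \Ad s(X,\tau) \circ T_{w_X} \circ \psi \circ \tau \circ \omega$ directly on $F_r K_r$, one checks that $\theta_q(F_r K_r) = -z_{\tau(r)} T_{w_X}(E_{\tau(r)})$, so that
$$
B_r = F_r - c_r z_{\tau(r)} T_{w_X}(E_{\tau(r)}) K_r^{-1} + s_r K_r^{-1}.
$$
Applying $*$ and using $F_r^* = K_r^{-1} E_r$, the essential subtask is the computation of $T_{w_X}(E_{\tau(r)})^*$. For this I would invoke the standard commutation formulas between $*$ and the Lusztig operators $T_s$ for $s \in X$: each $T_s(x)^*$ is obtainable from $T_s^{-1}(x^*)$ by conjugation with an explicit Cartan factor. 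Iterating along a reduced expression for $w_X$ and keeping track of the resulting $q$-powers, one produces an identity of the shape
$$
T_{w_X}(E_{\tau(r)})^* = q^{N_r}\, K_{\delta_r}\, T_{w_X}(F_{\tau(r)} K_{\tau(r)})\, K_{\delta_r}^{-1},
$$
for an explicit integer $N_r$ and $\delta_r$ in the root lattice of $X$. Since $T_{w_X}(F_{\tau(r)} K_{\tau(r)}) = -z_r^{-1} \theta_q(E_{\tau(r)} K_{\tau(r)}^{-1})$ by construction of $\theta_q$, substituting back and pushing all $K$-factors to the left rearranges $B_r^*$ into the form $\lambda_r K_{\gamma_r} B_{\tau(r)}$ modulo $U_q(\mfg_X)$, with $\gamma_r$ in the $\Theta$-fixed sublattice (so $K_{\gamma_r} \in U_q(\mfh^\theta)$).

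Matching the coefficient of $F_{\tau(r)}$ against the coefficient of the $T_{w_X}(E_r)$-term then forces exactly $c_r c_{\tau(r)} = q^{(\Theta(\alpha_r) - \alpha_r, \alpha_{\tau(r)})}$, the $z$-factors being absorbed via~\eqref{eq:z2} and the standing normalization $z_r = 1$ for $r$ fixed by $\tau$. In the case $\tau(r) = r$ (the only situation in which $s_r$ may be non-zero), an additional matching of the $K_r^{-1}$-coefficients gives $\bar s_r = \lambda_r s_r$, and once the $c$-relation is in force a short calculation (which I verified explicitly in rank one, where $\lambda_r = -c_r q^{(\alpha_r,\alpha_r)} = -1$) shows that $\lambda_r = -1$, yielding $s_r \in i\R$. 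The principal obstacle is the Lusztig computation for $T_{w_X}(E_{\tau(r)})^*$ with exact tracking of $q$-exponents: extracting the exponent $(\Theta(\alpha_r) - \alpha_r, \alpha_{\tau(r)})$ on the nose requires careful bookkeeping of every Cartan twist. This is most cleanly done by reducing to the rank-two $\{r\} \cup X$-subdiagrams, where $T_{w_X}(E_{\tau(r)})$ admits a finite explicit expansion; alternatively one may adapt the analogous $*$-invariance analysis from~\cite{arXiv:1610.09271}*{Proposition~4.6} to the conventions of~\cite{MR3269184}.
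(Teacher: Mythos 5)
Your opening reduction and the unwinding $\theta_q(F_rK_r)=-z_{\tau(r)}T_{w_X}(E_{\tau(r)})$ are correct, and your overall strategy --- express $B_r^*$ in terms of $B_{\tau(r)}$ conjugated back into position and match scalars --- is the same one the paper follows. The genuine gap is that the only nontrivial content of the theorem, namely the exact scalar in the identity you display for $T_{w_X}(E_{\tau(r)})^*$, is asserted rather than proved. Everything hinges on extracting the exponent $(\Theta(\alpha_r)-\alpha_r,\alpha_{\tau(r)})$, and you explicitly defer this to ``careful bookkeeping'' along a reduced expression, or to a reduction to ``rank-two $\{r\}\cup X$-subdiagrams''. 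The latter is not available in general: $X$ is the full set of black vertices of the Satake diagram and can be arbitrarily large, so the subdiagram generated by $r$ and $X$ is not of rank two, and $T_{w_X}(E_{\tau(r)})$ has no tractable closed expansion there. Iterating a commutation rule between $*$ and the $T_s$ letter by letter through a reduced word for $w_X$ would in principle work, but the accumulated Cartan twists, signs and $q$-powers are exactly the quantity to be controlled, and no mechanism for controlling them is given.

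The paper avoids this by a representation-theoretic device you should compare with. One writes $T_{w_X}(E_r)=a_r^+\Ad_q(Z_r^+(X))(E_r)$ with $Z_r^+(X)\in U_q(\mfg_X)$ an explicit monomial in the $E_s$, $s\in X$, so that the star can be moved through the adjoint action via $\Ad_q(y)(x)^*=\Ad_q(S(y)^*)(x^*)$; the resulting expression for $B_r^*$ is then compared with $\Ad_q(S(Z_{\tau(r)}^+(X))^*)(B_{\tau(r)}K_{\tau(r)})K_r^{-1}$, which lies in the coideal because $\Ad_q(U_q(\mfg_X))$ preserves it --- a containment that itself needs an argument and that your phrase ``modulo $U_q(\mfg_X)$'' elides. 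The scalars $a_r^+$ and the eigenvalue $d_{\alpha_r}$ of $\Ad_q(Z^-Z^+)$ on $E_r$ are then computed uniformly by letting $T_{w_X}$ act on extremal weight vectors of finite-dimensional $U_q(\mfg_X)$-modules, yielding $a_r^+=d_{\alpha_r}^{-1/2}$; this single identity is what produces the exponent in \eqref{EqC}. Note finally that the case split needed for the $s_r$-condition is the one the paper makes ($a_{rs}=0$ for all $s\in X$ versus not): $s_r$ can be nonzero only in the first case, where $T_{w_X}(E_r)=E_r$ and the computation is elementary, and there the constraint on $c_r$ together with $c_r>0$ forces $c_r=q_r^{-2}$ and hence $\overline{s_r}=-s_r$. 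As it stands your argument establishes the shape of the answer but not the theorem.
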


We will prove this theorem in Appendix \ref{Ap2}. Note that this implies that $c_r>0$ for all $r$ under the above convention on $c_r$.

In the following, we will write
\[
\mcC_{q,c} = \{\mbc \in \mcC_q \mid c_r >0, c_{\tau(r)}c_r = q^{(\Theta(\alpha_r)-\alpha_{\tau(r)},\alpha_r)}\},\quad \mcS_{q,c} = \{\mbs\in \mcS_q \mid s_r \in i\R\},\qquad  \mcT_{q,c}= \mcC_{q,c}\times \mcS_{q,c}.
\]
We then write $U_q^{\mbt}(\mfk)$ for the corresponding $*$-invariant coideal subalgebra of $U_q(\mfu)$. When $s_r = 0$ and $0 < c_r = c_{\tau(r)}$ satisfies \eqref{EqC} for all $r$ (fixing all $c_r$), we will refer to this as the \emph{no-parameter-case}, and write $\mbt = (\mbc,\mbs) = 0$ and
\[
U_q^{\theta}(\mfk) = U_q^0(\mfk).
\]

\begin{Def}
By a finite \emph{admissible} $*$-representation of $U_q^{\mbt}(\mfk)$ we mean any $*$-representation $(V,\pi)$ of $U_q^{\mbt}(\mfk)$ on a finite-dimensional pre-Hilbert space $V$ such that the operators $\pi(K_{\omega})$ for $K_\omega \in U_q(\mfh^{\theta})$ have positive spectrum.
\end{Def}

The following is now obvious.

\begin{Theorem}
Let $\Rep_q(\mfu)$ be the tensor C$^*$-category of finite admissible  $*$-representations of $U_q(\mfu)$. Let $ \Rep_{q}^{\mbt}(\mfk)$ be the C$^*$-category of finite admissible $*$-representations of $U_q^{\mbt}(\mfk)$. Then $\Rep_{q}^{\mbt}(\mfk)$ is a (strict) module C$^*$-category over $\Rep_q(\mfu)$ by means of \[\pi \odot \pi' = (\pi\otimes \pi')\circ \Delta_{\mid U_q^{\mbt}(\mfk)},\qquad f\odot g = f\otimes g.\]
We write $\Rep_q^{\theta}(\mfk) = \Rep_q^0(\mfk)$ for the no-parameter case.
\end{Theorem}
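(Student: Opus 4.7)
The statement asserts that a standard construction — tensoring a representation of a right coideal subalgebra with a representation of the ambient Hopf algebra — produces a module C$^*$-category in this specific setting. The plan is to verify the four ingredients: (i) the tensor product carries a well-defined $U_q^{\mbt}(\mfk)$-action, (ii) this action is a $*$-representation, (iii) admissibility is preserved, (iv) strictness of the associator.

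First I would invoke the coideal property of \cite{MR3269184}*{Proposition~5.2}, which gives $\Delta(U_q^{\mbt}(\mfk))\subseteq U_q^{\mbt}(\mfk)\otimes U_q(\mfg)$. Combining this with admissible $*$-representations $\pi$ of $U_q^{\mbt}(\mfk)$ on $V$ and $\pi'$ of $U_q(\mfu)$ on $V'$, the composition $(\pi\otimes\pi')\circ\Delta_{\mid U_q^{\mbt}(\mfk)}$ is a well-defined algebra homomorphism $U_q^{\mbt}(\mfk)\to \End(V\otimes V')$. Similarly, for morphisms $f\colon V_1\to V_2$ in $\Rep_q^{\mbt}(\mfk)$ and $g\colon V_1'\to V_2'$ in $\Rep_q(\mfu)$, the map $f\otimes g$ intertwines the coideal action since $(f\otimes g)\Delta(b)=\Delta(b)(f\otimes g)$ by the intertwining properties of $f$ and $g$ separately. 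This gives a well-defined bifunctor $\odot$.

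Next I would check the $*$-property. By Theorem~\ref{TheoStarInv} (which we may invoke since $\mbt\in\mcT_{q,c}$ under the standing conventions), $U_q^{\mbt}(\mfk)$ is $*$-invariant as a subalgebra of $U_q(\mfu)$. Because $\Delta$ is a $*$-homomorphism on $U_q(\mfu)$ and $\pi,\pi'$ are $*$-representations, the composition $(\pi\otimes\pi')\circ\Delta_{\mid U_q^{\mbt}(\mfk)}$ is a $*$-representation of $U_q^{\mbt}(\mfk)$. For admissibility, note that $U_q(\mfh^\theta)$ is spanned by the $K_\omega$ with $\Theta(\omega)=\omega$, and $\Delta(K_\omega)=K_\omega\otimes K_\omega$. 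On the tensor product, each factor acts with positive spectrum — the first by the assumed admissibility of $\pi$, the second since $\pi'$ decomposes into weight spaces of $U_q(\mfh)$ on which $K_\omega$ acts as a positive scalar $q^{(\omega,\chi)}$. Hence the tensor action has positive spectrum, so $\pi\odot\pi'$ is again admissible.

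The associator is taken to be the identity, and the pentagon reduces to the coassociativity of $\Delta$ restricted to $U_q^{\mbt}(\mfk)$; likewise $X\odot\un=X$ with $r_X=\id$ satisfies the unit triangles trivially. All axioms of a module C$^*$-category with strict unit and strict associator are thereby satisfied. The only nontrivial input is the coideal and $*$-invariance property of $U_q^{\mbt}(\mfk)$, both of which have already been established, which is why the statement is labelled ``obvious'' — the main potential subtlety is checking that admissibility (positivity of the $K_\omega$ for $\omega\in P^\Theta$) is preserved under tensor products, and this follows immediately from the coproduct formula on the Cartan generators together with the weight decomposition in $\Rep_q(\mfu)$.
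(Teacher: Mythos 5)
Your proof is correct and fills in exactly the routine verifications the paper omits (it states the theorem as "now obvious"): the coideal property gives the action, $*$-invariance of $U_q^{\mbt}(\mfk)$ together with $\Delta$ being a $*$-homomorphism gives the $*$-representation property, positivity of the $K_\omega$ on each tensor factor gives admissibility, and coassociativity gives strictness. This is the same (standard) approach the paper intends.
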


As the next theorem shows, the value of $\mbt$ is, in fact, irrelevant.

\begin{Theorem}
The $\Rep_q(\mfu)$-module C$^*$-categories $\Rep_q^{\mbt}(\mfk)$ are all equivalent.
\end{Theorem}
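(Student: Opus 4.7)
The plan is to produce, for each pair $\mbt, \mbt' \in \mcT_{q,c}$, an explicit equivalence $\Rep_q^{\mbt}(\mfk) \cong \Rep_q^{\mbt'}(\mfk)$ of module C$^*$-categories. By transitivity it is enough to fix $\mbt' = 0$ (the no-parameter case) and vary $\mbt$. Concretely, I would look for a $*$-algebra isomorphism $\Phi\colon U_q^0(\mfk) \to U_q^{\mbt}(\mfk)$ implemented by conjugation in a suitable completion of $U_q(\mfu)$ by an element $u$ that is group-like, $\Delta(u) = u \otimes u$. Then pullback along $\Phi$ gives a $*$-functor $G\colon \Rep_q^{\mbt}(\mfk) \to \Rep_q^0(\mfk)$, and the structure isomorphism $G(X) \odot U \to G(X \odot U)$ is provided by the action of $u$ on the $\Rep_q(\mfu)$-factor, in the spirit of Remark~\ref{rem:sigma-change}. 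The isomorphism of monoidal autoequivalences $F$ in the definition of equivalence is trivial here, as there is no autoequivalence $\sigma$ in sight.

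For the multiplicative parameters $\mbc \in \mcC_{q,c}$, I would use that any two solutions of the constraint $c_r c_{\tau(r)} = q^{(\Theta(\alpha_r)-\alpha_r,\alpha_{\tau(r)})}$ with $c_r > 0$ differ by $\lambda_r > 0$ satisfying $\lambda_r \lambda_{\tau(r)} = 1$. The generator $B_r = F_r + c_r X_r + s_r K_r^{-1}$, where $X_r := \theta_q(F_rK_r)K_r^{-1}$ has weight $-\alpha_r$, rescales uniformly in $F_r$ and $X_r$ under conjugation by a Cartan element $K_{i\omega}$; this element is unitary and group-like in the completion $M(\msU_q(\mfu))$. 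Writing the change $\mbc \mapsto \mbc'$ as logarithmic data and solving for $\omega$ using the $\tau$-compatibility condition should give precisely the required implementer. For the additive parameters $\mbs \in \mcS_{q,c}$, I would invoke the third part of the Appendix, which promises a link between $\mcT_{q,c}$ and the character theory of $U_q^0(\mfk)$: for $r \in I_\mcS$, the special position of $\alpha_r$ (fixed by $\tau$ and orthogonal to $X$) allows one to construct $*$-characters $\chi$ of $U_q^0(\mfk)$ with $\chi(B_r) = -s_r$, so that the substitution $B_r \mapsto B_r - \chi(B_r)$ realises the $\mbs$-twist.

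The main obstacle is packaging both shifts as conjugation by a single group-like unitary multiplier, so that the coaction $\Delta_{|U_q^\mbt(\mfk)}$ is intertwined with $\Delta_{|U_q^0(\mfk)}$ and the module functor is genuinely a $\Rep_q(\mfu)$-module functor. The $\mbc$-part, handled by $K_{i\omega}$, is unproblematic since $K_{i\omega}$ is manifestly group-like. The $\mbs$-part is more delicate, because a character of $U_q^0(\mfk)$ does not in general lift to a group-like element of $U_q(\mfu)$. I would address this by verifying directly that the twist $B_r \mapsto B_r - \chi(B_r)$ preserves the right coideal property inside $U_q(\mfu)$ — using that $\chi$ vanishes on the $U_q(\mfu)$-cofactor in $\Delta(B_r)$ thanks to $\chi$ being a character — and that the pullback functor $G$ therefore automatically carries the $\odot$-action of $\Rep_q(\mfu)$ across with trivial coherence isomorphism $G_2 = \id$. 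Combining the Cartan conjugation and the character twist then yields the desired equivalence.
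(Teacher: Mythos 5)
Your overall strategy --- realizing $U_q^0(\mfk)\cong U_q^{\mbt}(\mfk)$ by a conjugation compatible with the coproduct, and sourcing the additive shifts from $*$-characters of the coideal --- is close to the paper's. The paper proves in the Appendix that any two such coideals are $\chi$-conjugate via $\pi=(\chi\otimes\id)\Delta$ for a $*$-character $\chi$ of the coideal; by coassociativity this satisfies $(\pi\otimes\id)\Delta=\Delta\pi$ exactly, so the pullback functor is a module functor with $G_2=\id$ and the equivalence is immediate. Like the paper's own two-line proof, you defer the substantive content (existence of the required characters, obtained by computing the maximal commutative $*$-quotient of the coideal) to the appendix, which is acceptable here.

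The gap is in your treatment of the multiplicative parameters. Every element of $\mcC_{q,c}$ has all $c_r>0$, and two such tuples differ by positive factors $\lambda_r$ with $\lambda_r\lambda_{\tau(r)}=1$, nontrivial only on the distinguished $\tau$-orbit. Conjugation by a \emph{unitary} Cartan element $K_{i\omega}$ multiplies $c_r$ by the unimodular number $q^{i(\omega,\alpha_r-\Theta(\alpha_r))}$ --- this is exactly what the paper uses to normalize the \emph{phases} of the $c_r$ --- so it cannot move between two distinct points of $\mcC_{q,c}$. To achieve a positive rescaling you must take $\omega$ real, i.e., a positive non-unitary $K_\omega$; but then $\Ad(K_\omega)$ intertwines $\Delta$ only up to $\Ad K_\omega\otimes\Ad K_\omega$, so the coherence $G_2=1\otimes\pi_U(K_\omega)$ is positive rather than unitary, which violates the definition of equivalence of module C$^*$-categories used in the paper. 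The paper sidesteps both problems by implementing the $\mbc$-shift also via $(\chi\otimes\id)\Delta$, with $\chi(K_\mu)=q^{f(\mu)}$ and $\chi(B_r)=0$ (the C-type characters); this rescales $c_r$ by $q^{f(\alpha_{\tau(r)}-\alpha_r)}$ while keeping $G_2=\id$, and the fact that $\pi(K_\mu)$ is a \emph{positive} multiple of $K_\mu$ is also what guarantees that admissibility of representations is preserved --- a point your proposal does not address. Finally, the additive twist should read $B_r\mapsto B_r+(\chi(B_r)-s_r)K_r^{-1}$, a shift by a multiple of $K_r^{-1}$ coming from $(\chi\otimes\id)\Delta$, not the subtraction of a scalar $\chi(B_r)$ from $B_r$.
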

\begin{proof}
We will show in Appendix \ref{SecCharCoid} that for $\mbt,\mbt' \in \mcT_{q,c}$ there exist $^*$-isomorphisms
\[
\pi_{\mbt,\mbt'}\colon U_q^{\mbt}(\mfk) \rightarrow U_q^{\mbt'}(\mfk)
\text{ such that }
(\pi_{\mbt,\mbt'}\otimes \id)\Delta = \Delta \pi_{\mbt,\mbt'}
\]
on $U_q^{\mbt}(\mfk)$, and such that $\pi_{\mbt,\mbt'}(K_{\omega})$ is a \emph{positive} scalar multiple of $K_{\omega}$ for each $K_{\omega} \in U_q(\mfh^{\theta})$. This clearly implies the theorem.
\end{proof}

Hence in the following we can restrict to the no-parameter case. In order to turn $\Rep_q^{\theta}(\mfk)$ into a ribbon (twist-)braided module category, we will make use of the results from \citelist{\cite{arXiv:1507.06276}\cite{arXiv:1705.04238}}. We again use freely the notation used there, which is compatible with the notation from \cite{MR3269184}.

There is a slight adaption to be made as the results of \cite{arXiv:1705.04238} require some extra assumptions on~$\mbc,\mbs$ which are incompatible with the assumptions needed for our setting. To resolve this, let us for the moment return to the case of a formal variable $\mbq$. In \cite{arXiv:1705.04238}*{(3.13), (3.15) and (3.20)}, it is assumed that $\mbc\in \mcC_{\mbq}$ and $\mbs\in \mcS_{\mbq}$ satisfy the extra relations
$$
c_{\tau(r)}(\mbq) = \mbq^{(\alpha_r,\Theta(\alpha_r)-2\rho_X)}c_r(\mbq^{-1}),\quad c_{\tau(r)}(\mbq) = c_{\tau_0(r)}(\mbq),\quad s_r(\mbq) = s_r(\mbq^{-1}),
$$
where $\tau_0$ is the diagram automorphism (nontrivial only for the A$_n$, D$_{2n+1}$, and E$_6$ types) characterized by $\alpha_{\tau_0(r)} = -w_0\alpha_r$  with $w_0$ the longest element in the Weyl group of $\mfg$. As a case-by-case investigation of the Satake diagrams shows, the set $X$ is always preserved by $\tau_0$, and $\tau_0$ and $\tau$ commute \cite{arXiv:1507.06276}*{Remark 7.2}. Upon adding roots of $\mbq$ if necessary, it follows that the special solution
\[
c_{r}'(\mbq) = \mbq^{\frac{1}{2}(\alpha_r,\Theta(\alpha_r)-2\rho_X)},\qquad s_r'(\mbq) = 0,\qquad \mbt' = (\mbc',\mbs')
\]
always satisfies the extra relations. Let now $\omega_0 \in \mfh^*$ be such that
\[
(\omega_0,\alpha_r) = 0,\quad r\in X,\qquad (\omega_0,\alpha_r) = \frac{1}{4}(\Theta(\alpha_{\tau(r)}) - \alpha_{\tau(r)} -\Theta(\alpha_r) +2\rho_X,\alpha_r),\quad r\in I\setminus X.
\]
Since $\tau\omega_0$ satisfies the same conditions, $\omega_0$ is $\tau$-invariant. Moreover, since $\omega_0$ vanishes on the roots in $X$, it follows that $\Theta(\omega_0) = -\tau(\omega_0) = -\omega_0$, so
\[
(\omega_0,\alpha_r -\Theta(\alpha_r)) = 2(\omega_0,\alpha_r) =  \frac{1}{2}(\Theta(\alpha_{\tau(r)}) - \alpha_{\tau(r)} -\Theta(\alpha_r) +2\rho_X,\alpha_r)\quad r\in I\setminus X.
\]
Let $\gamma$ be the Hopf algebra automorphism $\Ad(K_{\omega_0})$ of $U_\mbq(\mfg)$, which is characterized by
\begin{align*}
E_r &\mapsto \mbq^{(\omega_0,\alpha_r)}E_r, &
F_r &\mapsto \mbq^{-(\omega_0,\alpha_r)}F_r,&
K_{\omega} &\mapsto K_{\omega}.
\end{align*}
Then, using
\[
\Ad(K_{\omega_0})(F_r + c_r'\theta_\mbq(F_rK_r)K_r^{-1}) = \mbq^{-(\omega_0,\alpha_r)}(F_r + \mbq^{(\omega_0,\alpha_r - \Theta(\alpha_r))}c_r'\theta_\mbq(F_rK_r)K_r^{-1}),
\]
we obtain
\[
\gamma(U_\mbq^{\mbt'}(\mfk^{\C})) = U_\mbq^{\theta}(\mfk^{\C}),
\]
where we interpret our non-parameter case in the setting of the indeterminate variable $\mbq$.

Denote by $\msU_\mbq(\mfg) = \bigoplus_{\varpi \in P^+} \End_{\C(\mbq)}(V_\varpi^{\mbq})$ the discrete multiplier bi-algebra over $\C(\mbq)$ associated to $U_\mbq(\mfg)$, so in particular $U_\mbq(\mfg) \subseteq M(\msU_\mbq(\mfg)) = \prod_{\varpi \in P^+} \End_{\C(\mbq)}(V_\varpi^{\mbq})$. Let
\[
\msK' \in U_\mbq^{\mbt'}(\mfk^{\C})\hotimes U_\mbq(\mfg)
\]
be the universal $K$-matrix for $U_q^{\mbt'}(\mfk^{\C})$ \citelist{\cite{arXiv:1507.06276}\cite{arXiv:1705.04238}*{Theorem 3.11}}, and write
\begin{equation}
\label{EqK1}
\wmsK = (\gamma\otimes \gamma)(\msK') \in U_\mbq^{\theta}(\mfk^{\C})\hotimes U_\mbq(\mfg)
\end{equation}
Then by $\tau\tau_0$-invariance of $\omega_0$,
\begin{equation}\label{EqK2}
\wmsK \Delta(b) = (\id\otimes \tau\tau_0)(\Delta(b))\wmsK,\qquad b\in U_\mbq^{\theta}(\mfk^{\C}).
\end{equation}
Moreover, since the $R$-matrix $\msR \in U_\mbq(\mfg)\hotimes U_\mbq(\mfg)$ is invariant under $\gamma\otimes \gamma$, we also retain
\begin{equation}\label{EqK3}
(\Delta\otimes \id)(\wmsK) = (\id \otimes \tau\tau_0 \otimes \id)(\msR_{32}) \wmsK_{13} \msR_{23},\qquad (\id\otimes \Delta)(\wmsK) = \msR_{32} \wmsK_{13} (\id \otimes \tau \tau_0 \otimes \id)(\msR_{23}) \wmsK_{12}.
\end{equation}

All of the above can be performed as well in the setting of $0<q<1$ a real number, and in particular \eqref{EqK1},\eqref{EqK2} and \eqref{EqK3} continue to hold. (For example, all constructions, including those involving the bar involution, can be performed over the $\C$-algebra $\C[\mbq]/(\mbq^2 -(q+q^{-1})\mbq + 1)$, from which one can specialize to $\mbq = q$.) Let
\[
\msK = (\id\otimes \tau\tau_0)(\wmsK) \in U_q^{\theta}(\mfk) \hotimes U_q(\mfu).
\]
Then \eqref{EqK2} becomes
\[
\msK((\id\otimes \tau\tau_0)(\Delta(b)) = \Delta(b) \msK,\qquad \forall b\in U_q^{\theta}(\mfk).
\]
We also have $(\tau\tau_0\otimes \tau\tau_0)(\msR) = \msR$, since any automorphism of the Dynkin diagram will leave the universal $R$-matrix invariant, see \cite{MR1358358}*{Corollary 8.3.12} (or, just to check this equality in the multiplier algebra one may use the fact that $\msR$ is determined by its action on the tensor product of lowest and highest weight vectors). It follows that
\[
(\Delta\otimes \id)(\msK) = \msR_{32} \msK_{13} ((\id\otimes \tau\tau_0)(\msR))_{23},\qquad (\id\otimes \Delta)(\msK) = (\id\otimes \Delta)(\msK)\msK_{12}.
\]
We hence obtain the following theorem.

\begin{Theorem}
With respect to the braiding
\[
\eta_{X,U} = (\pi_X\otimes \pi_U)(\msK),
\]
$\Rep_{q}^{\theta}(\mfk)$  is a (strict) ribbon $\tau\tau_0$-braided module C$^*$-category over the braided tensor C$^*$-category $\Rep_q(\mfu)$, where the latter is equipped with the braiding coming from the universal $R$-matrix $\msR$.
\end{Theorem}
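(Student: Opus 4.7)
The strategy is to recognize the theorem as an immediate application of the algebraic lemma characterizing ribbon $\sigma$-braids for quasi-coactions of quasi-triangular quasi-bialgebras, extended to the completed (multiplier) setting introduced at the end of Section~\ref{sec:rib-tw-mod}. In our situation the quasi-coaction is the honest coaction $\Delta|_{U_q^\theta(\mfk)} \colon U_q^\theta(\mfk) \to U_q^\theta(\mfk) \hotimes U_q(\mfu)$ with trivial twist $\psi = 1$, so the module structure on $\Rep_q^\theta(\mfk)$ is strict and all $\Psi$-terms disappear from the octagon and ribbon diagrams. The candidate ribbon $\tau\tau_0$-braid is $\msE := \msK$, and since $\msK \in U_q^\theta(\mfk) \hotimes U_q(\mfu)$, it defines via $\eta_{X,U} = (\pi_X \otimes \pi_U)(\msK)$ a bounded natural family of morphisms on admissible finite-dimensional $*$-representations.

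First I would record that $\tau\tau_0$ is a braided $*$-autoequivalence of $\Rep_q(\mfu)$: as a composition of Dynkin diagram automorphisms it extends to a Hopf $*$-algebra automorphism of $U_q(\mfu)$ and preserves $\msR$, as already noted above (or by appeal to \cite{MR1358358}*{Corollary~8.3.12}). Next, I would verify the three algebraic conditions for $\msK$ to be a ribbon $\tau\tau_0$-braid in the completed sense: the intertwiner identity $\msK \, (\id\otimes\tau\tau_0)\Delta(b) = \Delta(b) \, \msK$ for $b \in U_q^\theta(\mfk)$; the strict $\tau\tau_0$-octagon $(\Delta\otimes\id)(\msK) = \msR_{32}\, \msK_{13}\, ((\id\otimes\tau\tau_0)(\msR))_{23}$; and the strict ribbon twist equation $(\id\otimes\Delta)(\msK) = \msR_{32}\, \msK_{13}\, ((\id\otimes\tau\tau_0)(\msR))_{23}\, \msK_{12}$. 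All three have just been established: they are obtained by applying $\id \otimes \tau\tau_0$ to the universal $K$-matrix identities for $\wmsK$, using that $\msR$ is fixed by $\tau\tau_0 \otimes \tau\tau_0$. Evaluating the three identities against $\pi_X \otimes \pi_U$ and $\pi_X \otimes \pi_U \otimes \pi_V$ produces respectively the naturality square for $\eta_{X,U}$, the $\tau\tau_0$-octagon diagram~\eqref{EqOct2}, and the ribbon twist diagram~\eqref{EqRB2}.

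The main obstacle is not in this final translation but in the machinery that precedes it. One had to transport the Balagovi\'c--Kolb universal $K$-matrix $\msK'$ for the admissible parameter set $\mbt'$ through the Cartan twist $\gamma = \Ad(K_{\omega_0})$ to reach the no-parameter coideal $U_q^\theta(\mfk^\C)$, and then apply the diagram automorphism $\id \otimes \tau\tau_0$ in order to convert the resulting $\tau\tau_0$-type intertwiner into the form compatible with our convention for $\sigma$-braided module categories. The careful choice of $\omega_0$ ensures that $\gamma$ sends $U_\mbq^{\mbt'}(\mfk^\C)$ to $U_\mbq^{\theta}(\mfk^\C)$, and the specialisation from the formal variable $\mbq$ to the real number $0<q<1$ preserves all the required identities. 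Once those preparations are in hand, the proof of the theorem itself is essentially bookkeeping.
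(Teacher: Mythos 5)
Your proposal is correct and follows essentially the same route as the paper: the theorem is obtained by checking that $\msK$ satisfies the intertwining identity, the $\tau\tau_0$-octagon relation and the ribbon relation (with $\psi=1$, so the module structure is strict), and then invoking the general lemma of Section~\ref{sec:rib-tw-mod} on (ribbon) $\sigma$-braids, exactly as you describe. The only remark worth adding is that your displayed ribbon identity is the intended one; the paper's corresponding display contains a typo, writing $(\id\otimes\Delta)(\msK)=(\id\otimes\Delta)(\msK)\msK_{12}$ where the right-hand side should read $(\Delta\otimes\id)(\msK)\msK_{12}$.
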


\subsection{Ribbon braided module \texorpdfstring{C$^*$}{C*}-categories from quantum group automorphisms}\label{sec:Vogan}

Fix again a compact semisimple Lie algebra $\mfu$ with Chevalley generators $\{e_r,f_r,h_r\mid r\in I\}$ in $\mfg = \mfu^{\C}$, and let $\mfh = \mft \oplus \mfa$ be the complex Cartan subalgebra with vector part  $\mfa = \oplus_r \R h_r$ and compact part $\mft = i\mfa$.

Let $(Y,\mu)$ be a Vogan diagram with associated involution $\nu = \nu(Y,\mu)$. For $0<q<1$, we then obtain an involutive Hopf $*$-algebra automorphism $\nu_q$ of $U_q(\mfu)$ by
\begin{align*}
\nu_q(E_r) &= \epsilon_rE_{\mu(r)},& \nu_q(F_r) &= \epsilon_rF_{\mu(r)},& \nu_q(K_{\alpha}) &= K_{\Nu(\alpha)},
\end{align*}
where $N$ denotes the involutive transformation dual to $\nu_{\mid \mfh}$.
Note that $\nu_q$ also preserves the Borel subalgebras~$U_q(\mfb^{\pm})$. In the following, it will be convenient to work for a while with versions of $U_q(\mfu)$ and~$U_q(\mfg)$  with an extra Cartan part added.

Consider independent copies of the Hopf algebras $U_q(\mfb^{\pm})=(U_q(\mfb^{\pm}),\Delta,\varepsilon,S)$ with generators now denoted by $X_r^{\pm}, L_{\omega}^{\pm}$, so in particular
\begin{align*}
L_{\omega}^{\pm} X_r^{\pm} &= q^{\pm (\omega,\alpha_r)}X_r^{\pm} L_{\omega}^{\pm},& \Delta(X_r^+) &= X_r^+\otimes 1 + L_r^+ \otimes X_r^+,& \Delta(X_r^-) &= X_r^- \otimes (L_r^{-})^{-1} + 1\otimes X_r^-.
\end{align*}
Consider the skew-pairing between $U_q(\mfb^-)$ and $U_q(\mfb^+)$ characterized by
 \begin{align*}
(X_r^-,X_s^+) &= \delta_{rs} (q_r^{-1}-q_r)^{-1},& (L_{\alpha}^-,L_{\beta}^+) &= q^{-(\alpha,\beta)},
\end{align*}
and being zero between the other generators. Here the skew-pairing property means we have
\begin{align*}
(XY,Z) &= (X\otimes Y,\Delta(Z)),& (X,YZ) &= (\Delta^{\op}(X),Y\otimes Z).
\end{align*}
We can then make a new skew-pairing by twisting with $\nu_q$,
\[
(X,Y)_{\nu} = (\nu_q(X),Y).
\]
Write $(X, Y)_0 = (X, Y)$ and $(X, Y)_1 = (X, Y)_{\nu}$. Define for $k,l\in \{0,1\}$ the unital algebra $U_q^{kl}(\mfg\oplus \mfh)$ generated by the algebras $U_q(\mfb^{\pm})$ with interchange relation
\begin{equation}
\label{eq:interchange-relation}
YX = (X_{(1)},Y_{(1)})_k X_{(2)}Y_{(2)} (S(X_{(3)}),Y_{(3)})_l,\qquad Y\in U_q(\mfb^+), X \in U_q(\mfb^-).
\end{equation}
Then it is easily verified that we have as universal relations between the generators those of $U_q(\mfb^{\pm})$ together with
\begin{gather*}
L_{\omega}^+ L_{\chi}^- = q^{(\omega,\Nu^l(\chi)-\Nu^k(\chi))}L_{\chi}^-L_{\omega}^+,\\
L_{\omega}^- X_r^+ = q^{(\alpha_r,\Nu^k(\omega))}X_r^+L_{\omega}^-,\qquad L_{\omega}^+X_r^- =  q^{-(\alpha_r,\Nu^l(\omega))} X_r^- L_{\omega}^+,\\
\lbrack X_r^+,X_s^-\rbrack = \frac{\delta_{r,\mu^l(s)} \epsilon_s^l L_{r}^+ - \delta_{r,\mu^k(s)} \epsilon_s^{k} (L_{s}^-)^{-1}}{q_r-q_r^{-1}},
\end{gather*}
and that moreover there are no `extra relations' in the sense that the multiplication map gives isomorphisms
\[
U_q(\mfb^+) \otimes U_q(\mfb^-) \cong U_q^{kl}(\mfg\oplus \mfh) \cong U_q(\mfb^-) \otimes U_q(\mfb^+).
\]
We can endow $U_q^{kl}(\mfg\oplus \mfh)$ with the unique $*$-structure such that
\begin{align*}
(X_r^+)^* &= X_r^-L_r^-,& (L_r^+)^* &= L_r^-,
\end{align*}
and we denote this $*$-algebra by $U_q^{kl}(\mfu\oplus \mfa)$. The $*$-algebra $U_{q}^{00}(\mfu\oplus \mfa)$ is known to be a cover of the Hopf $*$-algebra $U_q(\mfu)$ under
\begin{align*}
X_r^+ &\mapsto E_r,&
X_r^- &\mapsto F_r,&
L_{\alpha}^{\pm} &\mapsto K_{\alpha}.
\end{align*}
It is also immediate from the construction that the family $(U_q^{kl}(\mfu\oplus \mfa))_{k,l}$ form a Hopf--Galois system in the sense of \cite{MR3285340}*{Definition 2.4} with $*$-preserving comultiplications
\[
\Delta = \Delta_{kl}^m\colon U_q^{kl}(\mfu\oplus \mfa)\rightarrow U_q^{km}(\mfu\oplus \mfa)\otimes U_q^{ml}(\mfu\oplus \mfa)
\]
given by the usual comultiplication on $U_q(\mfb^{\pm})$, and with the antipodes
\[
S = S^{kl}\colon U_q^{kl}(\mfu\oplus \mfa) \rightarrow U_q^{lk}(\mfu\oplus \mfa)
\]
determined by the ordinary antipodes on $U_q(\mfb^{\pm})$. In the following, we denote by
\[
\walpha\colon U_q^{10}(\mfu\oplus \mfa) \rightarrow U_q^{10}(\mfu\oplus \mfa)\otimes U_q(\mfu)
\]
the composition of $\Delta_{10}^0$ with the projection $U_q^{00}(\mfu\oplus \mfa) \rightarrow U_q(\mfu)$.

Let $U_q^{10}(\mfureg\oplus \mfa)$ be the $*$-subalgebra of $U_q^{10}(\mfu\oplus \mfa)$ generated by
\[
X_r^-,\quad (X_r^-)^* \quad (r \in I), \qquad L_{\omega}^{\pm} \quad (\omega \in P, N(\omega)=\omega),\qquad L_{\omega}^+ L_{-\omega}^- \quad (\omega\in P).
\]
Then $U_q^{10}(\mfureg\oplus \mfa)$ is precisely the $*$-algebra of elements commuting with all $L_{\omega}^+L_{-\omega}^-$, and is stable under the coaction $\walpha$ of $U_q(\mfu)$. Moreover, the ensuing coaction of $U_q(\mfu)$ has $L_{\omega}^+L_{-\omega}^-$ in its space of coinvariants. Let us write $U_q^{10}(\mfureg)$ for the $*$-algebra obtained by taking the quotient of $U_q^{10}(\mfureg\oplus \mfa)$ with the ideal generated by the $1-L_{\omega}^+L_{-\omega}^-$. Then we obtain a coaction $\alpha$ of $U_q(\mfu)$ on $U_q^{10}(\mfureg)$. It is, in fact, not hard to verify that $U_q^{10}(\mfureg)$ is the universal $*$-algebra generated (as a $*$-algebra) by the elements $F_r$ for $r\in I$ and the selfadjoint elements $K_{\omega}$ for $\alpha \in P^{N}$ (obtained as images of $X_r^-$ and $L_{\omega}^+$ respectively) such that $K_{\omega},F_r$ satisfy the relations of $U_q(\mfb^-)$ and such that
\[
F_r^*F_s -q^{(\alpha_r,\alpha_{s})}F_sF_r^* = \frac{\delta_{r,s}-\delta_{r,\mu(s)}\epsilon_s q^{-\frac{1}{2}(\alpha_r,\alpha_s-\alpha_{\mu(r)})} K_{\alpha_{\mu(r)}+\alpha_r}^{-1}}{q_r-q_r^{-1}},
\]
the coaction $\alpha$ by $U_q(\mfu)$ being given by the ordinary formulas. Note that if $\mfk\subseteq \mfu$ is of equal rank (i.e., $\nu$ acts trivially on $\mfh$), we can identify $U_q^{10}(\mfureg)$ with $U_q(\mfu)$ as a vector space by the natural triangular decomposition. We will denote the $*$-algebra generated by the elements $K_{\omega}$ with $\Nu(\omega)= \omega$ by $U_q(\mft^{\nu})$.

Next let us review the representation theory of $U_q^{10}(\mfureg)$, cf.~\cite{MR3208147}.

\begin{Def}
We call (resp.~\emph{finite}) \emph{admissible} $U_q^{10}(\mfureg)$-module any (resp.~finitely generated) left $U_q^{10}(\mfureg)$-module for which $U_q(\mfn^-)$ acts locally finitely and for which the $K_{\omega}\in U_q(\mft^{\nu})$ act as semisimple transformations with positive eigenvalues. We call (resp.~\emph{finite}) \emph{admissible $*$-representation} any (resp.~finite) admissible module $M$ endowed with a pre-Hilbert space structure for which
$$
\langle v, X w \rangle = \langle X^*v, w \rangle,\qquad (v,w \in M, X \in U_q^{10}(\mfureg)).
$$
\end{Def}

Note that in general, the finite admissible $*$-representations will \emph{not} be on finite-dimensional Hilbert spaces, even when they are irreducible.

For $\omega \in \mfa^{\nu}$, we call a vector $\xi$ in a $U_q^{10}(\mfureg)$-module a weight vector of weight $\omega$ if
\[
K_{\chi}\xi = q^{(\omega,\chi)}\xi.
\]
We call a vector $\eta$ in a $U_q^{10}(\mfureg)$-module a \emph{lowest weight} vector (of weight $\omega$)  if it is a weight vector (of weight~$\omega$) vanishing under the action of the $F_r$. We call a $U_q^{10}(\mfureg)$-module a \emph{lowest weight}-module (of weight~$\omega$) if it is generated by a lowest weight vector (of weight~$\omega$).

\begin{Lem}\label{LemIrLow}
Any finite admissible $*$-representation of $U_q^{10}(\mfureg)$ is a finite direct sum of irreducible admissible $*$-representations. Moreover, an admissible $*$-representation is lowest weight if and only if it is irreducible, and in this case the space of its self-intertwiners consists of the scalars.
\end{Lem}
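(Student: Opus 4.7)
Here is my plan. First I would establish existence of a lowest weight vector in any non-zero admissible $*$-representation $M$. Pick a joint $K_\omega$-eigenvector $v \in M$ (available since the $K_\omega \in U_q(\mft^\nu)$ act semisimply with positive eigenvalues), and form $V = U_q(\mfn^-) v$, which is finite-dimensional by local finiteness. The commutation $K_\omega F_r = q^{-(\omega,\alpha_r)} F_r K_\omega$ shows $V$ is $U_q(\mft^\nu)$-stable with weights in $(\mft^\nu)^*_\R$, and each $F_r$ shifts the weight by $-\alpha_r|_{\mft^\nu} = -\frac{1}{2}(\alpha_r + \alpha_{\mu(r)})$. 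This shift is non-zero because a sum of two positive simple roots cannot vanish. Choosing a linear functional $\phi$ on $(\mft^\nu)^*_\R$ that is strictly positive on every such shift, a weight vector $\eta \in V$ minimizing $\phi$ among the finitely many weights of $V$ must satisfy $F_r\eta = 0$ for every $r$, giving a lowest weight vector.

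Next I would analyze a lowest weight module $M = U_q^{10}(\mfureg)\eta$ with $\eta$ of weight $\lambda$. The defining quadratic relation allows one to commute any $F_s$ past any $F_r^*$ at the cost of a Cartan term, so every monomial in $U_q^{10}(\mfureg)$ can be rearranged into the form $(\prod F_{r_i}^*)\,K\,(\prod F_{s_j})$. Acting on $\eta$, the $F_{s_j}$'s kill $\eta$, the Cartan acts by a scalar, and the result has weight $\lambda + \sum \alpha_{r_i}|_{\mft^\nu}$; since each summand is $\phi$-strictly-positive, the $\lambda$-weight space of $M$ is exactly $\C\eta$. Irreducibility then follows from $*$-invariance: if $N \subsetneq M$ were a proper non-zero submodule, $N^\perp$ would also be a submodule, and the orthogonal projections of $\eta$ onto $N$ and $N^\perp$ (which commute with the action of $K_\omega$ and $F_r$ since $N,N^\perp$ are submodules) would be weight-$\lambda$ vectors killed by every $F_r$, hence scalar multiples of $\eta$. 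This forces $\eta$ to lie entirely in one of the two summands, contradicting that both are non-zero. Conversely, any irreducible admissible $*$-rep contains a lowest weight vector by the first step, and the submodule it generates must be all of $M$. The Schur statement is then immediate: a self-intertwiner $T$ preserves the one-dimensional lowest weight space, so $T\eta = \lambda_T \eta$, and intertwining gives $T(X\eta) = \lambda_T X\eta$ throughout $M = U_q^{10}(\mfureg)\eta$.

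For complete reducibility of a finite admissible $*$-rep $M$, I would iterate: extract a lowest weight vector $\eta_1$, set $N_1 = U_q^{10}(\mfureg)\eta_1$ (irreducible by the previous step), and use $*$-invariance to split $M = N_1 \oplus N_1^\perp$; repeat on $N_1^\perp$. Finite generation forces termination, since each of the finitely many generators $v_k$ has weight support inside the finite-dimensional subspace $U_q(\mfn^-)v_k$, so only finitely many irreducible summands can have non-zero orthogonal projection onto the generating set. The main obstacle I expect is the triangular-style rearrangement of $U_q^{10}(\mfureg)$ used in the second step: one has to verify carefully from the explicit quadratic relation that commuting $F_s$ past $F_r^*$ produces only Cartan-valued corrections (which act as scalars on weight vectors), so that the $\phi$-stratification actually yields a one-dimensional $\lambda$-weight space. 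Once this is secured, the rest of the argument falls into place.
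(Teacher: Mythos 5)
Your argument is correct and follows essentially the same route as the paper's: the paper extracts lowest weight vectors by descending along a positive functional (it uses the $K_{2\rho}$-eigenvalue where you use a general $\phi$, the same device), obtains complete reducibility by taking orthogonal complements within the finite-dimensional eigenspaces, and deduces irreducibility of lowest weight modules from the one-dimensionality of the lowest weight space — the point you single out as the main obstacle and justify via the triangular rearrangement $(\prod F^*)K(\prod F)$, which the paper leaves implicit (it follows from the stated isomorphism $U_q(\mfb^+)\otimes U_q(\mfb^-)\cong U_q^{10}(\mfu\oplus\mfa)$ passed to the quotient). Your write-up just reorders these steps and supplies more detail; no new idea or gap.
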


\begin{proof}
In a finite admissible $*$-representation the eigenspaces of $K_{2\rho}$, where $\rho$ is the ($N$-invariant) half-sum of the positive roots, must be finite-dimensional. Since any subrepresentation of a finite admissible $*$-representation is again finite admissible, any admissible $*$-representation is a direct sum of a finite number of irreducible admissible $*$-representations as we can take orthogonal complements in each $K_{2\rho}$-eigenspace.

If now $M$ is admissible and irreducible, we can obtain a lowest weight vector by repeatedly applying $F_r$'s to a weight vector. By irreducibility, the module spanned by the lowest weight vector must be~$M$. Conversely, if $M$ is admissible and of lowest weight $\omega_0$, the space of $\omega_0$-weight vectors must be one-dimensional and can hence be contained in only one irreducible component.

The final statement is immediate from Schur's lemma.
\end{proof}

\begin{Lem}
A lowest weight $*$-representation $M$ is completely determined up to isomorphism by its lowest weight.
\end{Lem}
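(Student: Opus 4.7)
The plan is to adapt the classical argument that (generalised) Verma modules are determined by their extremal weight, using the Hilbert inner product in place of the Shapovalov form. Let $M_1, M_2$ be two lowest weight $*$-representations with lowest weight $\omega_0$, and fix normalised lowest weight vectors $\eta_i \in M_i$ with $\|\eta_i\| = 1$. I would define a linear map $T\colon M_1 \to M_2$ by $T(X\eta_1) = X\eta_2$ for $X \in U_q^{10}(\mfureg)$. By construction, once well-defined, $T$ is $U_q^{10}(\mfureg)$-linear, and its surjectivity is automatic since $\{X\eta_2 : X \in U_q^{10}(\mfureg)\}$ spans $M_2$. The problem thus reduces to showing $T$ is well-defined, i.e.\ that $X\eta_1 = 0$ forces $X\eta_2 = 0$.

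The main tool is a triangular decomposition of $U_q^{10}(\mfureg)$. Starting from the factorisation $U_q(\mfb^+) \otimes U_q(\mfb^-) \cong U_q^{10}(\mfu\oplus\mfa)$ via multiplication, restricting to the $*$-subalgebra $U_q^{10}(\mfureg\oplus\mfa)$ and quotienting by $L_\omega^+ L_{-\omega}^- = 1$, one obtains that every element of $U_q^{10}(\mfureg)$ is a sum of monomials $A^+ A^0 A^-$, where $A^-$ lies in $U_q(\mfn^-)$, $A^0 \in U_q(\mft^\nu)$, and $A^+$ lies in the subalgebra generated by the $F_r^*$. Here it is crucial that in the commutator $\lbrack F_r^*, F_s\rbrack$ displayed in the excerpt, the only Cartan element appearing is $K_{\alpha_{\mu(r)} + \alpha_r}^{-1}$, whose label is manifestly $N$-invariant and therefore genuinely belongs to $U_q(\mft^\nu)$. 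It follows in particular that $M_i$ is spanned by vectors of the form $F_{r_1}^* \cdots F_{r_k}^* \eta_i$.

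The key step is then to show that for any $X, Y \in U_q^{10}(\mfureg)$, the inner product $\langle X\eta_i, Y\eta_i\rangle = \langle \eta_i, X^*Y \eta_i\rangle$ depends only on $\omega_0$ (and $X, Y$), not on $i$. Decomposing $X^*Y = \sum_j A_j^+ A_j^0 A_j^-$, the terms with $A_j^- \notin \C$ annihilate $\eta_i$, while the remaining $A_j^0$'s contribute scalars $q^{(\omega_0, \chi_j)}$, leaving $\sum_j \mu_j(\omega_0)\langle \eta_i, A_j^+ \eta_i\rangle$. Each $K_\chi \in U_q(\mft^\nu)$ acts selfadjointly with positive spectrum, and one checks by taking adjoints in the analogous relation for $F_r$ that $K_\chi F_r^* K_\chi^{-1} = q^{(\chi, \alpha_r)} F_r^*$; hence distinct weight spaces of $M_i$ are mutually orthogonal, so $\langle \eta_i, A_j^+ \eta_i\rangle$ vanishes unless the weight of $A_j^+$ is zero when restricted to $\mft^\nu$. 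Now any nonempty product of $F_r^*$'s has weight $\sum_\ell \alpha_{r_\ell}$, a nonzero positive combination of simple roots, whose restriction to $\mft^\nu$ can only vanish if the sum itself is $N$-anti-invariant; but that is impossible because $N$ permutes positive roots. Consequently only scalar $A_j^+$ survive, and $\langle \eta_i, X^*Y \eta_i\rangle$ reduces to an explicit expression depending only on $\omega_0$ and the algebra.

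Combining, $\|X\eta_1\|^2 = \|X\eta_2\|^2$ for all $X$, so $T$ is well-defined and isometric, hence the desired unitary module isomorphism. The main potential obstacle is to establish the triangular decomposition of $U_q^{10}(\mfureg)$ rigorously, given that the Cartan part has been reduced from (essentially) $\C[P \oplus P]$ to $\C[P^N]$ by the quotient; however, the explicit presentation of $U_q^{10}(\mfureg)$ recorded in the previous subsection, as the universal $*$-algebra on the $F_r$ and the selfadjoint $K_\omega$ ($N(\omega)=\omega$) with the stated commutation relations, makes the reduction of an arbitrary word in the generators to $A^+ A^0 A^-$-form a direct computation.
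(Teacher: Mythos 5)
Your proposal is correct and follows essentially the same route as the paper: the paper identifies $M$ with the quotient of the induced module on $U_q(\mfn^-)^*$ (the subalgebra generated by the $F_r^*$) by the radical of the contravariant form, arguing via the triangular decomposition that the pullback of the inner product is the normalized Shapovalov-type form determined by $\omega_0$. Your computation that $\langle \eta_i, X^*Y\eta_i\rangle$ depends only on $\omega_0$ — using the $A^+A^0A^-$ straightening, the $N$-invariance of $\alpha_{\mu(r)}+\alpha_r$, and the orthogonality of distinct $\mft^\nu$-weight spaces — is precisely the content of that step, just written out in more detail than the paper's "again by the triangular decomposition".
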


\begin{proof}
Let $\omega_0$ be the lowest weight. By inducing from a one-dimensional module for the Borel part, we can turn $U_q(\mfn^-)^*$ into a $U_q^{10}(\mfureg)$-module with $1$ as a lowest weight vector at weight $\omega_0$. This module has an invariant sesqui-linear form by the triangular decomposition. By multiplying with a scalar if necessary, the natural map  $U_q(\mfn^-)^* \rightarrow M$ must then preserve this form, again by the triangular decomposition. Hence the kernel of the above map must lie in the kernel of the form on $U_q(\mfn^-)^*$, and must then be equal to this kernel. This completely determines $M$.
\end{proof}

In the following, we say that a $\nu$-invariant weight $\omega_0$ is \emph{$U_q^{10}(\mfureg)$-adapted} if it arises as the lowest weight in a lowest weight admissible $*$-representation, and we write the associated lowest weight $*$-representation as $M_{\omega_0}$. The modules $M_{\omega_0}$, with $\omega_0$ ranging over the $U_q^{10}(\mfureg)$-adapted weights, then form a maximal collection of pairwise non-isomorphic irreducible admissible $U_q^{10}(\mfureg)$-representations.

\begin{Def}
We write $\Rep_q^{\nu}(\mfu^{\nu})$ for the C$^*$-category of finite admissible $*$-representations of $U_q^{10}(\mfureg)$.
\end{Def}

From the above discussion we see that the morphism spaces between finite admissible $*$-representations are finite-dimensional, and consist of adjointable maps. We now have the following result.

\begin{Theorem}
If $M$ is a finite admissible $*$-representation of $U_q^{10}(\mfureg)$, and $V$ a finite admissible representation of $U_q(\mfu)$, then $M\otimes V$ with the representation $(\pi_M\otimes \pi_V)\circ \alpha$ is a finite admissible $*$-representation of $U_q^{10}(\mfureg)$, making $\Rep_q^{\nu}(\mfu^{\nu})$ a module C$^*$-category over $\Rep_q(\mfu)$.
\end{Theorem}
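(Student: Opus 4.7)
The plan is to realise the claimed module-category structure through the coaction $\alpha\colon U_q^{10}(\mfureg)\to U_q^{10}(\mfureg)\otimes U_q(\mfu)$ constructed just before the statement: this is a unital $*$-algebra homomorphism satisfying $(\alpha\otimes\id)\alpha=(\id\otimes\Delta)\alpha$ and $(\id\otimes\varepsilon)\alpha=\id$. Granting these facts, the composition $(\pi_M\otimes\pi_V)\circ\alpha$ is automatically a $*$-representation of $U_q^{10}(\mfureg)$ on the algebraic pre-Hilbert tensor product $M\otimes V$, strict associativity of $\odot$ follows from coassociativity of $\alpha$, the strict unit condition follows from the counit axiom, and functoriality of $\odot$ on morphisms is immediate from the fact that intertwiners act diagonally. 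Hence the essential content reduces to verifying that $M\otimes V$ meets the admissibility conditions.

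The two ``pointwise'' conditions are routine. Since $L_\omega^+$ is group-like, $\alpha(K_\omega)=K_\omega\otimes K_\omega$ for $\omega\in P^N$, so positivity and semisimplicity of the $K_\omega$-action on $M\otimes V$ follow from the corresponding properties on $M$ and on $V$. For local finiteness of $U_q(\mfn^-)$, given $m\otimes v$ set $W=U_q(\mfn^-)\cdot m\subseteq M$, which is finite dimensional by admissibility of $M$; using $\alpha(F_r)=F_r\otimes K_r^{-1}+1\otimes F_r$ one sees that $W\otimes V$ is a finite dimensional $\alpha(U_q(\mfn^-))$-invariant subspace containing $m\otimes v$.

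The less automatic condition is finite generation, which is the step I anticipate as the only one requiring real input beyond the preceding setup. The key is a Hopf-algebraic identity using the antipode $S$ of the acting Hopf algebra $U_q(\mfu)$: combining coassociativity of $\alpha$ with the antipode relation $\sum Y_{(1)}S(Y_{(2)})=\varepsilon(Y)\cdot 1$ for $Y\in U_q(\mfu)$ and the counit axiom, one obtains, in Sweedler notation $\alpha(X)=\sum X_{(1)}\otimes X_{(2)}$,
\[
X\cdot m\otimes v\;=\;\sum\alpha(X_{(1)})\bigl(m\otimes S(X_{(2)})\,v\bigr)\qquad(X\in U_q^{10}(\mfureg),\ m\in M,\ v\in V).
\]
If $\{m_i\}$ is a finite $U_q^{10}(\mfureg)$-generating set of $M$ (which exists since $M$ is finite admissible) and $\{v_j\}$ is a basis of the finite dimensional space $V$, it follows at once that $\{m_i\otimes v_j\}_{i,j}$ generates $M\otimes V$ as a $U_q^{10}(\mfureg)$-module via $\alpha$. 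This step crucially uses that $U_q(\mfu)$ is a Hopf algebra; the comodule $*$-algebra $U_q^{10}(\mfureg)$ itself need not possess an antipode, which is why the relevant $S$ must live on the acting side.
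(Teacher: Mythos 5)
Your argument is correct. The paper states this theorem without proof, treating it as routine once the coaction $\alpha$ and the representation-theoretic lemmas are in place, and your write-up supplies exactly the verification that is being left implicit: the $*$-representation, unit and associativity claims from the comodule-algebra axioms, positivity and semisimplicity of the $K_\omega$ from group-likeness, local finiteness from $\alpha(U_q(\mfn^-))\subseteq U_q(\mfn^-)\otimes U_q(\mfb^-)$, and finite generation via the standard antipode identity $(Xm)\otimes v=\sum\alpha(X_{(1)})(m\otimes S(X_{(2)})v)$, which you correctly identify as the only step needing genuine input and which indeed uses only the antipode of the acting Hopf algebra $U_q(\mfu)$.
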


We now show that $\Rep_q^{\nu}(\mfu^{\nu})$ is a ribbon $\nu_q$-braided module C$^*$-category, where $\nu_q$ also denotes the (strict) braided autoequivalence of $\Rep_q(\mfu)$ induced by the involution $\nu_q$ on $U_q(\mfu)$.

Let again
\[
\msR \in U_q(\mfu)\hotimes U_q(\mfu) = \prod_{\varpi,\chi} B(V_{\varpi})\otimes B(V_{\chi})
\]
be the universal $R$-matrix of $U_q(\mfu)$. We have $(\nu_q \otimes \nu_q)(\msR) = \msR$, as can be seen from a concrete formula which contains $E_r$ and $F_r$ in pairs and is stable under diagram automorphism~\cite{MR1358358}*{Section 8.3}. Recall also that it can be characterized by a pairing of $U_q(\mfb^-)$ and $U_q(\mfb^+)$, in the sense that for $Y\in U_q(\mfb^-)$, when $\omega$ is a functional on $U_q(\mfu)$ factoring through a finite admissible $*$-representation, one has
\[
Z = (\id\otimes \omega)(\msR)\in U_q(\mfb^+)\qquad \textrm{and}\qquad (Y,Z) = \omega(Y).
\]
Similarly, if $Z\in U_q(\mfb^+)$ we have then
\[
Y = (\omega\otimes \id)(\msR)\in U_q(\mfb^-)\qquad \textrm{and}\qquad (Y,Z) = \omega(Z).
\]
The first assertions in the above two lines will be written as
\[
\msR \in U_q(\mfb^+)\hotimes U_q(\mfb^-).
\]
We can then interpret $\msR$ as an element
\[
\wmsR \in U_q^{10}(\mfu\oplus \mfa)\hotimes U_q(\mfb^-) \subseteq U_q^{10}(\mfu\oplus \mfa)\hotimes U_q(\mfu).
\]
Similarly, $\msR_{21}$ can be interpreted as an element $\wmsR_{21}$ in $U_q^{10}(\mfu\oplus \mfa)\hotimes U_q(\mfb^+)$ (strictly speaking we should write $\widetilde{\msR_{21}}$, which is however a bit more awkward notation).

\begin{Lem}\label{LemComR}
For $X \in U_q(\mfb^+)$ and $Y\in U_q(\mfb^-)$, we have the following identities in $U_q^{10}(\mfu\oplus \mfa)\hotimes U_q(\mfu)$:
\begin{align*}
\wmsR\Delta(X) &= \Delta^{\op}(X)\wmsR,&
\wmsR\Delta(Y) &= (\id\otimes \nu_q)\Delta^{\op}(Y)\wmsR,\\
\wmsR_{21}((\id\otimes \nu_q)\Delta^{\op}(X)) &= \Delta(X)\wmsR_{21},&
\wmsR_{21}\Delta^{\op}(Y) &= \Delta(Y) \wmsR_{21}.
\end{align*}
\end{Lem}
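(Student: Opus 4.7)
The strategy is to deduce all four identities from the classical quasitriangular relation $\msR\,\Delta(a) = \Delta^{\op}(a)\,\msR$ in $U_q(\mfg) = U_q^{00}(\mfu\oplus\mfa)$, by carefully tracking the discrepancy between the $U_q^{10}$ and $U_q^{00}$ interchange rules \eqref{eq:interchange-relation}. Write $\wmsR = \sum_i r_i\otimes s_i$ with $r_i\in U_q(\mfb^+)$ and $s_i\in U_q(\mfb^-)$, and similarly $\wmsR_{21} = \sum_i s_i\otimes r_i$, reinterpreting one leg via the inclusion $U_q(\mfb^{\pm})\hookrightarrow U_q^{10}$.

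The first and fourth identities are the easy case. For $X\in U_q(\mfb^+)$, both tensor factors of $\Delta(X)$ and $\Delta^{\op}(X)$ lie in $U_q(\mfb^+)$, so the first-leg products $r_i X_{(1)}$ and $X_{(2)} r_i$ happen entirely inside the Hopf subalgebra $U_q(\mfb^+)$, whose multiplication is identical in $U_q^{10}$ and $U_q^{00}$; the second-leg products live in $U_q(\mfu)$. Hence the first identity transports verbatim from $\msR\,\Delta(X) = \Delta^{\op}(X)\,\msR$. The fourth identity follows analogously from $\msR_{21}\,\Delta^{\op}(Y) = \Delta(Y)\,\msR_{21}$, with the first-leg products now staying inside $U_q(\mfb^-)$.

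The second and third identities are substantive. For the second, with $Y\in U_q(\mfb^-)$, the first-leg product $r_i Y_{(1)}$ mixes the two Borels and so invokes the $U_q^{10}$-interchange (with $k=1$, $l=0$), generating a pairing factor $\bigl(\nu_q(Y_{(1)(1)}), r_{i,(1)}\bigr)$ in place of the $U_q^{00}$-factor $\bigl(Y_{(1)(1)}, r_{i,(1)}\bigr)$; the mirror product $Y_{(2)} r_i$ on the right-hand side is already in PBW order and involves no interchange. The plan is to absorb this extra $\nu_q$ into the explicit $\id\otimes\nu_q$ on $\Delta^{\op}(Y)$ via two ingredients: (a) the skew-pairing is $\nu_q$-selfadjoint, $(\nu_q(A), B) = (A, \nu_q(B))$ for $A\in U_q(\mfb^-)$, $B\in U_q(\mfb^+)$, checked directly on the generators using $\mu$-involutivity, $\epsilon_r = \epsilon_{\mu(r)}$, and orthogonality of $N$; and (b) the $\nu_q\otimes\nu_q$-invariance of $\msR$, a fact already used in the paragraph before the present lemma and justified by \cite{MR1358358}*{Corollary~8.3.12}. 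Combined with the Hopf property $\Delta\circ\nu_q = (\nu_q\otimes\nu_q)\circ\Delta$, these facts should deliver the required cancellation. The third identity is proved symmetrically, with $\wmsR_{21}$ in place of $\wmsR$, where now it is the right-hand side product $X_{(1)} s_i$ that triggers the $U_q^{10}$-interchange.

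The main obstacle is the precise bookkeeping in the previous paragraph: one must check that the single $\nu_q$-twist introduced by the interchange is compensated exactly by $\id\otimes\nu_q$ and does not leave residual correction terms. The delicate point is that upon using selfadjointness of the pairing one finds $\nu_q$ acting at $r_{i,(1)}$ alone rather than at all three components of $\Delta^{(2)}(r_i)$, so the Hopf property of $\nu_q$ cannot by itself replace $r_i$ by $\nu_q(r_i)$; here the $\nu_q\otimes\nu_q$-invariance of $\msR$ supplies the freedom needed to reshuffle the remaining twist onto the second tensor factor, matching the $\id\otimes\nu_q$ on the right-hand side.
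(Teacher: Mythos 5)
Your approach is in essence the paper's: the first and fourth identities descend verbatim from the ordinary $R$-matrix relations because all first-leg products stay inside a single Borel, and the second and third follow from the $U_q^{10}$-interchange relation \eqref{eq:interchange-relation} combined with the fact that $\wmsR$ is the canonical element of the skew-pairing. Where you diverge is in handling the $\nu_q$ produced by the interchange, and there your route is needlessly roundabout: the ``delicate point'' you raise at the end does not arise if you simply refrain from moving $\nu_q$ across the pairing. Concretely, applying \eqref{eq:interchange-relation} to $r_iY_{(1)}$ gives $\sum_i(\nu_q(Y_{(1)}),r_{i(1)})(S(Y_{(3)}),r_{i(3)})\,Y_{(2)}r_{i(2)}\otimes s_iY_{(4)}$; now use $(\Delta^{(2)}\otimes\id)(\msR)=\msR_{14}\msR_{24}\msR_{34}$ together with the reproducing property $\sum_j(A,r_j)s_j=A$ for $A\in U_q(\mfb^-)$ to convert the two pairing factors into the elements $\nu_q(Y_{(1)})$ and $S(Y_{(3)})$ sitting in the \emph{second} tensor factor, to the left and right of $\wmsR$ respectively. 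This yields $(Y_{(2)}\otimes\nu_q(Y_{(1)}))\wmsR(1\otimes S(Y_{(3)})Y_{(4)})$, and the antipode axiom kills the last factor, giving exactly $(\id\otimes\nu_q)(\Delta^{\op}(Y))\wmsR$ with no residual twist ever touching the $R$-matrix. (The paper runs this same computation after slicing the second leg with a functional $\omega$.) Your detour --- transferring $\nu_q$ onto $r_{i(1)}$ by selfadjointness of the pairing and then invoking $(\nu_q\otimes\nu_q)(\msR)=\msR$ --- does close up, since those two facts are equivalent (the $R$-matrix being the canonical element of the pairing) and undo each other, but neither is needed for this lemma; $(\nu_q\otimes\nu_q)(\msR)=\msR$ is only required later, in the proposition about $\wmsE$.
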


\begin{proof}
Since the first legs of $\Delta(X)$ and $\wmsR$ both lie in $U_q(\mfb^+)$, the first identity can be interpreted inside $U_q(\mfb^+) \hotimes U_q(\mfg)$, and thus holds from the ordinary $R$-matrix relations. For the second identity, we take a functional $\omega$ on $U_q(\mfu)$ factorizing through a finite admissible $*$-representation. Then, we have
\[
(\id\otimes \omega)(\wmsR) Y = Y_{(2)} (\id\otimes \omega)((1\otimes\nu_q(Y_{(1)}))\wmsR(1\otimes S(Y_{(3)}))),
\]
from \eqref{eq:interchange-relation} and the fact that $(Y,(\id\otimes \omega)(\wmsR)) = \omega(Y)$, hence
\[
\wmsR \Delta(Y) = (Y_{(2)} \otimes \nu_q(Y_{(1)}))\wmsR(1\otimes S(Y_{(3)})Y_{(4)}) = (\id\otimes \nu_q)(\Delta^{\op}(Y))\wmsR.
\]

The remaining two identities are obtained in a similar way.
\end{proof}

Let $v = e^{\pi i \hbar C}$ be the natural ribbon element, where $C$ is the quantum Casimir element in the center of $M(\msU_q(\mfu))$ acting on $V_\varpi^q$ by the same scalar as the classical Casimir element of $\mfu$ on $V_\varpi$. Recall that
\[
\msR_{21}\msR\Delta(v) = v\otimes v.
\]

\begin{Def}
We define
\[
\wmsE = \wmsR_{21}(\id\otimes \nu_q)(\wmsR)(1\otimes v^{-1}) \in U_q^{10}(\mfu\oplus \mfa) \hotimes U_q(\mfu).
\]
\end{Def}

\begin{Prop}
The element $\wmsE$ satisfies the following properties:
\begin{gather*}
\wmsE(\id\otimes \nu_q)(\widetilde{\alpha}(x)) = \widetilde{\alpha}(x)\wmsE\ \ \text{for all}\ \ x\in U_q^{10}(\mfu\oplus \mfa),\\
\begin{aligned}
(\widetilde{\alpha} \otimes \id)(\wmsE) &= \msR_{32}\wmsE_{13}(\id\otimes \nu_q)(\msR)_{23},&
(\id\otimes \Delta)\wmsE &= (\widetilde{\alpha}\otimes \id)(\wmsE)\wmsE_{12}.
\end{aligned}
\end{gather*}
\end{Prop}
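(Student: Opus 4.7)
The plan is to establish the three identities by direct algebraic manipulation of the defining expression $\wmsE = \wmsR_{21}(\id\otimes\nu_q)(\wmsR)(1\otimes v^{-1})$, using five structural inputs: the commutation relations of Lemma~\ref{LemComR}; the quasi-triangular axioms for $\msR$, together with their variants $(\Delta\otimes\id)(\msR_{21}) = \msR_{32}\msR_{31}$ and $(\id\otimes\Delta)(\msR_{21}) = \msR_{21}\msR_{31}$ (immediate from the standard axioms and a permutation of legs); the ribbon identity $\Delta(v^{-1}) = (v^{-1}\otimes v^{-1})\msR_{21}\msR$ (rearranged from $\msR_{21}\msR\Delta(v)=v\otimes v$); the centrality of $v$ in $M(\msU_q(\mfu))$; and the $\nu_q$-invariance $(\nu_q\otimes\nu_q)(\msR)=\msR$ together with $\Delta\circ\nu_q = (\nu_q\otimes\nu_q)\circ\Delta$.

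For the intertwining identity (1), I would reduce to checking on the generators $x\in U_q(\mfb^+)\cup U_q(\mfb^-)$, on which $\widetilde{\alpha}$ coincides with $\Delta$. For $X\in U_q(\mfb^+)$, Lemma~\ref{LemComR} gives $\Delta(X)\wmsR_{21} = \wmsR_{21}(\id\otimes\nu_q)\Delta^{\op}(X)$; applying $\id\otimes\nu_q$ to $\wmsR\Delta(X) = \Delta^{\op}(X)\wmsR$ and using $\nu_q^2=\id$ yields $(\id\otimes\nu_q)\Delta^{\op}(X)(\id\otimes\nu_q)(\wmsR) = (\id\otimes\nu_q)(\wmsR)(\id\otimes\nu_q)\Delta(X)$. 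Composing these two identities moves $\Delta(X)$ from the left of $\wmsR_{21}(\id\otimes\nu_q)(\wmsR)$ all the way to $(\id\otimes\nu_q)\Delta(X)$ on the right; the trailing factor $1\otimes v^{-1}$ slides past by centrality of $v$. The case $Y\in U_q(\mfb^-)$ is symmetric via the remaining two identities of Lemma~\ref{LemComR}.

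For identity (2), I would apply $\widetilde{\alpha}\otimes\id$ factorwise. The quasi-triangular axioms (combined with $\Delta\circ\nu_q = (\nu_q\otimes\nu_q)\circ\Delta$) give
\[
(\widetilde{\alpha}\otimes\id)(\wmsR_{21}(\id\otimes\nu_q)(\wmsR)) = \msR_{32}\wmsR_{31}(\id\otimes\nu_q)(\wmsR)_{13}(\id\otimes\nu_q)(\msR)_{23}.
\]
Multiplying by $1\otimes 1\otimes v^{-1}$ and shifting it past $(\id\otimes\nu_q)(\msR)_{23}$ via centrality of $v$ exposes $\wmsE_{13}=\wmsR_{31}(\id\otimes\nu_q)(\wmsR)_{13}(1\otimes 1\otimes v^{-1})$ as the middle factor.

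For identity (3), I would expand $(\id\otimes\Delta)(\wmsE)$ factorwise, using the ribbon identity to handle $(\id\otimes\Delta)(1\otimes v^{-1})$, producing
\[
(\id\otimes\Delta)(\wmsE) = \wmsR_{21}\wmsR_{31}(\id\otimes\nu_q)(\wmsR)_{13}(\id\otimes\nu_q)(\wmsR)_{12}(1\otimes v^{-1}\otimes v^{-1})\msR_{32}\msR_{23}.
\]
On the other hand, substituting (2) and $\wmsE_{12}=\wmsR_{21}(\id\otimes\nu_q)(\wmsR)_{12}(1\otimes v^{-1}\otimes 1)$ into the right-hand side of (3) yields
\[
\msR_{32}\wmsR_{31}(\id\otimes\nu_q)(\wmsR)_{13}(\id\otimes\nu_q)(\msR)_{23}\wmsR_{21}(\id\otimes\nu_q)(\wmsR)_{12}(1\otimes v^{-1}\otimes v^{-1}),
\]
after using centrality of $v$ to collect the $v^{-1}$-factors on the right. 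The remaining equality is a purely $R$-matrix identity in $U_q^{10}(\mfu\oplus\mfa)\hotimes U_q(\mfu)^{\hat{\otimes} 2}$, which I would derive by repeated applications of the QYBE for $\msR$ together with the commutation between $\wmsR$, $\wmsR_{21}$ and coproducts of Borel elements supplied by Lemma~\ref{LemComR} (the latter being applied to Borels embedded at position~2).

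The main obstacle is part (3): although no new structural inputs are required, the identification of the two sides requires moving an $\msR_{32}\msR_{23}$-factor, produced by the coproduct of $v^{-1}$, from the far right on one side to the far left on the other, across several twisted $R$-matrix factors, while tracking how the $\nu_q$-twists redistribute among the legs. The presence of the ribbon element in the definition of $\wmsE$ is essential here: the rearrangement is only possible because $\Delta(v^{-1})$ supplies the precise compensating factor $\msR_{21}\msR$, and this forces the normalization $1\otimes v^{-1}$ in the definition of $\wmsE$.
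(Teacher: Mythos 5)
Your proposal is correct and follows essentially the same route as the paper: parts (1) and (2) via Lemma~\ref{LemComR} and the quasi-triangularity axioms, and part (3) by using the ribbon identity for $\Delta(v^{-1})$ to strip off the $v^{-1}$-factors and reduce to exactly the $R$-matrix identity the paper isolates as its equation for this step, which is then settled by the flip relations for $\msR$ together with Lemma~\ref{LemComR} and $(\nu_q\otimes\nu_q)(\msR)=\msR$. The only difference is cosmetic (you expand both sides of (3) fully before comparing, whereas the paper keeps $(\id\otimes\Delta)$ unexpanded at first), and your closing "repeated applications" step is precisely the exchange identity $(\id\otimes\nu_q)(\wmsR)_{13}(\id\otimes\nu_q)(\msR)_{23}\wmsR_{21}=\wmsR_{21}\msR_{23}(\id\otimes\nu_q)(\wmsR)_{13}$ that the paper proves from those same inputs.
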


\begin{proof}
The first identity follows from Lemma \ref{LemComR} and that $\nu_q$ is involutive, by considering separately elements in $U_q(\mfb^+)$ and $U_q(\mfb^-)$. The second identity is also immediate from the properties of the $R$-matrix.

Writing out the last equality and using the defining property of $v$, we see that it is equivalent to
\begin{equation}\label{EqIdRibb}
(\id\otimes \Delta)(\wmsR_{21}(\id\otimes \nu_q)(\wmsR))\msR_{32}\msR_{23} = \msR_{32} \wmsR_{31}(\id\otimes \nu_q)(\wmsR)_{13}(\id\otimes \nu_q)(\msR)_{23}\wmsR_{21}(\id\otimes \nu_q)(\wmsR)_{12}.
\end{equation}
Noting
\begin{equation*}
\begin{split}
(\id\otimes \Delta)(\wmsR_{21}(\id\otimes \nu_q)(\wmsR))\msR_{32}\msR_{23}  &=  \msR_{32}(\id\otimes \Delta^{\op})(\wmsR_{21}(\id\otimes \nu_q)(\wmsR))\msR_{23}  \\
&= \msR_{32} \wmsR_{31}\wmsR_{21} (\id\otimes \Delta^{\op})((\id\otimes \nu_q)(\wmsR))\msR_{23} \\
&= \msR_{32} \wmsR_{31}\wmsR_{21} \msR_{23}(\id\otimes \Delta)((\id\otimes \nu_q)(\wmsR))\\
&= \msR_{32} \wmsR_{31}\wmsR_{21} \msR_{23}(\id\otimes \nu_q)(\wmsR)_{13}(\id\otimes \nu_q)(\wmsR)_{12},
\end{split}
\end{equation*}
\eqref{EqIdRibb} will follow from
\[
(\id\otimes \nu_q)(\wmsR)_{13}(\id\otimes \nu_q)(\msR)_{23}\wmsR_{21} = \wmsR_{21} \msR_{23}(\id\otimes \nu_q)(\wmsR)_{13}.
\]
This is a consequence of the commutation relations in Lemma \ref{LemComR}:
\begin{equation*}
\begin{split}
(\id\otimes \nu_q)(\wmsR)_{13}(\id\otimes \nu_q)(\msR)_{23}\wmsR_{21} &= (\Delta\otimes \id)((\id\otimes \nu_q)\wmsR) \wmsR_{21} \\
& = \wmsR_{21} (\id\otimes \nu_q \otimes \nu_q)(\Delta^{\op}\otimes \id)(\wmsR) \\
&= \wmsR_{21} \msR_{23}(\id\otimes \nu_q)(\wmsR)_{13},
\end{split}
\end{equation*}
where in the last step we used $(\nu_q\otimes \nu_q)(\msR) = \msR$, as the form $(-,-)$ is $\nu_q$-invariant.
\end{proof}

It follows in particular from the first identity above that $\wmsE$ commutes with the $L_{\omega}^+L_{-\omega}^-\otimes 1$, hence
\[
\wmsE \in U_q^{10}(\mfureg\oplus \mfa) \hotimes U_q(\mfu).
\]
We write $\msE$ for the projection of $\wmsE$ inside $U_q^{10}(\mfureg)\hotimes U_q(\mfu)$. The following theorem is now
immediate.

\begin{Theorem}
The $\Rep_q(\mfu)$-module C$^*$-category $\Rep_q^{\nu}(\mfu^{\nu})$ is ribbon $\nu_q$-braided with respect to the $\nu_q$-braiding
\[
\eta_{M,U}(\xi\otimes \eta) = \msE(\xi\otimes \eta),\qquad \xi \in M,\eta \in U,
\]
where $M \in \Rep_q^{\nu}(\mfu^{\nu})$ and $U \in \Rep_q(\mfu)$.
\end{Theorem}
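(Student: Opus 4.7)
The plan is to observe that the theorem falls out directly from the preceding Proposition once we project from $\wmsE \in U_q^{10}(\mfu\oplus\mfa)\hotimes U_q(\mfu)$ to $\msE \in U_q^{10}(\mfureg)\hotimes U_q(\mfu)$ and match the three identities there against the strict-case ribbon $\sigma$-braid axioms.

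First I would record that the coaction $\alpha\colon U_q^{10}(\mfureg)\to U_q^{10}(\mfureg)\hotimes U_q(\mfu)$ is genuinely coassociative, so the quasi-associator $\psi$ is trivial and $\Rep_q^{\nu}(\mfu^{\nu})$ is a strict module C$^*$-category. In this strict setting, the $\sigma$-octagon relation of the general definition reduces to
\[
(\alpha\otimes\id)(\msE)=\msR_{21}\msE_{02}(\id\otimes\id\otimes\nu_q)(\msR_{12}),
\]
and the ribbon $\sigma$-braid equation reduces to
\[
(\id\otimes\Delta)(\msE)=(\alpha\otimes\id)(\msE)\,\msE_{01},
\]
with $U_q^{10}(\mfureg)$ placed at the $0$-th leg in both relations.

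Next I would descend the three identities of the Proposition through the canonical quotient $U_q^{10}(\mfureg\oplus\mfa)\twoheadrightarrow U_q^{10}(\mfureg)$, under which $\walpha$ goes to $\alpha$ and $\wmsE$ goes to $\msE$. After shifting the index labels (the Proposition places the module algebra at leg $1$ while the $\sigma$-braid definition places it at leg $0$), the second identity of the Proposition becomes exactly the displayed strict octagon, and the third identity becomes exactly the strict ribbon equation. The first identity, $\msE(\id\otimes\nu_q)(\alpha(x))=\alpha(x)\msE$ for $x\in U_q^{10}(\mfureg)$, is the statement that $\eta_{M,U}\colon M\odot\nu_q(U)\to M\odot U$ is $U_q^{10}(\mfureg)$-linear, which is precisely the naturality of $\eta$ in $(M,U)$.

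The last thing to verify is the invertibility of $\msE$, needed for $\eta$ to consist of isomorphisms: this is immediate because $\wmsE=\wmsR_{21}(\id\otimes\nu_q)(\wmsR)(1\otimes v^{-1})$ is a product of three invertible elements in the completed tensor product ($\msR$ is invertible by the quasi-triangular axioms, hence so are $\wmsR$ and its $\nu_q$-twist, while the ribbon element $v$ is central and invertible), and invertibility survives the quotient. The substantive content of the theorem was already absorbed into the Proposition; the only remaining work here is the strict-case simplification of the octagon and ribbon axioms together with a careful matching of leg labels, which is purely bookkeeping, so I do not expect any genuine obstacle beyond keeping the conventions straight.
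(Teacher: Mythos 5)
Your proposal is correct and follows essentially the same route as the paper, which simply declares the theorem ``immediate'' from the preceding Proposition: the three identities there, pushed through the quotient $U_q^{10}(\mfureg\oplus\mfa)\twoheadrightarrow U_q^{10}(\mfureg)$ and relabelled so that the module algebra sits at leg $0$, are exactly the morphism condition, the strict $\sigma$-octagon relation, and the strict ribbon $\sigma$-braid equation of the paper's general definition (with $\psi=1$), and invertibility of $\msE$ is clear from its definition as a product of invertible elements. Your only loose phrase is calling the first identity ``naturality'' --- it is really the condition that each $\eta_{M,U}$ is a morphism of $U_q^{10}(\mfureg)$-modules, naturality being automatic since $\msE$ is a fixed universal element --- but this does not affect the argument.
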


\section{Statement of the main conjecture}
\label{sec:main-conj}

We conjecture that the three ribbon twist-braided $\Rep_q(\mfu)$-module C$^*$-categories constructed in the previous section are equivalent if the associated involutions of $\mfu$ are inner equivalent.

\begin{Conj}\label{ConjSat}
Let $\mfu$ be a compact semisimple Lie algebra and $\{e_r,f_r,h_r\mid r\in I\}$ be $*$-compatible Chevalley generators of $\mfg=\mfu^\C$. Let
\begin{itemize}
\item $\sigma$ be an involution of $\mfu$ and $\mfk_{\sigma} = \mfu^{\sigma}$,
\item $(X,\tau)$ be a Satake diagram with associated involution $\theta = \theta(X,\tau)$ and $\mfk_{\theta} = \mfu^{\theta}$.
\item $(Y,\mu)$ be a Vogan diagram with associated involution $\nu = \nu(Y,\mu)$ and $\mfk_{\nu} = \mfu^{\nu}$.
\end{itemize}

Let $0<q<1$, and let $\hbar\in i\R$ be such that $e^{\pi i \hbar} = q$.

Then if $\sigma,\theta$ and $\nu$ are inner equivalent, we have
\[
\Rep_{\hbar}(\mfk_{\sigma})\cong  \Rep_q^{\theta}(\mfk_{\theta}) \cong \Rep_q^{\nu}(\mfk_{\nu})
\]
as twist-braided $\Rep_q(\mfu)$-module C$^*$-categories.
\end{Conj}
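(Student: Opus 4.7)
The plan is to first reduce the conjecture using Remark~\ref{RemEqInn}, which ensures that inner equivalent involutions yield isomorphic twist-braided module C$^*$-categories on each of the three sides. After this reduction, it suffices to establish the two pairwise equivalences $\Rep_{\hbar}(\mfk_{\theta}) \cong \Rep_{q}^{\theta}(\mfk_{\theta})$ (the KZ--coideal comparison) and $\Rep_{q}^{\theta}(\mfk_{\theta}) \cong \Rep_{q}^{\nu}(\mfk_{\nu})$ (the coideal--Vogan comparison), where the involutions have been brought into matching Satake and Vogan form on a common Cartan subalgebra.

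For the KZ--coideal comparison, the strategy is to lift the Drinfeld--Kohno equivalence $F\colon \Rep_{\hbar}(\mfu)\to\Rep_{q}(\mfu)$ to the module category level. Concretely, I would seek a $*$-functor $G\colon \Rep_{\hbar}(\mfk_{\theta})\to\Rep_{q}^{\theta}(\mfk_{\theta})$ together with unitary coherence isomorphisms $(G_{2})_{M,V}\colon G(M)\odot F(V)\to G(M\odot V)$ satisfying the mixed pentagon and braiding diagrams from our notion of equivalence. On the Tannakian side, $G_{2}$ should be implemented by an invariant twist element $J\in U(\mfk^{\C})\hotimes U(\mfg)$ that simultaneously conjugates the modified 2-cyclotomic KZ$_{2}$ associator $\Psi$ into the trivial quasi-coaction of the coideal picture and matches the ribbon KZ-braid $e^{-\pi i\hbar(2t^{\mfk}_{01}+C^{\mfk}_{1})}$ with the universal $K$-matrix $\msK$ up to a Cartan correction. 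At the level of formal power series in $\hbar$, such a twist is furnished by Enriquez's theory of cyclotomic associators \cite{MR2383601}, combined with the identification of $\msK$ with a cyclotomic monodromy implicit in \citelist{\cite{arXiv:1507.06276}\cite{arXiv:1705.04238}}.

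For the coideal--Vogan comparison, the approach is purely algebraic. I would construct a $U_{q}(\mfu)$-equivariant $*$-algebra isomorphism between (suitable completions of) $U_{q}^{\theta}(\mfk_{\theta})$ and $U_{q}^{10}(\mfureg)$ that carries $\msK$ to $\msE$, up to a group-like correction as in Remark~\ref{rem:sigma-change} which absorbs the discrepancy between the $\tau\tau_{0}$- and $\nu_{q}$-braidings. Such an isomorphism exists at the classical limit because $\mfu^{\theta}\cong\mfu^{\nu}$; the main subtlety is to match the coideal generators $B_{r}=F_{r}+c_{r}\theta_{\mbq}(F_{r}K_{r})K_{r}^{-1}+s_{r}K_{r}^{-1}$ against the twisted Drinfeld--double generators of $U_{q}^{10}(\mfureg)$. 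The freedom in choosing parameters $(\mbc,\mbs)$, which does not affect the equivalence class of the module C$^*$-category, should provide enough flexibility to make this match explicit.

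The main obstacle is the analytic aspect of the KZ--coideal comparison: upgrading the formal twist $J$ from a power series in $\hbar$ to an actual element of the completed tensor algebra at each fixed $0<q<1$. This is a cyclotomic counterpart of the analytic Drinfeld--Kohno theorem, and is more delicate than its tensor analogue in \cite{MR2832264} because of the additional regular singularity at $w=0$ of the modified 2-cyclotomic KZ$_{2}$ equation, which is precisely the source of the nontrivial ribbon braid. A promising alternative route, taken in Section~\ref{sec:rk-one-case} for rank one, is to bypass the twist altogether and instead exploit uniqueness of ribbon twist-braided structures up to isomorphism (in the spirit of \citelist{\cite{MR2782190}\cite{MR2959039}}), verifying the comparison directly after decomposition into irreducible components.
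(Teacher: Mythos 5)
You are proposing a strategy for a statement that the paper itself leaves as a \emph{conjecture}: it is proved only in the rank one case, and your proposal does not close the gap either, because its two central steps are precisely the open problems. For the KZ--coideal comparison, the existence of a twist $J$ conjugating the cyclotomic associator $\Psi$ into the trivial quasi-coaction and matching $e^{-\pi i\hbar(2t^{\mfk}_{01}+C^{\mfk}_{1})}$ with the universal $K$-matrix is available only as a formal power series statement (Enriquez, Brochier); upgrading it to an actual element of the completed tensor product at fixed $0<q<1$ is the cyclotomic analogue of the analytic Kazhdan--Lusztig/Drinfeld--Kohno theorem of \cite{MR2832264} and is not known. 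For the coideal--Vogan comparison, your plan to build a $U_q(\mfu)$-equivariant $*$-algebra isomorphism between $U_q^{\theta}(\mfk_{\theta})$ and $U_q^{10}(\mfureg)$ fails already for $\mfsu_2$: there the coideal is commutative with one-dimensional admissible irreducibles, while $U_q^{10}(\mfureg)$ has infinite-dimensional irreducible lowest-weight $*$-representations, so no such isomorphism (even after reasonable completions) can exist. The comparison must be made at the level of module categories, not of algebras, and appealing to the classical limit does not produce it.

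What the paper actually proves is the $\mfsu_2$ case, by the route you mention only in your closing sentence. It reduces, via Lemma~\ref{LemIsoComp}, to comparing the module subcategories generated by each simple object; it invokes the classification of $\Rep_q(\mfsu_2)$-module C$^*$-categories with $\Z$-fusion rules from \cite{MR3420332}; and it establishes a rigidity statement (Proposition~\ref{PropUnBraid} and Theorem~\ref{TheoUniBraid}) saying that a non-scalar ribbon braid on such a category is unique up to isomorphism and is determined by the single numerical invariant $\Tr(C^*C)$ of the braid operator on the generating object tensored with the fundamental representation. The three pictures are then matched by explicitly computing this operator in each one --- the monodromy exponential on the KZ side, the matrix commuting with $B_{t,V}$ on the coideal side, and $\msE=\wmsR_{21}(\id\otimes\nu_q)(\wmsR)(1\otimes v^{-1})$ on the Vogan side --- and checking that the eigenvalues agree. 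If you want an actual proof of any instance of the conjecture, this uniqueness-plus-computation method is the one that works; your twist-based program is the natural general approach but remains, as you partly acknowledge, a program rather than a proof.
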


\section{The rank one case}
\label{sec:rk-one-case}

In this section we will verify Conjecture \ref{ConjSat} for the simplest case $\mfu = \mfsu_2$. We first introduce some general terminology.

\begin{Def}
Let $\mcC$ be a tensor C$^*$-category, and $\mcD$ a $\mcC$-module C$^*$-category. We call $\mcD$ \emph{connected} if for any nonzero objects $X,Y$ in $\mcD$ there exists $U$ in $\mcC$ with $\Mor(X\odot U,Y)\neq \{0\}$. For $X\in \mcD$ a simple object, we write $\mcD_X$ for the smallest C$^*$-subcategory of $\mcD$ which is full, isomorphism-closed, subobject complete and closed under finite direct sums and $\odot$.
\end{Def}

Later, when we consider $\mcD=\Rep_q^{\mbt}(\mfk)$, we will write
$\Rep_q^{\mbt}(\mfk;X)$ for $\mcD_X$.

The category $\mcD_X$ is itself a $\mcC$-module C$^*$-category, and will be called the $\mcC$-module C$^*$-category \emph{generated} by $X$. If $\mcC$ is rigid, with $\overline{U}$ a dual object of $U$, it is clear by Frobenius reciprocity
\[
\Mor(X\odot U,Y) \cong \Mor(X,Y\odot \overline{U})
\]
that each $\mcD_X$ is equal to the full subcategory consisting of all objects $Y$ with $\Mor(X\odot U,Y)\neq \{0\}$ for some $U$. Moreover, we then have $\mcD_X = \mcD_Y$ for each simple object $Y$ in $\mcD_X$. If $\mcD$ is twist-braided, this structure passes by restriction to any $\mcD_X$.

The following lemma is immediate.

\begin{Lem}\label{LemIsoComp}
Let $\mcC$ be a rigid (braided) tensor C$^*$-category, and let $\mcD,\mcD'$ be two (twist-braided) $\mcC$-module C$^*$-categories. Assume that $\{X_i\mid i\in \mcI\}$ and $\{Y_i\mid i\in \mcI\}$ are maximal collections of mutually non-isomorphic simple objects in respectively $\mcD$ and $\mcD'$. If $\mcD_{X_i} \cong \mcD'_{Y_i}$ for each $i\in \mcI$, then $\mcD\cong \mcD'$.
\end{Lem}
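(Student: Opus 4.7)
The plan is to decompose $\mcD$ and $\mcD'$ into their connected $\mcC$-module C$^*$-subcategories and glue the given local equivalences into a global one. Since $\mcC$ is rigid, any $\mcC$-module C$^*$-category splits as an orthogonal direct sum of its connected components, and by the discussion preceding the lemma each $\mcD_{X_i}$ is exactly the connected component of $\mcD$ containing $X_i$. Introducing the equivalence relation $i \sim j$ on $\mcI$ defined by $\mcD_{X_i}=\mcD_{X_j}$ and choosing a representative $i(\pi)$ for each equivalence class $\pi$, one obtains $\mcD\cong\bigoplus_{\pi}\mcD_{X_{i(\pi)}}$; an analogous decomposition $\mcD'\cong\bigoplus_{\pi'}\mcD'_{Y_{j(\pi')}}$ holds for $\mcD'$.

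The next step is a multiplicity count matching these two decompositions. For each isomorphism class $[c]$ of connected $\mcC$-module C$^*$-categories, write $N_\mcD([c])$ for the multiplicity of $c$ as a component of $\mcD$ and $n([c])$ for the cardinality of a maximal collection of non-isomorphic simples in a representative. Then $|\{i\in\mcI\mid\mcD_{X_i}\cong c\}|=N_\mcD([c])\cdot n([c])$, with the analogous formula for $\mcD'$. The hypothesis $\mcD_{X_i}\cong\mcD'_{Y_i}$ shows the two index sets on the left coincide, and since $n([c])\ge 1$ one concludes $N_\mcD([c])=N_{\mcD'}([c])$ for every $[c]$, so the multisets of component isomorphism types of $\mcD$ and $\mcD'$ agree.

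Fixing a bijection between components of $\mcD$ and components of $\mcD'$ that respects isomorphism class, together with an explicit local equivalence from the hypothesis for each matched pair, I would take the direct sum of these local equivalences to obtain the required equivalence $\mcD\cong\mcD'$ of $\mcC$-module C$^*$-categories. In the twist-braided case, each local equivalence comes with an isomorphism $F_\pi\colon\sigma'\to\sigma$ of braided autoequivalences of $\mcC$; by the uniqueness remark after the definition of equivalence these must all coincide as a single natural transformation, so the direct sum of the component equivalences $G_\pi$ together with the common $F$ assembles into a genuine equivalence of twist-braided module C$^*$-categories. I expect the main subtlety to be the multiplicity count, which rules out the naive worry that the connected-component partition of $\mcI$ induced by $\mcD$ might a priori differ from the one induced by $\mcD'$; once this is granted, the assembly into a global equivalence is routine.
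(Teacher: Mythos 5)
Your overall strategy --- decompose into connected components and glue --- is the argument the paper has in mind (it offers no proof at all, declaring the lemma ``immediate''), and your handling of the twist-braided case via the uniqueness of $F$ is fine. The problem is the multiplicity count. From $N_{\mcD}([c])\cdot n([c]) = N_{\mcD'}([c])\cdot n([c])$ you cancel $n([c])$, which is legitimate only when $n([c])$ is finite. In every application of this lemma in the paper the components have $\Z$-fusion rules, so $n([c])=\aleph_0$ and the cancellation proves nothing: $1\cdot\aleph_0=2\cdot\aleph_0$. This is not merely a presentational defect of your write-up but a genuine failure of the statement at this level of generality: if $c$ is a connected $\mcC$-module C$^*$-category with countably infinitely many simple objects, then $\mcD=c$ and $\mcD'=c\oplus c$ admit bijections of their simple objects satisfying the hypothesis (every $\mcD_{X_i}$ and every $\mcD'_{Y_i}$ is equivalent to $c$), yet $\mcD$ is connected and $\mcD'$ is not, so $\mcD\not\cong\mcD'$.

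What rescues the lemma where it is actually used is an extra piece of information that the bare hypothesis does not record: in the rank-one applications the bijection $i\mapsto(X_i,Y_i)$ is compatible with the component decompositions, i.e.\ $\mcD_{X_i}=\mcD_{X_j}$ if and only if $\mcD'_{Y_i}=\mcD'_{Y_j}$ (concretely, on both sides the partition of the real parameter labelling the simples is by cosets of $\Z$). Under that additional hypothesis your equivalence relation on $\mcI$ is the same for $\mcD$ and $\mcD'$, one representative per class yields direct-sum decompositions indexed by the same set with termwise equivalent summands, and the rest of your argument goes through with no multiplicity count at all. So you should either add this compatibility of partitions as a hypothesis (and verify it at each invocation of the lemma), or restrict to the case where every component contains only finitely many simple objects; as written, the middle step of your proof fails in exactly the case the paper needs.
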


Let us now turn to the specific case of $\mfu = \mfsu_2$. We use the standard generators $e,f,h$ for $\mfsl_2(\C)$, \[\lbrack h,e\rbrack= 2e,\qquad \lbrack h,f\rbrack = -2f,\qquad \lbrack e,f\rbrack = h,\qquad e^* = f,\qquad h^* = h.\] In this case, there is up to inner equivalence only one non-trivial involution of $\mfsu_2$, with Satake form and Vogan form determined by \[\theta(h) = -h,\quad \theta(e) = -f,\quad \theta(f) = -e,\qquad \nu(h) = h,\quad \nu(e) = -e,\quad \nu(f) = -f.\] The respective fixed point subalgebras are
\[
\mfk_{\theta} =\mft_\theta = \R(f-e),\qquad \mfk_{\nu} = \mft_\nu = i \R h,
\]
with complexifications $\mfh_\theta = \C(f-e)$ and $\mfh_\nu = \mfh = \C h$.

Let us fix $0<q<1$ and $\hbar\in i\R$ with $q= e^{\pi i \hbar}$. Then $U_q(\mfsu_2)$ is the Hopf $*$-algebra generated by $K,E,F$ with
\begin{gather*}
\begin{aligned}
KE &= q^2EK,& KF &= q^{-2}FK,& \lbrack E,F\rbrack &= \frac{K-K^{-1}}{q-q^{-1}},& K^* &= K,& E^* &= FK,
\end{aligned}\\
\begin{aligned}
\Delta(K) &= K\otimes K,& \Delta(E) &= E\otimes 1 + K\otimes E,& \Delta(F) &= F\otimes K^{-1} + 1\otimes F.
\end{aligned}
\end{gather*}
(Strictly speaking, we should also add a square root of $K$ by the conventions for $U_q(\mfu)$ used in this paper, but this will be inessential in what follows.)

Let $V$ be the fundamental generating 2-dimensional $*$-representation of $U_q(\mfsu_2)$ on $\C^2$ with the standard orthonormal basis $e_+ = (1,0), e_- = (0,1)$ such that, with $X_V = \pi_V(X)$,
\[
E_V = \begin{pmatrix} 0 & q^{1/2} \\ 0 & 0 \end{pmatrix},\qquad F_V = \begin{pmatrix} 0 & 0 \\ q^{-1/2} & 0 \end{pmatrix},\qquad K_V = \begin{pmatrix} q & 0 \\ 0 & q^{-1}\end{pmatrix}.
\]
Then writing $e_{ab} = e_a\otimes e_b$, we have with respect to the basis $\{e_{++},e_{+-},e_{-+},e_{--}\}$ that the $R$-matrix and braiding at $V\otimes V$ are given by
\[
R=\msR_{V,V} = q^{1/2} \begin{pmatrix} q^{-1} & 0 & 0 & 0 \\ 0 & 1 & q^{-1} - q & 0 \\ 0 & 0 & 1 & 0\\ 0 & 0 & 0& q^{-1}\end{pmatrix},\quad \beta= \beta_{V,V} = \Sigma R =  q^{1/2} \begin{pmatrix} q^{-1} & 0 & 0 & 0 \\ 0 & 0 & 1 & 0 \\ 0 & 1 & q^{-1}-q & 0\\ 0 & 0 & 0& q^{-1}\end{pmatrix}.
\]

Let us also introduce the following terminology.

\begin{Def}
Let $\mcD$ be a $\Rep_q(\mfsu_2)$-module C$^*$-category. We say that $\mcD$ has $\Z$-fusion rules if the simple objects of $\mcD$ (up to isomorphism) can be labeled $\{W_n\mid n\in\Z\}$ such that
\[
W_n \otimes V \cong W_{n+1}\oplus W_{n-1}.
\]
\end{Def}

Our first goal will now be to study the braided module C$^*$-category structure on $\Rep_q^{\theta}(\mft_{\theta})$. Note that in this case we have
\[
\mcC_{q,c} = \{q^{-2}\},\qquad \mcS_{q,c} = i\R.
\]
In the following, we identify $\mcT_{q,c} = \mcC_{q,c} \times \mcS_{q,c}$ with $\R$ by
\[
(q^{-2},it) \leftrightarrow t.
\]
The coideal $U_q^t(\mft_{\theta})$ is generated by
\[
B_t = F-q^{-2}EK^{-1} + itK^{-1},\ \ \text{and then}\ \ B_{t,V} = \begin{pmatrix} itq^{-1} & -q^{-1/2} \\ q^{-1/2} & itq\end{pmatrix}.
\]
Note that $B_t^* =-B_t$. In particular, the simple objects in $\Rep_q^{\theta}(\mft_{\theta})$ are the evaluations
\[
\ev_{it}\colon U_q^0(\mft_{\theta}) \rightarrow \C,\quad B_0 \mapsto it
\]
for $t\in \R$, and we then obtain equivalences of module C$^*$-categories with fixed generating object
\[
\Rep_q^{\theta}(\mfk_{\theta};\ev_{it}) \cong \Rep_q^t(\mfk_{\theta};\varepsilon),
\]
where $\varepsilon$ is the counit of $U_q(\mfsu_2)$ (restricted to $U_q^t(\mft_{\theta})$).

For $t\in \R$, let us write $\lambda = \lambda_t \in \R$ for the unique real number such that
\[
t = q^{-1/2} \frac{q^{-\lambda} - q^{\lambda}}{q^{-1}-q}.
\]

\begin{Lem}
Let $\C_n$ be the one-dimensional $*$-representation of $U_q^{t}(\mft_{\theta})$ given by the $*$-character
\[
\chi_n\colon U_q^t(\mft_{\theta}) \rightarrow \C,\quad B_t \mapsto iq^{-1/2} \frac{q^{-\lambda -n} - q^{\lambda +n}}{q^{-1}-q}.
\]
Then the $\C_n$ exhaust up to isomorphism the irreducible objects in $\Rep_q^t(\mft_{\theta};\varepsilon)$, and
\[
\C_n \otimes V \cong \C_{n+1} \oplus \C_{n-1},
\]
that is, $\Rep_q^t(\mft_{\theta};\varepsilon)$ has $\Z$-fusion rules.
\end{Lem}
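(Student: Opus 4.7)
The approach is to compute the action of $B_t$ on $\C_n\otimes V$ directly, identify the irreducible summands from its eigenvalues, and then enumerate the simple objects of the module subcategory by an induction on tensor powers of $V$.

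\emph{Coideal form of $\Delta(B_t)$ and the resulting matrix.} The first step is to rewrite $\Delta(B_t)$ so that its left leg manifestly lies in $U_q^t(\mft_\theta)$. Using the formulas for $\Delta(E),\Delta(F),\Delta(K)$, a direct computation gives
\[
\Delta(B_t) = B_t\otimes K^{-1} + 1\otimes F - q^{-2}\otimes EK^{-1} \in U_q^t(\mft_\theta)\otimes U_q(\mfsu_2).
\]
Applying $\chi_n\otimes \pi_V$ and using the matrices for $E_V,F_V,K_V$ given in the text, the action of $B_t$ on $\C_n \otimes V$ in the basis $\{e_+,e_-\}$ is the matrix
\[
\begin{pmatrix} iq^{-3/2}[\lambda+n]_q & -q^{-1/2} \\ q^{-1/2} & iq^{1/2}[\lambda+n]_q\end{pmatrix},
\]
with trace $iq^{-1/2}(q^{-1}+q)[\lambda+n]_q$ and determinant $q^{-1}\bigl(1-[\lambda+n]_q^2\bigr)$.

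\emph{Identifying the eigenvalues and decomposing.} I would then check, via the elementary identities
\[
[m+1]_q+[m-1]_q = (q+q^{-1})[m]_q,\qquad [m+1]_q[m-1]_q = [m]_q^2-1,
\]
that these coincide with the trace and determinant of the diagonal operator with eigenvalues $iq^{-1/2}[\lambda+n\pm 1]_q = \chi_{n\pm 1}(B_t)$. These two eigenvalues are always distinct: $[m+1]_q = [m-1]_q$ forces $q^{-m} = -q^m$, which is impossible for real $m$ and $0<q<1$. Since the coproduct is a $*$-homomorphism and $U_q^t(\mft_\theta)\subseteq U_q(\mfu)$ is a $*$-subalgebra, $B_t$ acts as a skew-adjoint operator on $\C_n\otimes V$; as it also generates $U_q^t(\mft_\theta)$ as a $*$-algebra, its eigenspace decomposition is the decomposition into irreducible $U_q^t(\mft_\theta)$-subrepresentations, yielding $\C_n\otimes V\cong \C_{n+1}\oplus\C_{n-1}$.

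\emph{Exhaustion by induction.} A straightforward induction on $k$ using this fusion rule shows that the simple summands of $\C_0\otimes V^{\otimes k}$ are precisely the $\C_n$ with $|n|\leq k$ and $n\equiv k\pmod 2$. Hence every $\C_n$ does appear in $\Rep_q^t(\mft_\theta;\varepsilon)$ and, conversely, every simple object of the module C$^*$-category generated by $\varepsilon = \C_0$ is isomorphic to some $\C_n$; the $\C_n$ are pairwise non-isomorphic because $m\mapsto[m]_q$ is strictly increasing for $0<q<1$. I do not expect any significant obstacle: the argument reduces to one $2\times 2$ matrix calculation, two standard $q$-identities, a routine induction, and the $*$-structure of the coideal supplying diagonalizability for free.
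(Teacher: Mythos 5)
Your proof is correct and follows essentially the same route as the paper, which simply invokes "an easy spectral computation for the $2\times 2$ matrices $(\chi_n\otimes\pi_V)\Delta(B_t)$" together with the fact that $\C_0=\varepsilon$ generates; you have merely filled in that computation (the coideal form of $\Delta(B_t)$, the trace/determinant match via the two standard $q$-identities, skew-adjointness for diagonalizability) and made the generation argument explicit as an induction. The only cosmetic slip is the term $-q^{-2}\otimes EK^{-1}$, which should read $-q^{-2}(1\otimes EK^{-1})$.
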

\begin{proof}
Clearly $\C_0$ corresponds to the trivial representation given by the counit. Then the fusion rules above follow from an easy spectral computation for the $2$-by-$2$-matrices $(\chi_n\otimes \pi_V)\Delta(B_t)$. Since $\C_0$ is generating, these exhaust all irreducible objects.
\end{proof}

\begin{Prop}\label{PropUnBraid}
Let $t\in \R$. Up to isomorphism, there is a unique ribbon braided $\Rep_q(\mfsu_2)$-module C$^*$-category structure $\eta$ on $\Rep_q^t(\mft_{\theta};\varepsilon)$ such that $C = \eta_{\varepsilon,V}$ is not a scalar multiple of the identity. In that case, the singular values of $C$ are $\{q^{\pm \lambda - 1/2}\}$.
\end{Prop}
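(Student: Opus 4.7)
\emph{Plan.} My first step is to observe that any ribbon braided $\Rep_q(\mfsu_2)$-module C$^*$-category structure on $\Rep_q^t(\mft_\theta;\varepsilon)$ is determined, up to isomorphism, by the single morphism $C=\eta_{\varepsilon,V}$. Since $\varepsilon$ generates the module category under $\odot$ and $V$ generates $\Rep_q(\mfsu_2)$ as a braided tensor C$^*$-category, repeated application of the octagon and ribbon axioms, combined with naturality, expresses every $\eta_{X,U}$ in terms of $C$. Furthermore, a direct computation of the characteristic polynomial of the skew-adjoint operator $B_{t,V}$ shows that it has two distinct purely imaginary eigenvalues for every real $t$ and $0<q<1$, so the $U_q^t(\mft_\theta)$-module $\varepsilon\odot V$ decomposes orthogonally as $\C_1\oplus\C_{-1}$. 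Consequently $\End_{U_q^t}(V)=\C P_+\oplus\C P_-$ with $P_\pm$ the two orthogonal projections onto these summands, and $C=c_+P_++c_-P_-$ has singular values $\{|c_+|,|c_-|\}$.

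I then apply the octagon and ribbon axioms at $(X,U,W)=(\varepsilon,V,V)$. With $\sigma=\id$ and strict associators these yield
\[
\eta_{V,V}=\beta_{V,V}\,(C\otimes\id_V)\,\beta_{V,V},\qquad
\eta_{\varepsilon,V\otimes V}=\bigl[\beta_{V,V}\,(C\otimes\id_V)\bigr]^2,
\]
and both right-hand sides must define $U_q^t(\mft_\theta)$-equivariant morphisms of $V\otimes V$. Using the $U_q(\mfsu_2)$-isotypic decomposition $V\otimes V\cong V_0\oplus V_{2\varpi}$ together with naturality and the unit condition $\eta_{X,\un}=\id$, the second operator must act as the identity on the $q$-singlet $v_0=e_+\otimes e_--q\,e_-\otimes e_+\in V_0$, and on the three-dimensional summand $V_{2\varpi}$ it must be block-diagonal with scalar action on each of the three $U_q^t$-isotypic components $\C_{\pm 2}$ and $\C_0$.

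The main step is a direct $4\times 4$ matrix calculation in the standard weight basis $\{e_{\pm\pm}\}$, using the explicit matrix of $\beta_{V,V}$ recorded earlier in the section and the eigenvectors of $B_{t,V}$ expressed in terms of $\lambda$. Evaluating the two preceding constraints realises the set of non-scalar solutions as a single orbit under the natural gauge (an exchange of the labels $\pm$, which is moreover blocked here because the summands $\C_{\pm 1}$ are non-isomorphic for all real $\lambda$), and it shows that on this orbit $\{|c_+|,|c_-|\}=\{q^{\lambda-1/2},q^{-\lambda-1/2}\}$. Existence of at least one non-scalar $C$ is supplied by the universal $K$-matrix construction of Section~\ref{SecCoid} specialised to the rank-one case, so the only substantive point left is uniqueness. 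The main obstacle is verifying that the two octagon+ribbon constraints are jointly non-degenerate, i.e.\ that together they reduce to two independent scalar relations pinning down $(c_+,c_-)$ up to the blocked gauge; this is a concrete but slightly laborious computation with $\beta_{V,V}$ and $C\otimes\id_V$ in the four-dimensional space $V\otimes V$.
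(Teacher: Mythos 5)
Your overall strategy coincides with the paper's: reduce the whole structure to $C=\eta_{\varepsilon,V}$, impose that $C$ commutes with $B_{t,V}$, and extract the remaining constraint from the octagon and ribbon relations evaluated at $(\varepsilon,V,V)$. The paper packages that last constraint as the single identity $(C\otimes 1)\beta(C\otimes 1)\beta=\beta(C\otimes 1)\beta(C\otimes 1)$ (naturality of $\eta_{\varepsilon,-}$ with respect to $\beta_{V,V}$) and quotes the tom Dieck--H\"aring-Oldenburg computation showing a non-diagonal solution must have the form $\left(\begin{smallmatrix} a & b\\ c & 0\end{smallmatrix}\right)$ with $bc=-q^{-1}$; your ``identity on the singlet plus isotypic block-diagonality of $[\beta(C\otimes 1)]^2$'' is an equivalent (in fact slightly stronger) packaging, and the normality of $C$ coming from skew-adjointness of $B_{t,V}$ does give the singular values as $\{|c_+|,|c_-|\}$. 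So the route is the same, with the explicit $4\times 4$ computation deferred in both cases.

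There is, however, one genuine gap in your uniqueness-up-to-isomorphism step. Every constraint you impose is invariant under $C\mapsto -C$: commutation with $B_{t,V}$, non-scalarity, and any condition on $[\beta(C\otimes 1)]^2$, which is quadratic in $C$. Hence the solution set cannot be a single point; it is exactly $\{\pm C_0\}$ with $C_0=\left(\begin{smallmatrix} it(q^{-1}-q) & -q^{-1/2}\\ q^{-1/2} & 0\end{smallmatrix}\right)$. The ambiguity you discuss (swapping the labels of $c_+$ and $c_-$, ``blocked'' because $\C_1\not\cong\C_{-1}$) is not the relevant one; the residual ambiguity is the global sign, and to conclude ``unique up to isomorphism'' you must still exhibit an equivalence of ribbon braided module categories carrying $C_0$ to $-C_0$. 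The paper does this with the monoidal natural automorphism $F$ of the identity functor of $\Rep_q(\mfsu_2)$ acting by $(-1)^{n}$ on the $(n+1)$-dimensional irreducible. Without this step your argument shows at most that there are one or two structures, not one.
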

\begin{proof}
From the defining relations \eqref{EqOct2} and \eqref{EqRB2} (in the case of trivial braided monoidal autoequivalence), it is clear that $\eta$ is completely determined by $C = \eta_{\C,V}$. Moreover, as $\eta$ is a natural transformation,
\[
\eta_{\varepsilon,V\otimes V} =  (C\otimes 1)\beta(C\otimes 1)\beta = \beta(C\otimes 1)\beta (C\otimes 1).
\]
It is immediate then from a direct computation, cf.~\cite{MR1644317}*{Section 5}, that, if $C$ is not diagonal, $C$ must be of the form\footnote{The specific form differs from that in \cite{MR1644317} as a different convention for the $R$-matrix is used.}
\[
C = \begin{pmatrix} a & b \\ c & 0 \end{pmatrix}
\]
with $bc = -q^{-1}$. But as $C$ must commute with $B_{t,V}$, either $C$ is a scalar multiple of the identity (which is excluded by assumption) or
\[
C = \pm \begin{pmatrix} it(q^{-1}-q) & -q^{-1/2} \\ q^{-1/2} & 0  \end{pmatrix}.
\]
Hence the singular values of $C$ are indeed $\{q^{\pm \lambda - 1/2}\}$. The sign of $C$ is irrelevant as it can be changed by means of the natural automorphism $F$ of the identity functor on $\Rep_q(\mfsu_2)$ characterized by
\[
F_{U_{n/2}} = (-1)^{n} \id_{U_{n/2}}
\]
where $U_{n/2}$ is the unique (up to isomorphism) irreducible representation of dimension $n+1$.
\end{proof}

\begin{Theorem}\label{TheoUniBraid}
Let $(\mcD,\odot,\Psi,\eta)$ be a ribbon braided $(\Rep_q(\mfsu_2),\otimes,\beta)$-module C$^*$-category. Assume that $\mcD$ has $\Z$-fusion rules by means of the simple objects $\{W_n\mid n\in \Z\}$.  Assume $C = \eta_{W_0,V}$ non-scalar and let $t\in \R$ with
\[
\Tr(C^*C) = q^{-1}(q^{2\lambda} + q^{-2\lambda}),\qquad \lambda = \lambda_t.
\]
Then $\mcD$ and $\Rep_q^t(\mft_{\theta};\varepsilon)$ are equivalent as ribbon twist-braided $\Rep_q(\mfsu_2)$-module categories by an equivalence sending $W_0$ to $\varepsilon = C_0$.
\end{Theorem}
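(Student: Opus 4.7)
The plan is to match singular value data, construct an equivalence of plain $\Rep_q(\mfsu_2)$-module $C^*$-categories, and then upgrade it by means of the uniqueness part of Proposition~\ref{PropUnBraid}.

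First, a direct computation using Proposition~\ref{PropUnBraid} shows that if $\eta_{\varepsilon,V}$ denotes the canonical ribbon braid on $\Rep_q^t(\mft_\theta;\varepsilon)$ with singular values $\{q^{\pm\lambda_t-1/2}\}$, then
\[
\Tr(\eta_{\varepsilon,V}^*\eta_{\varepsilon,V}) = q^{2\lambda_t-1} + q^{-2\lambda_t-1} = q^{-1}(q^{2\lambda_t}+q^{-2\lambda_t}).
\]
Hence the trace hypothesis on $C$ says precisely that $C$ and $\eta_{\varepsilon,V}$ have the same singular values; the residual ambiguity $\lambda_t \leftrightarrow -\lambda_t$ is invisible at the level of the singular value set.

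Second, construct an equivalence $G\colon \Rep_q^t(\mft_\theta;\varepsilon) \to \mcD$ of $\Rep_q(\mfsu_2)$-module $C^*$-categories with $G(C_0)=W_0$. Both source and target are connected, semisimple, generated by a single simple object, and have identical $\Z$-fusion rules, so each can be reconstructed by a Tannakian argument from the internal endomorphism algebra of its generating object in $\Rep_q(\mfsu_2)$. On the $\Rep_q^t(\mft_\theta;\varepsilon)$ side this algebra is precisely $U_q^t(\mft_\theta)$; on the $\mcD$ side, the $\Z$-fusion hypothesis rigidifies the corresponding coideal up to the $*$-preserving Cartan-type rescalings captured by the parameter $\mbt$ of Section~\ref{SecCoid}. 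By the earlier result of Section~\ref{SecCoid} that $\Rep_q^{\mbt}(\mfk)$ is independent of $\mbt$ as a $\Rep_q(\mfu)$-module $C^*$-category, this residual freedom is immaterial, producing the desired $G$.

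Third, transport the ribbon braid of $\mcD$ across $G^{-1}$ to a ribbon braid $\eta'$ on $\Rep_q^t(\mft_\theta;\varepsilon)$. The morphism $\eta'_{\varepsilon,V}$ is unitarily conjugate to $C$, hence non-scalar with singular values $\{q^{\pm\lambda_t-1/2}\}$. Proposition~\ref{PropUnBraid} then forces $\eta'$ to coincide with the canonical ribbon braid on $\Rep_q^t(\mft_\theta;\varepsilon)$ up to the overall sign implemented by the natural automorphism $F$ of the identity functor on $\Rep_q(\mfsu_2)$ sending $U_{n/2}$ to $(-1)^n \id$; absorbing this sign into $G$ yields the advertised equivalence of ribbon braided module $C^*$-categories. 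The main obstacle is the second step: although the $\Z$-fusion hypothesis together with the parameter independence from Section~\ref{SecCoid} pins down the combinatorial data of the internal endomorphism algebra, carrying out the Tannakian reconstruction cleanly in the $C^*$-setting (existence of adjoints, unitarity of the coherence isomorphisms, matching of the $*$-structures on both sides) is what requires the most care.
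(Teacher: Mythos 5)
Your overall architecture is the same as the paper's: first identify the underlying module C$^*$-category, then use the uniqueness statement of Proposition~\ref{PropUnBraid} to match the ribbon braids (your first and third steps are essentially the paper's second sentence, and your trace computation is correct). The problem is your second step, which you yourself flag as "the main obstacle": it is not a proof but a restatement of the theorem you need. The assertion that the $\Z$-fusion hypothesis "rigidifies the corresponding coideal up to the $*$-preserving Cartan-type rescalings captured by the parameter $\mbt$" is precisely the classification of $\Rep_q(\mfsu_2)$-module C$^*$-categories with $\Z$-fusion rules, i.e.\ the statement that any such category is equivalent to some $\Rep_q^{t'}(\mft_{\theta};\varepsilon)$. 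The paper closes this step by citing \cite{MR3420332}; it is a genuinely nontrivial theorem (the fusion rules fix all Hom-space dimensions, but the hard part is classifying the possible associativity data, equivalently the possible internal endomorphism algebras of the generator together with their module-category structure), and no amount of abstract Tannakian reconstruction gets you there for free. Moreover, the parameter-independence result of Section~\ref{SecCoid} cannot supply the missing input: it compares the Letzter coideals $U_q^{\mbt}(\mfk)$ with one another, and says nothing about why the internal endomorphism algebra of the generator of an \emph{arbitrary} $\mcD$ with $\Z$-fusion rules must be one of them.

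A smaller presentational wrinkle: even granting the classification, it yields $\mcD\cong\Rep_q^{t'}(\mft_{\theta};\varepsilon)$ for \emph{some} $t'$, not for the $t$ of the statement. You should then transport the braid, apply Proposition~\ref{PropUnBraid} to read off that its singular values are $\{q^{\pm\lambda_{t'}-1/2}\}$, compare with $\Tr(C^*C)$ to get $\lambda_{t'}=\pm\lambda_t$, and use the isomorphism $\C_n\mapsto\C_{-n}$ (noted after the theorem) to pass from $-\lambda_t$ to $\lambda_t$. Your write-up builds the equivalence directly to the given $t$ before the braid has been used, which is slightly circular; this is easily repaired, unlike the classification gap.
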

Note that there are two possibilities for $t$. Indeed, it is not hard to check that the two choices are related by an isomorphism sending $\C_n$ to $\C_{-n}$.

\begin{proof}
It follows from \cite{MR3420332} that any module C$^*$-category with $\Z$-fusion rules must be isomorphic to some $\Rep_q^t(\mft_{\theta};\varepsilon)$. Since any isomorphism of the identity of $\Rep_q(\mfsu_2)$ changes the braiding to another braiding satisfying the same non-scalar condition, the allowed values of $t$ are then immediate from Proposition \ref{PropUnBraid}.
\end{proof}

The following theorem now settles one part of Conjecture \ref{ConjSat} in this case.

\begin{Theorem} Let $\sigma$ be an involution of $\mfsu_2$. Then $\Rep_{\hbar}(\mfk_{\sigma})$ and $\Rep_q^{\theta}(\mfk_{\theta})$ are equivalent as twist-braided module C$^*$-categories.
\end{Theorem}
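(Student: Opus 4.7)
The plan is to apply Theorem \ref{TheoUniBraid} to both sides and verify that the resulting classifying parameter matches. By Theorem \ref{TheoSat}, there is, up to inner equivalence, a unique non-trivial involution of $\mfsu_2$, so by Remark \ref{RemEqInn} we may take $\sigma = \theta = \omega$ with $\mfk = \R(e-f)$, integrating in $SU(2)$ to $K \cong U(1)$. Both twists trivialize: on the target side, $\tau\tau_0 = \id$ since $\mfsl_2$ has no nontrivial Dynkin automorphism; on the source side, $\sigma = \Ad(m_r)$ is inner, with $m_r = \exp(e)\exp(-f)\exp(e) \in SU(2)$, and Remark \ref{rem:sigma-change} lets us convert the ribbon $\sigma$-braid on $\Rep_\hbar(\mfk_\sigma)$ into a genuine ribbon braid. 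Both categories are then honest ribbon braided $\Rep_q(\mfu)$-module C$^*$-categories with $\Z$-fusion rules: on the source, with simples $\{W_n\}_{n \in \Z}$ the unitary $U(1)$-characters (so that $V|_K = W_1 \oplus W_{-1}$); on the target, with simples $\{\chi_n\}_{n \in \Z}$ the coideal characters described before Proposition \ref{PropUnBraid}.

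The trivial base yields a scalar braiding on both sides, so I take the distinguished object to be $W_1$ (source) and $\chi_1$ (target). On the source, using the orthonormal basis $X = (e-f)/\sqrt{2}$ of $\mfk^\C$ in Theorem \ref{thm:EKZ}, a direct computation shows
\[
(2 t^\mfk_{01} + C^\mfk_1)\big|_{W_1 \otimes V} = \begin{pmatrix} 1/2 & -i \\ i & 1/2 \end{pmatrix},
\]
which is Hermitian with eigenvalues $3/2$ and $-1/2$. Exponentiating, $C_{\mathrm{KZ}} := \eta_{W_1, V}$ has eigenvalues $q^{-3/2}$ and $q^{1/2}$, so $\Tr(C_{\mathrm{KZ}}^* C_{\mathrm{KZ}}) = q^{-3} + q = q^{-1}(q^{2} + q^{-2})$, identifying $|\lambda_{\mathrm{KZ}}| = 1$. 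On the target, $C_{\mathrm{alg}} := (\chi_1 \otimes \pi_V)(\msK)$ must commute with $(\chi_1 \otimes \pi_V)(\Delta(B))$ for $B = F - q^{-2}EK^{-1}$; using $\Delta(B) = B \otimes K^{-1} + 1 \otimes B$, this latter matrix has distinct eigenvalues $0$ and $i(q^{-3/2} + q^{1/2})$ on the summands $\chi_0, \chi_2 \subset \chi_1 \odot V$, so $C_{\mathrm{alg}}$ is automatically diagonal in this eigenbasis. Unpacking the closed form of the universal $K$-matrix for $U_q(\mfsl_2)$ from \cite{arXiv:1507.06276}, \cite{arXiv:1705.04238} then yields $\Tr(C_{\mathrm{alg}}^* C_{\mathrm{alg}}) = q^{-3} + q$ as well.

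By Theorem \ref{TheoUniBraid} applied to each side, both $\Rep_\hbar(\mfk_\sigma)$ and $\Rep_q^\theta(\mfk_\theta)$ are equivalent, as ribbon braided $\Rep_q(\mfu)$-module C$^*$-categories, to $\Rep_q^t(\mft_\theta; \varepsilon)$ for the same $t$, up to the harmless sign ambiguity $\lambda \leftrightarrow -\lambda$ absorbed by the relabeling automorphism $\C_n \leftrightarrow \C_{-n}$. Pulling back through the twist trivializations of the first paragraph yields the desired equivalence as ribbon twist-braided module C$^*$-categories. The main obstacle in this plan will be the explicit computation of $C_{\mathrm{alg}}$ on the target side: although the universal $K$-matrix for $U_q(\mfsl_2)$ admits a known closed form, extracting the two diagonal entries $c_0, c_2$ and confirming the trace identity requires careful bookkeeping of the series defining $\msK$.
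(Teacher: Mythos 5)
There is a genuine gap: you have misidentified the simple objects of both categories. The category $\Rep_{\hbar}(\mfk_{\sigma})$ is built on $\Rep(\mfk)$, the finite-dimensional unitary representations of the \emph{Lie algebra} $\mfk\cong\mfu(1)$, whose irreducibles are the characters $\chi_{\lambda}\colon f-e\mapsto i\lambda$ for \emph{all} $\lambda\in\R$, not the $\Z$-indexed characters of the group $U(1)$; likewise the simple objects of $\Rep_q^{\theta}(\mfk_{\theta})$ are the evaluations $\ev_{it}$ for all $t\in\R$. Neither category is connected and neither has $\Z$-fusion rules globally: each decomposes into a continuum of components, and it is only each individual component that has $\Z$-fusion rules. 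Your argument therefore addresses at best the single component generated by $W_1$, resp.\ $\chi_1$, and even then only for one value of the classifying parameter. To obtain the full equivalence you must invoke Lemma \ref{LemIsoComp} and match the component $\Rep_{\hbar}(\mft_{\theta};\chi_{\lambda})$ with $\Rep_q^{\theta}(\mfk_{\theta};\ev_{it})\cong\Rep_q^{t}(\mft_{\theta};\varepsilon)$ for $\lambda=\lambda_t$, for \emph{every} $\lambda\in\R$; this is precisely what the continuum of parameters $t$ in the coideal family is for, and it is the actual content of the paper's proof.

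Secondly, the step you flag as ``the main obstacle'' --- extracting $C_{\mathrm{alg}}$ from the closed form of the universal $K$-matrix --- is both undone and unnecessary. Proposition \ref{PropUnBraid} already shows that $\Rep_q^{t}(\mft_{\theta};\varepsilon)$ admits a \emph{unique} ribbon braided structure with non-scalar $\eta_{\varepsilon,V}$, with prescribed singular values $q^{\pm\lambda-1/2}$; so on the coideal side one only needs to know that the Balagovi\'c--Kolb braid is non-scalar, not its matrix. The explicit computation is only required on the KZ side, and there your evaluation of $2t^{\mfk}_{01}+C^{\mfk}_1$ is essentially correct, though note that after applying Remark \ref{rem:sigma-change} the braid acquires the extra unitary factor $g$ implementing $\sigma$ --- harmless for $\Tr(C^*C)$, but needed to write $C$ correctly and to justify non-scalarity of the actual braid. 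With these repairs your plan coincides with the paper's proof.
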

\begin{proof}
By Remark \ref{RemEqInn} and the fact that all involutions of $\mfsu_2$ are inner, we may take $\sigma = \theta = \Ad(g)$ with
$$
g = \begin{pmatrix} 0 & 1 \\ -1 & 0\end{pmatrix}.
$$

Clearly, we can take as a maximal family of simple objects in $\Rep_{\hbar}(\mfk_{\theta})$ the family of characters
\[
\chi_{\lambda}\colon \mfh_{\theta} \rightarrow \C,\quad f-e \mapsto
i\lambda,
\]
for $\lambda\in \R$. By Lemma \ref{LemIsoComp}, it is now sufficient to show that
\[
\Rep_{\hbar}(\mft_{\theta};\chi_{\lambda}) \cong
\Rep_q^{t}(\mft_{\theta};\varepsilon)\ \ \text{for}\ \ \lambda =
\lambda_t.
\]
However, $\Rep_{\hbar}(\mft_{\theta};\chi_\lambda)$ is equivalent to a ribbon $\Id$-braided category with associated non-scalar braiding
\begin{multline*}
\widetilde{\eta}_{\chi_\lambda,V} = e^{i\pi \hbar (\chi_{\lambda}(F-E)(F- E) - \frac{1}{2})} g =e^{-\pi i \hbar /2} \begin{pmatrix} i\sinh(\pi i\hbar \lambda)  & \cosh(\pi i \hbar \lambda) \\ -\cosh(\pi i\hbar \lambda) & i\sinh(\pi i \hbar \lambda)  \end{pmatrix} \\
= q^{-1/2} \begin{pmatrix} \frac{i}{2}(q^\lambda -q^{-\lambda}) & \frac{1}{2}(q^\lambda + q^{-\lambda})\\ - \frac{1}{2}(q^\lambda + q^{-\lambda}) &\frac{i}{2}(q^\lambda -q^{-\lambda}) \end{pmatrix} .
\end{multline*}
Since $\Rep_{\hbar}(\mfk_{\theta};\chi_{\lambda})$ has $\Z$-fusion rules, we conclude that $\Rep_{\hbar}(\mft_{\theta};\chi_{\lambda}) \cong \Rep_q^{t}(\mft_{\theta};\varepsilon)$ as ribbon twist-braided $\Rep_q(\mfsu_2)$-module C$^*$-categories by Proposition \ref{PropUnBraid}.
\end{proof}

Let us now prove the second part of Conjecture \ref{ConjSat}. Note first that in this case, $U_q^{10}(\mfsu_2)$ is given by the universal relations \[KF = q^{-2} FK,\quad KF^* = q^2 F^*K,\quad F^*F - q^2FF^* = \frac{1 + K^{-2}}{q-q^{-1}}.\] The following lemma is straightforward.

\begin{Lem} The weights
\[
\omega_r(K) = q^{-r},\qquad r\in \R
\]
exhaust the family of admissible weights for $U_q^{10}((\mfsu_2)_{\mathrm{r}})$.
\end{Lem}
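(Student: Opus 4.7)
The plan is to prove this by establishing both directions of the exhaustiveness: every admissible weight has the claimed form (necessity), and conversely every weight of the form $\omega_r$ with $r\in\R$ is admissible (the substantive part).

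For necessity, I would note that since $\nu$ acts trivially on $\mfh$ for $\mfsu_2$, every weight of $\mfh$ is $\nu$-invariant, so the only restriction comes from positivity of the selfadjoint operator $K$ in an admissible $*$-representation. On a lowest weight vector $\eta$ we must have $K\eta = \lambda\eta$ with $\lambda>0$, and writing $\lambda = q^{-r}$ gives $r\in\R$ uniquely; hence the lowest weight is $\omega_r$ for some real $r$.

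For sufficiency, I would construct the irreducible admissible $*$-representation $M_r$ explicitly for each $r\in\R$. Start from a cyclic lowest weight vector $\eta$ with $F\eta = 0$ and $K\eta = q^{-r}\eta$, and consider the vectors $\xi_n = (F^*)^n\eta$ for $n\ge 0$. Weight considerations force $K\xi_n = q^{-r+2n}\xi_n$ and $F\xi_n = c_n\xi_{n-1}$ for scalars $c_n$ with $c_0=0$. Applying the relation $F^*F - q^2 FF^* = (1+K^{-2})/(q-q^{-1})$ to $\xi_{n-1}$ yields a first-order linear recursion for $c_n$, whose closed-form solution factors as
\[
c_n = \frac{q\,(q^{-2n}-1)(1+q^{2+2r-2n})}{(1-q^2)^2}.
\]
The crucial point is that both factors $q^{-2n}-1$ and $1+q^{2+2r-2n}$ are strictly positive for $0<q<1$ and any $r\in\R$, so $c_n>0$ for every $n\ge 1$.

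I would then define a sesqui-linear form on $M_r$ by requiring $\langle\eta,\eta\rangle=1$ and $\langle X\xi,\eta'\rangle = \langle\xi,X^*\eta'\rangle$, giving $\|\xi_n\|^2 = c_nc_{n-1}\cdots c_1>0$; so the form is positive definite and the completion of $M_r$ is an admissible $*$-representation with lowest weight $\omega_r$, irreducible by Lemma~\ref{LemIrLow}. The main obstacle I anticipate is solving the recursion and confirming the clean factorization that makes positivity manifest, but once this is done the unconditional sign on both factors shows that, in contrast to the compact case, no integrality or boundedness restriction on $r$ arises, so the family $\{\omega_r\}_{r\in\R}$ is exactly the admissible spectrum.
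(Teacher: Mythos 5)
Your proposal is correct and takes essentially the same approach as the paper: the paper simply exhibits the $*$-representation $\pi_r$ on an orthonormal basis $\{e_n\}$ whose matrix coefficients are exactly the square roots of (the products of) your $c_n$, and asserts that the verification of exhaustiveness is concrete. Your derivation of the recursion from $F^*F-q^2FF^*=(1+K^{-2})/(q-q^{-1})$, the closed form for $c_n$, and the observation that both factors are positive for all $r\in\R$ supply precisely the computation the paper leaves to the reader.
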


\begin{proof} Let $M_r = \C[\N]$ be a pre-Hilbert space (with $\{e_n\}_{n \in \N}$ being an orthonormal basis) endowed with the $*$-representation $\pi_r$ of $U_q^{10}((\mfsu_2)_{\mathrm{r}})$ characterized by
\[
\pi_r(K)e_n = q^{-r+2n}e_n,\qquad q^{1/2}(q^{-1}-q)\pi_r(F) e_n = q^{-n}\left((1-q^{2n})(1+q^{2r+2-2n})\right)^{1/2}e_{n-1}.
\]
It can be concretely verified that these exhaust the isomorphism classes of irreducible $*$-representations of  $U_q^{10}((\mfsu_2)_{\mathrm{r}})$.
\end{proof}

We will keep the notation $(M_r,\pi_r)$ from the proof of the previous lemma.

\begin{Lem}\label{LemFusVog} We have $M_r \otimes V \cong M_{r+1} \oplus M_{r-1}$.
\end{Lem}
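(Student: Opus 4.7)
The plan is to decompose $M_r \otimes V$ by finding lowest weight vectors, then invoke the uniqueness of admissible lowest weight $*$-representations established earlier.

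First, I would make explicit the action of $U_q^{10}((\mfsu_2)_{\mathrm{r}})$ on $M_r \otimes V$ via the coaction $\alpha$, noting that on the generators $\alpha(K)=K\otimes K$ and $\alpha(F)=F\otimes K^{-1}+1\otimes F$, so the formulas are formally the same as the usual comultiplication. With the orthonormal basis $\{e_n\otimes e_\pm\}_{n\ge 0}$, one reads off the $K$-weights: $e_n\otimes e_+$ has weight $q^{-r+2n+1}$ and $e_n\otimes e_-$ has weight $q^{-r+2n-1}$. This gives weight spectrum $q^{-(r+1)}$ with multiplicity $1$ and $q^{-(r\pm(2k-1))}$ with multiplicity $2$ for $k\ge 1$, matching the expected weight multiplicities of $M_{r+1}\oplus M_{r-1}$.

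Next, I would locate lowest weight vectors. The vector $v_+ = e_0\otimes e_-$ is evidently annihilated by $\alpha(F)$, since $Fe_0=0$ in $M_r$ and $F_V e_-=0$ in $V$; and it has $K$-weight $q^{-(r+1)}$. For the second component, I would seek a lowest weight vector at $q^{-(r-1)}$ as $v_- = \lambda\,(e_1\otimes e_-) + \mu\,(e_0\otimes e_+)$, solving the linear equation $\alpha(F)v_-=0$ using the explicit action of $F$ in $M_r$ recorded in the previous lemma (the $(1+q^{2r+2-2n})^{1/2}$ factors are the only arithmetic novelty compared with $U_q(\mfsu_2)$). This yields a one-dimensional solution space, and hence a unique (up to scalar) second lowest weight vector.

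Then, by the general characterization (Lemma~\ref{LemIrLow}), the submodules generated by $v_+$ and $v_-$ are admissible lowest weight $*$-representations at weights $q^{-(r+1)}$ and $q^{-(r-1)}$, hence isomorphic respectively to $M_{r+1}$ and $M_{r-1}$. Taking orthogonal direct sum, we obtain an embedding $M_{r+1}\oplus M_{r-1}\hookrightarrow M_r\otimes V$. Comparing weight multiplicities (verified in the first step) shows this embedding is surjective, completing the proof. The main subtlety — and the only place one must be careful — is the sign change in the commutation relation $F^*F-q^2 FF^*=(1+K^{-2})/(q-q^{-1})$ when solving for $v_-$; this affects constants but not the dimensionality of the kernel of $\alpha(F)$, so the qualitative decomposition is unchanged.
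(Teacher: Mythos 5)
Your argument is correct and follows essentially the same route as the paper: both identify $e_0\otimes e_-$ as a lowest weight vector of weight $\omega_{r+1}$, account for a complementary constituent at weight $\omega_{r-1}$, and then invoke Lemma~\ref{LemIrLow} together with the uniqueness of admissible lowest weight $*$-representations; the only difference is that the paper obtains $M_{r-1}$ as the quotient by the submodule generated by $e_0\otimes e_-$, whereas you solve explicitly for the second lowest weight vector and close the argument by a weight-multiplicity count. One small slip in that count: the $K$-spectrum of $M_r\otimes V$ is $\{q^{-(r+1)+2k}\}_{k\ge 0}$ with multiplicity $1$ for $k=0$ and $2$ for $k\ge 1$, not ``$q^{-(r\pm(2k-1))}$'' (which would wrongly put weights below $q^{-(r+1)}$); the intended multiplicities are clearly the correct ones and your surjectivity argument goes through.
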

\begin{proof}
Let $\eta_r$ be the lowest weight vector in $M_r$. Then clearly $M_r \otimes V$ is generated by the lowest weight vector $\eta_r\otimes e_-$ (of weight $\omega_{r+1}$) and $\eta_r\otimes e_+$. But $\eta_r\otimes e_+$ is a lowest weight vector in the quotient module $M_r\otimes V/ U_q^{\nu}(\mfsu_2)(\eta_r\otimes e_-)$, of weight $\omega_{r-1}$.
\end{proof}

The following theorem now settles the second part of Conjecture \ref{ConjSat}.

\begin{Theorem} The twist-braided $\Rep_q(\mfsu_2)$-module C$^*$-categories $\Rep_{q}^{\theta}(\mfk_{\theta})$ and $\Rep_q^{\nu}(\mfk_{\nu})$ are equivalent.
\end{Theorem}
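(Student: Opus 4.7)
The strategy is to apply Theorem \ref{TheoUniBraid}, after untwisting the Vogan $\nu_q$-braid to an ordinary braid via an isomorphism $F\colon \Id\to\nu_q$ of braided monoidal autoequivalences. By Lemma \ref{LemFusVog}, $\Rep_q^\nu(\mfk_\nu)$ has $\Z$-fusion rules with maximal family of simples $\{M_r\}_{r\in\R}$; on the Satake side, $\Rep_q^\theta(\mfk_\theta)$ has simples $\{\ev_{it}\}_{t\in\R}$. By Lemma \ref{LemIsoComp}, it is enough to match these families up: for each $r\in\R$ I would exhibit $t=t(r)\in\R$ with
\[
\Rep_q^\nu(\mfk_\nu;M_r)\cong\Rep_q^\theta(\mfk_\theta;\ev_{it(r)})
\]
as twist-braided module C$^*$-categories, and confirm that every component of the Satake side is hit as $r$ varies.

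I would first build the monoidal isomorphism $F\colon \Id\to\nu_q$. On each irreducible $U_{n/2}$ with weight basis $\xi_{n-2k}$ set $F_{U_{n/2}}(\xi_{n-2k})=s_n(-1)^k\xi_{n-2k}$, with $s_n=1$ for $n$ even and $s_n=i$ for $n$ odd; these signs are the only ones satisfying $F\cdot E=-E\cdot F$ (so that $F$ intertwines $\id$ and $\nu_q$) together with the coherence of monoidality under the Clebsch--Gordan decomposition. Compatibility with the braiding is automatic since $\msR$ is invariant under $\nu_q\otimes\nu_q$. Twisting the Vogan $\nu_q$-braid $\eta$ by $F$ produces an ordinary ribbon braid $\tilde\eta_{X,U}=\eta_{X,U}\circ(\id_X\otimes F_U)$ on $\Rep_q^\nu(\mfk_\nu)$ which, by the very definition of equivalence of twist-braided module C$^*$-categories, yields a twist-braided module C$^*$-category equivalent to the original one but now with trivial twist.

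It remains to compute $\tilde\eta_{M_r,V}$ and invoke Theorem \ref{TheoUniBraid}. Restricting $\msE=\wmsR_{21}(\id\otimes\nu_q)(\wmsR)(1\otimes v^{-1})$ to $M_r\otimes V$, the piece $\pi_V(\wmsR)$ is the $4\times 4$ matrix $R$ displayed earlier, while the first leg acts on $M_r$ through the known action of the Borel generators $F,F^\ast,K^{\pm1}$ on the lowest-weight vector and its translates; the ribbon factor $v^{-1}$ contributes an explicit scalar on $V$. A direct block computation on the $(r-1,r+1)$ weight sector then yields a non-diagonal $2\times2$ matrix whose singular values take the form $\{q^{\pm\mu(r)-1/2}\}$ for a real-analytic function $\mu(r)$. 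Setting $\lambda_{t(r)}=\mu(r)$, Proposition \ref{PropUnBraid} and Theorem \ref{TheoUniBraid} deliver the required equivalence $\Rep_q^\nu(\mfk_\nu;M_r)\cong\Rep_q^{t(r)}(\mft_\theta;\varepsilon)$. Together with surjectivity of $\mu$ onto $\R$ (modulo the $\lambda\leftrightarrow-\lambda$ symmetry), Lemma \ref{LemIsoComp} finishes the argument.

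The main obstacle I expect is the bookkeeping in the explicit computation of $\tilde\eta_{M_r,V}$: even though each ingredient (the matrix $\pi_V(\msR)$, the sign-twist $F_V$, the $\nu_q$-twist on the second leg, and the ribbon factor) is elementary, they must be combined carefully and the resulting matrix checked to be non-scalar for every $r\in\R$ and to depend on $r$ in such a way that $\mu(r)$ covers every residue class in $\R$ modulo $\Z$ (up to the unavoidable $\lambda\leftrightarrow-\lambda$ ambiguity noted after Theorem \ref{TheoUniBraid}). A clean way to avoid case analysis is to reduce the computation to evaluating a single lowest-weight matrix coefficient of $\wmsR$ on $M_r$, making the singular-value extraction purely algebraic.
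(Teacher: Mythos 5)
Your strategy is essentially the paper's: untwist the $\nu_q$-braid by an isomorphism $\Id\to\nu_q$, compute the resulting ordinary ribbon braid on $M_r\otimes V$, and feed its singular values into Proposition~\ref{PropUnBraid} and Theorem~\ref{TheoUniBraid}, concluding via Lemmas~\ref{LemFusVog} and~\ref{LemIsoComp}. The paper packages the untwisting through Remark~\ref{rem:sigma-change}: $\nu_q=\Ad(K_\chi)$ for the group-like element $K_\chi$ with $\chi(H)=\pi i/(2\ln q)$, and one replaces $\msE$ by the non-scalar $\msE K_\chi$ — which is exactly your $F$, realized concretely. Two caveats. First, your explicit signs are not coherent: a monoidal isomorphism $\Id\to\nu_q$ must act on a weight-$m$ vector by $i^{m}$ (up to the global sign automorphism $(-1)^n$), i.e.\ $s_n=i^n$; your choice $s_n=1$ for all even $n$ already fails for the summand $U_1\subseteq V\otimes V$, whose highest weight vector $e_+\otimes e_+$ picks up $i\cdot i=-1$ under $F_V\otimes F_V$ while your $F_{U_1}$ gives $+1$. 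This is harmless because the correct $F$ exists (it is $K_\chi$), but as written your $F$ is not an isomorphism of monoidal functors, which is what the notion of equivalence of twist-braided module categories requires. Second, and more substantively, the entire content of the theorem lies in the computation you defer: one must actually evaluate $\wmsR_{21}(\id\otimes\nu_q)(\wmsR)(1\otimes v^{-1})$ on the two lowest-weight sectors of $M_r\otimes V$. The paper finds the two scalars $q^{-r-3/2}$ and $q^{r+1/2}$, so that the resulting parameter is an affine (hence surjective and injective) function of $r$, and non-scalarity is manifest. Your outline correctly identifies every step, including the need to check surjectivity, but without carrying out this evaluation the argument remains a plan rather than a proof.
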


\begin{proof}
By Lemma \ref{LemFusVog} and Lemma \ref{LemIsoComp}, it suffices to show that  the $\nu_q$-braided $\Rep_q(\mfsu_2)$-module C$^*$-category $\Rep_q^{\nu}(\mfk_{\nu};M_r)$ is equivalent to $\Rep_q^t(\mfk_{\theta};\varepsilon)$ for $\lambda_t = 2r+2$.

Now the ribbon element for $U_q(\mfsu_2)$ satisfies
\[
v_V = q^{3/2}.
\]
On the other hand, we can formally write $K= q^{H}$ and
\[
(\id\otimes \nu_q)(\wmsR) = (1\otimes 1+(q-q^{-1})KF^*\otimes F + \ldots)q^{-\frac{1}{2}H\otimes H},\qquad  \wmsR_{21} =  (1\otimes 1-(q-q^{-1})F\otimes E + \ldots)q^{-\frac{1}{2}H\otimes H}
\]
interpreted in $U_q^{\nu}(\mfsu_2)\otimes U_q(\mfsu_2)$ (as the extra Cartan part of $U_q^{10}(\mfu \oplus \mfa)$ is central in this case, it can be ignored in the discussion), hence
\[
(\id\otimes \nu_q)(\wmsR)(\eta_r \otimes e_-) = q^{-\frac{r}{2}} \eta_r\otimes e_-,\quad (\id\otimes \nu_q)(\wmsR)(\eta_r \otimes e_+) = q^{\frac{r}{2}}(\eta_r\otimes e_+  +(q-q^{-1}) q^{\frac{3}{2}-r} F^*\eta_r \otimes e_-).
\]
It follows that with $\msE = \msR_{21}(\id\otimes \nu_q)\msR(1\otimes v^{-1})$ the $\nu_q$-braiding, we have
\begin{align*}
\msE(\eta_r\otimes e_-) &=  q^{-r-\frac{3}{2}}\eta_r\otimes e_-,\\
\msE(\eta_r\otimes e_+) &= q^{\frac{r-3}{2}}(((q-q^{-1})q^{1-\frac{3}{2}r} + q^{2+\frac{1}{2}r}) \eta_r \otimes e_+ + (q-q^{-1})q^{\frac{5-3r}{2}}F^*\eta_r\otimes e_- ).
\end{align*}

Since in the quotient module of $M_r\otimes V$ we can identify $F^* \eta_r\otimes e_-$ and $-q^{-3/2} \eta_r \otimes e_+$, it follows that~$\msE$ has eigenvalues
\[
q^{-r-\frac{3}{2}},\quad  q^{r+\frac{1}{2}}.
\]
Now, we can write
\[
\nu_q = \Ad(K_\chi),\ \ \text{where}\ \ \chi(H) = \frac{\pi i}{2\ln(q)}.
\]
Since the element $\msE K_{\chi}$ is non-scalar, we deduce from Theorem \ref{TheoUniBraid} that $\Rep_q^{\nu}(\mfsu_2;M_r)$ is equivalent to $\Rep_q^{t}(\mft_{\theta};\varepsilon)$ for $\lambda_{t}$ such that $\lambda_{t} = 2r+2$, by an equivalence sending $M_r$ to $\C_0$.
\end{proof}

\appendix

\section{Symmetric pairs}\label{Ap1}

\subsection{Proof of Theorem \ref{TheoSat}}

Let $\theta$ be any involution of $\mfu$. Since any automorphism of $\mfu$ is the composition of an inner automorphism and the automorphism induced from a Dynkin diagram automorphism, for the existence part it is enough to show that one can find $*$-compatible Chevalley generators of $\mfg$ with respect to which $\theta$ is in Satake form.

Denote the eigenspace decomposition of $\theta$ by $\mfu = \mfk \oplus \mfm$, with $\mfk=\mfu^\theta$, and consider the real form
$$
\mfg_\theta=\mfk\oplus i\mfm
$$
of $\mfg$. It is a real semisimple Lie algebra and $\theta$ is a Cartan involution on it. Let $\mft\subset\mfu$ be a $\theta$-invariant Cartan subalgebra such that, with $\mfh = \mft^\C$, the Cartan subalgebra $\mfh\cap\mfg_\theta$ of $\mfg_\theta$ is \emph{maximally split}, that is, it contains a maximal abelian subspace of $i\mfm$. Specifically, we start with a maximal abelian subspace $\mfa_{\theta}\subseteq i\mfm$, then take a maximal abelian subalgebra $\mft_{\theta}$ of the centralizer of $\mfa_{\theta}$ in $\mfk$, and then define $\mft=\mft_{\theta}\oplus i\mfa_{\theta}$, see~\cite{MR1920389}*{Proposition~6.47}.

The real vector space spanned by the root vectors can be identified with $(i\mft_{\theta})^*\oplus\mfa_{\theta}^*$. Consider the lexicographic order on this space defined by a basis $x_1,\dots, x_n$ in $i\mft_{\theta}\oplus\mfa_{\theta}$ such that $x_1,\dots,x_p$ is a basis in $\mfa_\theta$. Consider the corresponding positive roots $\alpha\in\Delta^+$ and simple roots $\alpha_r$, $r\in I$. This already fixes the generators $h_r$. For a root $\alpha$, denote by $\overline\alpha$ the restriction of~$\alpha$ to $\mfa_\theta$. Let
$$
X=\{r\in I\mid\overline{\alpha_r}=0\}.
$$

Recall that $\Theta\colon \mfh^* \rightarrow \mfh^*$ defines the involutive transformation dual to $\theta_{\mid \mfh}$. It acts trivially on the root system~$\Delta_X$ generated by $X$. From the definition of the order structure it is also clear that $\Theta$ maps every root $\alpha\in\Delta^+\setminus \Delta_X^+$ into a negative root. It follows that $-w_X\circ\Theta$ maps $\Delta^+$ onto itself, so
$$
-w_X\circ\Theta=\tau
$$
for an automorphism~$\tau$ of the Dynkin diagram such that its action on $X$ coincides with that of $-w_X$. In other words, if we fix generators $e_r$ and $f_r$ (such that $f_r=e_r^*$) and define the element $m_X$ using them, then the automorphism $\theta\circ(\Ad m_X)^{-1}\circ\tau^{-1}\circ\omega$ of $\mfg$ is trivial on $\mfh$. Hence
$$
\theta=(\Ad z)\circ\omega\circ\tau\circ(\Ad m_X)=(\Ad z)\circ(\Ad m_X)\circ\tau\circ\omega,
$$
for some $z\in T$, where we used that $\tau(m_X)=m_X$ by $\tau$-invariance of $X$ and that the automorphisms $\Ad m_X$ and $\omega$ commute~\cite{MR3269184}*{Proposition~2.2(3)}.

Next we want to show that $z_k=1$ for $k\in X$. Let us prove first that $\theta(e_k)=e_k$ for such $k$. Assume this is not the case for some $k\in X$. Since the root space $\mfg_{\alpha_k}$ is one-dimensional and $\theta$-invariant, it follows that $\theta(e_k)=-e_k$. But then the element $e_k+f_k$ lies in $i\mfm$ and commutes with~$\mfa_{\theta}$, which contradicts the maximality of $\mfa_\theta$.

Denote by $\tau_{X}$ the permutation of $X$ defined by $-w_X$. Then on the Lie subalgebra $\mfg_X\subset\mfg$ corresponding to~$X$ we have
$$
\Ad z=\theta\circ\omega\circ\tau^{-1}\circ(\Ad m_X)^{-1}=\omega\circ\tau_X\circ(\Ad m_X)^{-1}=\id,
$$
where the last equality follows, e.g., from~\cite{MR1314093}*{Lemme~4.9}. This shows that $z_k=1$ for $k\in X$.

Finally, it remains to show that $\tau$ is involutive. Since $\theta$ is trivial on $\mfg_X$, the automorphisms $\theta$ and $\Ad m_X$ commute. Hence the transformations $\Theta$ and $w_X$ commute. But then the equality $-w_X\circ\Theta=\tau$ implies that~$\tau$ is involutive.

As already explained in~\cite{MR3269184}, uniqueness of $(X,\tau)$ up to conjugation follows from results of Kac and Wang~\cite{MR1155464}.

\subsection{Proof of Theorem \ref{TheoVog}}

Given an involution $\nu$ of $\mfu$, it is sufficient to show that there exist Chevalley generators with respect to which $\nu$ is in Vogan form.

Again as above consider the eigenspace decomposition $\mfu=\mfk\oplus\mfm$ of $\nu$ and the real form $\mfg_\nu=\mfk\oplus i\mfm$ of~$\mfg$. Choose a $\nu$-invariant Cartan subalgebra $\mft$ of $\mfu$ with complexification $\mfh$ such that $\mfh\cap\mfg_\nu$ is \emph{maximally anisotropic} in $\mfg_\nu$, that is, it contains a Cartan subalgebra of $\mfk$. Specifically, we start with a Cartan subalgebra $\mft_\nu$ of $\mfk$, then take its centralizer in $\mfg_\nu$, which necessarily has the form $\mft_\nu\oplus\mfa_\nu$ for some $\mfa_\nu\subseteq i\mfm$, and put $\mft=\mft_\nu\oplus i\mfa_\nu$, see~\cite{MR1920389}*{Proposition~6.60}.

Every root $\alpha$ with respect to $\mft$ restricts to a nonzero functional on $\mft_{\nu}$. Indeed, assume that this is not the case for some $\alpha$. Then for the transformation $\Nu$ dual to $\nu_{\mid \mfh}$ we have $\Nu(\alpha)=-\alpha$, hence $\nu(\mfg_\alpha)=\mfg_{-\alpha}$. Since~$\nu$ is unitary with respect to the inner product $B(X,Y^*)$ on $\mfg$, for any nonzero $e_\alpha\in\mfg_\alpha$ we have $\nu(e_\alpha)=c_\alpha e_\alpha^*$ for some $c_\alpha\in\T$. By rescaling $e_\alpha$ we may assume that $c_\alpha=1$. But then $i(e_\alpha+e_\alpha^*)$ lies in~$\mfk$ and commutes with~$\mft_{\nu}$, which contradicts the maximality of $\mft_\nu$.

Now, consider the lexicographic order on $(i\mft)^*=(i\mft_\nu)^*\oplus\mfa_\nu^*$ defined by a basis $x_1,\dots, x_n$ in $i\mft_\nu\oplus\mfa_\nu$ such that $x_1,\dots,x_l$ is a basis in $i\mft_\nu$. Consider the corresponding system of simple roots. This fixes the generators~$h_r$. By the previous paragraph, the set of positive roots is invariant under $\Nu$, hence $\Nu$ is defined by an involutive automorphism $\mu$ of the Dynkin diagram.

Choose Chevalley generators $e_r$ and $f_r$ such that $e_r^*=f_r$. If $\alpha_r$ is identically zero on $\mfa_\nu$, then $\Nu(\alpha_r)=\alpha_r$ and the root space $\mfg_{\alpha_r}$ is $\nu$-invariant. Hence either $\nu(e_r)=e_r$ or $\nu(e_r)=-e_r$. On the other hand, if $\alpha_r$ restricts to a nonzero functional on $\mfa_\nu$, then $\alpha_{\mu(r)}=\Nu(\alpha_r)\ne\alpha_r$ and $\nu(e_r)=c_re_{\mu(r)}$ for some $c_r\in\T$. But then by choosing a representative $r$ in every $2$-point orbit of $\mu$ and by replacing $e_{\mu(r)}$ by $c_re_{\mu(r)}$, and correspondingly~$f_{\mu(r)}$ by~$\bar c_rf_{\mu(r)}$, we can arrange that $\nu(e_r)=e_{\mu(r)}$.

\section{\texorpdfstring{$*$}{*}-invariance of quantum coideal symmetric pairs}\label{Ap2}

In this section we will prove Theorem \ref{TheoStarInv}.

We will need some more information on the Lusztig braid operators. Following \cite{MR1359532}*{Section 8}, let us also write $T_r$ for the braid operators as acting on finite admissible $U_q(\mfg)$-modules $V$, so
\[
T_r(xv) = T_r(x)T_r(v),\quad (x\in U_q(\mfg),v\in V).
\]
Let again
\[
w_X = s_{r_1}\cdots s_{r_M}
\]
be a reduced expression of the longest element of the Weyl group of $\mfg_X$. Then we have $w_X = s_{r_M}\cdots s_{r_1}$,
\[
\beta_1 = \alpha_{r_1}, \quad \beta_2 = s_{r_1}(\alpha_{r_2}),\quad \beta_3 = s_{r_1}s_{r_2}(\alpha_{r_3}),\quad \ldots,\quad \beta_M = s_{r_1} \cdots s_{r_{M-1}}(\alpha_{r_M})
\]
is an enumeration of the positive roots of $\mfg_X$~\cite{bourbaki-lie-fr-4-6}*{Section V.1.6, Corollaire 2 \`a Proposition 17}, and $T_{w_X} = T_{r_1} \cdots T_{r_M}$ is independent of the choice of reduced expression~\citelist{\cite{MR1227098}*{Chapter 39}\cite{MR1359532}*{Section 8.18--8.23}}.

For $\varpi$ a dominant integral weight of $\mfg_X$, let $V_{\varpi}$ be the highest weight $U_q(\mfg_X)$-module with weight $\varpi$ and highest weight vector $\xi_{\varpi}$. Then by induction we see that $\xi_k = F_{r_k}^{(\varpi,\beta^{\vee}_{k})} \cdots F_{r_1}^{(\varpi,\beta^{\vee}_{1})} \xi_\varpi$ has the weight $s_{r_k} \cdots s_{r_1}(\varpi)$ for $1 \le k \le M$. Moreover, for each $0 \le k < M$, ~\cite{bourbaki-lie-fr-7-8}*{Section VIII.7.2, Proposition 3} implies that $E_{r_{k+1}} \xi_k = 0$ and that $(\varpi,\beta^{\vee}_{k})$ is the largest integer $n$ such that $F_{r_{k+1}}^n \xi_k \neq 0$ (we put $\xi_0 = \xi_\varpi$). Hence from the formulas in \cite{MR1359532}*{Section 8.6} we see that
\begin{equation}\label{EqFormT-}
T_{w_X}\xi_{\varpi} = q^{2(\varpi,\rho_X)}\frac{\prod_{k=1}^M (-1)^{(\varpi,\beta_{k}^{\vee})}}{\prod_{k=1}^M [(\varpi,\beta_{k}^{\vee})]_{q_{r_k}}!} Z_{\varpi}^- \xi_{\varpi}
\end{equation}
for the lowest weight vectors, with
\[
Z_{\varpi}^- = F_{r_M}^{(\varpi,\beta^{\vee}_{M})} \cdots F_{r_1}^{(\varpi,\beta^{\vee}_{1})}.
\]
In particular, the right hand side, hence $Z_{\varpi}^-\xi_{\varpi}$ itself as well, does not depend on the particular decomposition of~$w_X$. The same computations give
\begin{equation}\label{EqFormT-Inv}
T_{w_X}^{-1}\xi_{\varpi} = \frac{1}{\prod_{k=1}^M [(\varpi,\beta_{k}^{\vee})]_{q_{r_k}}!} Z_{\varpi}^- \xi_{\varpi}.
\end{equation}

Similarly, we put
\[
Z_{w_X\omega}^+ = E_{r_M}^{(\varpi,\beta^{\vee}_{M})} \cdots E_{r_1}^{(\varpi,\beta^{\vee}_{1})},
\]
so that $Z_{w_X\varpi}^+$ transports a lowest weight vector of $V_{\varpi}$ to a multiple of $\xi_{\varpi}$ (the multiple again being independent of the chosen reduced expression). Concretely,
\begin{equation}\label{EqFormT+}
\begin{split}
T_{w_X}(T_{w_X}\xi_{\varpi}) &=  \frac{1}{\prod_{k=1}^M [(\varpi,\beta_{k}^{\vee})]_{q_{r_k}}!} Z_{w_X\varpi}^+ (T_{w_X}\xi_{\varpi}),\\
T_{w_X}^{-1}(T_{w_X}\xi_{\varpi}) &=   q^{-2(\varpi,\rho_X)}\frac{\prod_{k=1}^M (-1)^{(\varpi,\beta_{k}^{\vee})}}{\prod_{k=1}^M [(\varpi,\beta_{k}^{\vee})]_{q_{r_k}}!} Z_{w_X\varpi}^+ (T_{w_X}\xi_{\varpi}).
\end{split}
\end{equation}
When $r \in I\setminus X$, the restriction of $w_X(\alpha_r)$ to $\mfh_X$ is a dominant weight~\cite{MR3269184}*{Section 3.3}. Interpreting $-\alpha_r$ and $w_X(\alpha_r)$ as dominant weights on $\mfg_X$, we have
\begin{align*}
Z_r^{-}(X) &= Z_{-\alpha_r}^-,& Z_r^+(X) &= Z_{\alpha_r}^+
\end{align*}
in the notation of \cite{MR3269184}*{Section 4.3}.

Let us denote now by $\Ad_q$ the \emph{adjoint action} of $U_q(\mfg)$ on itself, defined by
\[
\Ad_{q}(x)(y) = x_{(1)} y S(x_{(2)}),
\]
where $S$ is the antipode of $U_q(\mfg)$ and where we have used the Sweedler notation $\Delta(x) = x_{(1)}\otimes x_{(2)}$. Following still the notation of \cite{MR3269184}*{Section 4.3}, we write $a_r^+$ for the unique number such that
\[
T_{w_X}(E_r) = a_r^+ \Ad_q(Z_r^+(X))(E_r).
\]
It is easy to see that $a_r^+ = a_{\tau(r)}^+ \in \R$. Be careful, however, that $T_{w_X}(E_r)$ is considered as the automorphism $T_{w_X}$ applied to $E_r \in U_q(\mfg)$, which \emph{is not} a priori the same as $T_{w_X}$ applied to $E_r$ considered as an element of the $U_q(\mfg_X)$-module $\Ad_{q}(U_q(\mfg_X))(E_r)$ under the adjoint action.

\begin{Def} We write $e_{\varpi},d_{w_X\varpi}\in \R$ for the unique numbers such that in $V_{\varpi}$ we have
\begin{align*}
Z_{w_X\varpi}^+Z_{\varpi}^- \xi_{\varpi} &= e_{\varpi} \xi_{\varpi}, & Z_{\varpi}^-Z_{w_X\varpi}^+T_{w_X}\xi_{\varpi} &= d_{w_X\varpi}T_{w_X}\xi_{\varpi}.
\end{align*}
\end{Def}

The following lemma follows immediately from \eqref{EqFormT-} and \eqref{EqFormT+}.

\begin{Lem} We have
\[
e_{\varpi} = d_{w_X\varpi} =  \prod_{k=1}^M ([(\varpi,\beta_k^{\vee})]_{q_{r_k}}!)^2.
\]
\end{Lem}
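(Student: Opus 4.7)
The plan is to derive both scalars from direct algebraic manipulation of the four formulas \eqref{EqFormT-}, \eqref{EqFormT-Inv}, and \eqref{EqFormT+}, which already give the action of $T_{w_X}^{\pm 1}$ on $\xi_\varpi$ and on $T_{w_X}\xi_\varpi$ in terms of $Z_\varpi^-$ and $Z_{w_X\varpi}^+$. To streamline bookkeeping, I set
\[
A = q^{2(\varpi,\rho_X)},\qquad B = \prod_{k=1}^M (-1)^{(\varpi,\beta_k^{\vee})},\qquad C = \prod_{k=1}^M [(\varpi,\beta_k^{\vee})]_{q_{r_k}}!,
\]
so that \eqref{EqFormT-} reads $T_{w_X}\xi_\varpi = (AB/C)\, Z_\varpi^-\xi_\varpi$, \eqref{EqFormT-Inv} reads $T_{w_X}^{-1}\xi_\varpi = (1/C)\, Z_\varpi^-\xi_\varpi$, and \eqref{EqFormT+} becomes $Z_{w_X\varpi}^+ T_{w_X}\xi_\varpi = (AC/B)\,\xi_\varpi = ABC\,\xi_\varpi$ (using $B^2 = 1$) together with $T_{w_X}^2\xi_\varpi = AB\,\xi_\varpi$.

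For $d_{w_X\varpi}$, I start from its defining equation and push $Z_{w_X\varpi}^+$ past $T_{w_X}\xi_\varpi$ using \eqref{EqFormT+}:
\[
d_{w_X\varpi}\, T_{w_X}\xi_\varpi \;=\; Z_\varpi^- Z_{w_X\varpi}^+ T_{w_X}\xi_\varpi \;=\; ABC\, Z_\varpi^- \xi_\varpi \;=\; ABC \cdot \frac{C}{AB}\, T_{w_X}\xi_\varpi \;=\; C^2\, T_{w_X}\xi_\varpi,
\]
where the last step invokes \eqref{EqFormT-}. This yields $d_{w_X\varpi} = C^2$, as claimed.

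For $e_\varpi$, I first note that combining \eqref{EqFormT-} and \eqref{EqFormT-Inv} gives $T_{w_X}\xi_\varpi = AB\, T_{w_X}^{-1}\xi_\varpi$, hence $T_{w_X}^{-1}\xi_\varpi = (AB)^{-1} T_{w_X}\xi_\varpi$. Then
\[
e_\varpi\,\xi_\varpi \;=\; Z_{w_X\varpi}^+ Z_\varpi^- \xi_\varpi \;=\; C\, Z_{w_X\varpi}^+ T_{w_X}^{-1}\xi_\varpi \;=\; \frac{C}{AB}\, Z_{w_X\varpi}^+ T_{w_X}\xi_\varpi \;=\; \frac{C}{AB}\cdot ABC\,\xi_\varpi \;=\; C^2\,\xi_\varpi,
\]
using \eqref{EqFormT-Inv} in the second step and $Z_{w_X\varpi}^+ T_{w_X}\xi_\varpi = ABC\,\xi_\varpi$ in the fourth. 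This gives $e_\varpi = C^2$ and completes the proof.

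There is no real obstacle here: the statement is essentially a bookkeeping exercise consolidating the four identities already set up. The only thing worth double-checking is the cancellation of the sign factor $B$, which works because $B \in \{\pm 1\}$ so $B = B^{-1}$, and this ensures the intermediate scalar $AC/B$ equals $ABC$; this is what makes the two calculations come out to the same manifestly positive quantity $C^2$.
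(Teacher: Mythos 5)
Your computation is correct and is exactly the bookkeeping the paper intends when it says the lemma "follows immediately" from \eqref{EqFormT-} and \eqref{EqFormT+}: you express $Z_\varpi^-\xi_\varpi$ and $Z_{w_X\varpi}^+T_{w_X}\xi_\varpi$ as scalar multiples of $T_{w_X}\xi_\varpi$ and $\xi_\varpi$ respectively, and the sign $B=\pm1$ cancels as you note. Nothing further is needed.
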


\begin{Lem}\label{Forma} For $r\in I\setminus X$, we have $a_r^+ = d_{\alpha_r}^{-1/2}$.
\end{Lem}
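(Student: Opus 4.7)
The plan is to compute $a_r^+$ directly by matching Lusztig's formula for $T_{w_X}(E_r)$ with an adjoint-action expression and identifying the resulting normalization factor with $\sqrt{d_{\alpha_r}}$.

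First I would establish a single-step identity: for $s \in I$ and $r \neq s$,
\[
T_s(E_r) = \frac{1}{[-a_{sr}]_{q_s}!} \Ad_q(E_s^{-a_{sr}})(E_r).
\]
This follows from a direct expansion: the formula $\Ad_q(E_s)(y) = E_s y - q^{(\alpha_s,\mu)}y E_s$ for $y$ a weight vector of weight $\mu$ allows one to compute $\Ad_q(E_s^n)(E_r) = \Ad_q(E_s)^n(E_r)$ iteratively as a $q$-binomial combination of the monomials $E_s^{n-m}E_rE_s^m$; comparing coefficients with Lusztig's explicit formula for $T_s(E_r)$ recorded earlier shows that the factor $[-a_{sr}]_{q_s}!$ absorbs the $q$-binomial denominators in the expansion. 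For instance, in the $G_2$ case with $s = 2$, $r = 1$, $-a_{21} = 3$, one explicitly verifies $\Ad_q(E_2^3)(E_1) = [3]_q!\, T_2(E_1)$.

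Second, I would iterate this single-step identity along a reduced expression $w_X = s_{r_1}\cdots s_{r_M}$. The key structural observation is that $E_r$ is the lowest-weight vector of the $U_q(\mfg_X)$-submodule $M = \Ad_q(U_q(\mfg_X))(E_r) \cong V_{w_X\alpha_r}$---indeed $\Ad_q(F_s)(E_r) = [F_s,E_r]K_s = 0$ for $s \in X$---so the highest-weight space of $M$ is one-dimensional and is spanned by both $T_{w_X}(E_r)$ and $\Ad_q(Z_r^+(X))(E_r)$. Applying the single-step identity at each stage and using the braid-group relations to rearrange the accumulated ordered product of $E_{r_k}$'s into the one appearing in $Z_r^+(X)$, we arrive at
\[
T_{w_X}(E_r) = \frac{1}{\prod_{k=1}^M [(w_X\alpha_r, \beta_k^\vee)]_{q_{r_k}}!}\Ad_q(Z_r^+(X))(E_r).
\]
Comparing this with the defining relation $T_{w_X}(E_r) = a_r^+ \Ad_q(Z_r^+(X))(E_r)$ and invoking the formula $d_{\alpha_r} = \prod_{k=1}^M [(w_X\alpha_r, \beta_k^\vee)]_{q_{r_k}}!^2$ from the preceding lemma (applied at $\varpi = w_X\alpha_r$, so that $d_{w_X\varpi} = d_{\alpha_r}$), we conclude $a_r^+ = d_{\alpha_r}^{-1/2}$.

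The hard part will be the inductive step verifying the second displayed identity: one must check that the composition of the single-step identities, combined with the braid-group relations for the $T_{r_k}$'s acting on products of $E$'s, produces exactly $Z_r^+(X)$ with the correct exponents $n_k = (w_X\alpha_r, \beta_k^\vee)$ (rather than a differently ordered product or different exponents), and that the $q$-factorial denominators accumulate precisely to $\prod_k [n_k]_{q_{r_k}}!$. This requires careful bookkeeping with the chain $\beta_k = s_{r_1}\cdots s_{r_{k-1}}(\alpha_{r_k})$ of positive coroots of $\mfg_X$ and can be handled by induction on $M$, using that at each step the partially applied Lusztig operator still takes a lowest-weight vector of the relevant $\mfg_X$-submodule to another weight vector in $M$.
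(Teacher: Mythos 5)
Your single-step identity $T_s(E_r)=\frac{1}{[-a_{sr}]_{q_s}!}\Ad_q(E_s^{-a_{sr}})(E_r)$ is correct (it follows from Lusztig's formula exactly as you describe), but the induction you build on it does not close, and this is where the real content of the lemma lies. After one step the element $y_1=T_{r_M}(E_r)$ is no longer a generator, so there is no "single-step identity" available to compute $T_{r_{M-1}}(y_1)$: applying the algebra automorphism $T_{r_{M-1}}$ to the polynomial expression $\Ad_q(E_{r_M}^{n})(E_r)$ replaces every factor by its image under $T_{r_{M-1}}$, and no amount of braid-relation rearrangement turns this back into $\Ad_q(E_{r_{M-1}}^{m}E_{r_M}^{n})(E_r)$ divided by a $q$-factorial. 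What your induction actually requires is the general statement that $T_s(y)=\frac{1}{[n]_{q_s}!}\Ad_q(E_s^{n})(y)$ whenever $\Ad_q(F_s)(y)=0$ and $y$ has $\Ad$-weight $\mu$ with $(\mu,\alpha_s^\vee)=-n$. That statement is precisely the assertion that the algebra automorphism $T_s$ agrees with the module-theoretic braid operator on the adjoint module $\Ad_q(U_q(\mfg_X))(E_r)$ — and the paper explicitly warns, just before defining $a_r^+$, that these two operators are \emph{not} a priori the same. Since $M\cong V_{w_X\alpha_r}$ is irreducible and both operators carry its lowest weight vector to a multiple of its highest weight vector, the undetermined scalar between them is exactly the quantity $a_r^+\prod_k[(w_X\alpha_r,\beta_k^\vee)]_{q_{r_k}}!$ that the lemma computes. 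So your "hard part" is not bookkeeping: it is the lemma itself, and your last sentence ("the partially applied Lusztig operator still takes a lowest-weight vector of the relevant $\mfg_X$-submodule to another weight vector") conflates the two operators whose comparison is at stake.

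The paper's proof avoids computing $T_{w_X}(E_r)$ in the algebra altogether. It evaluates both sides of the defining relation $T_{w_X}(E_r)=a_r^+\Ad_q(Z_r^+(X))(E_r)$ on the lowest weight vector $\eta_{-\varpi_r}$ of $V_{-w_0\varpi_r}$: on the left, $T_{w_X}$ fixes $\eta_{-\varpi_r}$, so $T_{w_X}(E_r)\eta_{-\varpi_r}=T_{w_X}(E_r\eta_{-\varpi_r})$ is computed by the module formula \eqref{EqFormT-Inv} applied to the copy of $V_{w_X\alpha_r}$ generated by $E_r\eta_{-\varpi_r}$; on the right, the adjoint action collapses to left multiplication because $U_q(\mfg_X)^+$ kills $\eta_{-\varpi_r}$, and the constant $e_{w_X\alpha_r}=d_{\alpha_r}$ appears. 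If you want to salvage your algebraic route, you would need to first establish (or cite, e.g.\ from Jantzen's Chapter 8 or Lusztig's treatment of the braid group action on $\Ad$-locally finite elements) the generalized single-step identity for arbitrary $\Ad_q(F_s)$-killed weight vectors; once that is in hand, the iteration is exactly the module computation \eqref{EqFormT+} and your conclusion follows. As written, the generator case alone does not bootstrap.
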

\begin{proof} Let $\varpi_r$ be the fundamental weight of $\mfg$ dual to $\alpha_r$, and let $w_0$ be the longest element of the Weyl group of $\mfg$. Then $-w_0\varpi_r$ is a dominant integral weight for $\mfg$ such that the $U_q(\mfg)$-module $V_{-w_0\varpi_r}$ has a lowest weight vector $\eta_{-\varpi_r}$ at weight $-\varpi_r$. It follows that
\[
T_{w_X}(E_r)\eta_{-\varpi_r} = T_{w_X}(E_r)T_{w_X}(\eta_{-\varpi_r}) = T_{w_X}(E_r\eta_{-\varpi_r}).
\]
On the other hand, from a concrete formula of $S$ and that $\eta_{-\varpi_r}$ is a lowest weight vector, we have
\[
T_{w_X}(E_r)\eta_{-\varpi_r} = a_r^+ \Ad_q(Z_{\alpha_r}^+)(E_r)\eta_{-\varpi_r} = a_r^+ Z_{\alpha_r}^+ E_r \eta_{-\varpi_r}.
\]
Now $E_r\eta_{-\varpi_r}$ spans an $U_q(\mfg_X)$-module isomorphic to the highest weight-module $V_{w_X\alpha_r}$ of $U_q(\mfg_X)$, where under this correspondence $E_r\eta_{-\varpi_r}$ is sent to a lowest weight vector $Z_{w_X\alpha_r}^-\xi_{w_X\alpha_r}$.  Then it follows from \eqref{EqFormT-Inv} that
\[
T_{w_X}Z_{w_X\alpha_r}^- \xi_{w_X\alpha_i} = \prod_{k=1}^M  ([(w_X\alpha_r,\beta_k^{\vee})]_{q_{r_k}}!) \xi_{w_X\alpha_r},
\]
while by definition
\[
a_r^+Z_{\alpha_r}^+ Z_{w_X\alpha_r}^- \xi_{w_X\alpha_r} = a_r^+ e_{w_X\alpha_r}\xi_{w_X\alpha_r}.
\]
This finishes the proof.
\end{proof}

We are now ready to determine the $*$-invariance of $U_q^{\mbt}(\mfk^{\C})$. We start with two easy lemmas, the first of which is implicit in \cite{MR3269184}.

\begin{Lem} For all $r\in I$, we have
\[
K_{\tau(r)}K_r^{-1} \in U_q^{\mbt}(\mfk^{\C}).
\]
\end{Lem}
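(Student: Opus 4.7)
The plan is to prove the equivalent (and stronger) statement that $\alpha_{\tau(r)}-\alpha_r\in P^\Theta$ for every $r\in I$, since then $K_{\tau(r)}K_r^{-1}=K_{\alpha_{\tau(r)}-\alpha_r}$ automatically lies in $U_q(\mfh^\theta)$, which sits inside $U_q^{\mbt}(\mfk^{\C})$ by definition.

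The first step is to observe that $\Theta$ acts as the identity on the sub-lattice $Q_X$. Indeed, for $s\in X$, using $\Theta=-w_X\circ\tau$ together with $\tau$-invariance of $X$,
\[
\Theta(\alpha_s)=-w_X(\alpha_{\tau(s)})=\alpha_{\tau_X\tau(s)},
\]
where $\tau_X$ denotes the diagram involution of $X$ induced by $-w_X$; the admissibility condition \eqref{cond:admis1} forces $\tau_X=\tau$ on $X$, so $\tau_X\tau(s)=s$ and $\Theta(\alpha_s)=\alpha_s$. This already settles the case $r\in X$, since then both $\alpha_r$ and $\alpha_{\tau(r)}$ lie in $Q_X\subseteq P^\Theta$.

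For $r\in I\setminus X$ the key computation is
\[
\Theta(\alpha_{\tau(r)}-\alpha_r)=-w_X(\alpha_r)+w_X(\alpha_{\tau(r)})=w_X(\alpha_{\tau(r)}-\alpha_r),
\]
again using $\Theta=-w_X\circ\tau$. Because $w_X$ lies in the Weyl group of $\mfg_X$, the difference $w_X(\beta)-\beta$ belongs to $Q_X$ for every $\beta\in P$; applied to $\beta=\alpha_{\tau(r)}-\alpha_r$, this yields
\[
\Theta(\alpha_{\tau(r)}-\alpha_r)=(\alpha_{\tau(r)}-\alpha_r)+z\quad\text{for some } z\in Q_X.
\]
Applying $\Theta$ once more and using both $\Theta^2=\id$ and the fact $\Theta|_{Q_X}=\id$ from the previous paragraph, one gets $\alpha_{\tau(r)}-\alpha_r=(\alpha_{\tau(r)}-\alpha_r)+2z$, whence $z=0$ in the torsion-free lattice $P$, so $\Theta(\alpha_{\tau(r)}-\alpha_r)=\alpha_{\tau(r)}-\alpha_r$ as required.

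There is no real obstacle here; the only minor subtlety is resisting the urge to expand $K_{\tau(r)}K_r^{-1}$ in the explicit generators of $U_q^{\mbt}(\mfk^{\C})$, which would quickly become awkward. The right move is to use the involutivity of $\Theta$ to bootstrap a congruence modulo $Q_X$ up to genuine $\Theta$-fixedness, once one notices that $\Theta$ acts trivially on $Q_X$.
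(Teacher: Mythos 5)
Your argument is correct, and in fact it proves a strictly stronger statement than the paper does. The paper's own proof is a one-line decomposition: from $\Theta=-w_X\circ\tau$ it writes
\[
\alpha_{\tau(r)}-\alpha_r=(\alpha_{\tau(r)}-w_X\alpha_{\tau(r)})-(\Theta(\alpha_r)+\alpha_r),
\]
observes that the first summand lies in $\Z X$ (so the corresponding group-like element lies in $U_q(\mfg_X)$) and that the second is $\Theta$-invariant (so the corresponding group-like element lies in $U_q(\mfh^\theta)$), and concludes only that $K_{\tau(r)}K_r^{-1}$ lies in the product $U_q(\mfh^\theta)U_q(\mfg_X)$ --- which is exactly the form in which the lemma is later used. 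You instead show that $\alpha_{\tau(r)}-\alpha_r$ is itself $\Theta$-fixed, so that $K_{\tau(r)}K_r^{-1}\in U_q(\mfh^\theta)$ outright. Both arguments rest on the same two inputs, namely $\Theta=-w_X\circ\tau$ and $\beta-w_X\beta\in\Z X$; your extra ingredients are the (correct) observation that $\Theta$ fixes $\Z X$ pointwise, which follows from condition \eqref{cond:admis1} exactly as you say, and the involutivity bootstrap upgrading the congruence $\Theta(\alpha_{\tau(r)}-\alpha_r)\equiv\alpha_{\tau(r)}-\alpha_r \pmod{\Z X}$ to an equality via $2z=0$ in the torsion-free weight lattice. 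What each buys: the paper's version is the minimal computation sufficient for Lemma \ref{LemAdPres} and the $*$-invariance argument; yours yields the cleaner structural fact that the elements $K_{\tau(r)}K_r^{-1}$ already sit in the Cartan part $U_q(\mfh^\theta)$ of the coideal.
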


\begin{proof} Since $\Theta = -w_X \circ \tau$, we have
\[
\alpha_{\tau(r)}-\alpha_r  = (\alpha_{\tau(r)} - w_X\alpha_{\tau(r)}) - (\Theta(\alpha_r) + \alpha_r)
\]
with $\alpha_{\tau(r)} - w_X\alpha_{\tau(r)}\in \Z X$, while $\Theta(\alpha_r)+\alpha_r$ is $\Theta$-invariant.
\end{proof}

\begin{Lem}\label{LemAdPres}  If $x\in U_q^{\mbt}(\mfk^{\C}), r\in I$ and $y\in U_q(\mfg_X)$, then
\[
\Ad_q(y)(xK_{\tau(r)})K_r^{-1}\in U_q^{\mbt}(\mfk^{\C}).
\]
\end{Lem}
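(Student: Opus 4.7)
The plan is to expand the adjoint action via Sweedler notation and then exhibit each summand as a product of factors each separately lying in $U_q^{\mbt}(\mfk^{\C})$. Writing $\Delta(y)=\sum y_{(1)}\otimes y_{(2)}$ and using that $U_q(\mfg_X)\subseteq U_q(\mfg)$ is a Hopf subalgebra (so that $y_{(1)},y_{(2)},S(y_{(2)})\in U_q(\mfg_X)$), the starting identity will be
\[
\Ad_q(y)(xK_{\tau(r)})K_r^{-1} = \sum y_{(1)}\, x\, K_{\tau(r)}\, S(y_{(2)})\, K_r^{-1}.
\]

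I would then insert $K_rK_r^{-1}=1$ between $K_{\tau(r)}$ and $S(y_{(2)})$ and regroup each summand as
\[
y_{(1)}\cdot x\cdot (K_{\tau(r)}K_r^{-1})\cdot (K_r S(y_{(2)}) K_r^{-1}).
\]
Here the first factor is in $U_q^{\mbt}(\mfk^{\C})$ because $U_q(\mfg_X)\subseteq U_q^{\mbt}(\mfk^{\C})$ by construction, the second by hypothesis on $x$, and the third by the preceding lemma. For the fourth factor, I would argue that $U_q(\mfg_X)$ is stable under conjugation by $K_r$ for every $r\in I$: since it is generated by the weight vectors $E_s,F_s,K_s$ with $s\in X$, and conjugation by $K_r$ acts on these generators by $E_s\mapsto q^{(\alpha_r,\alpha_s)}E_s$, $F_s\mapsto q^{-(\alpha_r,\alpha_s)}F_s$ and trivially on $K_s$, the subalgebra is preserved. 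Hence $K_rS(y_{(2)})K_r^{-1}\in U_q(\mfg_X)\subseteq U_q^{\mbt}(\mfk^{\C})$ as well.

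Since $U_q^{\mbt}(\mfk^{\C})$ is a subalgebra, the product of the four factors in each summand belongs to it, and therefore so does the whole sum. The only substantive point is the weight-stability of $U_q(\mfg_X)$ under Cartan conjugation, which is a routine compatibility rather than a genuine obstacle; the rest of the argument is essentially an assembly of the hypothesis, the fact that $U_q(\mfg_X)$ is a sub-Hopf-algebra, and the preceding lemma.
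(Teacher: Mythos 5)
Your argument is correct and is essentially the paper's proof in a slightly different packaging: the paper applies the multiplicativity $\Ad_q(y)(xK_{\tau(r)})=\Ad_q(y_{(1)})(x)\Ad_q(y_{(2)})(K_{\tau(r)})$ and then commutes $U_q(\mfg_X)$ past $K_{\tau(r)}$, whereas you expand the adjoint action directly and insert $K_r^{-1}K_r$, but the inputs are identical ($U_q(\mfg_X)$ is a Hopf subalgebra contained in $U_q^{\mbt}(\mfk^{\C})$, the preceding lemma for $K_{\tau(r)}K_r^{-1}$, and stability of $U_q(\mfg_X)$ under Cartan conjugation). No gap.
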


\begin{proof} We have
\[
\Ad_q(y)(xK_{\tau(r)})K_r^{-1} = \Ad_q(y_{(1)})(x)\Ad_q(y_{(2)})(K_{\tau(r)})K_r^{-1}.
\]
However, $\Ad_q(c)(x) \in U_q^{\mbt}(\mfk^{\C})$ for all $c\in U_q(\mfg_X)$, while
\[
\Ad_q(U_q(\mfg_X))(K_{\tau(r)})K_r^{-1}\in U_q(\mfg_X)K_{\tau(r)}U_q(\mfg_X)K_r^{-1} =  U_q(\mfg_X)K_{\tau(r)}K_r^{-1}  \subseteq U_q^{\mbt}(\mfk^{\C}).
\]
This proves the assertion.
\end{proof}

We are now ready to prove Theorem \ref{TheoStarInv}. In the proof of the following, we use again the shorthand
\[
z_r = s(X,\tau)(\alpha_r)
\]
under the notation of \cite{MR3269184}.

\begin{Theorem}
\label{thm:Star-invariance}
The algebra $U_q^{\mbt}(\mfk^{\C})$ is $*$-invariant if $s_r \in i\R$ and
$$
c_{\tau(r)}c_r = q^{(\Theta(\alpha_r)-\alpha_{r},\alpha_{\tau(r)})}
$$
for all $r\in I\setminus X$.
\end{Theorem}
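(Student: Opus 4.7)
Since $U_q^{\mbt}(\mfk^{\C})$ is generated by the manifestly $*$-invariant subalgebras $U_q(\mfg_X)$ and $U_q(\mfh^\Theta)$ together with the elements $B_r$ ($r\in I\setminus X$), it suffices to show $B_r^* \in U_q^{\mbt}(\mfk^{\C})$ for each such $r$. My first step would be to unpack $Y_r := \theta_q(F_rK_r)K_r^{-1}$ by stepping through the factors $\omega,\tau,\psi,T_{w_X},\Ad s(X,\tau)$ in turn: $\omega\circ\tau$ sends $F_rK_r$ to $-E_{\tau(r)}K_{-\alpha_{\tau(r)}}$; the twist $\psi$ absorbs the surviving Cartan factor, leaving $-E_{\tau(r)}$; then $T_{w_X}(E_{\tau(r)}) = a_{\tau(r)}^+\Ad_q(Z_{\tau(r)}^+(X))(E_{\tau(r)})$; and $\Ad s(X,\tau)$ scales by $z_{\tau(r)}$ since it acts trivially on $U_q(\mfg_X)$. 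Thus
\[
Y_r = -z_{\tau(r)}\, a_{\tau(r)}^+\, \Ad_q(Z_{\tau(r)}^+(X))(E_{\tau(r)}) K_r^{-1}.
\]

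Now I would apply $*$ to $B_r = F_r + c_rY_r + s_rK_r^{-1}$, using $F_r^* = K_r^{-1}E_r$, $E_{\tau(r)}^* = F_{\tau(r)}K_{\tau(r)}$, $(K_r^{-1})^* = K_r^{-1}$, and the Hopf $*$-algebra identity $(\Ad_q(y)(x))^* = S^{-1}((y^*)_{(2)})\,x^*\,(y^*)_{(1)}$. Then I would compare $B_r^*$ with a scalar multiple of $\Ad_q(y')(B_{\tau(r)})K_\mu$ for an appropriate $y' \in U_q(\mfg_X)$ and $\mu \in P$; the element $K_\mu$ will be chosen to factor through $K_{\alpha_{\tau(r)}-\alpha_r}$ and $U_q(\mfh^\Theta)$, both of which sit inside $U_q^{\mbt}(\mfk^{\C})$ by the lemma preceding the theorem. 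The decisive comparison matches the $K_r^{-1}E_r$-term of $B_r^*$ against the $F_{\tau(r)}K_{\tau(r)}$-term produced by $\Ad_q(y')(Y_{\tau(r)})K_\mu$; tracking the $q$-powers accumulated through Cartan commutations and using $a_r^+ = a_{\tau(r)}^+$ (equivalently $d_{\alpha_r} = d_{\alpha_{\tau(r)}}$) from Lemma~\ref{Forma} should produce exactly the stated identity $c_{\tau(r)}c_r = q^{(\Theta(\alpha_r)-\alpha_r,\alpha_{\tau(r)})}$. The Cartan coefficient then matches iff $s_r \in i\R$ (when $r \in I_\mcS$ one has $\tau(r)=r$, so the consistency check against $\tau(r)\leftrightarrow r$ is automatic). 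The residual $\Ad_q(y)$-correction terms with $y \in U_q(\mfg_X)$ are absorbed inside $U_q^{\mbt}(\mfk^{\C})$ by Lemma~\ref{LemAdPres}.

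The principal obstacle is converting $(\Ad_q(Z_{\tau(r)}^+(X))(E_{\tau(r)}))^*$, which naturally appears as a right quantum adjoint action (involving $S^{-1}$), into an honest left $\Ad_q$-action of $Z_r^+(X)$ on $F_rK_r$. To effect this cleanly I would work at the level of highest/lowest weight vectors inside the $U_q(\mfg_X)$-module $V_{\alpha_r}^q$: the unitary implementation of $T_{w_X}$ on finite-dimensional $*$-representations, combined with the identity $Z_{w_X\alpha_r}^+Z_{\alpha_r}^-\xi_{\alpha_r} = e_{\alpha_r}\xi_{\alpha_r}$ and its dual \eqref{EqFormT-}--\eqref{EqFormT+}, should give the required rewriting with the correct numerical scalar. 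Universality then lifts the resulting identity from the module back to $U_q(\mfg)$, producing the conditions in the statement.
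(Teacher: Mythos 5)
Your strategy coincides with the paper's own proof: reduce to showing $B_r^*\in U_q^{\mbt}(\mfk^{\C})$, expand $\theta_q(F_rK_r)$ through the adjoint action of $Z_{\tau(r)}^+(X)$, compare $B_r^*$ with $\Ad_q(\,\cdot\,)(B_{\tau(r)}K_{\tau(r)})K_r^{-1}$ using Lemma~\ref{LemAdPres}, and match the scalars via Lemma~\ref{Forma} (the paper merely organizes this into two cases according to whether $a_{rs}=0$ for all $s\in X$, the first case also yielding the purely imaginary condition on $s_r$). The ``principal obstacle'' you flag dissolves more directly than you anticipate: since $y\mapsto S(y)^*$ is an algebra homomorphism sending $E_s\mapsto -F_s$, one gets $S(Z_{\tau(r)}^+(X))^*=(-1)^{2(\rho_X^\vee,\alpha_r)}Z_{\tau(r)}^-(X)$ outright, and then $\Ad_q(Z_{\tau(r)}^-(X)Z_{r}^+(X))(E_r)=d_{\alpha_r}E_r$ supplies the scalar, so no detour through weight vectors is required.
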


\begin{proof} Clearly $U_q(\mfg_X)$ and $U(\mfh^{\theta})$ are $*$-invariant. We are to show that $B_r^* \in U_q^\mbt(\mfk^{\C})$ for $r\in I \setminus X$.

Recall first that $F_rK_r$ spans an irreducible $\Ad_q(U_q(\mfg_X))$-module \cite{MR3269184}*{Lemma 3.5}. We consider now two cases.

Suppose first that the $\Ad_q(U_q(\mfg_X))$-module spanned by $F_rK_r$ is one-dimensional, which is equivalent with $a_{rs} = 0$ for all $s\in X$. Then also the $\Ad_q(U_q(\mfg_X))$-module spanned by $E_r$ is one-dimensional. In particular, it follows that $T_{w_X}(E_r)= E_r$, and hence, since $z_r = 1$ as $a_{rs}=0$ for all $s\in X$,
\[
\theta_q(F_rK_r) = -E_{\tau(r)},
\]
so
\[
B_r = F_r -c_r E_{\tau(r)}K_r^{-1} + s_r K_r^{-1}.
\]
 Then
\[
B_r^* = K_r^{-1}E_r-\overline{c_r}K_r^{-1}F_{\tau(r)}K_{\tau(r)} + \overline{s_r}K_r^{-1}.
\]
On the other hand, since $K_r^{-1}K_{\tau(r)}\in U_{q}(\mfh^{\theta})U_q(\mfg_X)$, we have
\[
K_r^{-1}K_{\tau(r)}B_{\tau(r)} = K_r^{-1}K_{\tau(r)}F_{\tau(r)} - c_{\tau(r)}K_r^{-1}K_{\tau(r)}E_{r}K_{\tau(r)}^{-1} + s_{\tau(r)}K_r^{-1} \in U_q^{\mbt}(\mfk^{\C}).
\]
Hence $B_r^* \in U_q^{\mbt}(\mfk^{\C})$ if
$$
(1-\overline{c_r}c_{\tau(r)}q_r^2q^{(\alpha_r,\alpha_{\tau(r)})})K_r^{-1}E_r + (\overline{s_r} +q_r^2\overline{c_r}s_{\tau(r)})K_r^{-1}\in U_q^{\mbt}(\mfk^{\C}).
$$
Hence it is sufficient that
\begin{align*}
\overline{c_r} c_{\tau(r)}q^{(\alpha_r,\alpha_{\tau(r)}+\alpha_r)} &= 1,& \overline{s_r} &= - q_r^2 \overline{c_r} s_{\tau(r)}.
\end{align*}
Since our assumption implies $w_X\alpha_r = \alpha_r$, it is easily seen that $c_{\tau(r)}\overline{c_r}$ must have the form as in the statement of the proposition, since in this case $a_r^+  = 1$ and $\Theta(\alpha_r) = -w_X\tau(\alpha_r) = -\alpha_{\tau(r)}$. Moreover, if $s_r \neq 0$, then we are necessarily in the situation where $\tau(r)=r$. Since we assume that $c_r>0$ in this case, we obtain then $c_r = q_r^{-2}$, and it follows that $s_r$ is purely imaginary.

Let us now consider the case where $a_{rs}\neq 0$ for some $s\in X$. In particular, this automatically forces $s_r=0$.  Let us use the notation $v_r$ as in \cite{MR3269184}*{Theorem 4.4.(3)}, where we note that
\begin{equation}\label{Eqva}
v_r = s(X,\tau)(w_X\alpha_{\tau(r)})a_r^+ = \overline{z_r}a_r^+.
\end{equation}
An easy computation, using that $\Ad_q(y)(x)^* = \Ad_q(S(y)^*)(x^*)$, shows
\[
B_r^* = K_r^{-1}E_r - \overline{c_rv_r} K_r^{-1}\Ad_q(S(Z_{\tau(r)}^+)^*)(F_{\tau(r)}K_{\tau(r)}).
\]
On the other hand, it follows from Lemma \ref{LemAdPres} that
\[
\Ad_q(S(Z_{\tau(r)}^+(X))^*)(B_{\tau(r)}K_{\tau(r)})K_r^{-1} \in U_q^{\mbt}(\mfk^{\C}).
\]
Up to a scalar, this is the same element as
\begin{multline*}
K_r^{-1}\Ad_q(S(Z_{\tau(r)}^+)^*)(K_rB_{\tau(r)}K_{\tau(r)}K_r^{-1}) \\
= q^{-(\alpha_r,\alpha_{\tau(r)})}K_r^{-1} \Ad_q(S(Z_{\tau(r)}^+)^*)(F_{\tau(r)}K_{\tau(r)})  \\ + q^{-(\Theta(\alpha_r),\alpha_{\tau(r)})}c_{\tau(r)}K_r^{-1} \Ad_q(S(Z_{\tau(r)}^+)^*)(\theta_q(F_{\tau(r)}K_{\tau(r)})).
\end{multline*}
Now note that $S(Z_{\tau(r)}^+(X))^* = (-1)^{2(\rho_X^{\vee},\alpha_r)}Z_{\tau(r)}^-(X)$. Since
\[
\Ad_q(Z_{\tau(r)}^-(X)Z_{r}^+(X))(E_r) = \Ad_q(Z_{-\alpha_{\tau(r)}}^-Z_{\alpha_r}^+)(E_r)  =  \Ad_q(Z_{w_X\alpha_r}^-Z_{\alpha_r}^+)(E_r) =  d_{\alpha_r} E_r,
\]
we see that $B_r^* \in U_q^{\mbt}(\mfk^{\C})$ if
\[
c_{\tau(r)}\overline{c_r} v_{\tau(r)}\overline{v_r} =(-1)^{2(\rho^{\vee}_X,\alpha_r)} q^{(\Theta(\alpha_r) - \alpha_r,\alpha_{\tau(r)})}d_{\alpha_r}^{-1}.
\]
However, the conditions on an admissible pair imply that
\[
z_r \overline{z_{\tau(r)}} = z_r^2 = (-1)^{2(\rho_X^{\vee},\alpha_r)}.
\]
Using \eqref{Eqva}, we can then simplify the above identity, using Lemma \ref{Forma}, to the one in the statement of the proposition.
\end{proof}

\section{Characters of Letzter--Kolb coideals}\label{SecCharCoid}

Let $\theta = \theta(X,\tau)$ for $(X,\tau)$ an admissible pair, and put $\mfk =\mfu^{\theta}$. We assume that $\mfk \subseteq \mfu$ is an irreducible symmetric pair. One then has a non-trivial character on $\mfk = \mfu^{\theta}$ if and only if $\mfk \subseteq \mfu$ is a Hermitian symmetric pair. In particular, $\mfu$ is simple and the space of characters on $\mfk$ is one-dimensional. We will show that, analogously, the parameters $\mbc,\mbs$ can be non-trivial only in the Hermitian symmetric case, and then they are determined by a single parameter.

\begin{Def}
We say that a symmetric pair $\mfk = \mfu^{\theta}\subseteq \mfu$ is;
\begin{itemize}
\item of \emph{S-type} if $I_{\mcS} \neq \emptyset$,
\item of \emph{C-type} if there exist $i\in I\setminus X$ with $\tau(i)\neq i$ but $i \notin I_{\mcC}$; thus, $\mfg_{2\overline{\alpha_i}} \neq 0$.
\end{itemize}
\end{Def}

\begin{Lem}\label{LemSymHerm}
A symmetric pair is of $S$- or C-type if and only if it is Hermitian, in which case it can not be both. Moreover, if the symmetric pair is of S-type, there is exactly one element in $I_{\mcS}$, while if it is of C-type, there is exactly one two-element $\tau$-orbit $\{i,\tau(i)\}$ in $I\setminus X$ with $i\notin I_{\mcC}$.
\end{Lem}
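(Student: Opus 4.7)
The plan is to verify the lemma by case analysis using the classification of irreducible symmetric pairs through Satake diagrams. First observe that in the reducible case $\mfu = \mfs \oplus \mfs$ with $\theta$ a swap-type involution, every $r \in I$ satisfies $\tau(r) \ne r$, so $I_\ns = \emptyset$ (ruling out S-type); and the two copies of the root system are mutually orthogonal in $\mfh^*$, giving $(\alpha_r, \Theta(\alpha_r)) = 0$ for all $r$ and thus $I \setminus X = I_{\mcC}$ (ruling out C-type). Since $\mfk = \mfs$ is simple, this pair is also non-Hermitian, so the lemma holds trivially. Hereafter assume $\mfu$ is simple.

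The two conditions admit clean combinatorial reformulations. A vertex $r \in I \setminus X$ lies in $I_\ns$ iff $\tau(r) = r$ and $\alpha_r$ has no edge to any vertex of $X$ in the Dynkin diagram; it further lies in $I_{\mcS}$ iff, for every other $s \in I_\ns$, we have $a_{sr} \in 2\Z$, meaning $s$ is either non-adjacent to $r$ or linked to it by a double edge with $r$ on the long-root side. A pair $\{r, \tau(r)\}$ with $\tau(r) \ne r$ is a C-type pair precisely when every interior vertex of the Dynkin path from $r$ to $\tau(r)$ lies in $X$.

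Now run through the Satake diagrams of the irreducible simple symmetric pairs (Araki's classification). For each non-Hermitian type (AI, AII, BDI with $p \ne 2$, CII, EI, EII, EIV, EV, EVI, EVIII, EIX, FI, FII, G), direct inspection shows that any $r \in I_\ns$ has some $s \in I_\ns$ linked to it by a single edge (so $r \notin I_{\mcS}$) and that every $\tau$-orbit of size two is separated by a path containing at least one interior white vertex (so no C-type pair arises). For the Hermitian types, identify the unique S- or C-type structure:
\begin{itemize}
\item S-type (unique element of $I_{\mcS}$): AIII$(p,p)$ (the central $\tau$-fixed vertex $\alpha_p$), CI (the long-root end vertex), BDI$(2,q)$ for $q \ge 3$ (the unique white vertex not adjacent to any black vertex), DIII for $n$ even (the fork tip opposite the black fork tip of $D_n$), and EVII (the terminal vertex of the long arm of $E_7$ isolated from black vertices).
\item C-type (unique $\tau$-orbit): AIII$(p,q)$ with $p < q$ (the innermost pair, separated by the black chain $\alpha_{p+1}, \ldots, \alpha_{q-1}$), DIII for $n$ odd (the two fork tips, separated by the penultimate black spine vertex), and EIII (the two terminal vertices of the long arms of $E_6$ swapped by the diagram automorphism, separated by entirely black interior vertices).
\end{itemize}
In each Hermitian case, uniqueness of the S-type vertex or C-type pair is immediate from the explicit diagram structure, and no diagram realizes both conditions simultaneously, consistent with the fact that $\dim \mfz(\mfk) = 1$ in the Hermitian irreducible setting.

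The main obstacle is simply the bookkeeping across roughly a dozen Satake diagrams, particularly for the DIII series where the structure flips with the parity of $n$, and for the exceptional Hermitian types EIII and EVII, where one must fix labeling conventions for $E_6$ and $E_7$. Each individual verification, however, reduces to a mechanical check of adjacency and a handful of Cartan matrix entries.
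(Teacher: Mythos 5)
Your proof is correct and follows essentially the same route as the paper: the paper's proof is precisely a type-by-type verification against the classification of Satake diagrams, accompanied by the same explicit list of Hermitian S-type and C-type cases with the same distinguished vertices and $\tau$-orbits that you identify. Your additional treatment of the group case $\mfu=\mfs\oplus\mfs$ and the consistency check via $\dim\mfz(\mfk)=1$ are harmless supplements to the argument the paper intends.
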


For $\mfk\subseteq \mfu$ of Hermitian type, we will call a simple root $\alpha_i$ with $i\in I^*$ \emph{distinguished} if $\alpha_i \in I_{\mcS}$ (for S-type) or $\tau(i)\neq i$ but $i\notin I_{\mcC}$ (for C-type).

\begin{proof}
This follows by a type-by-type verification, see also the discussion below.
\end{proof}

In the following, we will list the S-type and C-type Hermitian symmetric pairs $(\mfk \subseteq \mfu)$ with the $\tau$-orbit of their distinguished simple root as in the statement of the lemma, using the numbering of roots as in \cite{MR1064110}*{Reference Chapter, table 9} (note that $\mfsp_l$ is a compact real form of $\mfsp_{2l}(\C)$):
\begin{enumerate}
\item S-type:
\begin{itemize}
\item AIII ($\mfs(\mfu_{p}\oplus \mfu_{p}) \subseteq \mfsu_{2p}$ for $p\geq 1$): $\alpha_p$.
\item DIII ($\mfu_{2p} \subseteq \mfso_{4p}$ for $p\geq 1$): $\alpha_{2p}$.
\item BDI ($\mfso_{2}\oplus \mfso_{q} \subseteq \mfso_{2+q}$ for $q\geq 3$):  $\alpha_1$.
\item CI ($\mfu_{l} \subseteq \mfsp_{l}$ with $l\geq 2$): $\alpha_l$.
\item EVII ($\mfe_6 \oplus \mfu_{1} \subseteq \mfe_7$): $\alpha_1$.
\end{itemize}
\item C-type:
\begin{itemize}
\item AIII ($\mfs(\mfu_{p}\oplus \mfu_{q})\subseteq \mfsu_{p+q}$ with $p < q$): $\{\alpha_{p},\alpha_{q}\}$.
\item DIII ($\mfu_{2p+1}\subseteq \mfso_{4p+2}$ with $p\geq 2$): $\{\alpha_{2p},\alpha_{2p+1}\}$.
\item EIII ($\mfso_{10} \oplus \mfu_{1} \subseteq \mfe_6$): $\{\alpha_1,\alpha_5\}$.
\end{itemize}
\end{enumerate}
In the C-type case, we will take resp.~$\alpha_p$, $\alpha_{2p+1}$ and $\alpha_1$ as the distinguished vector. This way the distinguished vector is always the noncompact simple positive root in the conventional presentation of inclusion $\mfk \subseteq \mfu$, cf.~\cite{MR1920389}*{Appendix C}.\footnote{Note that the above list of cases with non-trivial parameters can also be found in \cite{MR1996417}, however the case of $BDI$ (corresponding to the two cases $BI$ and $DI$) is erronously listed as never having parameters.}

\begin{Cor}\label{CorSymHerm}
Let $U_q^{\mbt}(\mfk)\subseteq U_q(\mfu)$ be a coideal $*$-subalgebra associated to $\mfk = \mfu^{\theta}\subseteq \mfu$, with all $c_r >0$.
\begin{enumerate}
\item If $\mfk \subseteq \mfu$ is not of Hermitian type, then $s_r = 0$ and for all $r\in I\setminus X$
\[
c_r= q^{\frac{1}{2}(\Theta(\alpha_r)-\alpha_{\tau(r)},\alpha_r)}.
\]
\item If $\mfk \subseteq \mfu$ is of Hermitian type S-type, then $s_r = 0$ for $r$ not distinguished and for all $r\in I\setminus X$
\[
c_r= q^{\frac{1}{2}(\Theta(\alpha_r)-\alpha_{\tau(r)},\alpha_r)}.
\]
At the distinguished vector one has $s_r \in i\R$, and all values are allowed.
\item If $\mfk \subseteq \mfu$ is of Hermitian type C-type, then $s_r = 0$ for all $r\in I\setminus X$ and for all  $r\in I^*$
\[
c_{r}= c_{\tau(r)} =  q^{\frac{1}{2}(\Theta(\alpha_{\tau(r)})-\alpha_{r},\alpha_{\tau(r)})}
\]
except possibly at the distinguished vertex $r$ where one has
\[
c_r = q^{\frac{1}{2}(\Theta(\alpha_r)-\alpha_{\tau(r)},\alpha_r)}q^\lambda,\qquad c_{\tau(r)} =  q^{\frac{1}{2}(\Theta(\alpha_{r})-\alpha_{\tau(r)},\alpha_i)}q^{-\lambda}
\]
for some $\lambda\in \R$. Any value of $\lambda$ can occur.
\end{enumerate}
\end{Cor}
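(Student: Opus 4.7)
I will derive the corollary by a straightforward case analysis, combining three inputs: the $*$-invariance constraints of Theorem \ref{TheoStarInv}, namely $s_r \in i\R$ and $c_{\tau(r)}c_r = q^{(\Theta(\alpha_r)-\alpha_r,\alpha_{\tau(r)})}$; the definitional constraints packed into $\mcC_q$ and $\mcS_q$, namely $s_r = 0$ for $r \notin I_\mcS$ and $c_r = c_{\tau(r)}$ for $r \in I_\mcC$; and the standing positivity assumption $c_r > 0$. Lemma \ref{LemSymHerm} will tell me, in each of the three cases, exactly which $r \in I\setminus X$ lie in $I_\mcS$ and which two-element $\tau$-orbits of $I\setminus X$ lie outside $I_\mcC$.

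For the non-Hermitian case, $I_\mcS$ is empty and every two-element $\tau$-orbit in $I\setminus X$ lies in $I_\mcC$. I therefore get $s_r \equiv 0$ and $c_r = c_{\tau(r)}$ for every $r \in I\setminus X$. The product equation then pins $c_r$ down as the unique positive square root of $q^{(\Theta(\alpha_r)-\alpha_r,\alpha_{\tau(r)})}$, and I will rewrite the resulting exponent in the form stated in the corollary by a short manipulation using $\Theta$-isometry of the form and the commutation $\tau w_X = w_X \tau$. The Hermitian S-type case will be handled identically, the only change being that $I_\mcS$ consists of a single $\tau$-fixed distinguished vertex $r_0$, which produces a free parameter $s_{r_0}\in i\R$ while all other $s_r$ vanish.

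For the Hermitian C-type case I will again have $s_r\equiv 0$, and by Lemma \ref{LemSymHerm} exactly one two-element $\tau$-orbit $\{r_0,\tau(r_0)\}$ fails to lie in $I_\mcC$. On every other orbit the non-Hermitian argument applies verbatim and returns the symmetric value. On the distinguished orbit the forced equality $c_{r_0} = c_{\tau(r_0)}$ is lost, so only the product is constrained by $*$-invariance; I will then parametrise the resulting one-parameter family of positive solutions around the symmetric one by $c_{r_0}\mapsto c_{r_0} q^{\lambda}$, $c_{\tau(r_0)}\mapsto c_{\tau(r_0)} q^{-\lambda}$ with $\lambda \in \R$, and check that any $\lambda \in \R$ can occur (which is immediate, since the modification leaves the product, and hence $*$-invariance on this orbit, untouched).

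The main place where some care is needed is verifying that the positive square root of $q^{(\Theta(\alpha_r)-\alpha_r,\alpha_{\tau(r)})}$ coincides with the symmetric expression $q^{\frac{1}{2}(\Theta(\alpha_r)-\alpha_{\tau(r)},\alpha_r)}$ displayed in the corollary, so that the chosen normalization is compatible with $c_r = c_{\tau(r)}$. This will reduce to the identity $(\Theta(\alpha_r)-\alpha_r,\alpha_{\tau(r)}) = (\Theta(\alpha_{\tau(r)})-\alpha_{\tau(r)},\alpha_r)$, which is forced by $\Theta^2 = \id$, the $\Theta$-isometry of the bilinear form, and the fact that $\tau$ and $w_X$ commute; this is the same consistency check that makes the $*$-invariance equation itself invariant under $r \leftrightarrow \tau(r)$. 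Once this identification is in hand, the rest of the proof is pure case-analysis bookkeeping driven by Lemma \ref{LemSymHerm}.
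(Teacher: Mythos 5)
Your overall strategy is exactly the one the paper intends: Corollary~\ref{CorSymHerm} is stated without proof because it is meant to be the case-by-case unpacking of membership in $\mcC_{q,c}\times\mcS_{q,c}$ --- i.e., the product constraint of Theorem~\ref{TheoStarInv} and $s_r\in i\R$, combined with the defining conditions $s_r=0$ for $r\notin I_{\mcS}$ and $c_r=c_{\tau(r)}$ for $r\in I_{\mcC}$, with Lemma~\ref{LemSymHerm} dictating which vertices and orbits are exceptional in each of the three cases. Your handling of the $s_r$'s, of the free parameter $s_{r_0}\in i\R$ in the S-type case, and of the one-parameter family $c_{r_0}\mapsto c_{r_0}q^{\lambda}$, $c_{\tau(r_0)}\mapsto c_{\tau(r_0)}q^{-\lambda}$ in the C-type case (including why every $\lambda$ occurs) is correct.

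The step you single out as "the main place where some care is needed" is, however, not correct as written. To match the displayed formulas you need $(\Theta(\alpha_r)-\alpha_r,\alpha_{\tau(r)}) = (\Theta(\alpha_r)-\alpha_{\tau(r)},\alpha_r)$, but the identity you reduce to, namely $(\Theta(\alpha_r)-\alpha_r,\alpha_{\tau(r)}) = (\Theta(\alpha_{\tau(r)})-\alpha_{\tau(r)},\alpha_r)$, is a different statement: it is the $r\leftrightarrow\tau(r)$ symmetry of the constraint (true, via $\Theta\tau=\tau\Theta$ and isometry) and does not imply the first. The two exponents differ by $(\Theta(\alpha_r),\alpha_{\tau(r)}-\alpha_r)$, which vanishes when $\tau(r)=r$ but not in general: for AIII with $\mfs(\mfu_2\oplus\mfu_3)\subseteq\mfsu_5$ one has $X=\emptyset$, $\Theta(\alpha_1)=-\alpha_4$, $1\in I_{\mcC}$, and $(\Theta(\alpha_1)-\alpha_1,\alpha_4)=-2$ while $(\Theta(\alpha_1)-\alpha_4,\alpha_1)=0$. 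A direct check of $B_1^*\in U_q^{\mbt}(\mfk^{\C})$ in that example forces $c_1c_4=q^{-2}$, supporting the exponent of Theorem~\ref{TheoStarInv}; the discrepancy is thus already present inside the paper, between Theorem~\ref{TheoStarInv} on one side and the exponent written in the definition of $\mcC_{q,c}$ and in the Corollary's displays on the other. The fix is to carry the exponent $\frac12(\Theta(\alpha_r)-\alpha_r,\alpha_{\tau(r)})$ through your derivation unchanged (everything then closes, with the Corollary's formulas read in that form), rather than to assert an identification that fails on every two-element $\tau$-orbit with $(\Theta(\alpha_r),\alpha_{\tau(r)}-\alpha_r)\neq 0$.
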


We will show now, in fact, that the parameter case can be obtained in a simple way from the non-parameter case. For this, we will study the set of $*$-characters on the coideals.

\begin{Def}
If $B$ is a right coideal $*$-subalgebra and $\chi$ a $*$-character on $B$, we call $$B' = (\chi\otimes \id)\Delta(B)$$ the $\chi$-conjugate of $B$.
\end{Def}

It is clear that $B'$ is again a coideal $*$-subalgebra. Our aim is to show now that any two coideal subalgebras $U_q^{\mbt}(\mfk)$, $U_q^{\mbt'}(\mfk)$ corresponding to the same symmetric pair are $\chi$-conjugate for some $\chi$. We start by determining the relations for the maximal commutative quotient of $U_q^{\mbt}$. In the following lemma, we write
\[
J = \{r \in I \setminus X\mid \alpha_r \perp \alpha_s\textrm{ for all }s\in X\}.
\]
We will only determine the relations in the case all $|a_{rs}|\leq 2$, as this is the only case of interest for us.

In the following we denote the generators $F_r$ of
$U_q^{\mbt}(\mfk)$ for $r\in X$ by $B_r$.

\begin{Lem}
Let $C_{\mbt}$ be the maximal commutative quotient of $U_q^{\mbt}(\mfk)$, and write $\overline{Z}$ for the images of the generators $Z$ of $U_q^{\mbt}(\mfk)$ in $C_{\mbt}$. Then $C_{\mbt}$ is a universal commutative algebra with generators $\bar Z$ and the following relations:
\begin{itemize}
\item $\overline{K}_{\omega+\omega'}=\overline{K}_\omega\overline{K}_{\omega'}$ for $\omega,\omega'\in P^\Theta$.
\item $\overline{K}_r^2 = 1$ for $r\in X$.
\item $\overline{E}_r = \overline{B}_r = 0$ for $r\in X$.
\item $\overline{B}_r = 0$ for $\Theta(\alpha_r)\neq -\alpha_r$.
\item For $a_{r,\tau(r)} = 0$,
\begin{equation}\label{eq:comm1}
\delta_{r\in J}(c_r \overline{K}_{\alpha_{\tau(r)}-\alpha_r}^2 -c_{\tau(r)}) = 0.
\end{equation}
\item For $a_{rs}= -1$,
\begin{multline} \label{eq:comm2}
(1-q_r)(1-q_r^{-1})\overline{B}_r^2\overline{B}_s =   -\delta_{r\in J}\delta_{r,\tau(r)}q_r c_r \overline{K}_{\alpha_{\tau(r)}-\alpha_r}\overline{B}_s  \\ + \delta_{r,\tau(s)} (q_r+q_r^{-1})(\delta_{s\in J}q_rc_s \overline{K}_{\alpha_{\tau(s)}-\alpha_s} +\delta_{r\in J}q_r^{-2}c_r \overline{K}_{\alpha_{\tau(r)} - \alpha_r})\overline{B}_r.
\end{multline}
\item For $a_{r,\tau(r)} = -2$, \[(q_r^{-8}c_r \overline{K}_{\alpha_{\tau(r)}-\alpha_r}-c_{\tau(r)})\overline{B}_r = 0.\]
\end{itemize}
If furthermore $D_{\mbt}$ is the maximal commutative $*$-algebra
quotient, then the extra relations in $D_{\mbt}$~are
\begin{itemize}
\item $\overline{B}_r = 0$ for $r\notin J$.
\item For $r\in J$,
\begin{equation}\label{eq:comm3}
\overline{B}_r^*  = -c_{\tau(r)}^{-1}q^{-(\alpha_r,\alpha_{\tau(r)})}\overline{K}_{\alpha_{\tau(r)}-\alpha_r}\overline{B}_{\tau(r)}.
\end{equation}
\end{itemize}
\end{Lem}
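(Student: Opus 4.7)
The plan is to deduce the presentation of $C_\mbt$ by analyzing the defining relations of $U_q^\mbt(\mfk)$ modulo the commutator ideal, and then extend the analysis to $D_\mbt$ using the explicit description of $B_r^*$ from Theorem~\ref{TheoStarInv}.

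First I would handle the subalgebra $U_q(\mfg_X)\subseteq U_q^\mbt(\mfk)$: for each $r\in X$, the interchange $K_rE_rK_r^{-1}=q_r^2E_r$ forces $(q_r^2-1)\overline{E}_r\overline{K}_r=0$ in $C_\mbt$, hence $\overline{E}_r=0$ since $\overline{K}_r$ is invertible; likewise $\overline{F}_r=\overline{B}_r=0$. Then $[E_r,F_r]=(K_r-K_r^{-1})/(q_r-q_r^{-1})$ collapses to $\overline{K}_r^2=1$. The remaining generators $\overline{K}_\omega$ for $\omega\in P^\Theta$ inherit the group law from $P^\Theta$. Next, for $r\in I\setminus X$, I would decompose
\[
B_r=F_r+c_r\theta_\mbq(F_rK_r)K_r^{-1}+s_rK_r^{-1}
\]
into Cartan weight components of weights $-\alpha_r$, $-\Theta(\alpha_r)$, $0$; since $\overline{B}_r$ must commute with every $\overline{K}_\omega$ ($\omega\in P^\Theta$), each surviving weight must annihilate $P^\Theta$, and by $\Theta$-invariance of the form this lies in the $(-\Theta)$-eigenspace of $\mfh^*$. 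This forces $\overline{B}_r=0$ whenever $\Theta(\alpha_r)\neq-\alpha_r$.

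The relations \eqref{eq:comm1}, \eqref{eq:comm2} and the $a_{r,\tau(r)}=-2$ relation should then be extracted from the commutation and quantum Serre identities among the Letzter--Kolb generators established in~\cite{MR3269184}*{Sections~5--7}: in each case the right-hand side of the $B_r,B_s$ commutator or Serre relation is an explicit polynomial in the $B_t$ (with $t\neq r$), the $K_\omega$ and the parameters $c_\bullet,s_\bullet$; imposing commutativity and substituting $\overline{B}_t=0$ for $t\in X$ yields precisely the stated identities. Completeness, i.e.\ that no further relations survive, should follow from the triangular decomposition of $U_q^\mbt(\mfk)$ in~\cite{MR3269184}, which bounds the set of defining relations to those indexed by pairs of simple roots and thus matches the list.

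For the $*$-quotient $D_\mbt$, the plan is to invoke the formulas for $B_r^*$ derived in the proof of Theorem~\ref{TheoStarInv}. For $r\in J$, the computation there expresses $B_r^*$ as a scalar multiple of $K_r^{-1}B_{\tau(r)}K_{\tau(r)}$ plus terms in $U_q(\mfh^\theta)$, and reading off the scalar from the coefficient comparison in that proof (combined with $c_r\bar c_{\tau(r)}q^{(\alpha_r,\alpha_{\tau(r)}+\alpha_r)}=1$) gives exactly \eqref{eq:comm3}. For $r\notin J$, the analogous formula involves $\Ad_q(S(Z_r^+(X))^*)(B_{\tau(r)}K_{\tau(r)})K_r^{-1}$, which after abelianization lands in the ideal generated by the relations $\overline{B}_s=0$ for $s\in X$; comparing with \eqref{eq:comm3} applied to $\tau(r)$ then forces $\overline{B}_r=0$ in $D_\mbt$. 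The main obstacle will be the bookkeeping in the derivation of \eqref{eq:comm2} from the quantum Serre relations of~\cite{MR3269184} in the case $r=\tau(s)$ where both $r\in J$ and $s\in J$ may occur, since one must correctly combine two separate Cartan contributions without spurious cross-terms.
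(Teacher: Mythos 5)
Your overall strategy coincides with the paper's: abelianize the Letzter--Kolb presentation of $U_q^{\mbt}(\mfk)$ and read off the $*$-relations from the proof of Theorem~\ref{TheoStarInv}. Your treatment of the first four bullets is correct and is exactly the paper's argument: the $q$-commutation with $K_r$ kills $\overline{E}_r,\overline{F}_r$ for $r\in X$, the relation $[E_r,F_r]=(K_r-K_r^{-1})/(q_r-q_r^{-1})$ then forces $\overline{K}_r^2=1$, and the weight argument against $U_q(\mfh^\theta)$ gives $\overline{B}_r=0$ unless $\Theta(\alpha_r)=-\alpha_r$.

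The gap is in the three substantive relations, which carry essentially all the content of the lemma. You describe the right-hand sides of Kolb's commutation and Serre relations as ``explicit polynomials in the $B_t$ ($t\neq r$), the $K_\omega$ and the parameters'' and propose to finish by substituting $\overline{B}_t=0$ for $t\in X$. That is not what those right-hand sides look like: in \cite{MR3269184}*{Theorems 7.1, 7.4 and 7.8} the correction terms are the elements $\mathcal{Z}_r$ and $\mathcal{W}_{rs}$, which are linear combinations of monomials $E_{r_1}\cdots E_{r_p}K_{\alpha_{\tau(r)}-\alpha_r}K_\gamma$, resp.\ $E_{s_1}\cdots E_{s_p}K_\gamma$, with $r_i,s_i\in X$ and $\gamma\in\Z X$. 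Every $\delta_{r\in J}$ and every surviving Cartan factor $c_r\overline{K}_{\alpha_{\tau(r)}-\alpha_r}$ in \eqref{eq:comm1}, \eqref{eq:comm2} and the $a_{r,\tau(r)}=-2$ relation comes from deciding when the degree-zero part ($p=0$) of these corrections is present: for $\mathcal{Z}_r$ this happens precisely when $w_X\alpha_r=\alpha_r$, i.e.\ when $r\in J$, while for $\mathcal{W}_{rs}$ it never happens because $w_X(\alpha_r)=\alpha_r+\alpha_s$ does not occur. This vanishing analysis, together with the evaluation $\sum_n(-1)^n\qbin{m}{n}{q}=(1-q^{m-1})(1-q^{m-3})\cdots(1-q^{-m+1})$ needed to abelianize the quantum Serre relations, is the core of the proof and is exactly the ``bookkeeping'' you defer at the end; without it the stated coefficients cannot be obtained. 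A smaller point on $D_{\mbt}$: for $r\notin J$ the clean mechanism is that $\Ad_q(y)(\cdot)$ abelianizes to $\varepsilon(y)(\cdot)$, so $\overline{B_r^*}=0$ and hence $\overline{B}_r=0$; your appeal to \eqref{eq:comm3} ``applied to $\tau(r)$'' is not available, since $\tau$ preserves $J$ and so $\tau(r)\notin J$ as well.
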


\begin{proof}
The first two relations are immediate, since we can use the ordinary quantum group relations of $U_q(\mfg_X)$. Also the third relation is immediate from \cite{MR3269184}*{Theorem 7.1}, as we have non-trivial $q$-commutation relations between $U_q(\mfh^\theta)$ and $B_r$ when $r\notin J$.

For the remainder of the relations, we claim that the elements $\mathcal{Z}_r = 0$, introduced just before \cite{MR3269184}*{Lemma 7.2}, vanish in the quotient, except when $r\in J$. Indeed, by its definition $\mathcal{Z}_r$ is a linear combination of elements of the form $E_{r_1}\ldots E_{r_p}K_{\alpha_{\tau(r)}-\alpha_r}K_{\gamma}$ for $\gamma\in \Z X$ variable and some fixed $p$, which vanishes in the quotient unless $p = 0$. But $p=0$ only when $\alpha_r$ is fixed under all $s_{r'}$ for $r'\in X$, i.e., when $r\in J$.

Similarly, we claim that the elements $\mathcal{W}_{rs}$, introduced in \cite{MR3269184}*{Lemma 7.7}, disappear in the quotient for all $r\in I\setminus X$ and $s\in X$. Indeed, we have again by definition that $\mathcal{W}_{rs}$ is a linear combination of elements of the form $E_{s_1}\ldots E_{s_p}K_{\gamma}$  for $\gamma\in \Z X$ variable and some fixed $p$. This does not vanish in the quotient only if $p = 0$, which is the case precisely when $w_X(\alpha_r) = \alpha_r + \alpha_s$. But this situation does not occur, cf. the proof of \cite{MR3269184}*{Lemma 5.11, Step 1}.

The remainder of the relations are now obvious from \cite{MR3269184}*{Theorems 7.1, 7.4 and~7.8}. Indeed, the relations for $r\in I\setminus X$ and $s\in X$ all vanish, while the relations for $r,s\in I\setminus X$ follow from the observation that
\[
\sum_{n} (-1)^n \qbin{m}{n}{q} = (1-q^{m-1})(1-q^{m-3})\ldots (1-q^{-m+1}).
\]

If we now demand that also the $*$-structure passes to the quotient, then, as follows from the proof of Theorem~\ref{thm:Star-invariance}, the element $B_r^*$ for $r\notin J$ coincides up to a scalar factor with
\[
K_r^{-1}\Ad_q(S(Z_{\tau(r)}^+)^*)(K_rB_{\tau(r)}K_{\alpha_{\tau(r)}-\alpha_r}),
\]
which shows that necessarily $\overline{B}_r = 0$. On the other hand, for $r\in J$ it follows immediately that
\[
\overline{B}_r^*  = -c_{\tau(r)}^{-1}q^{-(\alpha_r,\alpha_{\tau(r)})}\overline{K}_{\alpha_{\tau(r)}-\alpha_r}\overline{B}_{\tau(r)}.
\]
This completes the proof.
\end{proof}

We can now determine for each Hermitian symmetric space a family of $*$-characters as follows.

\begin{Cor}\label{CorDetChar}
On each $U_q^{\mbt}(\mfk)$ associated to a Hermitian symmetric space, one has a one-parameter family of $*$-characters $\chi_{t}$, $t \in \R$, determined as follows:
\begin{itemize}
\item For S-type: $\chi_t(K_{\omega})=1$ for all $\omega \in P^{\Theta}$ and $\chi_{t}(B_r)=0$ for all $r$ except for the distinguished vertex~$p$ where $\chi_t(B_p) = it$.
\item For C-type: $\chi_t(B_r)=0$ for all $r$ and $\chi_t(K_{\omega})= q^{f(\omega)}$ for all $\omega\in P^{\Theta}$, where $f$ is the unique linear functional on $\mfh^*$ which vanishes on the roots in $X$ and is such that $f(\alpha_r) =0$ for all $r\in I \setminus X$ except for the distinguished vertex $p$ where $f(\alpha_p)=t$.
\end{itemize}
\end{Cor}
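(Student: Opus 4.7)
By the preceding lemma, $*$-characters on $U_q^{\mbt}(\mfk)$ correspond bijectively to $*$-characters on the maximal commutative $*$-quotient $D_{\mbt}$, whose generators and defining relations have been spelled out explicitly. The plan is thus to define $\chi_t$ on generators in each case and verify each listed relation.

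For the S-type case, set $\chi_t(K_\omega) = 1$ for all Cartan generators, $\chi_t(B_p) = it$ at the distinguished vertex, and $\chi_t(B_r) = 0$ for $r \ne p$. Since $p \in I_{\mcS} \subseteq J$ with $\tau(p)=p$, one has $\Theta(\alpha_p) = -\alpha_p$; combined with $c_p > 0$, Theorem~\ref{TheoStarInv} forces $c_p = q^{-(\alpha_p,\alpha_p)}$, and substituting into \eqref{eq:comm3} at $r=p$ yields $\overline{it} = -c_p^{-1}q^{-(\alpha_p,\alpha_p)}(it) = -it$, as required. Relation \eqref{eq:comm1} holds at non-distinguished $r \in J$ with $a_{r,\tau(r)} = 0$ because Corollary~\ref{CorSymHerm} together with the $\Theta$- and $w_X$-invariance of the Killing form gives $c_r = c_{\tau(r)}$, while at $p$ itself $a_{pp} = 2 \ne 0$ so the relation does not apply. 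Relation \eqref{eq:comm2} either has both sides zero (when neither index equals $p$) or is blocked by the defining property of $I_{\mcS}$, namely $a_{sp}\in 2\Z$ for all $s\in I_{\ns}$, which rules out precisely the adjacencies that would yield a nonzero right-hand side.

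For the C-type case, set $\chi_t(B_r) = 0$ for all $r \in I\setminus X$, $\chi_t(K_r) = 1$ for $r \in X$, and $\chi_t(K_\omega) = q^{f(\omega)}$ with $f$ as specified. Relations \eqref{eq:comm2} and \eqref{eq:comm3} are then trivially satisfied. At non-distinguished pairs $\{r, \tau(r)\}$ in $J$ with $a_{r,\tau(r)} = 0$, relation \eqref{eq:comm1} holds since $c_r = c_{\tau(r)}$ by Corollary~\ref{CorSymHerm} and $f$ vanishes on $\alpha_{\tau(r)} - \alpha_r$ by construction. At the distinguished orbit $\{p, \tau(p)\}$, a case-by-case inspection of the C-type Hermitian pairs listed after Lemma~\ref{LemSymHerm} shows that either $\alpha_p$ and $\alpha_{\tau(p)}$ are adjacent, so that $a_{p,\tau(p)} \ne 0$ and \eqref{eq:comm1} does not apply, or $\alpha_p$ is adjacent to a vertex of $X$, so that $\alpha_p \notin J$ and the $\delta_{p \in J}$ factor annihilates the relation; in either case $t$ is unobstructed. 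The multiplicative Cartan relations hold since $f$ is linear.

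The main technical obstacle is carrying out the case-by-case analysis at the distinguished vertex in both types. This is resolved by the structural rigidity of the Satake diagrams for Hermitian symmetric pairs---the precise position of the distinguished vertex relative to $X$, the $\tau$-action, and the combinatorial conditions defining $I_{\mcS}$ and $I_{\mcC}$---which is tailored exactly so that the relations of $D_{\mbt}$ neither over-determine nor obstruct the prescribed one-parameter families of characters.
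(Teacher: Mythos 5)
Your proposal is correct and follows essentially the same route as the paper: both verify the prescribed assignments against the explicit relations \eqref{eq:comm1}--\eqref{eq:comm3} of the commutative $*$-quotient from the preceding lemma, using the defining condition of $I_{\mcS}$ to neutralize \eqref{eq:comm2} in the S-type case and the vanishing of all $\overline{B}_r$ to trivialize \eqref{eq:comm2}--\eqref{eq:comm3} in the C-type case. The only substantive difference is that at the distinguished $\tau$-orbit in the C-type case the paper replaces your case-by-case inspection by a uniform observation ($r\in J$ together with $a_{r,\tau(r)}=0$ forces $(\alpha_r,\Theta(\alpha_r))=0$, i.e.\ $r\in I_{\mcC}$, so the non-$I_{\mcC}$ orbit is automatically exempt from \eqref{eq:comm1}), but your enumeration of the three C-type families reaches the same conclusion.
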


\bp
Note that a vertex $r\in I\setminus X$ satisfies both $r\in J$ and $\Theta(\alpha_r)=-\alpha_r$ precisely when $r\in I_\ns$. It follows that every $*$-character $\chi$ of  $U_q^{\mbt}(\mfk)$ vanishes on all generators of  $U_q^{\mbt}(\mfk)$ except possibly for $K_\omega$, with $\omega\in P^\Theta$, and $B_r$, with $r\in I_\ns$. We will consider only characters such that $\chi(K_\omega)=q^{f(\omega)}$ for a linear functional~$f$ which is real on $P^\Theta$ and vanishes on $X$. The values of such an $f$ on $P^\Theta$ are determined by $f(\alpha_{\tau(r)}-\alpha_r)$ for $r\in I\setminus X$, $\tau(r)\ne r$. These values and the numbers $\chi(B_r)$ ($r\in I_\ns$) must satisfy the relations following from \eqref{eq:comm1}--\eqref{eq:comm3}.

In the S-type case, the distinguished vertex $p$ is the unique element of $I_\mcS$ and we have $c_r=c_{\tau(r)}$ for all $r\in I\setminus X$. It follows that if we let $f=0$ and $\chi(B_r)=0$ for all nondistinguished vertices in $I_\ns$, then the relations \eqref{eq:comm1} and \eqref{eq:comm2} will be satisfied and will impose no restrictions on $\chi(B_p)$ (since by the definition of $I_\mcS$ there is no vertex $r\in I$ such that $a_{rp}=-1$ and $\delta_{r\in J}\delta_{r,\tau(r)}=1$). On the other hand, relation \eqref{eq:comm3} gives $\chi(B_p^*)=-\chi(B_p)$, so $\chi(B_p)$ can be any purely imaginary number.

In the C-type case, if we let $\chi(B_r)=0$ for all $r\in I_\ns$, then the only restrictions on $f$ will be given by \eqref{eq:comm1}. If $r\in J$ and $a_{r,\tau(r)}=0$, then $r\in I_\mcC$, so \eqref{eq:comm1} will be satisfied if we let $f(\alpha_{\tau(r)}-\alpha_r)=0$ for all $r\in I_\mcC$. The complement in $I\setminus X$ of $I_\mcC$ and the vertices fixed by $\tau$ consists precisely of the $\tau$-orbit of the distinguished vertex $p$. Therefore to define $f$ on $P^\Theta$ it remains to define its value on $\alpha_{\tau(p)}-\alpha_p$, and we can take any real number as this value.
\ep

We can now prove the following theorem.

\begin{Theorem}\label{TheoCoidConj}
All $U_q^{\mbt}(\mfk)$ associated to the same symmetric pair are
$\chi$-conjugate for some $*$-cha\-racter~$\chi$.
\end{Theorem}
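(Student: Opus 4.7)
The strategy is to construct, for any $\mbt, \mbt' \in \mcT_{q,c}$ that parametrize the same symmetric pair, an explicit $*$-character of $U_q^{\mbt}(\mfk)$ whose $\chi$-conjugate equals $U_q^{\mbt'}(\mfk)$. First I would reduce to the Hermitian case: if $\mfk \subseteq \mfu$ is not of Hermitian type, then by Corollary~\ref{CorSymHerm} the positivity constraint $c_r > 0$ and condition~\eqref{EqC} together with $s_r = 0$ pin down $\mbt$ uniquely, so $\mbt = \mbt'$ and $\chi = \varepsilon$ (restriction of the counit) does the job.

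For an irreducible Hermitian symmetric pair (S-type or C-type), Corollary~\ref{CorSymHerm} shows that $\mcT_{q,c}$ is one-dimensional, parametrized either by the imaginary scalar $s_p$ at the distinguished vertex $p$ (S-type) or by the real scalar $\lambda$ governing the ratio $c_p/c_{\tau(p)}$ at the distinguished $\tau$-orbit (C-type). Correspondingly, Corollary~\ref{CorDetChar} already provides a matching one-parameter family $\{\chi_t\}_{t\in\R}$ of $*$-characters of $U_q^{\mbt}(\mfk)$: only $\chi_t(B_p) = it$ is nontrivial in the S-type case; only $\chi_t(K_\omega) = q^{f(\omega)}$ is nontrivial in the C-type case. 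I would then study the $*$-homomorphism
\[
\phi_t \;=\; (\chi_t \otimes \id) \circ \Delta\bigr|_{U_q^{\mbt}(\mfk)}\colon U_q^{\mbt}(\mfk) \longrightarrow U_q(\mfu),
\]
whose image is by construction a right coideal $*$-subalgebra of $U_q(\mfu)$.

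The core of the argument is to compute $\phi_t$ on the generators of $U_q^{\mbt}(\mfk)$. For $F_r$ with $r\in X$ and $K_\omega$ with $\omega\in P^\Theta$, the primitive-type coproduct formulas together with the vanishing of $\chi_t$ on all $F_r$ in $X$ yield $\phi_t(F_r) = F_r$ and $\phi_t(K_\omega) = \chi_t(K_\omega)K_\omega$; the latter is a \emph{positive} rescaling and can be absorbed by a positive Cartan conjugation exactly as in \cite{MR3269184}*{Proposition~9.2}. The heart of the proof is $\phi_t(B_r)$ for $r\in I\setminus X$: starting from $B_r = F_r + c_r\theta_q(F_rK_r)K_r^{-1} + s_rK_r^{-1}$ and the coideal decomposition of $\Delta(B_r)$ in $U_q^{\mbt}(\mfk)\otimes U_q(\mfg)$, the action of $\chi_t$ either adds a constant multiple of $itK_p^{-1}$ (S-type, shifting $s_p \mapsto s_p + \text{const}\cdot it$) or multiplies the $c$-coefficients at $p$ and $\tau(p)$ by reciprocal positive scalars (C-type, shifting the distinguished $\lambda$), while leaving the remaining parameters unaffected. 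Letting $t$ range over $\R$ then exhausts the full one-dimensional parameter space, giving the required $\mbt'$.

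The main obstacle is the explicit evaluation of $\Delta(\theta_q(F_rK_r)K_r^{-1})$, since $\theta_q$ involves the Lusztig braid operator $T_{w_X}$ and the full coproduct produces many terms living modulo $U_q^{\mbt}(\mfk)\otimes U_q(\mfb^+)$ whose bookkeeping is delicate. I would circumvent this by invoking the Letzter--Kolb uniqueness theorem: any right coideal $*$-subalgebra of $U_q(\mfu)$ that contains $U_q(\mfg_X)\,U_q(\mfh^\theta)$ and has, for each $r\in I\setminus X$, a generator of the form $F_r$ plus a term in the opposite Borel is automatically of the form $U_q^{\mbt'}(\mfk)$. Since the image of $\phi_t$ manifestly satisfies these structural constraints, only the matching of the distinguished scalar of $\mbt'$ to $t$ remains, and this can be carried out by evaluating $\phi_t(B_p)$ against a single highest-weight vector in an admissible representation. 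Combined with the absorption of the Cartan rescaling, this completes the identification $(\chi_t\otimes\id)\Delta(U_q^{\mbt}(\mfk)) = U_q^{\mbt'}(\mfk)$.
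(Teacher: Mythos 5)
Your overall strategy -- reduce to the irreducible Hermitian case via Corollary~\ref{CorSymHerm}, take the one-parameter family of $*$-characters $\chi_t$ from Corollary~\ref{CorDetChar}, and show that $(\chi_t\otimes\id)\Delta$ sweeps out the full parameter space -- is exactly the paper's. The computations you record for $U_q(\mfg_X)$, for $K_\omega$ with $\omega\in P^\Theta$, and the qualitative effect on $B_r$ (shifting $s_p$ by $it$ in S-type, rescaling $c_p,c_{\tau(p)}$ by reciprocal powers of $q$ in C-type) are also correct. The problem is the step you flag as "the main obstacle" and then sidestep.

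The obstacle is illusory, and your workaround does not hold up. You never need $\Delta(\theta_q(F_rK_r)K_r^{-1})$ in full: the known coideal structure gives
\[
\Delta(B_{r}) - B_{r}\otimes K_{r}^{-1} - 1\otimes F_{r} \in U_q(\mfg_X)^+ K_{\tau(r)}K_{r}^{-1}\otimes U_q(\mfg),
\]
and since $\chi_t$ restricts to (a scalar times) the counit on $U_q(\mfg_X)^+K_{\tau(r)}K_r^{-1}$, all the delicate terms are annihilated by $\chi_t\otimes\id$; what survives is the one-line formula $(\chi_t\otimes\id)\Delta(B_r)=B_r+(\chi_t(B_r)-s_r)K_r^{-1}$ in S-type, and $(\chi_t\otimes\id)\Delta(B_r)=q^{f(\alpha_{\tau(r)}-\alpha_r)}B_r+(1-q^{f(\alpha_{\tau(r)}-\alpha_r)})F_r$ in C-type, from which the new parameters are read off directly. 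By contrast, the "Letzter--Kolb uniqueness theorem" you invoke is not available in the form you state: the classification results characterize the coideals $B_{\mbc,\mbs}$ among coideal subalgebras satisfying several precise hypotheses (maximality/specialization conditions), not merely "contains $U_q(\mfg_X)U_q(\mfh^\theta)$ and has generators of the shape $F_r+(\text{something in }U_q(\mfb^+))$"; and even granting such a statement you would still have to verify that the resulting $\mbt'$ lies in $\mcT_{q,c}$ (positivity of $c_r$, $s_r\in i\R$, condition~\eqref{EqC}) and identify it quantitatively, which your "evaluate against a highest-weight vector" remark does not accomplish for the C-type case, where the parameter sits in the $c$-coefficients rather than in a scalar term. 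Replace the appeal to classification by the direct computation above and the proof closes.
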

\begin{proof}
We can of course restrict to the Hermitian symmetric pairs. Assume first that we are in the S-type case, with distinguished vertex $p$. Then if $\chi_t$ is one of the $*$-characters on $U_q^{\mbt}(\mfk)$ as determined above, it follows from
\[
\Delta(B_{r}) - B_{r}\otimes K_{r}^{-1} - 1\otimes F_{r} \in U_q(\mfg_X)^+ K_{\tau(r)}K_{r}^{-1}\otimes U_q(\mfg)
\]
and the fact that $\chi_t$ acts as the counit on $ U_q(\mfg_X)^+  U_q(\mfh^\theta)$ that
\[
(\chi_t \otimes \id)\Delta(B_{r}) = B_{r} + (\chi(B_{r})-s_{r})K_{r}^{-1}.
\]
But this is precisely adding the value $(it-s_{p})K_p^{-1}$ to $B_p$ at the distinguished vector, and doing nothing for the $B_{r}$ with $r\neq p$. Also on $U_q(\mfg_X)U_q(\mfh^\theta)$ the map $(\chi_t \otimes \id)\Delta$ acts as the identity. Hence $$(\chi_t\otimes \id)(\Delta(U_q^{(\mbc,\mbs)}(\mfk))) = U_q^{(\mbc,\mbs')}(\mfk),$$ where $s_i' = it$ at the distinguished vector and zero elsewhere. It follows that we obtain all coideal $*$-subalgebras corresponding to the same symmetric pair.

Assume now that we are in C-type. Then the same reasoning (using that all $s_r= 0$ in this case and that $\chi_t$ acts as $q^{f(\alpha_{\tau(r)}-\alpha_r)}$ times the counit on $U_q(\mfg_X)^+K_{\tau(r)}K_r^{-1}$) gives that
\[
(\chi_t\otimes \id)\Delta(B_r) = q^{f(\alpha_{\tau(r)}-\alpha_r)}B_r +(1-  q^{f(\alpha_{\tau(r)}-\alpha_r)} )F_r,\quad (\chi_t\otimes \id)(\Delta(K_{\alpha})) = q^{f(\alpha)}K_{\alpha}.
\]
Since all $c_r$ are necessarily positive, this implies that we get $U_q^{(\mbc',\mathbf{0})}(\mfk)$ where $c_p' = q^{-t}c_p$, $c_{\tau(p)}' = q^{t}c_{\tau(p)}$, and all other $c_r = c_r' = 1$. Again, it follows that we obtain all coideal $*$-subalgebras corresponding to the same symmetric pair.
\end{proof}

Define $c_r = q^{\frac{1}{2}(\Theta(\alpha_r)-\alpha_r,\alpha_{\tau(r)})}$ and $s_r = 0$, so that $U_q^0(\mfk) = U_q^{(\mbc,\mbs)}(\mfk)$. Let us write $U_q^t(\mfk) = U_q^{(\mbc(t),\mbs(t))}(\mfk)$, where
\begin{itemize}
\item $\mbc(t) = \mbc$, $\mbs(t)_r = \delta_{r,p} it$ for $p$ distinguished,
\item  $\mbc(t)_r = c_r$ for $r$ or $\tau(r)$ not distinguished, $\mbc(t)_r = q^{-t}c_r, \mbc(t)_{\tau(r)} = q^{t}c_r$ for $r$ distinguished, and $\mbs(t)_r= 0$ for all $r$.
\end{itemize}

\begin{Cor} The maps
\[
\pi_t\colon U_q^0(\mfk) \rightarrow U_q^t(\mfk),\quad X \mapsto (\chi_t\otimes \id)\Delta(X)
\]
are $*$-isomorphisms satisfying
\[
\Delta \circ \pi_t = (\pi_t\otimes \id)\circ \Delta.
\]
\end{Cor}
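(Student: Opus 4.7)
The plan is to verify the three assertions in order---the coproduct intertwining, the $*$-homomorphism property, and bijectivity onto $U_q^t(\mfk)$---leaning heavily on the computations already carried out in the proof of Theorem~\ref{TheoCoidConj}.

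The first two claims are formal. Coassociativity of $\Delta$ together with the right-coideal property $\Delta(U_q^0(\mfk))\subseteq U_q^0(\mfk)\otimes U_q(\mfu)$ gives, in Sweedler notation,
\[
\Delta(\pi_t(X))=\chi_t(X_{(1)})X_{(2)}\otimes X_{(3)}=\pi_t(X_{(1)})\otimes X_{(2)},
\]
proving $\Delta\circ\pi_t=(\pi_t\otimes\id)\circ\Delta$. Since $\chi_t$ is a $*$-character (an algebra homomorphism with $\chi_t(X^*)=\overline{\chi_t(X)}$) and $\Delta$ is a $*$-coalgebra map, applying $\chi_t$ leg-by-leg to $\Delta(XY)$ and $\Delta(X^*)$ will show that $\pi_t$ is a unital $*$-homomorphism.

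For the identification of the image with $U_q^t(\mfk)$, I will reuse the computations from the proof of Theorem~\ref{TheoCoidConj}. There it is shown that $\pi_t$ acts as the identity on $U_q(\mfg_X)$ and, in the S-type case, fixes $U_q(\mfh^\theta)$ and sends $B_r\mapsto B_r+it\,\delta_{r,p}K_r^{-1}$; in the C-type case it rescales $K_\omega\mapsto q^{f(\omega)}K_\omega$ by a positive factor and sends $B_r\mapsto F_r+q^{f(\alpha_{\tau(r)}-\alpha_r)}c_r\,\theta_q(F_rK_r)K_r^{-1}$. In each case the images are precisely the distinguished generators of $U_q^t(\mfk)$ corresponding to the parameters $(\mbc(t),\mbs(t))$ displayed in the statement, so $\pi_t(U_q^0(\mfk))=U_q^t(\mfk)$ and $\pi_t(K_\omega)$ is a positive multiple of $K_\omega$, as required for the equivalence of module C$^*$-categories in the preceding theorem.

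Bijectivity will follow by constructing an explicit inverse. Applying Corollary~\ref{CorDetChar} to $U_q^t(\mfk)$ produces a one-parameter family of $*$-characters on that coideal; let $\chi'$ denote the one of parameter $-t$. Setting $\pi'(Y)=(\chi'\otimes\id)\Delta(Y)$, coassociativity yields $\pi'\circ\pi_t=(\chi'\ast\chi_t\otimes\id)\Delta$, so the task reduces to checking that the convolution $\chi'\ast\chi_t$ agrees with the counit of $U_q^0(\mfk)$ on each generator. On $U_q(\mfg_X)$ and the Cartan both sides are trivial (or reduce to $q^{f(\omega)-f(\omega)}=1$); the main obstacle will be verifying the cancellation on the $B_r$ generators in the C-type case, since $\pi_t(B_r)$ is a genuine linear combination rather than a simple rescaling. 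One must expand $\Delta(\pi_t(B_r))\in U_q^t(\mfk)\otimes U_q(\mfu)$ using the Letzter--Kolb coproduct formula $\Delta(B^t_r)=B^t_r\otimes K_r^{-1}+1\otimes F_r+(\text{terms vanishing under }\chi')$ of~\cite{MR3269184} and collect the scalars to confirm the cancellation. Once this step is carried out, $\pi'$ is a two-sided inverse of $\pi_t$ and the corollary follows.
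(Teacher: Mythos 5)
Your proposal is essentially the paper's (implicit) argument: the corollary is stated without proof precisely because everything is already contained in the proof of Theorem~\ref{TheoCoidConj}, and your treatment of the coideal-intertwining and $*$-homomorphism properties is the standard formal computation. Two remarks on the bijectivity step. First, the ``main obstacle'' you defer is not actually an obstacle: the composite $\pi'\circ\pi_t$ is an algebra homomorphism, and the displayed formulas in the proof of Theorem~\ref{TheoCoidConj} already tell you exactly what $\pi_t$ and $\pi'$ do to each generator ($B_r\mapsto B_r+(\chi(B_r)-s_r)K_r^{-1}$ in the S-type case, $B_r\mapsto q^{f(\alpha_{\tau(r)}-\alpha_r)}B_r+(1-q^{f(\alpha_{\tau(r)}-\alpha_r)})F_r$ and $K_\omega\mapsto q^{f(\omega)}K_\omega$ in the C-type case, identity on $U_q(\mfg_X)U_q(\mfh^\theta)$); applying them twice and checking that every generator of $U_q^0(\mfk)$ is fixed is a one-line substitution, and an algebra homomorphism fixing a generating set is the identity. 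No fresh expansion of the Letzter--Kolb coproduct is needed. Second, be careful with the parameter of the inverting character: under the normalization of Corollary~\ref{CorDetChar} the S-type formula shows that conjugating $U_q^{\mbt}(\mfk)$ by $\chi_s$ always produces the coideal with $s_p=is$ \emph{irrespective of the starting parameter}, so the correct inverse for $\pi_t$ in the S-type case is $(\chi_0\otimes\id)\Delta$ on $U_q^t(\mfk)$, not $\chi_{-t}$ (your choice of $-t$ is correct only in the C-type case, where the action on $c_p$ is multiplicative and hence additive in $t$). With these two adjustments your argument is complete.
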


\section{Cylinder twist}

The notion of \emph{cylinder twist} and \emph{cylinder braiding} were introduced in \cite{MR1644317}. In this section, we clarify the connection between cylinder twists and ribbon braided module categories, complementing the algebraic discussion of the connection in \cite{arXiv:1705.04238}*{Section 2.3}.

Let $(\mcC, \beta)$ be a braided tensor category, $\sigma$ a braided autoequivalence, and $\mcD$ a right $\mcC$-module category.  For the sake of simplicity we assume that $\mcC$ is strict and $\mcD$ is a strict module category, although this is not an essential restriction.

\begin{Def}
We call \emph{cylinder $\sigma$-twist} $\theta$ at $X\in \mcD$ any natural isomorphism $\theta: X\odot \sigma(-) \rightarrow X\odot -$ of functors $\mcC\rightarrow \mcD$ such that $\theta$ satisfies the \emph{cylinder $\sigma$-twist equations}:
\begin{align}
\label{eq:cyl-tw-eq}
\theta_{U \otimes V} (X \odot (\sigma_2)_{U,V}) &= (X \odot \beta_{V,U}) (\theta_{V} \otimes U) (X \odot \beta_{U,\sigma(V)}) (\theta_{U} \otimes \sigma(V))  \\
\label{eq:cyl-tw-eq-2}
&= (\theta_{U} \otimes V) (X \odot \beta_{V,\sigma(U)}) (\theta_{V} \otimes \sigma(U)) (X \odot \beta_{\sigma(U),\sigma(V)}).
\end{align}
\end{Def}

For example, if $\mcD$ is a ribbon $\sigma$-braided $\mcC$-module category with $\sigma$-braid $\eta$, then $\eta_{X,-}$ is a cylinder $\sigma$-twist at $X$ for each $X\in \mcD$, using the ribbon $\sigma$-twist equation and naturality on $X \odot U$.

These equalities can be diagrammatically represented by Figure
\ref{fig:cyl-twist-eq}.
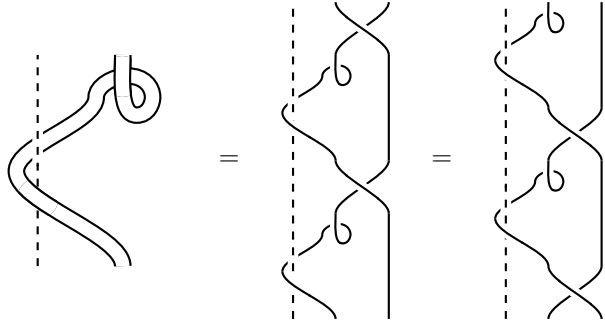
\begin{figure}[h]
\leavevmode
\begin{tikzpicture}[]
\path (0,0) (0,7); 
\begin{scope}[shift={(0,1)},scale=2]
\begin{knot}[
] \sribtwbr[only when rendering/.style={
    double distance=5pt,
  }]{0}
\strand[only when rendering/.style={dashed}] (0.2,0) -- (0.2,2);
\flipcrossings{2}
\end{knot}
\end{scope}
\draw node at (4,3) {$=$};
\begin{scope}[shift={(5,0)}]
\begin{knot}[
] \sribtwbr{0} \braidgen{1}{2} \braidgen{1}{5} \sribtwbr{3}
\strand[] (2,0) -- (2,2); \strand[] (2,3) -- (2,5); \strand[only
when rendering/.style={dashed}] (0.2,0) -- (0.2,6);
\flipcrossings{2,4,5,7}
\end{knot}
\end{scope}
\draw node at (8,3) {$=$};
\begin{scope}[shift={(9,0)}]
\begin{knot}[
] \sribtwbr{1} \braidgen{1}{0} \braidgen{1}{3} \sribtwbr{4}
\strand[] (2,1) -- (2,3); \strand[] (2,4) -- (2,6); \strand[only
when rendering/.style={dashed}] (0.2,0) -- (0.2,6);
\flipcrossings{2,4,5,7}
\end{knot}
\end{scope}
\end{tikzpicture}
\caption{cylinder $\sigma$-twist equation} \label{fig:cyl-twist-eq}
\end{figure}

Let us show how one can construct a ribbon $\sigma$-braided module category from  a cylinder $\sigma$-twist $\theta$ at $X$ on a right $\mcC$-module category  $\mcD$. Put
$$
\mcC_X(U, V) = \mcD(X \odot U, X \odot V)
$$
for objects $U, V$ of $\mcC$. This defines a right $\mcC$-module category $\mcC_X$ with the same objects as $\mcC$, and contains $\mcC$ as a subcategory. Moreover, $U \mapsto X \odot U$ together with the natural inclusion of morphisms define a functor of module categories from $\mcC_X$ to $\mcD_X$. The only deficit of $\mcC_X$ is that it is not subobject complete, but this can always be amended by passing to the subobject completion.

Put
$$
\eta_{U, V} = (X \odot \beta_{V, U}) (\theta_{V} \odot U) (X \odot
\beta_{U, \sigma(V)}) \colon X \odot U \otimes \sigma(V) \to X \odot
U \otimes V,
$$
so $\eta_{U,V} \in \mcC_X(U\otimes \sigma(V),U\otimes V)$. Diagrammatically, we represent $\theta_{V}$ as Figure
\ref{fig:cyl-tw}, and $\eta_{U,V}$ as Figure
\ref{fig:rib-tw-br-from-cyl-tw}.

\begin{figure}[h]
\begin{subfigure}[b]{0.4\textwidth}
\centering
\leavevmode
\begin{tikzpicture}
\begin{knot}[
] \varsribtwbr{0} \strand[only when rendering/.style={dashed}]
(0.2,0) -- (0.2,2); \flipcrossings{1,3}
\end{knot}
\end{tikzpicture}
\caption{cylinder $\sigma$-twist} \label{fig:cyl-tw}
\end{subfigure}
\begin{subfigure}[b]{0.4\textwidth}
\centering
\leavevmode
\begin{tikzpicture}
\begin{scope}[shift={(0,1)}]
\begin{knot}[
] \sribtwbr{0} \strand[] (0.4,0) -- (0.4,2); \strand[only when
rendering/.style={dashed}] (0.2,0) -- (0.2,2); \flipcrossings{2,3}
\end{knot}
\end{scope}
\node at (2,2) {$=$};
\begin{scope}[shift={(3,0)}]
\begin{knot}[
] \braidgen{1}{0} \varsribtwbr{1} \braidgen{1}{3} \strand[only when
rendering/.style={dashed}] (0.2,0) -- (0.2,4); \strand[] (2,1) --
(2,3); \flipcrossings{1,2,4,5}
\end{knot}
\end{scope}
\end{tikzpicture}
\caption{ribbon $\sigma$-braid from cylinder $\sigma$-twist}
\label{fig:rib-tw-br-from-cyl-tw}
\end{subfigure}
\caption{}
\end{figure}

\begin{Prop}
The family $\eta_{U, V}$ above satisfies the $\sigma$-octagon equation
and the ribbon $\sigma$-twist equation.
\end{Prop}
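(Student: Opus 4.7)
My plan is to verify both identities by expanding $\eta_{U,V}$ via its definition and simplifying, using three ingredients: the hexagon axioms for $\beta$ in $\mcC$, bifunctoriality of the module action $\odot$, and (for the ribbon twist only) the cylinder $\sigma$-twist equation \eqref{eq:cyl-tw-eq}. The graphical presentations in Figures~\ref{fig:cyl-tw} and~\ref{fig:rib-tw-br-from-cyl-tw} make the combinatorics transparent: $\eta_{U,V}$ is a strand $V$ wrapping once around the cylinder after passing over the strand $U$, and the two target identities express planar-isotopy relations among such wrappings.

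The $\sigma$-octagon is the easier of the two. Expanding $\eta_{U\otimes W, V}$ via the definition produces the composite $(X\odot\beta_{V, U\otimes W})(\theta_V\odot(U\otimes W))(X\odot\beta_{U\otimes W, \sigma(V)})$. Applying the hexagon identities $\beta_{V, U\otimes W}=(U\otimes\beta_{V,W})(\beta_{V,U}\otimes W)$ and $\beta_{U\otimes W, \sigma(V)}=(\beta_{U,\sigma(V)}\otimes W)(U\otimes\beta_{W,\sigma(V)})$, and invoking strict associativity of $\odot$ to identify $(\theta_V\odot U)\odot W$ with $\theta_V\odot(U\otimes W)$, the five resulting factors re-group as $(X\odot(U\otimes\beta_{V,W}))(\eta_{U,V}\odot W)(X\odot(U\otimes\beta_{W,\sigma(V)}))$, which is the $\sigma$-octagon's right-hand side; the cylinder twist equation plays no role here.

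For the ribbon $\sigma$-twist equation $\eta_{W, U\otimes V}=\eta_{W\otimes U,V}\circ(\eta_{W,U}\odot\sigma(V))$ I would expand both sides via the definition, and on the LHS additionally apply the cylinder $\sigma$-twist equation to decompose $\theta_{U\otimes V}$ into $\theta_V$ and $\theta_U$ separated by braidings. Both sides then contain exactly one copy of $\theta_U$ and one of $\theta_V$, but attached to strands in different orders; bifunctoriality of $\odot$ (the identity $(\theta_A\odot D')\circ(X\odot(\sigma(A)\otimes g))=(X\odot(A\otimes g))\circ(\theta_A\odot D)$ for a morphism $g\colon D\to D'$ in $\mcC$) is then invoked to conjugate each $\theta$ on the RHS by an appropriate braiding and align the configurations. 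After alignment, equality reduces to three identities in $\mcC$: the hexagon expansion of $\beta_{W,\sigma(U)\otimes\sigma(V)}$ matches the outer right, the Yang--Baxter equation (a consequence of the two hexagons) matches the outer left, and a further Yang--Baxter cancellation reduces the remaining five-braid word in the middle to $\beta_{U,\sigma(V)}\otimes W$. The main obstacle is choosing the correct direction of each bifunctoriality substitution --- taking $g=\beta_{U,W}$ rather than $\beta_{W,U}$ for $\theta_V$, and $g=\beta_{W,\sigma(V)}$ rather than $\beta_{\sigma(V),W}$ for $\theta_U$ --- as the opposite choices produce $\beta^{\pm 1}$ pairs that do not cancel and would incorrectly require the symmetry $\beta_{A,B}=\beta_{B,A}^{-1}$.
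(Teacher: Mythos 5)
Your argument is correct and is essentially the paper's own proof, which carries out exactly these steps diagrammatically: the octagon follows from the multiplicativity (hexagon) of $\beta$ alone, and the ribbon twist from the cylinder $\sigma$-twist equation together with naturality of $\odot$ in the $\mcC$-variable and braid-group (Yang--Baxter) identities. The only cosmetic difference is that the paper recognizes the already-proven $\sigma$-octagon relation at the final step instead of expanding $\eta_{W\otimes U,V}$ from the definition, but since the octagon was itself proved by that expansion, the two computations coincide.
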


\begin{proof}
The $\sigma$-octagon relation can be verified as in Figure \ref{fig:oct-rel-for-zeta}, where we used the multiplicativity of $\beta$ such as $\beta_{U \otimes V, W} = (\beta_{U,W} \otimes V) (U \otimes \beta_{V,W})$. For the ribbon $\sigma$-twist relation we manipulate as in Figures \ref{fig:ribbon-tw-eq-for-zeta-1} and \ref{fig:ribbon-tw-eq-for-zeta-2}. The dotted boxes indicate use of the braid relation, naturality at $U$ in  $Y \odot U$, and the $\sigma$-octagon relation (at the final step).
\end{proof}

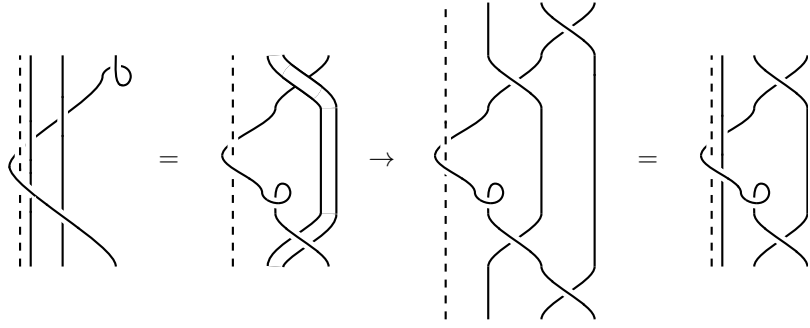
\begin{figure}
\leavevmode\beginpgfgraphicnamed{bkz_conj_fig11}
\begin{tikzpicture}
\begin{scope}[shift={(0,1)}]
\begin{knot}[
] \cribtwbr{2}{0} \strand (0.4,0) -- (0.4,4); \strand[only when
rendering/.style={dashed}] (0.2,0) -- (0.2,4); \strand[] (1,0) --
(1,4); \flipcrossings{2,3,4,5}
\end{knot}
\end{scope}
\draw node at (3,3) {$=$};
\begin{scope}[shift={(4,1)}]
\begin{knot}[
] \braidtwocldbl[][5pt]{1}{0} \varsribtwbr{1}
\braidtencldbl[][5pt]{1}{3} \strand[only when rendering/.style={
    double distance=5pt,
  }] (2,1) -- (2,3);
\strand[only when rendering/.style={dashed}] (0.2,0) -- (0.2,4);
\flipcrossings{1,2,4,5}
\end{knot}
\end{scope}
\draw node at (7,3) {$\to$};
\begin{scope}[shift={(8,0)}]
\begin{knot}[
] \braidgen{2}{0} \strand (1, 0) -- (1, 1); \braidgen{1}{1} \strand
(3, 1) -- (3, 5); \varsribtwbr{2} \strand (2, 2) -- (2, 4);
\braidgen{1}{4} \braidgen{2}{5} \strand (1, 5) -- (1, 6);
\strand[only when rendering/.style={dashed}] (0.2,0) -- (0.2,6);
\flipcrossings{1,2,4,6,7,8}
\end{knot}
\end{scope}
\draw node at (12,3) {$=$};
\begin{scope}[shift={(13,1)}]
\begin{knot}[
] \braidgen{1}{0} \varsribtwbr{1} \braidgen{1}{3} \strand[] (2,1) --
(2,3); \strand (0.4,0) -- (0.4,4); \strand[only when
rendering/.style={dashed}] (0.2,0) -- (0.2,4);
\flipcrossings{1,2,5,6,7}
\end{knot}
\end{scope}
\end{tikzpicture}
\endpgfgraphicnamed
\caption{$\sigma$-octagon relation} \label{fig:oct-rel-for-zeta}
\end{figure}

\begin{figure}
\leavevmode\beginpgfgraphicnamed{bkz_conj_fig12}
\begin{tikzpicture}
\begin{scope}[scale=2,shift={(0,0.7)}]
\begin{knot}[
] \sribtwbr[only when rendering/.style={
    double distance=5pt,
  }]{0}
\strand (0.4,0) -- (0.4,2); \strand[only when
rendering/.style={dashed}] (0.2,0) -- (0.2,2); \flipcrossings{2,3}
\end{knot}
\end{scope}
\node at (3,3.5) {$=$};
\begin{scope}[scale=2,shift={(2,0)}]
\begin{knot}[
  y=0.6cm,
] \braidtencldbl[][5pt]{1}{0} \varsribtwbr[only when
rendering/.style={
    double distance=5pt,
  }]{1}
\strand (2, 1) -- (2, 3); \braidtwocldbl[][5pt]{1}{3} \strand[only
when rendering/.style={dashed}] (0.2,0) -- (0.2,4);
\flipcrossings{1,2,4,5}
\end{knot}
\end{scope}
\draw node at (9,3.5) {$\to$};
\begin{scope}[shift={(10,0)}]
\begin{knot}[
  y=0.5cm,
  clip radius=3pt,
] \braidgen{1}{0} \strand (3,0) -- (3,1); \varsribtwbr{2}
\braidgen{2}{1} \strand (1,1) -- (1,2); \strand (2,2) --
(2,4);\strand (3,2) -- (3,8); \braidgen{1}{4} \varsribtwbr{5}
\strand[] (2,5) -- (2,7); \draw[dotted] (0.9,7) rectangle (3.1,10);
\braidgen{1}{7} \braidgen{2}{8} \strand (1,8) -- (1,9);
\braidgen{1}{9} \strand (3,9) -- (3,10); \strand[only when
rendering/.style={dashed}] (0.2,0) -- (0.2,10);
\flipcrossings{1,2,4,5,7,8,10,11,12,13}
\end{knot}
\end{scope}
\draw node at (14,3.5) {$\to$};
\begin{scope}[shift={(15,0)}]
\begin{knot}[
  y=0.5cm,
  clip radius=3pt,
] \braidgen{1}{0} \strand (3,0) -- (3,1); \varsribtwbr{2}
\braidgen{2}{1} \strand (1,1) -- (1,2); \strand (2,2) --
(2,4);\strand (3,2) -- (3,7); \braidgen{1}{4} \varsribtwbr{5}
\strand[] (2,5) -- (2,7); \braidgen{2}{7} \strand (1,7) -- (1,8);
\braidgen{1}{8} \strand (3,8) -- (3,9); \braidgen{2}{9} \strand
(1,9) -- (1,10); \strand[only when rendering/.style={dashed}]
(0.2,0) -- (0.2,10); \draw[dotted] (1.9,0.9) rectangle (3.1, 2.1);
\draw[dotted,->] (2.5,2.1) -- (2.5, 3); \draw[dotted] (1.9,6.9)
rectangle (3.1, 8.1); \draw[dotted,->] (2.5,6.9) -- (2.5, 6);
\flipcrossings{1,2,4,5,6,7,9,10,11,12}
\end{knot}
\end{scope}
\end{tikzpicture}
\endpgfgraphicnamed
\caption{ribbon $\sigma$-twist equation} \label{fig:ribbon-tw-eq-for-zeta-1}
\end{figure}

\begin{figure}
{\centering
\leavevmode\beginpgfgraphicnamed{bkz_conj_fig13}
\begin{tikzpicture}
\path (0,0) (0,10*5/7+1); 
\begin{scope}
\begin{knot}[
  y=0.5cm,
  clip radius=3pt,
] \braidgen{1}{0} \strand (3,0) -- (3,3); \varsribtwbr{1} \strand
(2,1) -- (2,3); \braidgen{2}{3} \strand (1, 3) -- (1, 4);
\braidgen{1}{4} \strand (3,4) -- (3,5); \braidgen{2}{5} \strand
(3,6) -- (3,9); \strand (1,5) -- (1,6); \draw[dotted] (0.9,3)
rectangle (3.1,6); \varsribtwbr{6} \strand (2,6) -- (2,8);
\braidgen{1}{8} \braidgen{2}{9} \strand (1,9) -- (1,10);
\strand[only when rendering/.style={dashed}] (0.2,0) -- (0.2,10);
\flipcrossings{1,2,3,5,6,7,8,9,10,12,13,14}
\end{knot}
\end{scope}
\node at (4, 3.5) {$\to$};
\begin{scope}[shift={(5,0)}]
\begin{knot}[
  y=0.5cm,
  clip radius=3pt,
] \braidgen{1}{0} \strand (3,0) -- (3,4); \varsribtwbr{1} \strand
(2,1) -- (2,3); \braidgen{1}{3} \braidgen{2}{4} \strand (1,4) --
(1,5); \braidgen{1}{5} \strand (3,5) -- (3,9); \varsribtwbr{6}
\strand (2,6) -- (2,8); \braidgen{1}{8} \braidgen{2}{9} \strand
(1,9) -- (1,10); \strand[only when rendering/.style={dashed}]
(0.2,0) -- (0.2,10);
\draw[dotted] (-0.1,4) rectangle (3.1,10.1);
\flipcrossings{1,2,4,5,6,7,8,10,11,12}
\end{knot}
\end{scope}
\node at (9, 3.5) {$\to$};
\begin{scope}[shift={(10,0.5)}]
\begin{knot}[
] \sribtwbr{0} \strand[] (2,0) -- (2,2); \cribtwbr{2}{2} \strand[]
(1,2) -- (1,6); \strand (0.4,0) -- (0.4,6); \strand[only when
rendering/.style={dashed}] (0.2,0) -- (0.2,6);
\flipcrossings{2,3,4,7,8,9}
\end{knot}
\end{scope}
\end{tikzpicture}
\endpgfgraphicnamed
}
\caption{ribbon $\sigma$-twist equation, cont.}
\label{fig:ribbon-tw-eq-for-zeta-2}
\end{figure}

The $\mcC$-module category $\mcC_X$ will however not necessarily be a ribbon $\sigma$-braided module category, as we do not yet know if $\eta_{U,V}$ is natural in $U$. Therefore, we first restrict the morphisms spaces of $\mcC_X$ as follows:
$$
\tilde{\mcC}_X(U, V) = \{ f \in \mcD(X \odot U, X \odot V) \mid
\forall W\colon (f \odot W) \eta_{U,W} = \eta_{V,W} (f \odot
\sigma(W)) \}
$$
This still defines a right $\mcC$-module
category $\tilde{\mcC}_X$ with $\mcC$ as a
subcategory.

\begin{Prop}
We have $\eta_{U,V} \in \tilde{\mcC}_X(U \otimes \sigma(V), U
\otimes V)$ for any $U$ and $V$.
\end{Prop}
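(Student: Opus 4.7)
The required identity reads, for every $W\in\mcC$,
\[
(\eta_{U,V} \odot W) \circ \eta_{U\otimes\sigma(V), W} = \eta_{U\otimes V, W} \circ (\eta_{U,V} \odot \sigma(W)). \quad (*)
\]
My plan is to show that both sides of $(*)$ coincide with $\eta_{U, V\otimes W}$. The right-hand side is precisely the ribbon $\sigma$-twist equation established in the preceding proposition, applied with the second index $V\otimes W$: in strict form it reads $\eta_{U, V\otimes W}=\eta_{U\otimes V, W}\circ(\eta_{U,V}\odot\sigma(W))$. It therefore suffices to establish the mirror decomposition
\[
\eta_{U, V\otimes W} = (\eta_{U,V} \odot W) \circ \eta_{U\otimes\sigma(V), W}.
\]

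To derive the mirror decomposition, I would expand $\eta_{U, V\otimes W}$ via its defining formula and then substitute the \emph{second} cylinder $\sigma$-twist identity \eqref{eq:cyl-tw-eq-2} for $\theta_{V\otimes W}$, in place of the first one \eqref{eq:cyl-tw-eq} employed in Figures~\ref{fig:ribbon-tw-eq-for-zeta-1}--\ref{fig:ribbon-tw-eq-for-zeta-2}. The remaining manipulations---splitting the compound braiding $\beta_{V\otimes W, U}$ into simple braidings via the hexagon relation, and reorganizing the resulting composite via naturality of $\beta$---are mirror images of those performed in the preceding proposition, and rearrange the resulting string precisely into the two nested factors $(\eta_{U,V}\odot W)\circ\eta_{U\otimes\sigma(V), W}$.

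The conceptual content is that the two cylinder twist identities \eqref{eq:cyl-tw-eq} and \eqref{eq:cyl-tw-eq-2}---imposed simultaneously in the definition of a cylinder $\sigma$-twist---correspond under the passage $\theta\leadsto\eta$ to the two orderings, first twist $V$ and then $W$ versus first twist $W$ and then $V$, in which two adjacent strands can be twisted around $X$. Their forced equality is exactly $(*)$. The only obstacle is the strand bookkeeping inside the pictorial manipulation, which is mechanical once the analogous argument in the previous proposition has been carried out.
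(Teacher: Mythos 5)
Your proposal is correct and is essentially the paper's own argument: the diagrammatic proof in Figures~\ref{fig:naturality-1}--\ref{fig:naturality-2} converts $(\eta_{U,V}\odot W)\,\eta_{U\otimes\sigma(V),W}$ into the cabled twist $\eta_{U,V\otimes W}$ via braid relations, bifunctoriality of $\odot$, and the cylinder $\sigma$-twist equation, and then invokes the ribbon $\sigma$-twist relation from the preceding proposition to identify this with $\eta_{U\otimes V,W}\,(\eta_{U,V}\odot\sigma(W))$. Your observation that the second form \eqref{eq:cyl-tw-eq-2} of the cylinder twist equation is what enters here (the first form having served the ribbon twist relation) correctly captures why both identities are imposed in the definition.
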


\begin{proof}
We need to prove $(\eta_{U,V} \odot W) \eta_{U,W} = \eta_{V,W} (\eta_{U,V} \odot \sigma(W))$ for all $W$. The proof goes as in Figures \ref{fig:naturality-1} and \ref{fig:naturality-2}. This time, the dotted boxes represent use of the braid relation, naturality at $U$ in $Y \odot U$, and the cylinder $\sigma$-twist equation for $\theta$ (at the penultimate step). The last step uses the ribbon $\sigma$-twist relation.
\end{proof}

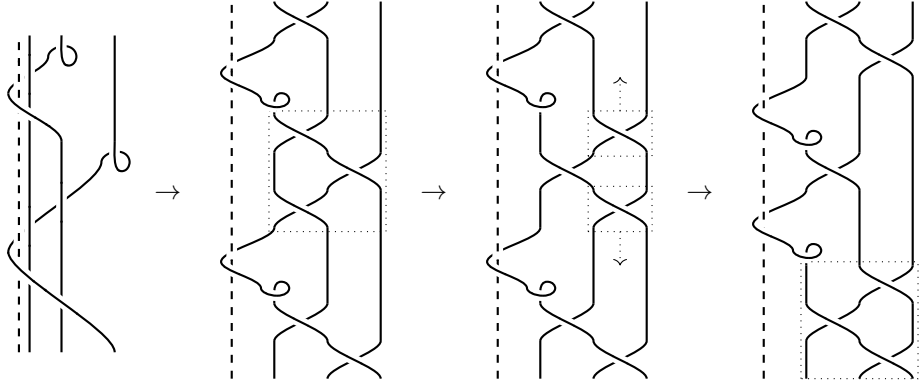
\begin{figure}
\leavevmode\beginpgfgraphicnamed{bkz_conj_fig14}
\begin{tikzpicture}
\begin{scope}[shift={(0,0.5)}]
\begin{knot}[
] \cribtwbr{2}{0} \strand[] (1,0) -- (1,4); \sribtwbr{4} \strand[]
(2,4) -- (2,6); \strand (0.4,0) -- (0.4,6); \strand[only when
rendering/.style={dashed}] (0.2,0) -- (0.2,6);
\flipcrossings{2,3,4,8,10,11}
\end{knot}
\end{scope}
\node at (3,3.5) {$\to$};
\begin{scope}[shift={(4,0)}]
\begin{knot}[
  y=0.5cm,
  clip radius=3pt,
] \braidgen{2}{0}  \strand (1,0) -- (1,1); \braidgen{1}{1} \strand
(3,1) -- (3,5); \varsribtwbr{2} \strand (2,2) -- (2,4);
\braidgen{1}{4} \strand (1, 5) -- (1, 6); \braidgen{2}{5}
\braidgen{1}{6} \strand (3,6) -- (3,10); \varsribtwbr{7} \strand
(2,7) -- (2,9); \braidgen{1}{9} \strand[only when
rendering/.style={dashed}] (0.2,0) -- (0.2,10); \draw[dotted] (0.9,
3.9) rectangle (3.1, 7.1);
\flipcrossings{1,2,3,5,6,7,8,9,11,12,13,14}
\end{knot}
\end{scope}
\node at (8, 3.5) {$\to$};
\begin{scope}[shift={(9,0)}]
\begin{knot}[
  y=0.5cm,
  clip radius=3pt,
] \braidgen{2}{0}  \strand (1,0) -- (1,1); \braidgen{1}{1} \strand
(3,1) -- (3,4); \varsribtwbr{2} \strand (2,2) -- (2,4);
\braidgen{2}{4} \strand (1, 4) -- (1, 5); \braidgen{1}{5} \strand
(3,5) -- (3,6); \braidgen{2}{6} \strand (3,7) -- (3,10); \strand
(1,6) -- (1,7); \varsribtwbr{7} \strand (2,7) -- (2,9);
\braidgen{1}{9} \strand[only when rendering/.style={dashed}] (0.2,0)
-- (0.2,10); \draw[dotted] (1.9, 3.9) rectangle (3.1, 5.1);
\draw[dotted,->] (2.5,3.9) -- (2.5,3); \draw[dotted] (1.9, 5.9)
rectangle (3.1, 7.1); \draw[dotted,->] (2.5,7.1) -- (2.5,8);
\flipcrossings{1,2,3,4,6,7,8,9,10,12,13,14}
\end{knot}
\end{scope}
\node at (13, 3.5) {$\to$};
\begin{scope}[shift={(14,0)}]
\begin{knot}[
  y=0.5cm,
  clip radius=3pt,
] \braidgen{2}{0} \strand (1,0) -- (1,1); \braidgen{1}{1} \strand
(3,1) -- (3,2); \braidgen{2}{2} \strand (1,2) -- (1, 3); \strand
(3,3) -- (3,8); \varsribtwbr{3} \braidgen{1}{5} \varsribtwbr{6}
\strand[] (2,3) -- (2,5); \strand[] (2,6) -- (2,8); \braidgen{2}{8}
\strand (1,8) -- (1,9); \braidgen{1}{9} \strand (3,9) -- (3,10);
\strand[only when rendering/.style={dashed}] (0.2,0) -- (0.2,10);
\draw[dotted] (0.9,0) rectangle (3.1, 3.1);
\flipcrossings{1,2,3,4,6,7,8,10,11,12,13}
\end{knot}
\end{scope}
\end{tikzpicture}
\endpgfgraphicnamed
\caption{naturality} \label{fig:naturality-1}
\end{figure}

\begin{figure}
\leavevmode\beginpgfgraphicnamed{bkz_conj_fig15}
\begin{tikzpicture}
\begin{scope}[shift={(0,0)}]
\begin{knot}[
  y=0.5cm,
  clip radius=3pt,
] \braidgen{1}{0} \strand (3,0) -- (3,1); \braidgen{2}{1} \strand
(1,1) -- (1,2); \braidgen{1}{2} \strand (3,2) -- (3,8);
\varsribtwbr{3} \braidgen{1}{5} \varsribtwbr{6} \strand[] (2,3) --
(2,5); \strand[] (2,6) -- (2,8); \braidgen{2}{8} \strand (1,8) --
(1,9); \braidgen{1}{9} \strand (3,9) -- (3,10); \strand[only when
rendering/.style={dashed}] (0.2,0) -- (0.2,10); \draw[dotted]
(-0.1,1.9) rectangle (2.1, 8.1);
\flipcrossings{1,2,3,4,5,7,8,9,11,12,13}
\end{knot}
\end{scope}
\node at (4,3.5) {$\to$};
\begin{scope}[shift={(5,0)},scale=2]
\begin{knot}[
  y=0.6cm,
] \braidtencldbl[][5pt]{1}{0} \varsribtwbr[only when
rendering/.style={
    double distance=5pt,
  }]{1}
\strand (2, 1) -- (2, 3); \braidtwocldbl[][5pt]{1}{3} \strand[only
when rendering/.style={dashed}] (0.2,0) -- (0.2,4);
\flipcrossings{1,2,4,5}
\end{knot}
\end{scope}
\node at (10,3.5) {$\to$};
\begin{scope}[shift={(11,0)}]
\begin{knot}[
] \sribtwbr{0} \strand[] (2,0) -- (2,2); \cribtwbr{2}{2} \strand[]
(1,2) -- (1,6); \strand (0.4,0) -- (0.4,6); \strand[only when
rendering/.style={dashed}] (0.2,0) -- (0.2,6);
\flipcrossings{2,3,4,7,8,9}
\end{knot}
\end{scope}
\end{tikzpicture}
\endpgfgraphicnamed
\caption{naturality, cont.} \label{fig:naturality-2}
\end{figure}
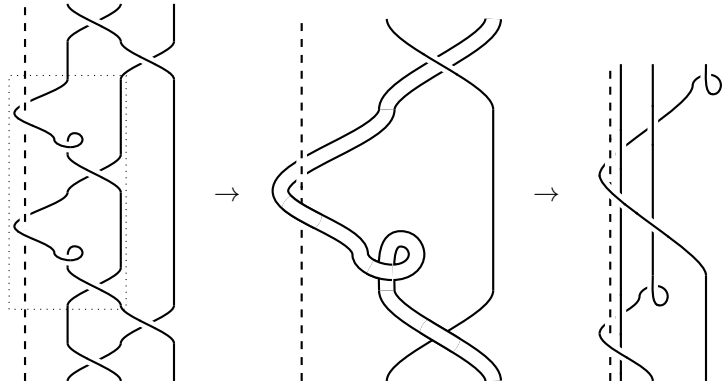

From the above, we obtain that $\eta_{U,V} \in \tilde{\mcC}_X(U\otimes V,U\otimes V)$, and the definition of $\tilde{\mcC}_X$ forces $\eta_{U,V}$ to be natural in $U$. It follows that $\tilde{\mcC}_X$ is a ribbon $\sigma$-braided $\mcC$-module category.

\begin{bibdiv}
\begin{biblist}

\bib{MR1314093}{article}{
      author={Back-Valente, Val\'erie},
      author={Bardy-Panse, Nicole},
      author={Ben~Messaoud, Hechmi},
      author={Rousseau, Guy},
       title={Formes presque-d\'eploy\'ees des alg\`ebres de {K}ac-{M}oody:
  classification et racines relatives},
        date={1995},
        ISSN={0021-8693},
     journal={J. Algebra},
      volume={171},
      number={1},
       pages={43\ndash 96},
         url={https://doi.org/10.1006/jabr.1995.1004},
      review={\MR{1314093}},
         doi={10.1006/jabr.1995.1004},
       label={BV+95}
}

\bib{arXiv:1507.06276}{article}{
      author={Balagovic, Martina},
      author={Kolb, Stefan},
       title={Universal $K$-matrix for quantum symmetric pairs},
     journal={J. Reine Angew. Math, to appear},
         doi={10.1515/crelle-2016-0012},
       year = {2015},
      eprint={\href{http://arxiv.org/abs/1507.06276v2}{{\tt  arXiv:1507.06276v2 [math.QA]}}},

}

\bib{arXiv:1610.09271}{misc}{
      author={Bao, Huanchen},
      author={Wang, Weiqiang},
       title={Canonical bases arising from quantum symmetric pairs},
         how={preprint},
      eprint={\href{http://arxiv.org/abs/1610.09271}{\texttt{arXiv:1610.09271
  [math.QA]}}},
       year = {2016},
}

\bib{arXiv:1606.04769}{misc}{
      author={Ben-Zvi, David},
      author={Brochier, Adrien},
      author={Jordan, David},
       title={Quantum character varieties and braided module categories},
         how={preprint},
        date={2016},
      eprint={\href{http://arxiv.org/abs/1606.04769}{\texttt{arXiv:1606.04769
  [math.QA]}}},
}

\bib{MR3285340}{article}{
      author={Bichon, Julien},
       title={Hopf-{G}alois objects and cogroupoids},
        date={2014},
        ISSN={0041-6932},
     journal={Rev. Un. Mat. Argentina},
      volume={55},
      number={2},
       pages={11\ndash 69},
      eprint={\href{http://arxiv.org/abs/1006.3014}{\texttt{arXiv:1006.3014
  [math.QA]}}},
      review={\MR{3285340}},
}

\bib{bourbaki-lie-fr-4-6}{book}{
      author={Bourbaki, Nicolas},
       title={\'{E}l\'{e}ments de math\'{e}matique. {G}roupes et alg\`{e}bres
  de {L}ie. chapitre 4 \`{a} 6},
   publisher={Springer, Berlin-Heidelberg},
        date={2007},
         doi={10.1007/978-3-540-34491-9},
        note={Reprint of the 1981 (Masson) edition},
}

\bib{bourbaki-lie-fr-7-8}{book}{
      author={Bourbaki, Nicolas},
       title={\'{E}l\'{e}ments de math\'{e}matique. {G}roupes et alg\`{e}bres
  de {L}ie. chapitre 7 et 8},
   publisher={Springer, Berlin-Heidelberg},
        date={2007},
         doi={10.1007/978-3-540-33977-9},
        note={Reprint of the 1975 (Hermann) edition},
}

\bib{MR2892463}{article}{
      author={Brochier, Adrien},
       title={A {K}ohno-{D}rinfeld theorem for the monodromy of cyclotomic {KZ}
  connections},
        date={2012},
        ISSN={0010-3616},
     journal={Comm. Math. Phys.},
      volume={311},
      number={1},
       pages={55\ndash 96},
      eprint={\href{http://arxiv.org/abs/1011.4285}{\texttt{arXiv:1011.4285
  [math.QA]}}},
         url={http://dx.doi.org/10.1007/s00220-012-1424-0},
         doi={10.1007/s00220-012-1424-0},
      review={\MR{2892463}},
}

\bib{MR3248737}{article}{
      author={Brochier, Adrien},
       title={Cyclotomic associators and finite type invariants for tangles in
  the solid torus},
        date={2013},
        ISSN={1472-2747},
     journal={Algebr. Geom. Topol.},
      volume={13},
      number={6},
       pages={3365\ndash 3409},
      eprint={\href{http://arxiv.org/abs/1209.0417}{\texttt{arXiv:1209.0417
  [math.QA]}}},
         url={http://dx.doi.org/10.2140/agt.2013.13.3365},
         doi={10.2140/agt.2013.13.3365},
      review={\MR{3248737}},
}

\bib{MR1358358}{book}{
      author={Chari, Vyjayanthi},
      author={Pressley, Andrew},
       title={A guide to quantum groups},
   publisher={Cambridge University Press},
     address={Cambridge},
        date={1995},
        ISBN={0-521-55884-0},
        note={Corrected reprint of the 1994 original},
      review={\MR{1358358 (96h:17014)}},
}

\bib{MR774205}{article}{
      author={Cherednik, I.~V.},
       title={Factorizing particles on a half line, and root systems},
        date={1984},
        ISSN={0564-6162},
     journal={Teoret. Mat. Fiz.},
      volume={61},
      number={1},
       pages={35\ndash 44},
         doi={10.1007/BF01038545},
      review={\MR{774205}},
}

\bib{MR1017085}{article}{
      author={Cherednik, I.~V.},
       title={Generalized braid groups and local {$r$}-matrix systems},
        date={1989},
        ISSN={0002-3264},
     journal={Dokl. Akad. Nauk SSSR},
      volume={307},
      number={1},
       pages={49\ndash 53},
        note={translation in Soviet Math. Dokl. {\bf 40} (1990), no. 1,
  43--48},
      review={\MR{1017085}},
}

\bib{MR1052280}{article}{
      author={Cherednik, I.~V.},
       title={Calculation of the monodromy of some {$W$}-invariant local
  systems of type {$B,C$} and {$D$}},
        date={1990},
        ISSN={0374-1990},
     journal={Funktsional. Anal. i Prilozhen.},
      volume={24},
      number={1},
       pages={88\ndash 89},
         url={http://dx.doi.org/10.1007/BF01077930},
         doi={10.1007/BF01077930},
        note={translation in Funct. Anal. Appl. {\bf 24} (1990), no. 1,
  78--79},
      review={\MR{1052280}},
}

\bib{MR3208147}{article}{
    AUTHOR = {De Commer, Kenny},
     TITLE = {Representation theory of quantized enveloping algebras with
              interpolating real structure},
   JOURNAL = {SIGMA Symmetry Integrability Geom. Methods Appl.},
  FJOURNAL = {SIGMA. Symmetry, Integrability and Geometry. Methods and
              Applications},
    VOLUME = {9},
      YEAR = {2013},
     PAGES = {Paper 081, 20},
      ISSN = {1815-0659},
   MRCLASS = {17B37 (20G42 46L65)},
  MRNUMBER = {\MR{3208147}},
MRREVIEWER = {Iwan Praton},
       DOI = {10.3842/SIGMA.2013.081},
       URL = {http://dx.doi.org/10.3842/SIGMA.2013.081},
}

\bib{MR3420332}{article}{
      author={De~Commer, Kenny},
      author={Yamashita, Makoto},
       title={Tannaka--{K}re\u\i n duality for compact quantum homogeneous
  spaces {II}. {C}lassification of quantum homogeneous spaces for quantum
  {SU}(2)},
        date={2015},
        ISSN={0075-4102},
     journal={J. Reine Angew. Math.},
      volume={708},
       pages={143\ndash 171},
      eprint={\href{http://arxiv.org/abs/1212.3414}{\texttt{arXiv:1212.3414
  [math.OA]}}},
         url={http://dx.doi.org/10.1515/crelle-2013-0074},
         doi={10.1515/crelle-2013-0074},
      review={\MR{3420332}},
}

\bib{MR1644317}{article}{
   author={tom Dieck, Tammo},
   author={H{\"a}ring-Oldenburg, Reinhard},
   title={Quantum groups and cylinder braiding},
   journal={Forum Math.},
   volume={10},
   date={1998},
   number={5},
   pages={619--639},
   issn={0933-7741},
   review={\MR{1644317}},
   doi={10.1515/form.10.5.619},
}

\bib{MR1958831}{incollection}{
    AUTHOR = {Donin, J.},
    author = {Mudrov, A.},
     TITLE = {{${\scr U}_q({\rm sl}(n))$}-covariant quantization of
              symmetric coadjoint orbits via reflection equation algebra},
 BOOKTITLE = {Quantization, {P}oisson brackets and beyond ({M}anchester,
              2001)},
    SERIES = {Contemp. Math.},
    VOLUME = {315},
     PAGES = {61--79},
 PUBLISHER = {Amer. Math. Soc., Providence, RI},
      YEAR = {2002},
   MRCLASS = {53D55 (17B37)},
  MRNUMBER = {1958831},
MRREVIEWER = {Zhong Qi Ma},
       URL = {https://doi.org/10.1090/conm/315/05475},
}

\bib{MR1992884}{article}{
      author={Donin, J.},
      author={Kulish, P.~P.},
      author={Mudrov, A.~I.},
       title={On a universal solution to the reflection equation},
        date={2003},
        ISSN={0377-9017},
     journal={Lett. Math. Phys.},
      volume={63},
      number={3},
       pages={179\ndash 194},
  eprint={\href{http://arxiv.org/abs/math/0210242}{\texttt{arXiv:math/0210242
  [math.QA]}}},
         url={http://dx.doi.org/10.1023/A:1024438101617},
         doi={10.1023/A:1024438101617},
      review={\MR{1992884}},
}

\bib{MR1047964}{article}{
      author={Drinfel{\cprime}d, V.~G.},
       title={Quasi-{H}opf algebras},
        date={1989},
        ISSN={0234-0852},
     journal={Algebra i Analiz},
      volume={1},
      number={6},
       pages={114\ndash 148},
        note={Translation in Leningrad Math. J. \textbf{1} (1990), no. 6,
  1419--1457},
      review={\MR{1047964 (91b:17016)}},
}

\bib{MR2383601}{article}{
      author={Enriquez, Benjamin},
       title={Quasi-reflection algebras and cyclotomic associators},
        date={2007},
        ISSN={1022-1824},
     journal={Selecta Math. (N.S.)},
      volume={13},
      number={3},
       pages={391\ndash 463},
  eprint={\href{http://arxiv.org/abs/math/0408035}{\texttt{arXiv:math/0408035
  [math.QA]}}},
         url={http://dx.doi.org/10.1007/s00029-007-0048-2},
         doi={10.1007/s00029-007-0048-2},
      review={\MR{2383601 (2010e:17009)}},
}

\bib{MR2126485}{article}{
      author={Enriquez, Benjamin},
      author={Etingof, Pavel},
       title={Quantization of classical dynamical {$r$}-matrices with
  nonabelian base},
        date={2005},
        ISSN={0010-3616},
     journal={Comm. Math. Phys.},
      volume={254},
      number={3},
       pages={603\ndash 650},
  eprint={\href{http://arxiv.org/abs/math/0311224}{\texttt{arXiv:math/0311224
  [math.QA]}}},
         url={http://dx.doi.org/10.1007/s00220-004-1243-z},
         doi={10.1007/s00220-004-1243-z},
      review={\MR{2126485 (2006a:17011)}},
}

\bib{MR3242743}{book}{
      author={Etingof, Pavel},
      author={Gelaki, Shlomo},
      author={Nikshych, Dmitri},
      author={Ostrik, Victor},
       title={Tensor categories},
      series={Mathematical Surveys and Monographs},
   publisher={American Mathematical Society, Providence, RI},
        date={2015},
      volume={205},
        ISBN={978-1-4704-2024-6},
         url={http://dx.doi.org/10.1090/surv/205},
         doi={10.1090/surv/205},
      review={\MR{3242743}},
}

\bib{MR1940926}{inproceedings}{
      author={Golubeva, V.~A.},
      author={Leksin, V.},
       title={On a generalization of the {D}rinfeld-{K}ohno theorem},
        date={2000},
   booktitle={Proceedings of the {S}econd {ISAAC} {C}ongress, {V}ol. 2 ({F}ukuoka, 1999)},
      series={Int. Soc. Anal. Appl. Comput.},
      volume={8},
   publisher={Kluwer Acad. Publ., Dordrecht},
       pages={1371\ndash 1386},
         doi={10.1007/978-1-4613-0271-1\_61},
      review={\MR{1940926 (2004a:32023)}},
}

\bib{arXiv:math/991141}{misc}{
      author={Gurevich, D.},
      author={Saponov, P.},
       title={Quantum sphere via reflection equation algebra},
         how={preprint},
        date={1999},
      eprint={\href{http://arxiv.org/abs/math/991141}{\texttt{arXiv:math/991141
  [math.QA]}}},
}

\bib{MR1834454}{book}{
      author={Helgason, Sigurdur},
       title={Differential geometry, {L}ie groups, and symmetric spaces},
      series={Graduate Studies in Mathematics},
   publisher={American Mathematical Society, Providence, RI},
        date={2001},
      volume={34},
        ISBN={0-8218-2848-7},
         url={http://dx.doi.org/10.1090/gsm/034},
         doi={10.1090/gsm/034},
        note={Corrected reprint of the 1978 original},
      review={\MR{1834454}},
}

\bib{MR1359532}{book}{
    AUTHOR = {Jantzen, Jens Carsten},
     TITLE = {Lectures on quantum groups},
    SERIES = {Graduate Studies in Mathematics},
    VOLUME = {6},
 PUBLISHER = {American Mathematical Society, Providence, RI},
      YEAR = {1996},
     PAGES = {viii+266},
      ISBN = {0-8218-0478-2},
   MRCLASS = {17B37 (16W30)},
  MRNUMBER = {1359532},
MRREVIEWER = {Arun Ram},
}

\bib{MR1155464}{article}{
   author={Kac, V. G.},
   author={Wang, S. P.},
   title={On automorphisms of Kac-Moody algebras and groups},
   journal={Adv. Math.},
   volume={92},
   date={1992},
   number={2},
   pages={129--195},
   issn={0001-8708},
   review={\MR{1155464}},
   doi={10.1016/0001-8708(92)90063-Q},
}

\bib{MR1321145}{book}{
      author={Kassel, Christian},
       title={Quantum groups},
      series={Graduate Texts in Mathematics},
   publisher={Springer-Verlag},
     address={New York},
        date={1995},
      volume={155},
        ISBN={0-387-94370-6},
      review={\MR{1321145 (96e:17041)}},
}

\bib{MR1239506}{article}{
      author={Kazhdan, D.},
      author={Lusztig, G.},
       title={Tensor structures arising from affine {L}ie algebras. {III}},
        date={1994},
        ISSN={0894-0347},
     journal={J. Amer. Math. Soc.},
      volume={7},
      number={2},
       pages={335\ndash 381},
         url={http://dx.doi.org/10.2307/2152762},
         doi={10.2307/2152762},
      review={\MR{1239506 (94g:17048)}},
}

\bib{MR1239507}{article}{
      author={Kazhdan, D.},
      author={Lusztig, G.},
       title={Tensor structures arising from affine {L}ie algebras. {IV}},
        date={1994},
        ISSN={0894-0347},
     journal={J. Amer. Math. Soc.},
      volume={7},
      number={2},
       pages={383\ndash 453},
         url={http://dx.doi.org/10.2307/2152763},
         doi={10.2307/2152763},
      review={\MR{1239507 (94g:17049)}},
}

\bib{MR1920389}{book}{
    AUTHOR = {Knapp, Anthony W.},
     TITLE = {Lie groups beyond an introduction},
    SERIES = {Progress in Mathematics},
    VOLUME = {140},
   EDITION = {Second},
 PUBLISHER = {Birkh\"auser Boston, Inc., Boston, MA},
      YEAR = {2002},
     PAGES = {xviii+812},
      ISBN = {0-8176-4259-5},
   MRCLASS = {22-01},
  MRNUMBER = {1920389},
}
\bib{MR2565052}{article}{
    AUTHOR = {Kolb, Stefan},
     author = {Stokman, Jasper V.},
     TITLE = {Reflection equation algebras, coideal subalgebras, and their
              centres},
   JOURNAL = {Selecta Math. (N.S.)},
  FJOURNAL = {Selecta Mathematica. New Series},
    VOLUME = {15},
      YEAR = {2009},
    NUMBER = {4},
     PAGES = {621--664},
      ISSN = {1022-1824},
   MRCLASS = {17B37},
  MRNUMBER = {2565052},
MRREVIEWER = {Iv\'an Ezequiel Angiono},
       URL = {https://doi.org/10.1007/s00029-009-0007-1},
}

\bib{MR3269184}{article}{
      author={Kolb, Stefan},
       title={Quantum symmetric {K}ac-{M}oody pairs},
        date={2014},
        ISSN={0001-8708},
     journal={Adv. Math.},
      volume={267},
       pages={395\ndash 469},
      eprint={\href{http://arxiv.org/abs/1207.6036}{\texttt{arXiv:1207.6036
  [math.QA]}}},
         url={http://dx.doi.org/10.1016/j.aim.2014.08.010},
         doi={10.1016/j.aim.2014.08.010},
      review={\MR{3269184}},
}

\bib{arXiv:1705.04238}{misc}{
      author={Kolb, Stefan},
       title={Braided module categories via quantum symmetric pairs},
         how={preprint},
        date={2017},
      eprint={\href{https://arxiv.org/abs/1705.04238}{{\tt  arXiv:1705.04238 [math.QA]}}},
}

\bib{MR1198638}{article}{
      author={Kulish, P.~P.},
      author={Sasaki, R.},
      author={Schwiebert, C.},
       title={Constant solutions of reflection equations and quantum groups},
        date={1993},
        ISSN={0022-2488},
     journal={J. Math. Phys.},
      volume={34},
      number={1},
       pages={286\ndash 304},
         url={http://dx.doi.org/10.1063/1.530382},
         doi={10.1063/1.530382},
      review={\MR{1198638 (94e:82035)}},
}

\bib{MR1289327}{article}{
      author={Leibman, A.},
       title={Some monodromy representations of generalized braid groups},
        date={1994},
        ISSN={0010-3616},
     journal={Comm. Math. Phys.},
      volume={164},
      number={2},
       pages={293\ndash 304},
         url={http://projecteuclid.org/euclid.cmp/1104270834},
      review={\MR{1289327 (95k:52019)}},
}

\bib{MR1717368}{article}{
    AUTHOR = {Letzter, Gail},
     TITLE = {Symmetric pairs for quantized enveloping algebras},
   JOURNAL = {J. Algebra},
  FJOURNAL = {Journal of Algebra},
    VOLUME = {220},
      YEAR = {1999},
    NUMBER = {2},
     PAGES = {729--767},
      ISSN = {0021-8693},
   MRCLASS = {17B37},
  MRNUMBER = {1717368},
MRREVIEWER = {Walter R. Ferrer Santos},
       URL = {https://doi-org.myezproxy.vub.ac.be/10.1006/jabr.1999.8015},
}

\bib{MR1742961}{article}{
      author={Letzter, Gail},
       title={Harish-{C}handra modules for quantum symmetric pairs},
        date={2000},
        ISSN={1088-4165},
     journal={Represent. Theory},
      volume={4},
       pages={64\ndash 96},
         url={http://dx.doi.org/10.1090/S1088-4165-00-00087-X},
         doi={10.1090/S1088-4165-00-00087-X},
      review={\MR{1742961}},
}

\bib{MR1913438}{incollection}{
      author={Letzter, Gail},
       title={Coideal subalgebras and quantum symmetric pairs},
        date={2002},
   booktitle={New directions in {H}opf algebras},
      series={Math. Sci. Res. Inst. Publ.},
      volume={43},
   publisher={Cambridge Univ. Press, Cambridge},
       pages={117\ndash 165},
      eprint={\href{http://arxiv.org/abs/math/0103228}{\texttt{arXiv:math/0103228
  [math.QA]}}},
      review={\MR{1913438}},
}

\bib{MR1996417}{article}{
    AUTHOR = {Letzter, Gail},
     TITLE = {Quantum symmetric pairs and their zonal spherical functions},
   JOURNAL = {Transform. Groups},
  FJOURNAL = {Transformation Groups},
    VOLUME = {8},
      YEAR = {2003},
    NUMBER = {3},
     PAGES = {261--292},
      ISSN = {1083-4362},
   MRCLASS = {17B37 (33D80 81R50)},
  MRNUMBER = {1996417},
MRREVIEWER = {Erik Koelink},
       URL = {https://doi.org/10.1007/s00031-003-0719-9},
}

\bib{MR1227098}{book}{
      author={Lusztig, George},
       title={Introduction to quantum groups},
      series={Progress in Mathematics},
   publisher={Birkh{\"a}user Boston Inc.},
     address={Boston, MA},
        date={1993},
      volume={110},
        ISBN={0-8176-3712-5},
      review={\MR{1227098 (94m:17016)}},
}

\bib{MR2304470}{article}{
    AUTHOR = {Mudrov, A.},
     TITLE = {Quantum conjugacy classes of simple matrix groups},
   JOURNAL = {Comm. Math. Phys.},
  FJOURNAL = {Communications in Mathematical Physics},
    VOLUME = {272},
      YEAR = {2007},
    NUMBER = {3},
     PAGES = {635--660},
      ISSN = {0010-3616},
   MRCLASS = {17B37},
  MRNUMBER = {2304470},
MRREVIEWER = {David Hernandez},
       URL = {https://doi.org/10.1007/s00220-007-0222-6},
}

\bib{MR2681261}{article}{
      author={M{\"u}ger, Michael},
       title={Tensor categories: a selective guided tour},
        date={2010},
        ISSN={0041-6932},
     journal={Rev. Un. Mat. Argentina},
      volume={51},
      number={1},
       pages={95\ndash 163},
      eprint={\href{http://arxiv.org/abs/0804.3587}{\texttt{arXiv:0804.3587
  [math.CT]}}},
      review={\MR{2681261 (2011f:18007)}},
}

\bib{MR2832264}{article}{
      author={Neshveyev, Sergey},
      author={Tuset, Lars},
       title={Notes on the {K}azhdan-{L}usztig theorem on equivalence of the
  {D}rinfeld category and the category of {$U_q\germ g$}-modules},
        date={2011},
        ISSN={1386-923X},
     journal={Algebr. Represent. Theory},
      volume={14},
      number={5},
       pages={897\ndash 948},
      eprint={\href{http://arxiv.org/abs/0711.4302}{\texttt{arXiv:0711.4302
  [math.QA]}}},
         url={http://dx.doi.org/10.1007/s10468-010-9223-9},
         doi={10.1007/s10468-010-9223-9},
      review={\MR{2832264 (2012j:17022)}},
}

\bib{MR2782190}{article}{
      author={Neshveyev, Sergey},
      author={Tuset, Lars},
       title={Symmetric invariant cocycles on the duals of {$q$}-deformations},
        date={2011},
        ISSN={0001-8708},
     journal={Adv. Math.},
      volume={227},
      number={1},
       pages={146\ndash 169},
      eprint={\href{http://arxiv.org/abs/0902.2365}{\texttt{arXiv:0902.2365
  [math.QA]}}},
         url={http://dx.doi.org/10.1016/j.aim.2011.01.017},
         doi={10.1016/j.aim.2011.01.017},
      review={\MR{2782190}},
}

\bib{MR2959039}{article}{
      author={Neshveyev, Sergey},
      author={Tuset, Lars},
       title={Autoequivalences of the tensor category of {$U_q\germ
  g$}-modules},
        date={2012},
        ISSN={1073-7928},
     journal={Int. Math. Res. Not. IMRN},
      number={15},
       pages={3498\ndash 3508},
      eprint={\href{http://arxiv.org/abs/1012.4700}{\texttt{arXiv:1012.4700
  [math.QA]}}},
      review={\MR{2959039}},
}

\bib{MR3204665}{book}{
    AUTHOR = {Neshveyev, Sergey},
author={Tuset, Lars},
     TITLE = {Compact quantum groups and their representation categories},
    SERIES = {Cours Sp\'ecialis\'es [Specialized Courses]},
    VOLUME = {20},
 PUBLISHER = {Soci\'et\'e Math\'ematique de France, Paris},
      YEAR = {2013},
     PAGES = {vi+169},
      ISBN = {978-2-85629-777-3},
   MRCLASS = {46L65 (17B37 46L89 81-02 81R05 81R10 81R50)},
  MRNUMBER = {3204665},
MRREVIEWER = {Julien Bichon},
}

\bib{MR1064110}{book}{
   author={Onishchik, A. L.},
   author={Vinberg, {\`E}. B.},
   title={Lie groups and algebraic groups},
   series={Springer Series in Soviet Mathematics},
   note={Translated from the Russian and with a preface by D. A. Leites},
   publisher={Springer-Verlag, Berlin},
   date={1990},
   pages={xx+328},
   isbn={3-540-50614-4},
   review={\MR{1064110}},
   doi={10.1007/978-3-642-74334-4},
}

\bib{MR919322}{article}{
      author={Podle{\'s}, P.},
       title={Quantum spheres},
        date={1987},
        ISSN={0377-9017},
     journal={Lett. Math. Phys.},
      volume={14},
      number={3},
       pages={193\ndash 202},
         doi={10.1007/BF00416848},
      review={\MR{919322 (89b:46081)}},
}

\end{biblist}
\end{bibdiv}

\end{document}